\def\oversortoftilde#1{\mathop{\vbox{\m@th\ialign{##\crcr\noalign{\kern3\p@}%
      \sortoftildefill\crcr\noalign{\kern3\p@\nointerlineskip}%
      $\hfil\displaystyle{#1}\hfil$\crcr}}}\limits}
\def\sortoftildefill{$\m@th \setbox\z@\hbox{$\braceld$}%
  \braceld\leaders\vrule \@height\ht\z@ \@depth\z@\hfill\braceru$}
\newtheorem{theorem}{Theorem}[section]
\newtheorem{lemma}[theorem]{Lemma}
\newtheorem{proposition}[theorem]{Proposition}
\newtheorem{corollary}[theorem]{Corollary}
\theoremstyle{definition}
\newtheorem{definition}[theorem]{Definition}
\newtheorem{example}[theorem]{Example}
\theoremstyle{remark}
\newtheorem{remark}[theorem]{Remark}
\numberwithin{equation}{section}
\newcommand{\att}{\big \vert}
\newcommand{\pr}			{{\operatorname{\mathsf{pr}}}}
\newcommand{\pol}		{{\operatorname{\mathrm{pol}}}}
\newcommand{\frakg}			{{\operatorname{\mathfrak{g}}}}
\newcommand{\frakh}			{{\operatorname{\mathfrak{h}}}}
\newcommand{\gl}			{{\operatorname{\mathfrak{gl}}}}
\newcommand{\dif}{\mathop{}\!\mathrm{d}}    
\DeclareMathOperator{\supp}{supp}
\DeclareMathOperator{\spn}{span}
\DeclareMathOperator{\sign}{sign}
\DeclareMathOperator{\id}{id}
\DeclareMathOperator{\Hom}{Hom}
\DeclareMathOperator{\Diff}{Diff}
\newcommand{\hotimes}	{\mathbin{\widehat{\otimes}}}
\DeclareFontFamily{U}{MnSymbolC}{}
\DeclareSymbolFont{MnSyC}{U}{MnSymbolC}{m}{n}
\DeclareFontShape{U}{MnSymbolC}{m}{n}{
    <-6>  MnSymbolC5
   <6-7>  MnSymbolC6
   <7-8>  MnSymbolC7
   <8-9>  MnSymbolC8
   <9-10> MnSymbolC9
  <10-12> MnSymbolC10
  <12->   MnSymbolC12}{}
\DeclareMathSymbol{\intprod}{\mathbin}{MnSyC}{'270}
\begin{document}

\title{A local-to-global analysis of Gelfand-Fuks cohomology}
\author{Lukas Miaskiwskyi}

\begin{abstract}
We present a novel proof technique to construct the Gelfand-Fuks spectral sequence for diagonal Chevalley-Eilenberg cohomology of vector fields on a smooth manifold, performing a local-to-global analysis through a notion of generalized good covers from the theory of factorization algebras and cosheaves.
This approach yields a unified way to deal with the problem of comparing ``sheaf-like'' data over different Cartesian powers of the manifold, and is easily generalized to the study of other cohomology theories associated to geometric objects.
Independently, we lay out a detailed and easily accessible exposition on the continuous Chevalley-Eilenberg cohomology of formal vector fields and of vector fields on Euclidean space, modernizing and elaborating on well-established literature on the subject by Bott, Fuks, and Gelfand.
\end{abstract}

\maketitle

\tableofcontents

\newpage

\section{Introduction}
Beginning around fifty years ago, a plethora of literature has been created to understand the continuous Chevalley-Eilenberg cohomology $\mathfrak{X}(M)$ of the vector fields on a smooth manifold $M$. This cohomology carries the name \emph{Gelfand--Fuks cohomology}, in reference to the authors who opened the investigation of this subject with a series of highly novel papers \cite{gel1969cohomologyI, gel1970cohomologyII, gel1970formalcohomology}. Initially, it was hoped that this cohomology might contain invariants for the smooth structure of $M$, hence be a potential tool for a classification of smooth structures on a given topological manifold. 
\\
Unfortunately, these hopes were denied by a paper by Bott and Segal, which showed that the Gelfand--Fuks cohomology was isomorphic to the singular cohomology of a mapping space that can be functorially constructed from $M$, and from which no new invariants arise \cite{bott1977cohomology}.
Regardless, these explorations brought with them a lot of applications, for example in the theory of foliations \cite{fuks1973characteristic} or for the construction of the Virasoro algebra \cite{virasoro1970subsidiary}. Further, many related open problems are still being pursued, like the continuous cohomology of the Lie algebra of symplectic, Hamiltonian or divergence-free vector fields on symplectic/Riemannian manifolds \cite{janssens2016universal, janssens2018integrability}. 
\\
The goal of this document is two-fold: For one, we want to present a novel proof technique, using so-called \emph{$k$-good covers} from the theory of functor calculus and factorization algebras (cf.~\cite{boavida2013manifold}), to construct well-known spectral sequences which calculate the Gelfand--Fuks cohomology of certain smooth manifolds, cf. \cite[Theorem 2.4.1a, 2.4.1.b]{fuks1984cohomology}. This approach is inspired by the treatment of Gelfand--Fuks cohomology in the framework of factorization algebras in a preprint by Kapranov and Hennion \cite{hennion2018gelfand}. The idea is to use a local-to-global analysis to reconstruct the spectral sequence for the Gelfand--Fuks cohomology of the manifold from the Gelfand--Fuks cohomology of a local part. The use of $k$-good covers solves the problem that this reconstruction necessarily compares data between different Cartesian powers $M, M^2, M^3, \dots$ of a manifold $M$. This is a natural problem to encounter here, since cochains for Gelfand--Fuks cohomology are maps on multiple copies of $\mathfrak{X}(M)$.
 An advantage of this approach over previous proofs is that it is easily generalizable to other situations, as we plan to show in future work \cite{miaskiwskyi2021loday}.
\\
The other goal of this document is to lay out a streamlined, detailed and relatively elementary path to the fundamental results of Gelfand--Fuks cohomology, accessible to any researcher with a solid understanding of homological algebra, sheaf theory and differential geometry. To this end, we largely follow the general strategies in \cite{fuks1984cohomology, bott1973lectures}, filling in nontrivial details that have been left to the reader in the original literature, modernizing some of the language used, and replacing some of the arguments with ones which the author perceives as clearer.
We make no a claim to exhaustiveness: we restrict ourselves to Gelfand--Fuks cohomology with trivial coefficients, and direct the reader to \cite{tsujishita1981continuous} for an overview of the study of other coefficient modules.

We begin in Section~\ref{SectionLieAlgebraOfFormals} with a study of continuous cohomology of the Lie algebra of formal vector fields, i.e. vector fields whose coefficient functions are formal power series. They represent the infinitesimal counterpart of $\mathfrak{X}(M)$ and their cohomology is calculated using a spectral sequence over which one can get full control. This section is largely a review of \cite{gel1970formalcohomology}.
In Section~\ref{SectionLocalCohomology}, we tie the cohomology of formal vector fields to the Gelfand--Fuks cohomology of Euclidean space, which may itself be understood as the local counterpart to Gelfand--Fuks cohomology. This section is a review of \cite{bott1973lectures}.
The Section~\ref{sec:CosheafAspectsOfLocalGelfandFuks}, we examine the transformation behavior of Gelfand--Fuks cohomology on Euclidean space under diffeomorphisms. The proofs and formulations are original, though the results are implicitly used in the literature.
This prepares the local-to-global analysis of the Gelfand--Fuks cohomology on an arbitrary smooth manifold, which we carry out in Section~\ref{SectionGelfandFuksOnSmooths}. We give a variation of the well-known spectral sequences that calculate Gelfand--Fuks cohomology for a class of orientable, smooth manifolds. For the sake of completion, we explain how it allows a full calculation of the Gelfand--Fuks cohomology of the circle $S^1$ and may be used to make certain general statements about finite-dimensionality of the Gelfand--Fuks cohomology.
\noindent {\it Acknowledgement.}  The author would like to thank Bas Janssens, Tobias Diez, and Milan Niestijl for interesting discussions and many useful suggestions. He was supported by the NWO Vidi Grant 639.032.734 ``Cohomology and representation theory of infinite-dimensional Lie groups''.   

\section{The Lie algebra of formal vector fields}
\label{SectionLieAlgebraOfFormals}
In this section, we mainly elaborate on the methods given in \cite[Chapter 2.2]{fuks1984cohomology} and \cite{gel1970formalcohomology} to analyze the Lie algebra of formal vector fields, an infinitesimal version of the Lie algebra of vector fields on a smooth manifold. We also use methods from \cite[Corollary 1]{guillemin1973some} to analyze stable Chevalley-Eilenberg cohomology of this space.
\subsection{Definition and first properties}
\begin{definition}[Formal vector fields]
Let~$n \in \mathbb{N}$. We define the \emph{Lie algebra of formal vector fields}~$W_n$ to be equal to the topological Lie algebra 
\begin{align}
W_n := \mathbb{R} \llbracket x_1,\dots,x_n \rrbracket \otimes \mathbb{R}^n.
\end{align}
Its topology is induced by the projective limit topology of~$\mathbb{R}\llbracket x_1,\dots,x_n \rrbracket$ and its Lie bracket is given by
\begin{align}
[f \partial_i, g \partial_j] := 
f \frac{\partial g}{\partial x_i} \cdot \partial_j
-
g \frac{\partial f}{\partial x_j} \cdot \partial_i, \quad f,g 
\in 
\mathbb{R} \llbracket x_1,\dots,x_n \rrbracket.
\end{align}
\end{definition}
\begin{remark}
\label{RemarkTransformationOfWn} 
There is a more geometric definition of~$W_n$ which we will use in Section~\ref{SectionLocalCohomology}, as the space of infinity-jets~$J_p^\infty \mathfrak{X}(\mathbb{R}^n)$ of vector fields at an arbitrary point~$p \in \mathbb{R}^n$. Any choice of local frame around~$p$ induces a continuous Lie algebra isomorphism 
\begin{align}
J_p^\infty \mathfrak{X}(\mathbb{R}^n) \cong W_n.
\end{align} 
\end{remark}
We first examine the structure of~$W_n$.
\begin{definition}
The element~$E := \sum_{i=1}^n x_i \partial_i \in \mathfrak{g}_0$ is the \emph{Euler vector field} in~$W_n$. The eigenspaces
\begin{align}
\mathfrak{g}_k = \{ X \in W_n : [E,X] = k \cdot X \}
\end{align}
give~$W_n = \widehat{\bigoplus}_{k \geq \mathbb{Z}} \mathfrak{g}_k$ as the completion of a graded Lie algebra.
Elements of~$\mathfrak{g}_k$ are called \emph{homogeneous (of degree~$k$)}. 
\end{definition}
More explicitly, we find for all~$k \in \mathbb{Z}$
\begin{align}
\mathfrak{g}_k = 
\left 
\{ \sum_{i=1}^n p_i \partial_i \in W_n 
:
 p_i \text{ homogeneous polynomials of degree } k+1 
 \right 
 \}.
\end{align}
In particular,~$\frakg_k = 0$ if~$k < -1$, and in low orders, we have Lie algebra isomorphisms:
\begin{align}
\mathfrak{g}_{-1} =  
\spn 
\left\{ \partial_i : i = 1,\dots, n \right \} \cong \mathbb{R}^n, 
\quad
\mathfrak{g}_{0} = 
\spn \left\{ x_i \partial_j : i, j = 1,\dots, n \right\} \cong \mathfrak{gl}_n(\mathbb{R}).
\end{align}
\begin{definition}
Let~$\frakg$ be a topological Lie algebra. Its \emph{continuous Chevalley-Eilenberg cohomology} is the cohomology of the cochain complex
\begin{align}
C^\bullet(\frakg) := \bigoplus_{k \geq 0} C^k(\frakg).
\end{align}
By~$C^k(\frakg)$ we denote the space of multilinear, skew-symmetric, continuous maps
\begin{equation}
c : \frakg^{k} \to \mathbb{R},
\end{equation}
and the differential~$d : C^\bullet(\frakg) \to C^{\bullet+1}(\frakg)$ of the complex is
\begin{equation}
\begin{gathered}
dc(X_1,\dots,X_{k+1}) = 
\sum_{1 \leq i < j \leq k+1} (-1)^{i+j-1} c([X_i,X_j],X_1,\dots,X_{k+1}) \quad \forall X_i \in \frakg.
\end{gathered}
\end{equation}
The space~$C^\bullet(\frakg)$ assumes the structure of a differential graded algebra with the wedge product
\begin{equation}
\begin{aligned}
(c_1 \wedge c_2) &(X_1,\dots,X_{k+l}) := 
\\
&\frac{1}{k! l!} \sum_{\sigma \in \Sigma_{k + l}} \sign(\sigma) c_1(X_{\sigma(1)} ,\dots, X_{\sigma(k)}) c_2(X_{\sigma(k + 1)}, \dots, X_{\sigma(k + l)})
\end{aligned}
\end{equation}
for~$c_1 \in C^k(\frakg), c_2 \in C^l(\frakg), X_1,\dots,X_{k+l} \in \frakg$.
\end{definition}
\begin{remark}
If~$\frakg$ is finite-dimensional, the continuity assumption for cochains in~$C^\bullet(\frakg)$ is redundant. If~$\frakg = W_n$ with its projective topology, then the continuity assumption on~$c \in C^\bullet(W_n)$ just means that there is a~$k \in \mathbb{Z}$ so that~$c(X,\cdot,\dots,\cdot) = 0$ for all~$X$ with~$\deg X > k$. In particular,~$c$ is only nonzero on a finite-dimensional subspace of~$\Lambda^\bullet W_n$.
\end{remark}
Recall the following:
\begin{definition}
\label{DefinitionLieAlgebraActionOnCochains}
Let~$\mathfrak{g}$ be a Lie algebra,~$Y \in \mathfrak{g}$ and~$c \in C^k(\mathfrak{g})$ for some~$k \geq 0$. 
\begin{itemize}
\item[i)]
Denote the natural Lie algebra action of an element~$Y$ on~$C^k(\mathfrak{g})$ by~$Y \cdot c$; the formula is given for~$Y,X_1,\dots,X_k \in \mathfrak{g}$ by
\begin{align} 
(Y \cdot c)(X_1,\dots,X_k) &:= -\sum_{i = 1}^k c(X_1,\dots,[Y,X_i],\dots,X_k),
\end{align}
and~$Y \cdot c = 0$ if~$c \in C^0(\frakg)$.
\item[ii)]
Denote by~$Y \intprod c \in C^{k-1}(\mathfrak{g})$ the \emph{interior product of~$c$ with~$Y$,} which is defined via
\begin{align}
(Y \intprod c)(X_1,\dots,X_{k-1}) = c(Y,X_1,\dots,X_{k-1})
\end{align}
and~$Y \intprod c = 0$ if~$c \in C^0(\frakg)$.
\end{itemize}
\end{definition}
A straightforward calculation yields the following homotopy relation:
\begin{lemma}
\label{LemmaInteriorProductHomotopy}
Let~$\mathfrak{g}$ be a Lie algebra,~$c \in C^\bullet(\frakg)$ and~$Y \in \mathfrak{g}$. Then we have the following chain homotopy formula:
\begin{align}
d ( Y \intprod c) + Y \intprod dc
= 
-Y \cdot c
\end{align}
\end{lemma}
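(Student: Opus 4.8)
The plan is to verify the identity by a direct evaluation of both sides on an arbitrary tuple $(X_1,\dots,X_k) \in \frakg^k$, matching terms exactly as one does in the proof of Cartan's magic formula $\mathcal{L}_Y = d \circ \iota_Y + \iota_Y \circ d$ in de Rham theory. Since the three operators $c \mapsto d(Y \intprod c)$, $c \mapsto Y \intprod dc$ and $c \mapsto Y \cdot c$ are all $\mathbb{R}$-linear, it suffices to fix a single $c \in C^k(\frakg)$ and compute.

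First I would expand the middle term on the left. By definition $(Y \intprod dc)(X_1,\dots,X_k) = dc(Y,X_1,\dots,X_k)$, so I would apply the defining formula for $d$ to this $(k+1)$-tuple, writing $W_1 = Y$ and $W_{l+1} = X_l$ for $l = 1,\dots,k$. The decisive step is to split the resulting double sum $\sum_{1 \le i < j \le k+1}$ according to whether $i = 1$, in which case the bracket $[W_i,W_j]$ involves $Y$, or $i \ge 2$, in which case it does not.

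For the terms with $i = 1$ the bracket is $[Y, X_{j-1}]$, and after reindexing $j = l+1$ the contribution is $\sum_{l=1}^{k} (-1)^{l+1} c([Y,X_l], X_1, \dots, \widehat{X_l}, \dots, X_k)$. On the other side, unwinding the definition of $Y \cdot c$ and moving each $[Y,X_l]$ from its slot $l$ to the front costs a sign $(-1)^{l-1} = (-1)^{l+1}$, so $-(Y \cdot c)(X_1,\dots,X_k)$ is exactly this same sum. Thus the $i=1$ block alone reproduces $-(Y \cdot c)$.

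For the terms with $i \ge 2$, writing $i = a+1$, $j = b+1$, the bracket $[X_a,X_b]$ occupies the first argument while $Y$ sits in the second; transposing these two arguments introduces one factor of $-1$, which lines these terms up against the expansion of $d(Y \intprod c)(X_1,\dots,X_k)$, where $Y$ always occupies the first slot. A quick check shows the sign produced by the transposition is exactly opposite to the weight $(-1)^{a+b-1}$ carried by the matching term of $d(Y \intprod c)$, so these contributions cancel in pairs and contribute nothing. Combining the two blocks yields $d(Y \intprod c) + Y \intprod dc = -(Y \cdot c)$. The only real obstacle is the sign bookkeeping: one must keep the $(-1)^{i+j-1}$ weights consistent with the position shifts caused by inserting $Y$, transposing it past a bracket, and deleting the two bracketed entries.
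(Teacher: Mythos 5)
Your proof is correct and is exactly the ``straightforward calculation'' the paper invokes without writing out: splitting $dc(Y,X_1,\dots,X_k)$ into the block of terms whose bracket contains $Y$ (which reproduces $-Y\cdot c$) and the block where it does not (which cancels against $d(Y\intprod c)$ after one transposition). The sign bookkeeping checks out, so nothing further is needed.
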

%
%
A well-known corollary of the previous statement is:
\begin{corollary}
\label{CorollaryLieAlgebraActsTriviallyOnCohomology}
The action of a Lie algebra~$\mathfrak{g}$ on its cochains~$C^\bullet(\frakg)$ commutes with the Chevalley-Eilenberg differential, and the induced action on~$H^\bullet(\mathfrak{g})$ is trivial.
\end{corollary}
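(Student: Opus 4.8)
The plan is to deduce both claims formally from the chain homotopy formula of Lemma~\ref{LemmaInteriorProductHomotopy}, exploiting it in two slightly different ways. Recall that for every $Y \in \frakg$ and every $c \in C^\bullet(\frakg)$ it reads $d(Y \intprod c) + Y \intprod dc = -Y \cdot c$, and that the action $Y \cdot (-)$ is visibly linear in $Y$ from its defining formula in Definition~\ref{DefinitionLieAlgebraActionOnCochains}. This latter observation reduces everything to checking the statement for a single fixed $Y$.

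For the first claim, that $Y \cdot (-)$ commutes with $d$, I would write down the homotopy formula twice. Applying $d$ to the formula for $c$ and using $d^2 = 0$ to kill the first term on the right leaves $-d(Y \cdot c) = d(Y \intprod dc)$. On the other hand, applying the homotopy formula directly to the cochain $dc$ and again invoking $d^2 = 0$ (so that the term $Y \intprod d(dc)$ vanishes) gives $-Y \cdot (dc) = d(Y \intprod dc)$. Comparing these two identities yields $d(Y \cdot c) = Y \cdot (dc)$, which is exactly the desired commutation. Consequently $Y \cdot (-)$ sends cocycles to cocycles and coboundaries to coboundaries, hence descends to a well-defined operator on $H^\bullet(\frakg)$.

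For the second claim, that the induced action on cohomology is trivial, I would simply restrict the homotopy formula to a cocycle. If $c \in C^\bullet(\frakg)$ satisfies $dc = 0$, then the middle term drops out and the formula collapses to $-Y \cdot c = d(Y \intprod c)$, exhibiting $Y \cdot c$ as a coboundary. Thus every cocycle $c$ is sent by $Y$ to something cohomologous to zero, so the class of $Y \cdot c$ in $H^\bullet(\frakg)$ vanishes. Since every cohomology class has a cocycle representative and the action is linear in $Y$, this shows that the whole $\frakg$-action on $H^\bullet(\frakg)$ is trivial.

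There is essentially no hard step here: all the genuine content is already packaged into Lemma~\ref{LemmaInteriorProductHomotopy}, and the corollary is a purely formal consequence, entirely analogous to how the Cartan magic formula $\mathcal{L}_Y = d\,\iota_Y + \iota_Y\, d$ forces the Lie derivative to act trivially on de Rham cohomology. The only points that deserve a moment's care are the two separate appeals to $d^2 = 0$ in the first part and the reduction to a single generator $Y$ via linearity of the action; neither presents a real obstacle.
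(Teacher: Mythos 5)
Your proposal is correct and matches the paper's intent: the paper gives no written proof, presenting the corollary as a direct consequence of Lemma~\ref{LemmaInteriorProductHomotopy}, and your two applications of the homotopy formula (once differentiated and once evaluated on $dc$ to get commutation with $d$, then once on a cocycle to exhibit $Y\cdot c$ as the coboundary $-d(Y\intprod c)$) are exactly the standard Cartan-formula argument being invoked. Nothing is missing.
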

Using the grading of~$W_n$ induced by the Euler vector field~$E$, we can also define a grading of the cochains:
\begin{definition}
Let~$r \in \mathbb{Z}$ and~$k \geq 0$ an integer. Define
\begin{align}
C^k_{(r)}(W_n) := 
\{ c \in C^k(W_n) : E \cdot c = -r \cdot c \}.
\end{align}
\end{definition}
\begin{remark}
More explicitly,~$c \in C^k_{(r)}(W_n)$ if and only if we have for all homogeneous formal vector fields~$X_1,\dots,X_k \in W_n$
\begin{align}
\sum_{i=1}^k \deg X_i \neq r \implies c(X_1,\dots,X_k) = 0.
\end{align}
\end{remark}
\begin{proposition}[\cite{fuks1984cohomology}, Section 1.5 and 2.2]
The spaces~$C^\bullet_{(r)} (W_n)~$ fulfil the following properties:
\begin{itemize}
\item[i)] We have~$C^\bullet(W_n) = \bigoplus_{r \in \mathbb{Z}} C^\bullet_{(r)}(W_n).
$
\item[ii)] For all~$r \in \mathbb{Z}$, the spaces~$C^\bullet_{(r)}(W_n)$ are subcomplexes of~$C^\bullet(W_n)$.
\item[iii)] For all~$r,s \in \mathbb{Z}$ we have
\begin{align}
C^\bullet_{(r)}(W_n) \wedge C^\bullet_{(s)}(W_n) \subset C^\bullet_{(r + s)}(W_n).
\end{align}
\item[iv)] If~$r < -k$, then~$C^k_{(r)}(W_n) = 0$.
\item[v)]
\label{PropositionWnCohmologyInZeroethDegree} 
The inclusion~$C^\bullet_{(0)}(W_n) \subset C^\bullet(W_n)$ induces an algebra isomorphism
\begin{align}
H^\bullet(C^\bullet_{(0)}(W_n)) \cong H^\bullet(W_n).
\end{align}
\end{itemize}
\end{proposition}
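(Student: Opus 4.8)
The plan is to treat parts (i)--(iv) as bookkeeping consequences of a single computation and to reserve the genuine work for part (v). The computation in question is the action of~$E$ on a cochain: for homogeneous~$X_1,\dots,X_k$ with~$X_i \in \mathfrak{g}_{d_i}$, Definition~\ref{DefinitionLieAlgebraActionOnCochains} together with~$[E,X_i] = d_i X_i$ gives
\begin{align}
(E \cdot c)(X_1,\dots,X_k)
= -\sum_{i=1}^k c(X_1,\dots,[E,X_i],\dots,X_k)
= -\Big(\sum_{i=1}^k d_i\Big)\, c(X_1,\dots,X_k).
\end{align}
Thus~$E\cdot(-)$ scales the value of~$c$ over total-degree-$D$ inputs by~$-D$, and~$C^k_{(r)}(W_n)$ is exactly the space of cochains supported on inputs of total degree~$r$, matching the preceding Remark.

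For (i), I would set~$\pi_r c$ to be the cochain agreeing with~$c$ on homogeneous inputs of total degree~$r$ and vanishing on all others, extended by multilinearity. Continuity of~$c$ forces it to vanish once some input has degree exceeding a bound~$k_0$; since~$\mathfrak{g}_d = 0$ for~$d < -1$, only finitely many total degrees~$r$ occur, so~$c = \sum_r \pi_r c$ is a finite sum of continuous cochains, each lying in~$C^k_{(r)}(W_n)$, and directness follows since the~$\pi_r c$ are eigenvectors of~$E\cdot(-)$ for the distinct eigenvalues~$-r$. Part (ii) is immediate from Corollary~\ref{CorollaryLieAlgebraActsTriviallyOnCohomology}: since~$E\cdot(-)$ commutes with~$d$, each eigenspace~$C^\bullet_{(r)}(W_n)$ is~$d$-stable. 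Part (iii) follows because~$E\cdot(-)$ is a derivation of the wedge product, so~$E\cdot(c_1\wedge c_2) = (E\cdot c_1)\wedge c_2 + c_1\wedge(E\cdot c_2) = -(r+s)\,c_1\wedge c_2$. Part (iv) follows from the degree bound~$d_i \geq -1$: any~$k$ homogeneous inputs have total degree at least~$-k$, so for~$r < -k$ no admissible inputs exist and~$C^k_{(r)}(W_n) = 0$.

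The heart of the matter is (v), and the strategy is to show that~$C^\bullet_{(r)}(W_n)$ is acyclic for every~$r \neq 0$, so that by (i) and (ii) the cohomology concentrates in the~$r = 0$ summand. The tool is Lemma~\ref{LemmaInteriorProductHomotopy} applied to~$Y = E$: for~$c \in C^\bullet_{(r)}(W_n)$, where~$E\cdot c = -r\,c$, it yields
\begin{align}
d(E\intprod c) + E\intprod dc = -E\cdot c = r\,c.
\end{align}
Since~$E \in \mathfrak{g}_0$ has degree~$0$, inserting it into the first slot leaves the total degree of the remaining inputs unchanged, so~$E\intprod(-)$ preserves~$C^\bullet_{(r)}(W_n)$. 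Hence for~$r \neq 0$ the degree~$-1$ operator~$h := \tfrac{1}{r}\,E\intprod(-)$ is a well-defined endomorphism of the complex~$C^\bullet_{(r)}(W_n)$ satisfying~$dh + hd = \id$, i.e. a contracting homotopy, whence~$H^\bullet(C^\bullet_{(r)}(W_n)) = 0$. Combining with the direct-sum decomposition gives~$H^\bullet(W_n) \cong H^\bullet(C^\bullet_{(0)}(W_n))$; this is an algebra isomorphism because, by (iii),~$C^\bullet_{(0)}(W_n)$ is a subalgebra of the differential graded algebra~$C^\bullet(W_n)$ and the inclusion is a morphism of differential graded algebras.

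I expect the only delicate points to be the finiteness argument underlying the decomposition in (i) --- where one must combine the continuity constraint with the lower bound~$\mathfrak{g}_d = 0$ for~$d < -1$ to guarantee a finite sum of continuous cochains --- and the verification that~$E\intprod(-)$ genuinely preserves each graded piece in (v). Everything else is formal once the eigenvalue computation above is in hand.
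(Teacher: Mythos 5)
Your proposal is correct and follows essentially the same route as the paper: the eigenvalue computation for~$E\cdot(-)$ on homogeneous inputs underpins (i)--(iv), and (v) is obtained from the contracting homotopy~$\tfrac{1}{r}\,E\intprod(-)$ on~$C^\bullet_{(r)}(W_n)$ for~$r\neq 0$ together with the multiplicativity from (iii). (Incidentally, your sign~$d(E\intprod c)+E\intprod dc = r\,c$ is the consistent one given the definitions; the paper's displayed~$-r\cdot c$ is a harmless slip, since only a nonzero scalar multiple of~$E\intprod(-)$ is needed.)
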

\begin{proof} 
\textit{i)} The direct sum decomposition follows since every~$c \in C^k(W_n)$ is zero on homogeneous vector fields of sufficiently high degree, and its evaluation on any collection~$X_1,\dots,X_k \in W_n$ can be uniquely decomposed into summands of homogeneous vector fields.
\\
\textit{ii)}
This follows since the action of~$E$ commutes with the Lie algebra differential by Lem\-ma~\ref{LemmaInteriorProductHomotopy}.
\\
\textit{iii)} This is due to 
\begin{align}
E \cdot (c_1 \wedge c_2) = (E \cdot c_1) \wedge c_2 + c_1 \wedge (E \cdot c_2) \quad \forall c_1,c_2 \in C^\bullet(W_n).
\end{align}
\textit{iv)} Due to the pidgeonhole principle, any collection of~$k$ elements in~$W_n$ whose degrees sum up to a value smaller than~$-k$ must have an element with degree smaller~$ -1$. Such an element is necessarily zero, which shows the statement. 
\\
\textit{v)} Lemma~\ref{LemmaInteriorProductHomotopy} shows that for all~$c \in C^\bullet_{(r)}(W_n)$ we have
\begin{align}
d(E \intprod c) + E \intprod (dc) = - r \cdot c. 
\end{align}
Thus, for~$r \neq 0$, the map~$- \frac{1}{r}(E \intprod \cdot)$ defines a chain homotopy between the identity and zero for the cochain complex~$C^\bullet_{(r)}(W_n)$, and hence~$H^\bullet(C^\bullet_{(r)}(W_n)) = 0$. 
We conclude that all cohomology classes of~$C^\bullet(W_n)$ admit a representative fully contained in~$C^\bullet_{(0)}(W_n)$. This shows that the inclusion induces an isomorphism of vector spaces. By statement iii)~$C_{(0)}^\bullet(W_n)$ is a subalgebra of~$C^\bullet(W_n)$ with respect to the wedge product, hence the inclusion induces an algebra isomorphism on cohomology.
\end{proof}
In the following we will write~$H^\bullet_{(r)}(W_n) := H^\bullet(C^k_{(r)}(W_n))$.
\subsection{Stable cohomology of formal vector fields}
We first focus on certain low-dimensional cohomology, the so-called \emph{stable cohomology} of~$W_n$, due to Guillemin and Shnider. They prove in \cite[Corollary 1]{guillemin1973some} that~$H^k(W_n)$ is trivial in dimension~$k = 1,\dots, n$. 
Note that their paper makes much more general statements, in particular about stable cohomology of formal Lie algebras corresponding to other classical vector field Lie algebras, e.g. formal Hamiltonian and divergence-free vector fields.
\begin{definition}
Define for all~$r \in \mathbb{Z}$,
\begin{align}
\partial C^\bullet_{(r)}(W_n) :=
\{ \partial_i \cdot c \in C^\bullet_{(r+1)}(W_n) : c \in C^\bullet_{(r)}(W_n) \},
\end{align}
and
\begin{align}
\partial C^\bullet(W_n) := \bigoplus_{r \in \mathbb{Z}} \partial C^\bullet_{(r)}(W_n).
\end{align}
Recall that~$\partial_i \cdot c$ denotes the action of~$\partial_i \in \mathfrak{g}_{-1}$ on the cochain~$c$ (see Definition~\ref{DefinitionLieAlgebraActionOnCochains}).
\end{definition}
By Corollary~\ref{CorollaryLieAlgebraActsTriviallyOnCohomology}, the Lie algebra action of~$\frakg$ on~$C^\bullet(W_n)$ commutes with the Che\-val\-ley\--Ei\-len\-berg differential, and thus:
\begin{lemma}
\label{LemmaDifferentialComplexesAreComplexes}
For all~$r \in \mathbb{Z}$, the space~$\partial C^\bullet_{(r)}(W_n)$ is a subcomplex of~$C^\bullet_{(r+1)}(W_n)$.
\end{lemma}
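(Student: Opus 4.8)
The plan is to check the two defining conditions of a subcomplex: that $\partial C^\bullet_{(r)}(W_n)$ lands in $C^\bullet_{(r+1)}(W_n)$, and that it is preserved by the Chevalley--Eilenberg differential.

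The containment is essentially built into the definition, but I would first record the short weight computation that justifies it. Since $Y \mapsto Y \cdot (-)$ is a Lie algebra action on $C^\bullet(W_n)$, we have $E \cdot (\partial_i \cdot c) - \partial_i \cdot (E \cdot c) = [E,\partial_i] \cdot c$. Using $[E,\partial_i] = -\partial_i$ (because $\partial_i \in \mathfrak{g}_{-1}$) together with $E \cdot c = -r\, c$ for $c \in C^\bullet_{(r)}(W_n)$, this yields $E \cdot (\partial_i \cdot c) = -(r+1)(\partial_i \cdot c)$, so each generator $\partial_i \cdot c$ indeed has weight $r+1$ and lies in $C^\bullet_{(r+1)}(W_n)$.

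For the $d$-invariance, the crucial input is Corollary~\ref{CorollaryLieAlgebraActsTriviallyOnCohomology}: the action of any $Y \in W_n$ commutes with $d$ on the nose, so $d(\partial_i \cdot c) = \partial_i \cdot (dc)$. The differential preserves the weight grading (part ii of the preceding proposition), hence $dc \in C^{\bullet+1}_{(r)}(W_n)$, and therefore $\partial_i \cdot (dc)$ is again the action of $\partial_i$ on an element of $C^\bullet_{(r)}(W_n)$, i.e.\ an element of $\partial C^\bullet_{(r)}(W_n)$. Extending over all $i$ and all $c$ by linearity gives $d\bigl(\partial C^\bullet_{(r)}(W_n)\bigr) \subseteq \partial C^\bullet_{(r)}(W_n)$, which is what we want.

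There is no serious obstacle here; the computation is entirely formal. The only point requiring care is that the commutation of the Lie algebra action with $d$ must be an \emph{exact} identity rather than a statement up to homotopy, which is precisely what Corollary~\ref{CorollaryLieAlgebraActsTriviallyOnCohomology} supplies. One should also keep in mind that $\partial C^\bullet_{(r)}(W_n)$ is the span of the images of the various $\partial_i$-actions, so verifying $d$-invariance on each such generator is enough by linearity.
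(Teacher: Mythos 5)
Your proof is correct and follows the same route as the paper, which derives the lemma directly from Corollary~\ref{CorollaryLieAlgebraActsTriviallyOnCohomology} (exact commutation of the Lie algebra action with $d$) together with the fact that $d$ preserves the weight grading. Your additional weight computation showing $E \cdot (\partial_i \cdot c) = -(r+1)(\partial_i \cdot c)$ is a worthwhile explicit check of the containment that the paper leaves implicit in the definition.
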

We need one more preparing definition, since the degree zero component of cochain complexes is often troublesome.
\begin{definition}[Reduced Complex]
\label{def:ReducedComplex}
If~$C^\bullet$ is a cochain complex, define the \emph{reduced complex}~$\tilde{C}^\bullet$ as 
\begin{align}
\tilde{C}^0 = 0, \quad \tilde{C}^k := C^k\quad \forall k \geq 1,
\end{align}
equipped with the inherited differential from~$C^\bullet$.
\\
We denote by~$\tilde{H}^\bullet$ the cohomology of the reduced complex.
\end{definition}
The aim of this section is the construction of a Koszul complex relating the complexes~$C_{(r)}^\bullet(W_n)$ for different values of~$r$. 
\begin{proposition}
\label{PropKoszulComplexForWn}
There exists an exact sequence of cochain complexes
\begin{equation}
\label{eq:ExactSequenceOfFormalCochains}
\begin{aligned}
0 
\to 
\tilde{C}^\bullet_{(0)}(W_n) \otimes \Lambda^{n} \mathfrak{g}_{-1}
&\to 
\tilde{C}^\bullet_{(1)}(W_n) \otimes \Lambda^{n - 1} \mathfrak{g}_{-1}
\to
\dots
\\
&\to 
\tilde{C}^\bullet_{(n)}(W_n) \otimes \Lambda^{0} \mathfrak{g}_{-1}
\to
\tilde{C}^\bullet_{(n)}(W_n) / \partial \tilde{C}^\bullet_{(n-1)}(W_n) 
\to 
0,
\end{aligned}
\end{equation}
where the differentials in every term are induced by the Chevalley-Eilenberg differential of~$C^\bullet(W_n)$. 
\end{proposition}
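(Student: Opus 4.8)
The plan is to recognize \eqref{eq:ExactSequenceOfFormalCochains} as a single graded strand of the Koszul complex attached to the $n$ commuting operators $\partial_1 \cdot (-), \dots, \partial_n \cdot (-)$. First I would take the horizontal maps to be the Koszul differential
\[
c \otimes (\xi_1 \wedge \cdots \wedge \xi_{n-j}) \longmapsto \sum_{l=1}^{n-j} (-1)^{l+1} (\xi_l \cdot c) \otimes (\xi_1 \wedge \cdots \widehat{\xi_l} \cdots \wedge \xi_{n-j}), \qquad \xi_l \in \mathfrak{g}_{-1},
\]
and verify that this is a complex of cochain complexes. Three things must be checked: that $\partial_i \cdot (-)$ raises the weight by one, i.e.\ sends $C^\bullet_{(j)}(W_n)$ into $C^\bullet_{(j+1)}(W_n)$ (immediate from $[E,\partial_i] = -\partial_i$ together with the fact that $Y \mapsto Y \cdot (-)$ is a Lie algebra action); that the operators $\partial_i \cdot (-)$ pairwise commute (because $[\partial_i,\partial_j] = 0$ and the action is a Lie algebra action), which yields that the horizontal composite vanishes; and that each $\partial_i \cdot (-)$ commutes with the Chevalley--Eilenberg differential (Corollary~\ref{CorollaryLieAlgebraActsTriviallyOnCohomology}), so the horizontal arrows are chain maps. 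Exactness at the rightmost spot is then built in, since the image of the penultimate arrow is by definition $\partial \tilde{C}^\bullet_{(n-1)}(W_n)$, the kernel of the quotient map.

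Next I would interpret the internal exactness homologically. Collecting all weights, $M := \bigoplus_r C^\bullet_{(r)}(W_n)$ is a graded module over $S := \sym(\mathfrak{g}_{-1}) = \mathbb{R}[\partial_1,\dots,\partial_n]$, and \eqref{eq:ExactSequenceOfFormalCochains} is exactly the strand of the Koszul complex $M \otimes \Lambda^\bullet \mathfrak{g}_{-1}$ on which the weight of the cochain factor plus the exterior degree equals $n$ (which is why only weights $0,\dots,n$ and the length $n+1$ appear). As such it computes the Chevalley--Eilenberg homology $H_\bullet(\mathfrak{g}_{-1}; M)$ of the abelian Lie algebra $\mathfrak{g}_{-1}$, equivalently $\operatorname{Tor}^S_\bullet(\mathbb{R}, M)$. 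Its $H_0$ is $M/\partial M$, which on this strand is precisely the displayed quotient $\tilde{C}^\bullet_{(n)}(W_n)/\partial \tilde{C}^\bullet_{(n-1)}(W_n)$. Hence the whole content of the proposition is the vanishing of $H_{>0}(\mathfrak{g}_{-1}; M)$, one Chevalley--Eilenberg degree $k \geq 1$ at a time, and this holds as soon as $\partial_1,\dots,\partial_n$ is a regular sequence on each $M^k = \bigoplus_r C^k_{(r)}(W_n)$, i.e.\ as soon as $M^k$ is free over $S$.

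The crucial input is the explicit form of the action: since $\partial_i \cdot c = -c \circ \operatorname{ad}(\partial_i)$ and $\operatorname{ad}(\partial_i)$ is the coefficient-derivative $\partial / \partial x_i$, on the restricted dual $W_n^\ast = \bigoplus_j \mathfrak{g}_j^\ast$ the operators $\partial_i \cdot (-)$ are the transposes of the coefficient-derivatives, i.e.\ raising (``creation'') operators. In a monomial basis one checks directly that these make $W_n^\ast$ a free $S$-module of rank $n$, generated by $\mathfrak{g}_{-1}^\ast$. As $C^k(W_n) = \Lambda^k W_n^\ast$ carries the induced diagonal $S$-action, the remaining task --- and the main obstacle --- is to show that exterior powers of a free $S$-module stay free. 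For $n \geq 2$ this is strictly stronger than torsion-freeness, and the diagonal action is \emph{not} the $S$-linear exterior power, so it cannot be reduced to the one-variable case. I would attack it via the Cauchy decomposition $\Lambda^k(S \otimes U) \cong \bigoplus_{\lambda \vdash k} \mathbb{S}_\lambda(S) \otimes \mathbb{S}_{\lambda'}(U)$, reducing freeness of $M^k$ to that of the Schur functors $\mathbb{S}_\lambda(S)$ of the free rank-one module $S$ under the diagonal action; since $M^k$ is graded with finite-dimensional, bounded-below weight pieces, graded Nakayama reduces this to checking $\operatorname{Tor}^S_1(\mathbb{R}, M^k) = 0$ on the graded level. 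Finally, passing to the reduced complex is exactly what discards the degenerate degree $k = 0$ term, where $M^0 = \mathbb{R}$ sits in weight $0$ as the trivial $S$-module and regularity fails; this is precisely why the statement is phrased with $\tilde{C}^\bullet$.
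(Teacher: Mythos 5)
Your overall architecture is sound and matches the paper's in spirit: you correctly identify the horizontal maps as the weight-$n$ strand of the Koszul complex of $M=\bigoplus_r C^\bullet_{(r)}(W_n)$ over $S=S^\bullet\mathfrak{g}_{-1}$, you correctly verify that the $\partial_i\cdot(-)$ raise the weight, commute with each other and with the Chevalley--Eilenberg differential, and you correctly isolate the one nontrivial assertion, namely that $\operatorname{Tor}^S_{>0}(\mathbb{R},C^k(W_n))=0$ for each $k\geq 1$ (with the reduced complex discarding the genuinely non-regular $k=0$ term). The problem is that this assertion --- which you yourself flag as ``the main obstacle'' --- is never actually proved. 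You reduce it to freeness of the Schur functors $\mathbb{S}_\lambda(S)$ under the diagonal action via the Cauchy decomposition, and separately note that graded Nakayama would reduce freeness of $M^k$ to $\operatorname{Tor}^S_1(\mathbb{R},M^k)=0$; but neither the freeness of $\mathbb{S}_\lambda(S)$ nor the $\operatorname{Tor}_1$-vanishing is then established, and the second reduction is circular as stated, since $\operatorname{Tor}$-vanishing is exactly what the exactness of the Koszul strand requires. As written, the proof of the proposition is therefore incomplete at its only substantive point. (This is essentially the Guillemin--Shnider route; the paper's own remark after the proposition notes that their argument rests on the unproved Hopf-algebra fact that $\Lambda^k$ of a free module over a Hopf algebra is free under the diagonal action.)

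The missing step is fillable, and filling it leads directly to the paper's actual proof. Instead of passing through $\Lambda^k W_n^*$ and Schur functors, the paper writes the full tensor power as
$(W_n^*)^{\otimes k}\cong S^\bullet(V^k)\otimes (V^*)^{\otimes k}$ with $V=\mathfrak{g}_{-1}$, observes that the diagonal action is multiplication by the diagonal subspace $V_\Delta\subset V^k$, and that $S^\bullet(V^k)\cong S^\bullet(V_\Delta)\otimes_{\mathbb{R}}S^\bullet(V^k/V_\Delta)$ is visibly a free $S^\bullet(V_\Delta)$-module; hence the Koszul complex
$S^\bullet(V^k)\otimes\Lambda^\bullet V\to S^\bullet(V^k/V_\Delta)$ is exact (it is the basic Koszul resolution tensored, over $\mathbb{R}$, with vector spaces). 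Tensoring with $(V^*)^{\otimes k}$ and taking $\Sigma_k$-isotypic components are exact functors in characteristic zero, and the sign-isotypic component recovers $C^k(W_n)\otimes\Lambda^\bullet V$. In your language: $\mathbb{S}_\lambda(S)$ is a graded direct summand of $S^{\otimes|\lambda|}\cong S^\bullet(V^{|\lambda|})$, which is free over the diagonal copy of $S$, and a graded direct summand of a graded free module with locally finite, bounded-below grading is again graded free --- but this observation is precisely the content you omitted, and once it is made the detour through the Cauchy formula is unnecessary.
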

\begin{proof}
Write~$V := \frakg_{-1}$. Denote by~$\vee$ the product of symmetric tensors, and define for all~$r \leq n - 1$ the map
\begin{equation}
\begin{aligned}
\sigma_r :
S^\bullet V \otimes  \Lambda^r V
&\to
S^\bullet V \otimes \Lambda^{r-1} V,
\\
u \otimes (\partial_{i_1} \wedge \dots \wedge \partial_{i_r}) 
&\mapsto 
\sum_{j=1}^r 
(-1)^j
(\partial_{i_j} \vee u) 
\otimes 
(\partial_{i_1} \wedge \dotsb \widehat{\partial_{i_j}} \dotsb \wedge \partial_{i_r}) 
\quad 
\forall 
u \in S^\bullet V.
\end{aligned}
\end{equation}
These maps give rise to the well-known, acyclic Koszul complex
\begin{equation}
\begin{aligned}
0 \to S^\bullet V \otimes \Lambda^n V
&
\to
S^\bullet V \otimes  \Lambda^{n-1} V
\to
\dots
\\
&
\to
S^\bullet V \otimes  \Lambda^1 V
\to
S^\bullet V \otimes  \Lambda^0 V
\to
\mathbb{R}
\to 0.
\end{aligned}
\end{equation}
Taking the tensor product of the above exact sequence with~$S^\bullet V,$ and using the canonical isomorphism~$S^\bullet(V^2) \cong S^\bullet(V) \otimes S^\bullet(V)$, we get the exact sequence
\begin{equation}
\begin{aligned}
0 
\to 
S^\bullet V^2 \otimes\Lambda^n V
&\to
S^\bullet V^2 \otimes  \Lambda^{n-1} V
\to
\dots
\\
&
\to
S^\bullet V^2 \otimes  \Lambda^0 V
\to
S^\bullet V
\to 0,
\end{aligned}
\end{equation}
and inductively, if we denote by~$V_\Delta \subset V^k$ the diagonal subspace, then we get an exact sequence
\begin{equation}
\label{eq:ExactComplexAfterFirstTensorProduct}
\begin{aligned}
0 
\to 
S^\bullet V^k 
\otimes
\Lambda^n V
&\to
S^\bullet V^k
\otimes 
\Lambda^{n-1} V
\to
\dots
\\
&
\to
S^\bullet V^k 
\otimes  
\Lambda^0 V
\to
S^\bullet (V^k/V_\Delta)
\to 0.
\end{aligned}
\end{equation}
Let~$k \geq 1$ and denote by~$\Sigma_k$ the permutation group in~$k$ elements. The tensor product~$S^\bullet (V^k) \otimes (V^*)^{\otimes k}$ admits a~$\Sigma_k$ action by signed, simultaneous permutation of the tensor factors in~$(V^*)^{\otimes k}$ and the direct summands in~$V^k$. Taking invariants with respect to this action, we find
\begin{align}
\left(S^\bullet (V^k) \otimes (V^*)^{\otimes k} \right)^{\Sigma_k} \cong C^k(W_n).
\end{align}
Thus, taking the tensor product of the complex \eqref{eq:ExactComplexAfterFirstTensorProduct} with~$(V^*)^{\otimes k}~$ and taking invariants under a finite group are exact functors of~$\mathbb{R}$-vector spaces, and as such we get for every~$k \geq 1$ an exact sequence
\begin{equation}
\label{ComplexExactWn}
\begin{aligned}
0 
\to 
C^k(W_n) \otimes \Lambda^{n} V
&\to 
C^k(W_n) \otimes \Lambda^{n - 1} V
\to
\dots
\\
&\to 
C^k(W_n) \otimes \Lambda^{0} V
\to
C^k(W_n) / \partial \tilde{C}^k(W_n) 
\to 
0.
\end{aligned}
\end{equation}
The last nontrivial map in \eqref{ComplexExactWn} is the canonical quotient projection, the others are:
\begin{equation}
\label{eq:InducedDifferentialOnFinalComplex}
\begin{gathered}
C^k(W_n) \otimes \Lambda^r V
\to
C^k(W_n) \otimes \Lambda^{r-1} V,
\\
c \otimes (\partial_{i_1} \wedge \dots \wedge \partial_{i_r})
\mapsto 
\sum_{j=1}^r 
(-1)^j
(\partial_{i_j} \cdot c) 
\otimes 
(\partial_{i_1} \wedge \dotsb \widehat{\partial_{i_j}} \dotsb \wedge \partial_{i_r}) 
\quad 
\forall c \in \tilde{C}^\bullet(W_n). 
\end{gathered}
\end{equation}
By Corollary~\ref{CorollaryLieAlgebraActsTriviallyOnCohomology}, the differential of~$C^\bullet(W_n)$ commutes with the action of~$S^\bullet V$, and by Lemma~\ref{LemmaDifferentialComplexesAreComplexes}, the differential commutes with the projection 
\begin{align}
C^\bullet(W_n)
\to
C^\bullet(W_n)/\partial C^\bullet(W_n).
\end{align} 
Hence, by taking the direct sum of the complexes \eqref{ComplexExactWn} for all~$k \geq 1$, we receive the following exact sequence of chain complexes
\begin{equation}
\label{eq:CombinedComplex}
\begin{aligned}
0 
\to 
\tilde{C}^\bullet(W_n) \otimes \Lambda^{n} V
&\to 
\tilde{C}^\bullet(W_n) \otimes \Lambda^{n - 1} V
\to
\dots
\\
&\to 
\tilde{C}^\bullet(W_n) \otimes \Lambda^{0} V
\to
\tilde{C}^\bullet(W_n) / \partial \tilde{C}^\bullet(W_n) 
\to 
0.
\end{aligned}
\end{equation} 
With respect to the grading~$\tilde{C}^\bullet(W_n) = \bigoplus_r \tilde{C}^\bullet_{(r)}(W_n)$ the maps \eqref{eq:InducedDifferentialOnFinalComplex} restrict to maps on the graded components 
\begin{align}
C^\bullet_{(k)}(W_n) \otimes \Lambda^r(V) 
\to
C^\bullet_{(k + 1)}(W_n) \otimes \Lambda^{r-1}(V) \quad \forall k,r \in \mathbb{Z},
\end{align}
and the canonical quotient projection~$C^\bullet(W_n) \to
C^\bullet(W_n) / \partial C^\bullet(W_n)~$ restricts to
\begin{align}
C_{(k)}^\bullet(W_n)
\to
C_{(k)}^\bullet(W_n) / \partial C_{(k - 1)}^\bullet(W_n) \quad \forall k \in \mathbb{Z}.
\end{align}
Considering the graded component of \eqref{eq:CombinedComplex} whose leftmost term is~$C^\bullet_{(0)}(W_n) \otimes \Lambda^n V$ yields the desired statement.
\end{proof}
\begin{remark}
The construction of this sequence in \cite[Thm 1]{guillemin1973some} is carried out differently. A detailed proof of their construction would require a study of Hopf algebra theory, which we do not want to carry out here: they implicitly use that if~$M$ is a free module over a Hopf algebra~$H$, then so is~$\Lambda^k_{\mathbb{K}} M$ with its induced, diagonal action for all~$k > 0$. This can, for example, be shown with the methods of \cite[Chapter 7.2]{duascualescu2001hopf}.
\end{remark}
\begin{proposition}[\cite{guillemin1973some}, Corollary 1]
\label{PropWnCohoZeroInLowDegree}
We have~$H^k(W_n) = 0$ if~$k = 1,\dots,n$.
\end{proposition}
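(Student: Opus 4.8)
The plan is to feed the exact sequence of cochain complexes from Proposition~\ref{PropKoszulComplexForWn} into a diagram chase in cohomology, exploiting the fact that all of its \emph{interior} terms are acyclic. First I would record that for $r \neq 0$ the chain homotopy $-\tfrac{1}{r}(E \intprod \cdot)$ exhibited in the proof of part v) above shows $H^\bullet(C^\bullet_{(r)}(W_n)) = 0$; since moreover $C^0_{(r)}(W_n) = 0$ for $r \neq 0$, the reduced complex coincides with the full one there, so $\tilde{C}^\bullet_{(r)}(W_n)$ is acyclic for every $r \geq 1$. Tensoring with the finite-dimensional space $\Lambda^{n-r}\mathfrak{g}_{-1}$ does not change this. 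Hence in the sequence \eqref{eq:ExactSequenceOfFormalCochains} every term is acyclic except the leftmost one, $\tilde{C}^\bullet_{(0)}(W_n) \otimes \Lambda^n \mathfrak{g}_{-1}$, and the rightmost quotient, $\tilde{C}^\bullet_{(n)}(W_n)/\partial\tilde{C}^\bullet_{(n-1)}(W_n)$.

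Writing $A_0,\dots,A_{n+1}$ for the terms of \eqref{eq:ExactSequenceOfFormalCochains}, I would next break this long exact sequence into short exact sequences $0 \to K_{i-1} \to A_i \to K_i \to 0$ using the images $K_i$ of its maps. Because $A_1,\dots,A_n$ are acyclic, the connecting maps in the associated long exact cohomology sequences are isomorphisms $H^q(K_i) \cong H^{q+1}(K_{i-1})$, and iterating these across the $n$ acyclic terms yields a degree-shifting isomorphism $H^q(A_{n+1}) \cong H^{q+n}(A_0)$. One could equivalently run the spectral sequence of the associated bicomplex, whose rows are exact so that its total complex is acyclic; the two nonzero columns then force the $d_{n+1}$-differential to be exactly the isomorphism above.

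Finally I would identify the two ends. Since $\Lambda^n \mathfrak{g}_{-1} \cong \mathbb{R}$, part v) gives $H^{q+n}(A_0) = \tilde{H}^{q+n}(C^\bullet_{(0)}(W_n)) \cong \tilde{H}^{q+n}(W_n)$, so after setting $m = q+n$ the isomorphism reads $\tilde{H}^{m}(W_n) \cong H^{m-n}\big(\tilde{C}^\bullet_{(n)}(W_n)/\partial\tilde{C}^\bullet_{(n-1)}(W_n)\big)$. The right-hand side vanishes whenever $m - n \leq 0$: for $m < n$ the cohomological degree is negative, and for $m = n$ the degree-zero part of the quotient is zero because the reduced complexes have no component in Chevalley--Eilenberg degree $0$. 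Combined with $\tilde{H}^m = H^m$ for $m \geq 1$, this gives $H^m(W_n) = 0$ for $m = 1,\dots,n$.

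The main obstacle is the careful degree bookkeeping: one must track the total shift of $n$ accumulated through the iterated long exact sequences, and then verify that the single surviving case $m = n$ is killed precisely by the use of the \emph{reduced} complexes in Proposition~\ref{PropKoszulComplexForWn}. The acyclicity input, by contrast, is essentially handed to us by the homotopy from part v) together with the observation that $C^0_{(r)}(W_n)$ vanishes for $r \neq 0$.
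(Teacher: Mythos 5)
Your proposal is correct and follows essentially the same route as the paper: it feeds the acyclicity of $\tilde{C}^\bullet_{(r)}(W_n)$ for $r \neq 0$ into the exact sequence of Proposition~\ref{PropKoszulComplexForWn}, derives the degree-$n$ shift isomorphism $H^m(W_n) \cong H^{m-n}\bigl(\tilde{C}^\bullet_{(n)}(W_n)/\partial\tilde{C}^\bullet_{(n-1)}(W_n)\bigr)$, and kills the right-hand side in degrees $\leq 0$ using that the quotient is a reduced complex. The only difference is that you spell out the diagram chase explicitly via short exact sequences, which the paper leaves to the reader.
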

\begin{proof}
Consider the exact sequence \eqref{eq:ExactSequenceOfFormalCochains}. We have established that~$\tilde{C}^\bullet_{(r)}(W_n)$ is an acyclic subcomplex whenever~$r \neq 0$, and as such, all terms in the exact sequence are acyclic except for the leftmost and rightmost nontrivial ones. 
Now we can combine this with the exactness of \eqref{eq:ExactSequenceOfFormalCochains} and either apply a straightforward diagram chase, or by view this sequence of cochain complexes as a double complex and comparing the associated spectral sequences. Both methods allow one to deduce the following isomorphisms:
\begin{equation}
\label{eq:IsomoprhismsForDegreeZeroFormalCohomology}
H^k \left( \tilde{C}^\bullet_{(0)}(W_n) \right)
\cong
H^k \left( \tilde{C}^\bullet_{(0)}(W_n) \otimes \Lambda^n \mathfrak{g}_{-1} \right)
\cong
H^{k-n}\left( \tilde{C}^\bullet_{(n)}(W_n) / \partial \tilde{C}^\bullet_{(n-1)}(W_n) \right).
\end{equation}
But, as a relative complex, the complex on the right-hand side is zero in all degrees smaller than~$1$, hence so is its cohomology. Hence, for all~$k = 1,\dots, n$ we have
\begin{align}
H^k(W_n) \cong H^k_{(0)}(W_n) = \tilde{H}^k_{(0)}(W_n) = 0.
\end{align}
\end{proof}
\subsection{A spectral sequence for the cohomology of formal vector fields}
One can do even better than Proposition~\ref{PropWnCohoZeroInLowDegree}: We will formulate a spectral sequence due to Gelfand and Fuks \cite{gel1970formalcohomology} which calculates the cohomology of~$W_n$, and fully specify its differentials. In other words, we will be able to calculate the dimension of~$H^\bullet(W_n)$ in every degree and for every~$n \in \mathbb{N}$.
The information from the previous section about low degree cohomology will aid us for the analysis of this spectral sequence.
To this end, we begin with a short recollection of some representation and cohomology theory of~$\gl_n(\mathbb{R})$.
We largely follow the proof in \cite{fuks1984cohomology}, with certain adaptations that will be indicated.
\begin{theorem}[\cite{kolar2013natural}, Theorem 24.4]
\label{TheoremGLnInvariantsContainedWhere}
Write~$V = \mathbb{R}^n$, and consider~$V$ and~$V^*$ as~$\gl_n(\mathbb{R})$-modules with the defining representation and the dual thereof.\\
For every~$\sigma \in \Sigma_r$, define
\begin{gather*}
\Psi_{\sigma} : V^{\otimes r} \otimes (V^*)^{\otimes r} \to \mathbb{R}, \\
v_1 \otimes \dots \otimes v_r \otimes \alpha_1 \otimes \dots \otimes \alpha_r
\mapsto 
\alpha_1(v_{\sigma(1)}) \dots \alpha_r(v_{\sigma(r)}) \quad \forall \alpha_i \in V^*, v_i \in V.
\end{gather*}
Then~$\{\Psi_{\sigma}\}_{\sigma \in \Sigma_r}$ is a spanning set for
\begin{align}
\Hom_{\mathbb{R}} \left(V^{\otimes r} \otimes (V^*)^{\otimes r}, \mathbb{R}\right)^{\gl_n(\mathbb{R})}
\cong
\left((V^*)^{\otimes r}  \otimes V^{\otimes r} \right)^{\gl_n(\mathbb{R})}
\end{align}
For~$r \leq n$, the set~$\{\Psi_\sigma\}$ is linearly independent.
Further, if~$r \neq s$, then 
\begin{align}
\Hom_{\mathbb{R}} \left(V^{\otimes r} \otimes (V^*)^{\otimes s}, \mathbb{R}\right)^{\gl_n(\mathbb{R})} = 0.
\end{align}
\end{theorem}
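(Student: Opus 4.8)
The plan is to translate the three assertions into statements about equivariant endomorphisms of $V^{\otimes r}$ and then apply Schur--Weyl duality. For finite-dimensional $V$ the canonical pairing gives a $\gl_n(\mathbb{R})$-equivariant isomorphism $\Hom_{\mathbb{R}}(V^{\otimes r} \otimes (V^*)^{\otimes s}, \mathbb{R}) \cong (V^*)^{\otimes r} \otimes V^{\otimes s}$, so all three claims reduce to understanding the invariant space $\big((V^*)^{\otimes r} \otimes V^{\otimes s}\big)^{\gl_n(\mathbb{R})}$. The vanishing for $r \neq s$ is then a weight argument: the identity matrix $\mathrm{I} \in \gl_n(\mathbb{R})$ acts on $V$ by $+1$ and on $V^*$ by $-1$, hence on $(V^*)^{\otimes r} \otimes V^{\otimes s}$ by the scalar $s - r$; since an invariant is annihilated by $\mathrm{I}$, it is killed by $s - r$ and so vanishes whenever $r \neq s$. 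This settles the last assertion and lets me assume $r = s$ from now on.

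For $r = s$ I would further identify $(V^*)^{\otimes r} \otimes V^{\otimes r} \cong \operatorname{End}(V^{\otimes r})$ equivariantly, so that the invariant space becomes $\operatorname{End}_{\gl_n(\mathbb{R})}(V^{\otimes r})$, the algebra of $\gl_n(\mathbb{R})$-equivariant endomorphisms of $V^{\otimes r}$. A short check of conventions shows that, under this chain of isomorphisms, the contraction $\Psi_\sigma$ corresponds to the operator $P_\sigma$ permuting the tensor factors of $V^{\otimes r}$ according to $\sigma$. Thus the first assertion becomes: the $\{P_\sigma\}_{\sigma \in \Sigma_r}$ span $\operatorname{End}_{\gl_n(\mathbb{R})}(V^{\otimes r})$; and the second becomes: they are linearly independent when $r \le n$.

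The core is the spanning statement, which I would obtain from the double centralizer theorem. Since the $\gl_n(\mathbb{R})$-action integrates to the connected group $GL_n^+(\mathbb{R}) = GL_n(\mathbb{R})^\circ$, being $\gl_n(\mathbb{R})$-equivariant is the same as commuting with every $g^{\otimes r}$, $g \in GL_n^+(\mathbb{R})$. As $T \mapsto T^{\otimes r}$ is polynomial and $GL_n^+(\mathbb{R})$ is Zariski-dense in $M_n(\mathbb{R}) = \operatorname{End}(V)$ (a real polynomial vanishing on the Euclidean-open set $\{\det > 0\}$ vanishes identically), the span of $\{g^{\otimes r} : g \in GL_n^+(\mathbb{R})\}$ equals the span of $\{T^{\otimes r} : T \in \operatorname{End}(V)\}$. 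Let $B \subseteq \operatorname{End}(V^{\otimes r})$ be the image of the group algebra $\mathbb{R}[\Sigma_r]$ under $\sigma \mapsto P_\sigma$; it is semisimple, since $\mathbb{R}[\Sigma_r]$ is semisimple in characteristic zero by Maschke's theorem. Writing $\operatorname{End}(V^{\otimes r}) = \operatorname{End}(V)^{\otimes r}$, the commutant of $B$ is the $\Sigma_r$-fixed subspace $S^r(\operatorname{End} V)$, which by polarization is exactly the span of the $T^{\otimes r}$, hence exactly the algebra generated by the group action. The double centralizer theorem applied to the semisimple algebra $B$ then yields $\operatorname{End}_{\gl_n(\mathbb{R})}(V^{\otimes r}) = B$, i.e. the span of the $P_\sigma$, which translates back to the desired spanning set $\{\Psi_\sigma\}$.

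Finally, for $r \le n$ the representation $\sigma \mapsto P_\sigma$ is faithful: choosing linearly independent $e_1, \dots, e_r \in V$ (possible since $n \ge r$), the tensors $P_\sigma(e_1 \otimes \dots \otimes e_r)$ are distinct basis vectors for distinct $\sigma$, so any relation $\sum_\sigma c_\sigma P_\sigma = 0$ forces all $c_\sigma = 0$; hence the $\Psi_\sigma$ are linearly independent. I expect the main obstacle to lie entirely in the preceding paragraph: the identification of the commutant of $\Sigma_r$ with the algebra spanned by the $g^{\otimes r}$, together with the double centralizer step, is the genuine invariant-theoretic content (the First Fundamental Theorem), and one must take care that the density argument and the semisimplicity go through over $\mathbb{R}$ for the merely connected group $GL_n^+(\mathbb{R})$, rather than invoking the usual formulation over an algebraically closed field.
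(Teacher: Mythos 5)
Your proof is correct, but note that the paper does not prove this statement at all: it is imported verbatim as Theorem 24.4 of Kol\'a\v{r}--Michor--Slov\'ak (the invariant tensor theorem, i.e.\ the First Fundamental Theorem of invariant theory for $\gl_n$), so there is no in-paper argument to compare against. Your route through Schur--Weyl duality is a standard and complete way to establish it, and you handle the genuinely delicate points correctly: the weight argument via the identity matrix for $r \neq s$; the passage from $\gl_n(\mathbb{R})$-equivariance to commuting with $g^{\otimes r}$ for $g$ in the \emph{connected} group $GL_n^+(\mathbb{R})$; the Zariski-density step identifying the span of $\{g^{\otimes r}\}$ with $\operatorname{Sym}^r(\operatorname{End} V)$ inside $\operatorname{End}(V)^{\otimes r}$ (which needs characteristic zero for polarization, available here); and the double centralizer theorem applied to the image of $\mathbb{R}[\Sigma_r]$, which is legitimate over $\mathbb{R}$ because semisimplicity of the group algebra (Maschke) is all that is required --- no algebraic closure is needed for the bicommutant identity $B''=B$. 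The identification of $\Psi_\sigma$ with the permutation operator $P_\sigma$ and the faithfulness argument for $r \le n$ are also correct. The only cosmetic caveat is that one should fix once and for all the convention identifying $\Hom(V^{\otimes r}\otimes (V^*)^{\otimes r},\mathbb{R})$ with $\operatorname{End}(V^{\otimes r})$, since a mismatch would replace $\sigma$ by $\sigma^{-1}$ throughout; this does not affect any of the three assertions.
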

%
%
%
%
%
\begin{theorem}[\cite{fuks1984cohomology}, Theorem 2.1.1]
\label{TheoremCohomologyOfGLn}
The cohomology ring~$H^\bullet(\mathfrak{gl}_n(\mathbb{R}))$ is isomorphic to the exterior algebra
\begin{align}
\Lambda^\bullet [\phi_1,\dots,\phi_{2n-1}],
\end{align}
where the~$\phi_i$ are generators in degree~$i$. The inclusion~$\mathfrak{gl}(n-1, \mathbb{R}) \to \mathfrak{gl}(n, \mathbb{R})$ induces a morphism
\begin{align}
H^q(\mathfrak{gl}(n,\mathbb{R})) \to H^q(\mathfrak{gl}(n - 1,\mathbb{R}))
\end{align}
which is an isomorphism for~$q \leq  2n - 3$.
\end{theorem}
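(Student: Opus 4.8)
The plan is to reduce the computation to invariant cochains and then to identify these invariants explicitly by means of Theorem~\ref{TheoremGLnInvariantsContainedWhere}. Since $\gl_n(\mathbb{R})$ is reductive and finite-dimensional, the cochain complex is simply $C^\bullet(\gl_n(\mathbb{R})) = \Lambda^\bullet \gl_n(\mathbb{R})^*$, on which $\gl_n(\mathbb{R})$ acts, and by Corollary~\ref{CorollaryLieAlgebraActsTriviallyOnCohomology} this action is trivial on cohomology. Complete reducibility of finite-dimensional $\gl_n(\mathbb{R})$-modules (the semisimple part $\mathfrak{sl}_n(\mathbb{R})$ governs the action, the center acting trivially in the adjoint representation) lets me project onto the invariant subcomplex $(\Lambda^\bullet \gl_n(\mathbb{R})^*)^{\gl_n(\mathbb{R})}$ without changing cohomology. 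Moreover the Chevalley--Eilenberg differential vanishes identically on invariant cochains, since these are the values at the identity of bi-invariant forms on $GL_n(\mathbb{R})$, which are closed via the inversion automorphism. Thus I obtain $H^\bullet(\gl_n(\mathbb{R})) \cong (\Lambda^\bullet \gl_n(\mathbb{R})^*)^{\gl_n(\mathbb{R})}$ with zero differential.

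Next I would compute these invariants. Writing $V = \mathbb{R}^n$, the adjoint module is $\gl_n(\mathbb{R}) \cong V \otimes V^*$, so $\Lambda^k \gl_n(\mathbb{R})^*$ is a $\gl_n(\mathbb{R})$-equivariant direct summand of $(V^*)^{\otimes k} \otimes V^{\otimes k}$. Theorem~\ref{TheoremGLnInvariantsContainedWhere} then tells me that the invariants of the latter are spanned by the contraction operators $\Psi_\sigma$, $\sigma \in \Sigma_k$, and that mixed tensors with unequal numbers of $V$- and $V^*$-factors carry no invariants. Antisymmetrizing the $\Psi_\sigma$ inside the wedge realizes each invariant $k$-form as a signed sum of cyclic contractions, that is, of products of matrix traces $\operatorname{tr}(\theta^{\,j_1}) \cdots \operatorname{tr}(\theta^{\,j_m})$ of powers of the Maurer--Cartan form $\theta \in \gl_n(\mathbb{R})^* \otimes \gl_n(\mathbb{R})$. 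The graded-antisymmetry together with cyclicity of the trace forces $\operatorname{tr}(\theta^{\,m}) = 0$ for even $m$, leaving the odd trace forms $\phi_{2j-1} := \operatorname{tr}(\theta^{\,2j-1})$ as the building blocks.

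I would then invoke the Hopf-algebra structure of the invariants: $(\Lambda^\bullet \gl_n(\mathbb{R})^*)^{\gl_n(\mathbb{R})}$ is a finite-dimensional, graded-commutative, connected Hopf algebra over $\mathbb{R}$, hence by the structure theorem of Hopf and Leray an exterior algebra on its odd-degree primitive generators. The primitives are exactly the $\phi_{2j-1}$, and Cayley--Hamilton (equivalently, the decomposability of $\operatorname{tr}(\theta^{\,2j-1})$ for $2j-1 > 2n-1$ via Newton's identities) shows there are precisely $n$ of them, in degrees $1, 3, 5, \dots, 2n-1$. This yields $H^\bullet(\gl_n(\mathbb{R})) \cong \Lambda^\bullet[\phi_1, \phi_3, \dots, \phi_{2n-1}]$. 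For the stabilization statement, the inclusion $\gl_{n-1}(\mathbb{R}) \hookrightarrow \gl_n(\mathbb{R})$ restricts the Maurer--Cartan form to that of $\gl_{n-1}(\mathbb{R})$, hence sends each generator $\phi_{2j-1}$ to the corresponding generator of $H^\bullet(\gl_{n-1}(\mathbb{R}))$; this is an isomorphism on generators precisely when $2j-1 \leq 2(n-1)-1 = 2n-3$. Since any cohomology class in degree $q \leq 2n-3$ is a wedge of generators each of degree at most $q$, the restriction is an isomorphism in all degrees $q \leq 2n-3$, the sole generator outside this range being $\phi_{2n-1}$ of degree $2n-1$.

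The main obstacle I anticipate is the second and third steps: converting the dimension count furnished by Theorem~\ref{TheoremGLnInvariantsContainedWhere} into the full multiplicative, exterior-algebra structure, and pinning down both the parity and the exact range of the generators. The Hopf-algebra argument handles this cleanly in principle, but the transgression and primitivity bookkeeping, together with the compatible choice of trace generators needed for naturality of the restriction map in the final step, is where the genuine work lies.
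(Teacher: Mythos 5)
The paper does not actually prove this theorem: it is quoted from Fuks (Theorem 2.1.1) with only a remark correcting the stated stabilization range, so there is no in-paper argument to compare against. Your sketch is the classical proof, and its overall architecture is sound. The reduction to the invariant subcomplex via complete reducibility and the Hochschild--Mostow argument is exactly the mechanism the paper itself uses in Lemma~\ref{lem:InitialShapeOfSecondPage}; the vanishing of the differential on invariants is correct (and, incidentally, the bi-invariant-form/inversion argument does not need compactness of $GL_n(\mathbb{R})$ --- it is also provable purely algebraically from the Cartan relation of Lemma~\ref{LemmaInteriorProductHomotopy}). The identification of the invariants with antisymmetrized contractions, the vanishing of $\operatorname{tr}(\theta^{m})$ for even $m$ by cyclicity versus antisymmetry, and the final stabilization argument (no monomial of degree $q\leq 2n-3$ can involve $\phi_{2n-1}$, and the lower trace forms restrict to their $\gl_{n-1}$-counterparts on the block embedding) are all correct; the last point in fact reproduces the corrected range $q\leq 2n-3$ that the paper's remark insists on.

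The one place where your proposal leans on unproved heavy machinery is the step you yourself flag: passing from the spanning set $\{\Psi_\sigma\}$ to the exterior-algebra structure with exactly $n$ primitives in degrees $1,3,\dots,2n-1$. Note in particular that Theorem~\ref{TheoremGLnInvariantsContainedWhere} only guarantees linear independence of the $\Psi_\sigma$ for $r\leq n$ tensor factors, whereas the top class $\phi_1\wedge\phi_3\wedge\dots\wedge\phi_{2n-1}$ lives in $\Lambda^{n^2}\gl_n(\mathbb{R})^*$, far outside that range; so the nonvanishing of products of generators, and the decomposability of $\operatorname{tr}(\theta^{2j-1})$ for $j>n$, genuinely require the Hopf--Koszul--Samelson structure theorem (or an equivalent input such as the cohomology of $U(n)$) rather than following from the invariant-theory count alone. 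Since the paper itself imports the entire theorem from the literature, invoking that structure theorem is a legitimate way to close the argument, but it is the actual content of the result rather than bookkeeping.
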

\begin{remark}
Note that our reference states the above theorem in an erroneous way: They state the map induced by the inclusion has a one-dimensional kernel for~$q = n$, which, for example, cannot be true when~$n = 2$, since the second cohomology vanishes for all~$\mathfrak{gl}(n,\mathbb{R})$. They also write that the inclusion only induces an isomorphism in degree~$< n$, but their spectral sequence argument actually shows the above, stronger property (see also \cite[Cor 4D.3]{hatcher2002algebraic}.
\end{remark}
\begin{lemma}
\label{lem:InitialShapeOfSecondPage}
Let~$n \in \mathbb{N}$ be arbitrary, and consider the Hochschild-Serre spectral sequence~$\{E^{p,q}_r, d_r\}$ of the Lie-algebra-subalgebra pair~$\mathfrak{g}_0 \subset W_n$ in continuous cohomology (cf. Appendix~\ref{AppendixHochschildSerre}).
\\
We have for all~$p, q \geq 0$
\begin{align}
E^{p,q}_2
=
\begin{cases}
H^q(\frakg_0) \otimes
\left( 
\Lambda^r \mathfrak{g}_{-1} \otimes \Lambda^r \mathfrak{g}_1 
\right)^{\frakg_0} 
&
\text{ if }
p \text{ even  and } p = 2r, \\
0 & \text{ if } p \text{ odd.}
\end{cases}
\end{align}
\end{lemma}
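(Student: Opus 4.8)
The plan is to run the Hochschild–Serre spectral sequence of the reductive pair $\mathfrak{g}_0 \subset W_n$ and to pin down its $E_2$-page by combining the reductivity of $\mathfrak{g}_0 = \mathfrak{gl}_n(\mathbb{R})$ with the invariant theory of Theorem~\ref{TheoremGLnInvariantsContainedWhere}. First I would recall the shape of the $E_1$-page. With the standard filtration (Appendix~\ref{AppendixHochschildSerre}) one has $E_1^{p,q} \cong H^q(\mathfrak{g}_0 ; \Lambda^p (W_n/\mathfrak{g}_0)^*)$, where $\mathfrak{g}_0$ acts on the transverse forms through the quotient adjoint action. The continuity constraint on cochains means the relevant dual is the restricted dual, so $(W_n/\mathfrak{g}_0)^* = \mathfrak{g}_{-1}^* \oplus \bigoplus_{k \geq 1} \mathfrak{g}_k^*$ with each $\mathfrak{g}_k^* \cong S^{k+1}\mathbb{R}^n \otimes (\mathbb{R}^n)^*$ finite-dimensional; hence $\Lambda^p(W_n/\mathfrak{g}_0)^*$ is a direct sum of finite-dimensional, completely reducible $\mathfrak{g}_0$-modules.

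Because $\mathfrak{g}_0$ is reductive and these coefficient modules are semisimple, the standard splitting $H^q(\mathfrak{g}_0 ; M) \cong H^q(\mathfrak{g}_0) \otimes M^{\mathfrak{g}_0}$ applies term by term, giving
\begin{align}
E_1^{p,q} \cong H^q(\mathfrak{g}_0) \otimes \big( \Lambda^p (W_n/\mathfrak{g}_0)^* \big)^{\mathfrak{g}_0},
\end{align}
with $d_1 = \id \otimes d_{\mathrm{rel}}$, where $d_{\mathrm{rel}}$ is the relative Chevalley–Eilenberg differential on invariant transverse forms. Consequently $E_2^{p,q} \cong H^q(\mathfrak{g}_0) \otimes H^p(W_n, \mathfrak{g}_0)$, and the lemma reduces to the invariant-theoretic claim that the relative cohomology $H^p(W_n,\mathfrak{g}_0)$ vanishes for odd $p$ and equals $(\Lambda^r \mathfrak{g}_{-1} \otimes \Lambda^r \mathfrak{g}_1)^{\mathfrak{g}_0}$ for $p = 2r$ (identifying a finite-dimensional invariant space with the dual one of the same dimension, so that the stars may be dropped in the final formula).

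Next I would cut down the invariant forms using the grading and Theorem~\ref{TheoremGLnInvariantsContainedWhere}. Expanding $\Lambda^p$ into multidegree components $\Lambda^{a_{-1}} \mathfrak{g}_{-1}^* \otimes \bigotimes_{k \geq 1} \Lambda^{a_k} \mathfrak{g}_k^*$, the central element $E \in \mathfrak{g}_0$ forces every invariant to have Euler-weight zero; equivalently, the numbers of copies of $\mathbb{R}^n$ and $(\mathbb{R}^n)^*$ must balance, which by Theorem~\ref{TheoremGLnInvariantsContainedWhere} is exactly the condition $a_{-1} = \sum_{k \geq 1} k\, a_k$, so that $p = \sum_{k \geq 1} (k+1) a_k$. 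I would record that this does not by itself kill odd $p$ (for instance $a_2 = 1,\, a_{-1} = 2$ contributes at $p = 3$, and $a_3 = 1,\, a_{-1}=3$ at $p=4$ alongside the pure piece), so the odd-degree vanishing and the isolation of the pure configuration $a_{-1} = a_1 = r$ are genuine features of the $d_{\mathrm{rel}}$-cohomology, not of the $E_1$-page.

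The main obstacle is therefore the computation of $H^\bullet$ of the invariant-forms complex. I would attack it with an auxiliary filtration of $(\Lambda^\bullet (W_n/\mathfrak{g}_0)^*)^{\mathfrak{g}_0}$ — e.g. by the total number of tensor factors drawn from $\bigoplus_{k \geq 2}\mathfrak{g}_k^*$ — chosen so that the associated-graded differential becomes a Koszul-type contraction. The key structural input is that in the relative complex the factor $\mathfrak{g}_{-1}^* = (\mathbb{R}^n)^*$ cannot be split (since $[\mathfrak{g}_j,\mathfrak{g}_k]\subset\mathfrak{g}_{j+k}$ with $j+k=-1$ forces a $\mathfrak{g}_0$-term), whereas each higher factor $\mathfrak{g}_k^*$ with $k\geq 1$ does split, contributing among its terms a pairing $\mathfrak{g}_{-1}^*\wedge\mathfrak{g}_{k-1}^*$ that, after the chosen filtration, is precisely a Koszul differential coupling $\mathbb{R}^n$-factors with $(\mathbb{R}^n)^*$-factors. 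I expect the associated graded to be acyclic except on the pure configuration $\Lambda^r\mathfrak{g}_{-1}^*\otimes\Lambda^r\mathfrak{g}_1^*$, with Theorem~\ref{TheoremGLnInvariantsContainedWhere} supplying both the surviving invariant space (spanned, and for $r\leq n$ independently, by the permutation contractions $\Psi_\sigma$) and the vanishing in odd total degree. Controlling this auxiliary spectral sequence — verifying the acyclicity of the Koszul-type associated graded off the diagonal, checking that $d_{\mathrm{rel}}$ vanishes on the surviving pure classes, and ensuring no higher differential revives the odd columns — is the technically delicate part, and is where I expect to spend most of the effort.
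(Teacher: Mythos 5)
Your setup — the form of the $E_1$-page, the reduction to $\mathfrak{g}_0$-invariants via complete reducibility, and the balance condition $a_{-1}=\sum_{k\geq 1}k\,a_k$ — matches the paper. But there is a genuine gap at the point where you assert that the balance condition ``does not by itself kill odd $p$'' and that the isolation of the pure configuration $a_{-1}=a_1=r$ is a feature of the $d_{\mathrm{rel}}$-cohomology rather than of the $E_1$-page. This is false, and your own examples show why the missing step matters: for $a_2=1$, $a_{-1}=2$ the module is $\Lambda^2\mathfrak{g}_{-1}^*\otimes\mathfrak{g}_2^*$, and by Theorem~\ref{TheoremGLnInvariantsContainedWhere} every invariant is a linear combination of full permutation contractions $\Psi_\sigma$. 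Since $a_{-1}=2$ exceeds the number of copies of higher $\mathfrak{g}_k^*$ (namely $1$), any such contraction must pair two of the skew-symmetric $V^*$-factors coming from $\Lambda^2\mathfrak{g}_{-1}^*$ with two $V$-factors lying in the \emph{same} symmetric power $S^3V$ inside the single copy of $\mathfrak{g}_2^*$; the transposition of that pair changes the sign on one side and not on the other, so the (skew-)symmetrized contraction vanishes. The same pigeonhole argument gives the second constraint $a_{-1}\leq\sum_{k\geq 1}a_k$ in general, which combined with $a_{-1}=\sum_{k\geq 1}k\,a_k$ forces $a_k=0$ for $k\geq 2$ and $a_{-1}=a_1=r$. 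Hence every odd column of $E_1$ is already zero, $d_1$ vanishes for trivial reasons, and $E_1=E_2$ — no computation of relative cohomology is needed.

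Because you miss this constraint, you are driven to compute $H^p(W_n,\mathfrak{g}_0)$ by an auxiliary filtration with a Koszul-type associated graded. That portion of your argument is only a sketch (``I expect the associated graded to be acyclic\ldots'', ``is where I expect to spend most of the effort'') and is not carried out; as written it does not establish the odd-degree vanishing, the identification of $E_2^{2r,q}$ with the invariant space (rather than with some subquotient of it), or the absence of higher differentials into the pure configuration. All of these issues evaporate once the second invariant-theoretic constraint is in place, which is exactly the route the paper takes.
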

\begin{proof}
By definition of the Hochschild-Serre spectral sequence, the first page takes the following form:
\begin{equation}
\begin{aligned}
\label{eq:FirstPageOfHochschildSerre}
E^{p,q}_1 &= 
H^q 
\left(
\mathfrak{g}_0 ; \Lambda^p (W_n /\mathfrak{g}_0)^*	 
\right) 
=
H^q 
\left(
\mathfrak{g}_0 ; 
\Lambda^p \left( \bigoplus_{j \neq 0} \mathfrak{g}_j \right)^* 
\right) 
\\
&=
\bigoplus_{p_{-1} + p_1 + p_2 + \dots = p} 
H^q 
\left(
\mathfrak{g}_0 ; 
\bigotimes_{j \neq 0}
\Lambda^{p_j} \mathfrak{g}_j ^* 
\right)
\end{aligned}
\end{equation}
Note that, as Lie algebras,~$\mathfrak{g}_0 \subset W_n$ is isomorphic to~$\mathfrak{gl}_n(\mathbb{R}) = \mathfrak{gl}(V)$ via
\begin{align}
\sum_{i,j = 1}^n a_{ij} x_i \partial_j \mapsto (a_{ij})_{1 \leq i ,j \leq n}.
\end{align}
By Weyl's complete reducibility theorem \cite[Theorem 10.9]{hall2015lie}, all coefficient modules in \eqref{eq:FirstPageOfHochschildSerre} are completely reducible. Together with reducibility of the Lie algebra~$\gl_n(\mathbb{R})$ and \cite[Theorem 10]{hochschild1953cohomology}, we may reduce the coefficient space in the above cohomologies to the~$\mathfrak{gl}_n(\mathbb{R})$-invariants. 
Hence,
\begin{equation}
\begin{aligned}
E^{p,q}_1 
&= 
\bigoplus_{p_{-1} + p_1 + p_2 + \dots = p} 
H^q 
\left(
\mathfrak{g}_0 ; 
\left(
\bigotimes_{j \neq 0}
\Lambda^{p_j} \mathfrak{g}_j^*
\right)^{\mathfrak{gl}(V)}
 \right)
\\
&=
H^q(\mathfrak{g}_0) \otimes \left( 
\bigoplus_{p_{-1} + p_1 + p_2 + \dots = p}  
 \left(
\bigotimes_{j \neq 0}
\Lambda^{p_j} \mathfrak{g}_j^*
\right)^{\mathfrak{gl}(V)}
\right) .
\end{aligned}
\end{equation}
By definition,~$\frakg_j^* = (S^{j + 1} V) \otimes V^*$ contains~$j + 1$ tensor factors that transform \emph{covariantly} (i.e. copies of~$V$) under the~$\gl(V)$ action, and one tensor factor that transforms \emph{contravariantly} (i.e. a copy of~$V^*$), hence~$\Lambda^{p_j} \frakg_j^*$ contains~$j \cdot p_j$ covariant and~$p_j$ contravariant tensor factors. Hence
\begin{align}
\bigotimes_{j \neq 0}
\Lambda^{p_j} \mathfrak{g}_j^*
\subset 
(V^*)^
{
\sum_{j \neq 0} p_j
}
\otimes
V^
{
\sum_{j \neq 0} (j + 1)p_j
}
\end{align}
The last part of Theorem~\ref{TheoremGLnInvariantsContainedWhere} then implies that the space of~$\gl(V)$-invariants of the space~$\bigotimes_{j \neq 0}
\Lambda^{p_j} \mathfrak{g}_j^*$ is only nonzero if
\begin{align}
\label{EquationFirstRequirementOnIndices}
\sum_{j \neq 0} p_j
=
\sum_{j \neq 0} (j + 1)p_j,
\text{ or equivalently }
p_{-1} 
= 
\sum_{j = 1}^\infty j \cdot p_j
\end{align}
Simultaneously, again from Theorem~\ref{TheoremGLnInvariantsContainedWhere}, we know that~$\mathfrak{gl}_n(\mathbb{R})$-invariants in a tensor module 
\begin{align}
V^{\otimes r} \otimes (V^*)^{\otimes r} \cong \Hom((V^*)^{\otimes r} \otimes V^{\otimes r}, \mathbb{R})
\end{align} 
can be described as the linear combinations of the functionals which contract all covariant indices with permutations of the contravariant indices. 
Correspondingly, the~$\mathfrak{gl}(V)$ invariants in the subspaces~$\bigotimes_{j \neq 0}
\Lambda^{p_j} \mathfrak{g}_j^*$ are given by subjecting these functionals to the required (skew-)symmetrizations.
%
%
Hence if 
\begin{align}
p_{-1}  > \sum_{j =1}^\infty p_j,
\end{align} 
then by the pidgeonhole principle, any invariant tensor contracts at least two contravariant factors belonging to~$\Lambda^{p_{-1}} \mathfrak{g}_{-1}$ with two covariant factors, both belonging to a single copy of~$\mathfrak{g}_j$ within~$\Lambda^{p_j} \mathfrak{g}_j = \mathfrak{g}_j \wedge \dots \wedge \mathfrak{g}_j$ for some~$j \geq 1$.
However, in such a contraction the contravariant factors would behave skew-symmetrically and the covariant ones symmetrically under permutation, hence their contraction is zero.
Thus, we get the additional requirement
\begin{align}
\label{EquationSecondRequirementOnIndices}
p_{-1} \leq \sum_{j =1}^\infty p_j
\end{align}
Combining \eqref{EquationFirstRequirementOnIndices} and \eqref{EquationSecondRequirementOnIndices} we end up with
$p_k = 0$ for~$k \geq 2$ and~$p_{-1} = p_1 =: r$. This implies that~$p = 2r$ is even whenever there are nontrivial invariants, and thus every other column in the page~$E^{p,q}_1$ vanishes. Hence all differentials on the first page are trivial, and~$E^{p,q}_1 = E^{p,q}_2$. This concludes the proof.
\end{proof}
Since~$H^q(\frakg_0)$,~$\Lambda^r \frakg_{-1}$, and~$\Lambda^r \frakg_1$ are nonzero only for finitely many~$q,r \geq 0$, and all involved spaces are finite-dimensional in every degree, we have:
\begin{corollary}
For all~$q \geq 0$, the continuous cohomology~$H^q(W_n)$ is fi\-nite\--di\-men\-sio\-nal, and~$H^q(W_n) \neq 0$ only in finitely many degrees.
\end{corollary}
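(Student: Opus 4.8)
The plan is to deduce the statement directly from the shape of the second page computed in Lemma~\ref{lem:InitialShapeOfSecondPage}, together with the fact that the Hochschild-Serre spectral sequence $\{E^{p,q}_r, d_r\}$ for the pair $\frakg_0 \subset W_n$ converges to $H^\bullet(W_n)$, the total degree being $p + q$. So it suffices to show that the $E_2$ page has only finitely many nonzero entries and that each of them is finite-dimensional; boundedness and finiteness then propagate to $E_\infty$ and finally to the cohomology.

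First I would check finite-dimensionality of each entry $E^{p,q}_2$. By Lemma~\ref{lem:InitialShapeOfSecondPage} the nonzero entries have the form $H^q(\frakg_0) \otimes (\Lambda^r \frakg_{-1} \otimes \Lambda^r \frakg_1)^{\frakg_0}$ with $p = 2r$. The factor $H^q(\frakg_0) = H^q(\gl_n(\mathbb{R}))$ is finite-dimensional by Theorem~\ref{TheoremCohomologyOfGLn}, being a graded piece of the exterior algebra $\Lambda^\bullet[\phi_1,\dots,\phi_{2n-1}]$ on finitely many generators. The second factor is a subspace of $\Lambda^r \frakg_{-1} \otimes \Lambda^r \frakg_1$, and since both $\frakg_{-1} \cong \mathbb{R}^n$ and $\frakg_1$ are finite-dimensional, this too is finite-dimensional. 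Hence each $E^{p,q}_2$ is finite-dimensional.

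Next I would bound the support. The columns vanish for odd $p$, and a nonzero even column $p = 2r$ requires $\Lambda^r \frakg_{-1} \neq 0$, hence $0 \leq r \leq \dim \frakg_{-1} = n$; so at most the columns $p \in \{0,2,\dots,2n\}$ survive. Along each such column, $E^{p,q}_2$ can only be nonzero when $H^q(\frakg_0) \neq 0$, and by Theorem~\ref{TheoremCohomologyOfGLn} this happens only for $q$ in a finite range (namely $0 \leq q \leq 2n^2 - n$, the top degree of the exterior algebra). Thus only finitely many entries of $E_2$ are nonzero, each finite-dimensional.

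It remains to transport this to the cohomology. Since every later page $E^{p,q}_{r+1}$ is a subquotient of $E^{p,q}_r$, the whole $E_\infty$ page is again supported on finitely many $(p,q)$ with each $E^{p,q}_\infty$ finite-dimensional. The boundedness of the page also guarantees that the filtration on each $H^m(W_n)$ is finite, with associated graded $\bigoplus_{p+q=m} E^{p,q}_\infty$. Therefore $H^m(W_n)$ is a finite sum of finite-dimensional spaces, hence finite-dimensional, and it vanishes whenever $m$ lies outside the finite set $\{p+q : E^{p,q}_\infty \neq 0\}$. The one point deserving care is the passage from $E_\infty$ to $H^\bullet(W_n)$, i.e.\ strong convergence of the sequence; but this is automatic here since the nonzero region of the $E_2$ page is bounded, so no genuine obstacle remains.
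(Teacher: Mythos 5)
Your argument is correct and is exactly the paper's (the paper compresses it into the one sentence preceding the corollary: the $E_2$ page of the Hochschild--Serre spectral sequence from Lemma~\ref{lem:InitialShapeOfSecondPage} has finitely many nonzero, finite-dimensional entries, and this propagates through subquotients to $E_\infty$ and the abutment). Your spelled-out version, including the bounded-support convergence remark, is a faithful elaboration of that same route; the only cosmetic quibble is that the top degree of $H^\bullet(\gl_n(\mathbb{R}))$ is $n^2$ (generators in odd degrees $1,3,\dots,2n-1$) rather than $2n^2-n$, which is immaterial since any finite bound suffices.
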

Let us further analyze the invariant space~$\left( 
\Lambda^r \mathfrak{g}_{-1} \otimes \Lambda^r \mathfrak{g}_1 
\right)^{\frakg_0}$.
\begin{lemma}
\label{lem:FinalShapeOfSecondPage}
Let~$\{E^{\bullet, \bullet}_2, d_r\}$ be the spectral sequence from Lemma~\ref{lem:InitialShapeOfSecondPage} of the Lie\--al\-ge\-bra\--sub\-al\-ge\-bra pair~$\frakg_0 \subset W_n$.
\\
For all~$r = 1,\dots,n$, there exist multiplicative generators~$\Psi_{2r} \in E^{2r,0}_2$ so that
\begin{align}
E^{\bullet,0}_2 &= \mathbb{R}[\Psi_2, \Psi_4\dots,\Psi_{2n}] / \langle \Psi_{i_1} \dots \Psi_{i_k} : i_1 + \dots + i_k > 2n \rangle.
\end{align}
\end{lemma}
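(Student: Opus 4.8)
The plan is to treat the bottom row $q=0$ of the $E_2$-page directly as a graded-commutative algebra under the cup product, which along this row is induced by the wedge product of invariant forms on $W_n/\mathfrak{g}_0$. By Lemma~\ref{lem:InitialShapeOfSecondPage} together with $H^0(\mathfrak{g}_0)=\mathbb{R}$, we have $E^{2r,0}_2=(\Lambda^r \mathfrak{g}_{-1}\otimes\Lambda^r\mathfrak{g}_1)^{\mathfrak{g}_0}$, so the whole row is the graded algebra $A^\bullet:=\bigoplus_{r\geq 0}E^{2r,0}_2$ with wedge multiplication. Since each $E^{2r,0}_2$ sits in even form-degree $2r$, the product is commutative, and the entire statement becomes a purely invariant-theoretic claim about $\mathfrak{gl}(V)$-modules, with $V=\mathbb{R}^n$.

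To build the generators I would use the Lie bracket, which restricts to a $\mathfrak{g}_0$-equivariant pairing $\mathfrak{g}_{-1}\otimes\mathfrak{g}_1\to\mathfrak{g}_0\cong\mathrm{End}(V)$, $v\otimes\xi\mapsto[\partial_v,\xi]$. Composing $r$ of these endomorphisms, taking a trace, and antisymmetrizing over the $\mathfrak{g}_{-1}$-arguments and over the $\mathfrak{g}_1$-arguments produces for each $r$ a canonical invariant $\Psi_{2r}\in E^{2r,0}_2$, the single cyclic trace of length $r$; it is nonzero precisely for $1\leq r\leq n$, since it requires $\Lambda^r\mathfrak{g}_{-1}=\Lambda^r V\neq 0$. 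The plan is then to prove two things: that the $\Psi_{2r}$ generate $A^\bullet$, and that the only relations are the truncation ones.

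Generation, and the reduction of arbitrary invariants to trace monomials, comes from the first fundamental theorem, Theorem~\ref{TheoremGLnInvariantsContainedWhere}. Every invariant in $\Lambda^r\mathfrak{g}_{-1}\otimes\Lambda^r\mathfrak{g}_1$ lies inside $V^{\otimes 2r}\otimes(V^*)^{\otimes 2r}$ and is a linear combination of the complete contractions $\Psi_\sigma$; imposing the antisymmetry from the two exterior powers and the symmetry from the $S^2 V^*$-factor of $\mathfrak{g}_1$ forces each surviving contraction to organize its indices into closed cycles of bracket-endomorphisms, i.e.\ into a product of cyclic traces. Matching cycle lengths with the generators, such a monomial is exactly a wedge product $\Psi_{2a_1}\wedge\dots\wedge\Psi_{2a_k}$ indexed by a partition $a_1+\dots+a_k=r$, which shows the $\Psi_{2r}$ generate. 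The truncation relation is then immediate: a monomial with $i_1+\dots+i_k>2n$ lives in $E^{2R,0}_2$ with $R>n$, and there $\Lambda^R V=0$ forces it to vanish. Hence $A^\bullet$ is a quotient of $\mathbb{R}[\Psi_2,\dots,\Psi_{2n}]/\langle\Psi_{i_1}\cdots\Psi_{i_k}: i_1+\dots+i_k>2n\rangle$.

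The main obstacle is the reverse inequality, namely linear independence of the partition-indexed trace monomials for $1\leq r\leq n$, equivalently $\dim E^{2r,0}_2=p(r)$ with no unexpected relations. The difficulty is genuine, because the linear independence of the raw contractions $\Psi_\sigma$ in Theorem~\ref{TheoremGLnInvariantsContainedWhere} only holds when the number $2r$ of tensor factors is at most $n$; in the range $n/2<r\leq n$ the second fundamental theorem produces relations among the $\Psi_\sigma$, and one must rule out that any of these descend to relations among the $\Psi$-monomials before $R$ reaches $n+1$. I would resolve this by a dimension count that is independent of the generators: compute $\dim(\Lambda^r\mathfrak{g}_{-1}\otimes\Lambda^r\mathfrak{g}_1)^{\mathfrak{g}_0}$ via Schur--Weyl and plethysm in the stable range, verify it equals the number of partitions of $r$, and observe that the only $n$-dependent vanishing, coming from $\Lambda^{n+1}V=0$, first affects degree $r=n+1$. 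Matching this count against the upper bound above forces the isomorphism and pins the relations down to exactly the truncation.
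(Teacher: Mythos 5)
Your proposal follows essentially the same route as the paper: your $\Psi_{2r}$ are the cyclic traces of $r$-fold composites of the bracket pairing $\mathfrak{g}_{-1}\otimes\mathfrak{g}_1\to\mathfrak{g}_0\cong\mathrm{End}(V)$, which are exactly the paper's $\Psi_\sigma$ for $\sigma$ an $r$-cycle; generation comes from the first fundamental theorem (Theorem~\ref{TheoremGLnInvariantsContainedWhere}) together with the cycle decomposition of permutations, and the truncation relations from $\Lambda^{>n}V=0$. The one point of divergence is the linear independence of the trace monomials for $n/2<r\le n$, which you rightly isolate as the only delicate step: the independence clause of Theorem~\ref{TheoremGLnInvariantsContainedWhere} covers only $2r\le n$ tensor slots, and the paper itself passes over this quickly, inferring ``no further relations'' from the mere nonvanishing of the $\Psi_\sigma$. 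Your proposed repair --- a stable Schur--Weyl/plethysm computation of $\dim\bigl(\Lambda^r\mathfrak{g}_{-1}\otimes\Lambda^r\mathfrak{g}_1\bigr)^{\mathfrak{g}_0}$ plus the observation that ``the only $n$-dependent vanishing first affects $r=n+1$'' --- is the right kind of argument, but as written its final step asserts precisely the claim at issue, namely that the second-fundamental-theorem relations (which already exist among the raw contractions once $2r>n$) do not descend to relations among the symmetrized invariants before $r=n+1$; this still has to be proved, e.g.\ via the modification rules for rational $\mathfrak{gl}_n$-representations or by identifying $E^{\bullet,0}_2$ with the truncated polynomial algebra $H^\bullet$ of the $2n$-skeleton of $BU(n)$, whose graded dimensions are the partition numbers $p(r)$ for $r\le n$.
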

\begin{proof}
By Lemma~\ref{lem:InitialShapeOfSecondPage} we have
\begin{align}
E^{\bullet, 0}_2 = \bigoplus_{r \geq 0} \left( \Lambda^r \frakg_{-1} \otimes \Lambda^r \frakg_1 \right)^{\frakg_0}.
\end{align}
By Theorem~\ref{TheoremGLnInvariantsContainedWhere} the elements in the invariant space~$\left( 
\Lambda^r \mathfrak{g}_{-1} \otimes \Lambda^r \mathfrak{g}_1 
\right)^{\frakg_0}$ arise by taking, for every permutation~$\sigma \in \Sigma_r$, the functional 
\begin{align}
\bigotimes^r V \times \bigotimes^r \left(V^* \otimes V^*  \otimes V \right) \to \mathbb{R},
\end{align}
with
\begin{equation}
\begin{aligned}
(
	\alpha_1^1 \otimes \dots &\otimes \alpha_r^1,
	( \beta_1^1 \otimes \beta_1^2 \otimes \alpha_{r+1}^2)
	\otimes \dots \otimes 
	(\beta_r^1 \otimes \beta_{r}^2 \otimes \alpha_{2r}^2)
)
\\
&\mapsto
\beta_1^1(\alpha_1^1) \dots \beta_r^1 (\alpha_r^1) \cdot \beta_1^2(\alpha_{\sigma(1)}^2) \dots \beta_r^2(\alpha_{ \sigma(r)}^2), \quad 
\forall \alpha_i \in V, \,
\beta_i^1, \beta_i^2 \in V^*,
\end{aligned}
\end{equation}
skew-symmetrizing over the first~$r$ and the last~$r$ arguments, and symmetrizing over the exchange~$\beta_i^1 \leftrightarrow \beta_i^2$. Denote the arising functional by~$\Psi_{\sigma} \in \left( \Lambda^r \mathfrak{g}_{-1} \otimes \Lambda^r \mathfrak{g}_1 \right)^{\mathfrak{gl}(V)}$.
From the skew-symmetry of the functionals in the first~$r$ and last~$r$ arguments, one deduces that 
\begin{equation}
\label{eq:PsisInvariantUnderConjugation}
\Psi_{\sigma} = \Psi_{\tau \sigma \tau^{-1}} \quad \forall \sigma, \tau \in \Sigma_k.
\end{equation}
Another straightforward calculation shows that for~$\sigma \in \Sigma_r$ and~$\tau \in \Sigma_l$ we have
\begin{equation}
\label{eq:PsisProductFormula}
\Psi_\sigma \wedge \Psi_\tau = \Psi_{\sigma \tau} 
\in 
\left( \Lambda^{r + l} \mathfrak{g}_{-1} \otimes \Lambda^{r + l} \mathfrak{g}_1 \right)^{\mathfrak{gl}(V)}.
\end{equation}
Because~$\mathfrak{g}_{-1}$ is~$n$-dimensional,~$\left( \Lambda^r \mathfrak{g}_{-1} \otimes \Lambda^r \mathfrak{g}_1 \right)^{\mathfrak{gl}(V)} = 0$ if~$r > n$, so in particular
\begin{align}
\Psi_\sigma \wedge \Psi_\tau = 0 \text{ if } \sigma \in \Sigma_r, \tau \in \Sigma_l, r + l > n.
\end{align}
Denote by~$\Psi_r \in \left( \Lambda^r \mathfrak{g}_{-1} \otimes \Lambda^r \mathfrak{g}_1 \right)^{\mathfrak{gl}(V)}$ the~$\Psi_\sigma$ corresponding to an~$r$-cycle~$\sigma \in \Sigma_r$. This is well-defined, since by \eqref{eq:PsisInvariantUnderConjugation} the functional~$\Psi_{\sigma}$ only depends on the conjugacy class of~$\sigma$, and all~$r$-cycles are conjugate to one another. 
\\
Since every permutation can be uniquely (up to ordering) decomposed into a composition of cycles, \eqref{eq:PsisProductFormula} shows that~$ \left( \Lambda^r \mathfrak{g}_{-1} \otimes \Lambda^r \mathfrak{g}_1 \right)^{\mathfrak{gl}(V)}$ is multiplicatively generated by the~$\Psi_{2r} \in \left( \Lambda^r \mathfrak{g}_{-1} \otimes \Lambda^r \mathfrak{g}_1 \right)^{\mathfrak{gl}(V)}$ for every~$r = 1,\dots, n$. 
Hence every element in~$E^{\bullet,0}_2$ is given as a unique (up to ordering) product of elements in~$\{\Psi_2,\dots,\Psi_{2n}\}$. Since all~$\Psi_{\sigma}$ for~$\sigma \in \Sigma_r$ and~$r \leq n$ are nonzero, the only relation between these generators is that products~$\Psi_{i_1} \dots \Psi_{i_k}$ are zero if~$i_1 + \dots + i_k > 2n$, and the lemma is proven.
\end{proof}
To understand this spectral sequence further, we will need the Borel transgression theorem. To formulate it, let us first define some terminology.
\begin{definition}
Let~$\{E^{p,q}_r, d_r\}_{r \geq 0}$ be a cohomological first-quadrant spectral sequence. Denote by~$\kappa_r^{r+1} : \ker d_r \to E^{\bullet, \bullet}_{r+1}$ the natural quotient map from cocycles of the~$r$-th page differential~$d_r$ to the~$r+1$-th page, and 
\begin{align}
\kappa_r^s = \kappa_{s-1}^s \circ \dots \circ \kappa_{r}^{r+1} \quad \forall s > r,
\end{align}
where the domain of~$\kappa_r^s$ is defined inductively as all the~$c \in E^{\bullet, \bullet}_{r+1}$ in the domain of~$\kappa_r^{s-1}$ so that~$\kappa_r^{s-1} c \in \ker d_{s-1}$.
We call an element~$c \in E^{p,0}_2$ \emph{transgressive} if, for all~$r$ with~$2 \leq r \leq p$, we have that~$c$ is in the domain of~$\kappa_2^r$.
\end{definition}
Intuitively, the transgressive elements in~$E^{p,0}_{2}$ are the ones which ``survive'' until the very last moment: Only the differential~$d_{p+1} : E^{p,0}_{p+1} \to E^{0,p +1}_{p+1}$, also called the \emph{transgression}, can map it to something nontrivial.
By abuse of notation, we often denote an element in the domain of~$\kappa_r^s$ by the same symbol as its image under~$\kappa_r^{s}$ in the higher page~$E_{s}^{\bullet, \bullet}$
 The following theorem was originally proven in \cite{borel1953cohomologie}, but we cite a slightly stronger version from \cite[Thm 2.9]{mimura1991topology}.
\begin{theorem}[Borel transgression theorem]
\label{thm:BorelTransgression}
Consider two finite-dimensional, graded vector spaces~$B^\bullet := \bigoplus_{p \in \mathbb{N}_0} B^p$ and~$F^\bullet := \bigoplus_{q \in \mathbb{N}_0} F^q$. Assume there are elements~$x_i \in F^\bullet$ of odd degree such that
\begin{align}
\Lambda^\bullet[x_1,\dots,x_l] \to F^\bullet
\end{align}
is bijective in degrees~$\leq N$ and injective in degree~$N + 1$.
Let further~$\{E_r^{p,q}, d_r\}_{r \geq 0}$ be a cohomological spectral sequence whose second page has the form
\begin{align}
E_2^{p,q} = B^p \otimes F^q,
\end{align}
and which converges towards a graded vector space~$H^\bullet$ with~$H^k = 0$ if~$0 < k \leq N + 2$.
Then we can choose the generators~$x_i$ to be transgressive, and if~$y_1,\dots,y_l \in B^\bullet$ denotes a collection of elements with
\begin{align}
d_{\deg x_i + 1}  x_i =  y_i \quad i =1,\dots, l,
\end{align}
then the map
\begin{align}
\mathbb{R}[y_1,\dots,y_l] \to B^\bullet 
\end{align}
is bijective for degrees~$\leq N$ and injective for degree~$N + 1$.
\end{theorem}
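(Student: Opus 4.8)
The plan is to prove the theorem by comparing the given spectral sequence $\{E_r\}$ with an explicit ``model'' spectral sequence whose base is honestly the polynomial algebra $\mathbb{R}[y_1,\dots,y_l]$, and then transferring this algebraic structure back to $B^\bullet$ via the comparison theorem (in the form due to Zeeman). Concretely I would proceed in three stages: first arrange that the odd generators $x_i$ are transgressive, then construct the model together with a morphism $\phi$ realizing the hypothesized fiber map, and finally run the comparison with careful bookkeeping of the range $\le N+1$. Throughout, only the generators of degree $\le N+1$ matter for the conclusion in the stated range, since a generator $x_i$ of higher degree has transgression $y_i$ of degree $\deg x_i + 1 > N+2$, which lies outside the relevant part of $B^\bullet$.

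First I would show that the generators of degree $\le N+1$ can be chosen transgressive, arguing inductively after ordering them by degree $\deg x_1 \le \dots \le \deg x_l$. For the lowest generator, the fiber $F^\bullet$ carries no classes in degrees strictly between $0$ and $\deg x_1$, so each intermediate differential $d_r(x_1)$ for $2 \le r \le \deg x_1$ lands in a subquotient of $E_2^{r,\,\deg x_1 - r + 1} = B^r \otimes F^{\deg x_1 - r + 1} = 0$; hence $x_1$ survives to the transgression $d_{\deg x_1 + 1}$. Since nothing maps into the $p=0$ column and $H^{\deg x_1} = 0$ (as $\deg x_1 \le N+1$), the class $x_1$ cannot be a permanent cocycle, so $y_1 := d_{\deg x_1 + 1}(x_1) \neq 0$ lies in $B^\bullet = E_2^{\bullet,0}$. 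For a higher generator $x_i$ the fiber already contains products of the lower $x_j$, and the targets of the intermediate differentials need not vanish; here I would modify $x_i$ by decomposable fiber classes of the appropriate degree so as to kill $d_2(x_i),\dots,d_{\deg x_i}(x_i)$ before it transgresses. This is possible because, by the inductive hypothesis, the differentials on the lower generators are pure transgressions, so the differentials on the decomposables are determined by the Leibniz rule, and the obstruction to the modification is controlled by the vanishing of $H$ in the range.

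Next I would build the model spectral sequence $\{{}'E_r\}$ as the multiplicative tensor product, over $i=1,\dots,l$, of the elementary spectral sequences with second page $\mathbb{R}[y_i] \otimes \Lambda[x_i]$ and single nonzero differential $d_{\deg x_i + 1}(x_i) = y_i$. Each elementary factor abuts to $\mathbb{R}$ concentrated in degree $0$ (its $d_{\deg x_i+1}$ sends $x_i y_i^{\,j} \mapsto y_i^{\,j+1}$), so by the Künneth structure ${}'E$ abuts to $\mathbb{R}$, i.e. is trivial in all positive total degrees. Sending $x_i \mapsto x_i$ and $y_i \mapsto y_i$ and extending multiplicatively defines a morphism $\phi : {}'E_2 \to E_2$ that commutes with all differentials, precisely because the transgressions were matched in the previous step. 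On fibers, $\phi$ realizes the hypothesized map $\Lambda[x_1,\dots,x_l] \to F^\bullet$, which is bijective in degrees $\le N$ and injective in degree $N+1$; on bases it is the map $\mathbb{R}[y_1,\dots,y_l] \to B^\bullet$ whose behavior we want to control.

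Finally I would invoke the comparison theorem. Both ${}'E$ and $E$ abut trivially in total degrees $1,\dots,N+2$ (by construction of the model and by the hypothesis $H^k=0$ there), and $\phi$ is an isomorphism on fibers through degree $N$ and injective in degree $N+1$; the comparison theorem then forces $\phi$ to be an isomorphism on bases through degree $N$ and injective in degree $N+1$. Since the base of the model is by definition $\mathbb{R}[y_1,\dots,y_l]$, this is exactly the asserted statement about $\mathbb{R}[y_1,\dots,y_l] \to B^\bullet$. I expect the main obstacle to be the transgressivity step: arranging, by the inductive modification with decomposables, that every relevant generator has all intermediate differentials vanishing, together with the precise tracking of the shifts between $N$, $N+1$, and $N+2$ so that the comparison theorem applies with exactly the stated bounds.
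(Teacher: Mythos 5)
The paper does not prove Theorem~\ref{thm:BorelTransgression} at all: it is imported from the literature (it cites \cite{borel1953cohomologie} and \cite[Thm 2.9]{mimura1991topology}), so there is no in-paper argument to compare yours against. What you have written is an outline of the standard proof found in those references: arrange the odd generators to be transgressive, build the Koszul-type model spectral sequence $\bigotimes_{i}\left(\mathbb{R}[y_i]\otimes\Lambda[x_i]\right)$ with the single differential $d_{\deg x_i+1}x_i=y_i$, and apply Zeeman's comparison theorem using the triviality of both abutments in total degrees $1,\dots,N+2$. The architecture and the degree bookkeeping ($\leq N$ and $N+1$ on fiber and base, $N+2$ on the abutment) are correct and consistent with the version quoted here. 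One implicit hypothesis you rely on, and which the paper's statement suppresses, is that the spectral sequence is one of \emph{algebras} with $E_2$ carrying the tensor-product multiplication of $B^\bullet$ and $F^\bullet$; without this the Leibniz rule and the multiplicative extension of your comparison map $\phi$ are unavailable. This holds for the Hochschild--Serre spectral sequence where the theorem is applied, but it should be stated.

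The one genuine gap is the step you yourself flag: transgressivity of the higher generators. Your argument for $x_1$ is complete (the relevant fiber degrees are forced to vanish by the hypothesis on $\Lambda^\bullet[x_1,\dots,x_l]\to F^\bullet$), but for $x_i$ with $i>1$ the proposed fix --- replace $x_i$ by $x_i+z$ with $z$ a decomposable fiber class so as to kill $d_2(x_i),\dots,d_{\deg x_i}(x_i)$ --- requires knowing that each intermediate differential $d_r(x_i)\in E_r^{r,\deg x_i-r+1}$ already lies in $d_r$ of the fiber column, and nothing in your sketch establishes this. Note also that $F^{\deg x_i-r+1}$ may contain \emph{indecomposable} lower-degree generators, not only decomposables, so the class of allowed corrections has to be enlarged and controlled. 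The phrase ``the obstruction is controlled by the vanishing of $H$ in the range'' names the desired conclusion rather than proving it; in the standard treatments this point is handled by running the induction on generators and the comparison with the partially built model simultaneously, so that the target of $d_r(x_i)$ is identified with a known piece of $E_r$ before the correction is made. Since the paper only cites the theorem, this does not affect the paper, but as a self-contained proof your proposal is incomplete precisely at this step.
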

Using this, we can fully describe the desired spectral sequence:
\begin{theorem}[\cite{fuks1984cohomology}, Theorem 2.2.4]
\label{TheoremCohomologyOfWn}
Let~$n \in \mathbb{N}$ be arbitrary, and consider the Hoch\-schild\--Serre spectral sequence~$\{E^{p,q}_r, d_r\}$ of the pair~$\mathfrak{g}_0 \subset W_n$ in continuous cohomology.
Its second page takes the form
\begin{align}
E^{0,\bullet}_2 &= \Lambda^\bullet [ \phi_1, \phi_3,\dots, \phi_{2n -1}] , \\
E^{\bullet,0}_2 &= \mathbb{R}[\Psi_2, \Psi_4\dots,\Psi_{2n}] / \langle \Psi_{i_1} \dots \Psi_{i_k} : i_1 + \dots + i_k > 2n \rangle, 
\\
E^{p,q}_2 &= E^{p,0}_2 \otimes E^{0,q}_2,
\end{align}
and all differentials of the spectral sequence are fully specified on the generators by
\begin{align}
d_{i + 1}\phi_i &= \Psi_{i+1} \quad i \in \{1,3,\dots,2n-1\}.
\end{align}
\end{theorem}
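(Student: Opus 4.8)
The description of the second page requires no further work: the factorization $E^{p,q}_2=E^{p,0}_2\otimes E^{0,q}_2$ with $E^{0,\bullet}_2=H^\bullet(\mathfrak{g}_0)=\Lambda^\bullet[\phi_1,\phi_3,\dots,\phi_{2n-1}]$ is Lemma~\ref{lem:InitialShapeOfSecondPage} together with Theorem~\ref{TheoremCohomologyOfGLn}, and the identification of the base row $E^{\bullet,0}_2$ with the truncated polynomial algebra is Lemma~\ref{lem:FinalShapeOfSecondPage}. The entire content of the theorem is therefore the computation of the differentials. I would first record that each $\Psi_{2r}$ is a permanent cocycle, since any $d_s$ applied to a class in the row $q=0$ would land in a row of negative $q$. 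As the spectral sequence is multiplicative and $E_2$ is generated as an algebra by the $\phi_i$ and the $\Psi_{2r}$, the Leibniz rule reduces the whole problem to determining the transgressions $d_{i+1}\phi_i$.

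The main tool is the Borel transgression theorem (Theorem~\ref{thm:BorelTransgression}), applied with $F^\bullet=E^{0,\bullet}_2$ and $B^\bullet=E^{\bullet,0}_2$. Because $F^\bullet$ is by construction the exterior algebra on the odd classes $\phi_i$, the comparison map $\Lambda^\bullet[\phi_1,\dots,\phi_{2n-1}]\to F^\bullet$ is an isomorphism in every degree, so the hypothesis on the fiber holds for any $N$. The sequence converges to $H^\bullet(W_n)$, which vanishes in degrees $1,\dots,n$ by Proposition~\ref{PropWnCohoZeroInLowDegree}, so I may take $N=n-2$. Borel then gives that all generators $\phi_i$ may be chosen transgressive, and that the transgressions $y_i:=d_{i+1}\phi_i\in B^{i+1}$ induce a map $\mathbb{R}[y_1,\dots,y_l]\to B^\bullet$ which is bijective in degrees $\le n-2$. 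Passing to the indecomposables $Q(B^\bullet)=B^+/(B^+\cdot B^+)$, whose degree-$(i+1)$ part is one-dimensional and spanned by the class of $\Psi_{i+1}$, I conclude $y_i=c_i\Psi_{i+1}$ modulo decomposables with $c_i\ne 0$; rescaling $\phi_i$ and subtracting the decomposable (permanent-cocycle) correction then yields $d_{i+1}\phi_i=\Psi_{i+1}$ for all $i$ in the range controlled by $N$.

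The obstacle is that $N=n-2$ reaches only the lower half of the generators. I would bridge the gap by induction on $n$. The formal vector fields not involving the last coordinate form a graded Lie subalgebra $W_{n-1}\subset W_n$ whose degree-zero part is the standard $\mathfrak{gl}_{n-1}\subset\mathfrak{gl}_n$, so restriction of cochains induces a morphism $E_r(W_n)\to E_r(W_{n-1})$ of Hochschild--Serre spectral sequences. On $E^{0,\bullet}_2$ this is the restriction $H^\bullet(\mathfrak{gl}_n)\to H^\bullet(\mathfrak{gl}_{n-1})$, an isomorphism sending $\phi_i\mapsto\phi_i$ for $i\le 2n-3$ by Theorem~\ref{TheoremCohomologyOfGLn}; on the base row it sends $\Psi_j\mapsto\Psi_j$ and is injective in the corresponding stable range, which one reads off from Theorem~\ref{TheoremGLnInvariantsContainedWhere} and Lemma~\ref{lem:FinalShapeOfSecondPage}. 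Naturality of the transgression then transports the inductive hypothesis $d_{i+1}\phi_i=\Psi_{i+1}$ from $W_{n-1}$ to $W_n$ for every $i\le 2n-3$, the injectivity removing the remaining ambiguity.

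This leaves the top generator $\phi_{2n-1}$, which I expect to be the genuinely hard step: it is new at every level and is seen neither by Borel (its transgression lives in the extreme degree $2n$, never inside the bijectivity range, no matter how large the vanishing range of $H^\bullet(W_n)$) nor by restriction (it leaves the stable range and $B^{2n}(W_{n-1})=0$). Using the lower transgressions already established, every decomposable element of $B^{2n}$ is a boundary before page $2n$, so $E^{2n,0}_{2n}$ is one-dimensional, spanned by the class of $\Psi_{2n}$; transgressivity then forces $d_{2n}\phi_{2n-1}=c\,\Psi_{2n}$, and the theorem comes down to showing $c\ne 0$, equivalently $H^{2n}(W_n)=0$. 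Since this degree lies beyond the a priori vanishing range, an independent input is unavoidable; I would obtain it by comparing the complex $C^\bullet(W_n,\mathfrak{gl}_n)$ with the truncated Weil algebra of $\mathfrak{gl}_n$, in which the $\phi_i$ are precisely the primitive generators transgressing to the invariant polynomials $\Psi_{i+1}$ by the classical Chern--Weil transgression --- nonzero in particular for the top class. With $c\ne 0$ secured, a final rescaling gives $d_{2n}\phi_{2n-1}=\Psi_{2n}$, and multiplicativity propagates the differentials over all of $E_2$, completing the proof.
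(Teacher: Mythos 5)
Your reduction of the theorem to the transgressions, and your use of the Borel transgression theorem, match the paper's overall strategy, but the way you try to reach the upper generators has a genuine gap, and the paper closes it with a different (and simpler) device: stabilization. Applying Borel directly to the pair $(W_n,\mathfrak{g}_0)$ with $N=n-2$ only pins down $d_{i+1}\phi_i$ for $i+1\le n-1$, as you note. Your first patch, downward induction along $W_{n-1}\subset W_n$, runs into the problem that the induced morphism of Hochschild--Serre spectral sequences goes $E_r(W_n,\gl_n)\to E_r(W_{n-1},\gl_{n-1})$, so naturality lets you push information forward to $W_{n-1}$ but not pull it back to $W_n$: to conclude that $\phi_i$ is even transgressive upstairs you would need the restriction to be injective on every term $E_r^{r,i-r+1}(W_n)$ for $2\le r\le i$, and injectivity on $E_2$ does not survive passage to higher pages (a class in $\ker d_2$ upstairs can land in $\operatorname{im} d_2$ downstairs without being a boundary upstairs). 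Your second patch, for $\phi_{2n-1}$, defers to the input $H^{2n}(W_n)=0$ (via a Weil-algebra comparison that is nowhere developed in the paper); in the paper that vanishing is obtained only as a \emph{corollary} of the present theorem, so as written this step is either circular or requires importing a substantial external theory.

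The paper's proof instead stabilizes upward: it runs Borel for the pair $(W_{3n},\gl_{3n})$, where Guillemin--Shnider gives $H^k(W_{3n})=0$ for $1\le k\le 3n\ge 2n+1$, so the transgression theorem applies with $N=2n-1$ and determines $d_{i+1}\tilde\phi_i=\Psi_{i+1}$ for \emph{all} $i\in\{1,3,\dots,2n-1\}$, including the top one. The inclusion $W_n\hookrightarrow W_{3n}$ then induces $E_r(W_{3n},\gl_{3n})\to E_r(W_n,\gl_n)$, and here naturality works in the easy direction: transgressive classes map to transgressive classes, the $\Psi_i$ correspond under the explicit formulas of Lemma~\ref{lem:FinalShapeOfSecondPage}, and $\tilde\phi_i$ maps to a nonzero multiple of $\phi_i$ by Theorem~\ref{TheoremCohomologyOfGLn}. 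If you want to salvage your outline, replace both of your patches by this single upward comparison.
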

\begin{proof}
The form of the second page follows from Theorem~\ref{TheoremCohomologyOfGLn}, Lemma~\ref{lem:InitialShapeOfSecondPage}, and Lemma~\ref{lem:FinalShapeOfSecondPage}. It remains to show the statement about the differentials.
Consider the Hochschild-Serre spectral sequence for the pair of Lie algebras~$(W_{3n}, \gl_{3n}(\mathbb{R}))$. By Proposition~\ref{PropWnCohoZeroInLowDegree}, we know that~$H^k(W_{3n}) = 0$ in degrees~$k = 1,\dots, 3n$, and up to degree~$2n$, the zeroeth column of the spectral sequence is equal to the exterior algebra~$\Lambda^\bullet [\phi_1,\phi_3,\dots,\phi_{2n - 1}]$, the~$\phi_i$ being the generators of~$H^\bullet(\gl_{3n}(\mathbb{R}))$. Hence we can apply the Borel transgression theorem~\ref{thm:BorelTransgression} with~$N = 2n - 1$, implying that generators~$\tilde \phi_1,\dots,\tilde \phi_{2n-1}$ of the zeroeth column can be chosen so that
\begin{align}
d_{2i} \tilde \phi_i =  \Psi_{i+1}, \quad i \in \{1, 3,\dots,2n - 1\}.
\end{align}
Now the inclusion~$W_n \to W_{3n}$ induces a morphism from the Hochschild-Serre spectral sequence for the Lie-algebra-subalgebra pair~$(W_{3n}, \gl_{3n}(\mathbb{R}))$ to the spectral sequence for~$(W_{n}, \gl_{n}(\mathbb{R}))$. Under this morphism, the generators~$\Psi_i$ for~$W_{3n}$ restrict to equivalent ones for~$W_n$ by the explicit formula for them given in Lemma~\ref{lem:FinalShapeOfSecondPage}. 
\\
By functoriality of the Hochschild-Serre spectral sequence, the generators~$\tilde{\phi}_i$ must then restrict to something nonzero in the space~$E^{0,2i - 1}_{2i}$ of the spectral sequence for~$W_n$. By an inductive argument, this vector space is one-dimensional and generated by the generator~$\phi_i$, so the image of~$\tilde{\phi}_i$ under this morphism must be a nonzero multiple of~$\phi_i$. Hence, up to a nonzero constant, we have in the spectral sequence for~$W_n$ that
\begin{align}
d_{2i} \phi_i = \Psi_{i+1}, \quad i \in \{1,3,\dots,2n - 1\}.
\end{align}
This proves that all generators~$\phi_1,\dots,\phi_{2n-1}$ in the spectral sequence for~$W_n$ map as desired. Since the differential of the Hochschild-Serre spectral sequence is multiplicative and all pages are generated by the~$\phi_i$ and the~$\Psi_i$, this fully specifies the differential on every page.
\end{proof}
\begin{remark}
In \cite{fuks1984cohomology}, the above argument is carried out with the Lie\--al\-ge\-bra\--sub\-al\-ge\-bra pair~$(W_{2n},\gl_{2n}(\mathbb{R}))$ rather than~$(W_{3n},\gl_{3n}(\mathbb{R}))$, which would not fulfil the requirements of the version of the Borel transgression theorem we use here. 
\end{remark}
\def\sseqpacking{\sspackhorizontal}
\begin{figure}
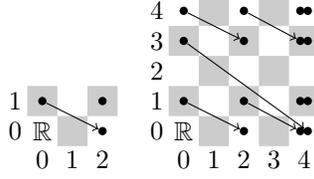

\centering
\begin{sseq}[grid=chess,labelstep=1]{0...2}{0...1}
\ssdrop{\mathbb{R}}
\ssmove 0 1
\ssdropbull
\ssarrow{2}{-1}
\ssdropbull
\ssmove 0 1
\ssdropbull
\end{sseq}
\begin{sseq}[grid=chess,labelstep=1]{0...4}{0...4}
\ssdrop{\mathbb{R}}
\ssmove 0 1
\ssdropbull
\ssarrow{2}{-1}
\ssdropbull
\ssmove 0 1
\ssdropbull
\ssarrow{2}{-1}
\ssdropbull 
\ssdropbull \ssname{b}
\ssmove 0 1
\ssdropbull 
\ssdropbull 
\ssmoveto 0 3
\ssdropbull \ssname{a}
\ssmove 0 1
\ssdropbull
\ssarrow{2}{-1}
\ssdropbull
\ssmove 0 1
\ssdropbull
\ssarrow{2}{-1}
\ssdropbull
\ssdropbull 
\ssmove 0 1
\ssdropbull
\ssdropbull
\ssgoto{a}
\ssgoto{b}
\ssstroke[arrowto]
\end{sseq}
\caption{The spectral sequences for~$W_1$ and~$W_2$, with nonvanishing differentials indicated. Every dot represents one basis element of the term in the given position.
The cohomology of~$W_1$ is only nontrivial in degree~$0$ and~$3$, whereas the cohomology of~$W_2$ is nontrivial in degree~$0, 5,7$ and~$8$, degree~$5$ and~$8$ having multiplicity 2.}
\end{figure}
This allows one to fully calculate the dimensions of~$H^\bullet(W_n)$ in all degrees and even offers some insight into the behavior of representatives of the cohomology classes.
We are going to sumarize the most important properties of~$H^\bullet(W_n)$ in the following corollary:
\begin{corollary}
\label{CorollaryWedgeProductOfFormalsIsZero}
The space~$H^k(W_n)$ is trivial when~$1 \leq k \leq 2n$ or~$k > n^2 + 2n$. The wedge product of two cohomology classes of positive degree in~$H^\bullet(W_n)$ is zero.
\end{corollary}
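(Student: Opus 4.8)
My plan is to read all three assertions off the completely determined spectral sequence of Theorem~\ref{TheoremCohomologyOfWn}. It is convenient to repackage its $E_2$-page and differentials as the bigraded commutative differential graded algebra
\[
A := \left( \mathbb{R}[\Psi_2,\Psi_4,\dots,\Psi_{2n}]/I \right) \otimes \Lambda^\bullet[\phi_1,\phi_3,\dots,\phi_{2n-1}], \qquad D\phi_{2j-1} = \Psi_{2j},
\]
where $I$ is the ideal of $\Psi$-monomials of total degree $>2n$. Filtering $A$ by $\Psi$-degree reproduces exactly the given spectral sequence, so that $E_\infty$ is its associated graded, $\dim H^k(A) = \dim H^k(W_n)$ for every $k$, and the two ring structures agree up to the filtration.

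For the top-degree vanishing I would merely bound degrees on $E_2 = A$: the truncated base contributes total degree at most $2n$ and the exterior factor at most $1+3+\dots+(2n-1) = n^2$, so $A$, hence the subquotient $E_\infty$, is supported in total degree $\leq n^2 + 2n$, giving $H^k(W_n) = 0$ for $k > n^2+2n$. For the low-degree vanishing I would compare $A$ with the untruncated algebra $\tilde A = \mathbb{R}[\Psi_2,\dots,\Psi_{2n}] \otimes \Lambda^\bullet[\phi_1,\dots,\phi_{2n-1}]$, which is the Koszul complex of the regular sequence $(\Psi_2,\dots,\Psi_{2n})$ and is therefore acyclic in positive degrees. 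Since $I\tilde A$ is concentrated in total degrees $\geq 2n+2$, the projection $\tilde A \to A$ is an isomorphism of complexes in total degrees $\leq 2n+1$, whence $H^k(W_n) \cong H^k(A) = H^k(\tilde A) = 0$ for $1 \leq k \leq 2n$ (refining Proposition~\ref{PropWnCohoZeroInLowDegree}).

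The vanishing of products is the substantial point, and I would run it through the $\Psi$-degree filtration. This filtration is multiplicative, $F^p \cdot F^{p'} \subseteq F^{p+p'}$, and, because nothing survives in $\Psi$-degree exceeding $2n$, it satisfies $F^{2n+1}H^\bullet(W_n) = 0$. Hence it suffices to show that every class of positive degree has $\Psi$-filtration degree $p > n$: the product of two such classes then lies in $F^{p+p'} \subseteq F^{2n+1} = 0$. The content of this reduction is the claim that $E_\infty^{p,q} = 0$ whenever $0 < p \leq n$; that the two edges $E_\infty^{p,0}$ and $E_\infty^{0,q}$ collapse to $\mathbb{R}$ is immediate, since each $\phi_{2j-1}$ transgresses onto $\Psi_{2j}$ and the $\Psi_{2j}$ generate the augmentation ideal of the base.

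The main obstacle is exactly this filtration estimate, and I do not expect it to follow from crude degree or weight counting. For $n\geq 3$ there are genuine positive classes strictly below the top filtration $p=2n$ — for instance $[\Psi_4\phi_3]$ when $n=3$ has $p=4$ — so one cannot argue that positive cohomology concentrates in top $\Psi$-degree; one really needs the weaker bound $p>n$. The engine for this is the Koszul relation $D(\phi_{2i-1}\phi_{2j-1}) = \Psi_{2i}\phi_{2j-1} - \Psi_{2j}\phi_{2i-1}$, which lets one trade a factor $\Psi_{2i}$ against a free odd generator $\phi_{2j-1}$ with $j>i$ to raise the $\Psi$-degree of a representative; iterating such trades, every positive class is cohomologous to one using more than half of the available $\Psi$-degree. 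I would make this precise by a direct analysis of which monomials survive on $E_\infty$, i.e. of the Koszul homology of the truncated ring $\mathbb{R}[\Psi_2,\dots,\Psi_{2n}]/I$, showing it is concentrated above the middle $\Psi$-degree, and would dispatch any residual low-filtration pairs using the exterior relation $\phi_{2j-1}^2 = 0$. Establishing the bound $p>n$ uniformly in $n$ is where essentially all of the difficulty lies.
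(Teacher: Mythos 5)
Your overall strategy is the same as the paper's: read everything off the fully determined spectral sequence of Theorem~\ref{TheoremCohomologyOfWn}, get the top bound $k>n^2+2n$ from the support of the $E_2$-page, and reduce the vanishing of products to the claim that $E_\infty^{p,q}=0$ whenever $0<p\leq n$, so that every positive-degree class has filtration $p>n$ and any product lands in filtration $p+p'>2n$, which is empty. Your treatment of the low-degree vanishing via the surjection from the untruncated Koszul complex $\mathbb{R}[\Psi_2,\dots,\Psi_{2n}]\otimes\Lambda^\bullet[\phi_1,\dots,\phi_{2n-1}]$ (acyclic in positive degrees, with kernel concentrated in total degree $\geq 2n+2$) is a clean and correct alternative to the paper's route, which extracts the same bound from the monomial analysis described below.

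However, there is a genuine gap at exactly the point you flag: the claim $E_\infty^{p,q}=0$ for $0<p\leq n$ is asserted but not proved; you only sketch a ``trading'' heuristic and defer to ``a direct analysis of which monomials survive.'' Since this claim carries the entire product statement (and, in the paper, also the low-degree vanishing), the proposal is incomplete as it stands. You are also too pessimistic about what closes it: the paper settles it by a short degree count that exploits the ordering of the indices. Write a basis element of $E_2^{p,q}$ as $\phi_{i_1}\cdots\phi_{i_s}\Psi_{j_1}^{m_1}\cdots\Psi_{j_t}^{m_t}$ with $i_1<\dots<i_s$ and $j_1<\dots<j_t$. If $s=0$ or $i_1>j_1$, the monomial is the image of $\phi_{j_1-1}\phi_{i_1}\cdots\phi_{i_s}\Psi_{j_1}^{m_1-1}\cdots$ under the transgression of $\phi_{j_1-1}$, hence dies as a boundary. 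Otherwise ($t=0$ or $i_1<j_1$) the transgression of $\phi_{i_1}$ sends the monomial to $\phi_{i_2}\cdots\phi_{i_s}\,\Psi_{i_1+1}\Psi_{j_1}^{m_1}\cdots\Psi_{j_t}^{m_t}$, and this image is nonzero in the truncated ring precisely when $i_1+1+p\leq 2n$; but in this case $i_1<j_1\leq p$, so $i_1+1\leq p$ and hence $i_1+1+p\leq 2p\leq 2n$ whenever $p\leq n$ (a parallel estimate using $i_1+1\leq i_1+\dots+i_s=q$ handles $p+q\leq 2n$). Since $i_1$ is odd and $j_1$ is even, one of the two cases always applies, so no monomial with $0<p\leq n$ survives to $E_\infty$. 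This is precisely the ``crude'' counting you dismissed; the only input you were missing is that the relevant $\Psi$-degree needed to kill a monomial is controlled by its \emph{smallest} odd index, which is itself bounded by $p$. If you adopt this argument you should also address the passage from single monomials to general linear combinations (the differential permutes the monomial basis injectively on the relevant subspaces, so no cancellation occurs), a point the paper itself glosses over.
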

\begin{proof}
Any element in~$E^{p,q}_2$ with~$(p,q) \neq (0,0)$ is a linear combination of terms of the form
\begin{align}
\label{eq:SpecialTermInSpectralSequence}
\phi_{i_1} \dots \phi_{i_s} \Psi^{m_1}_{j_1} \dots \Psi^{m_t}_{j_t},
\end{align}
where we have ordered the groups of indices so that~$i_1 < \dots < i_s$ and~$j_1 < \dots < j_t$. We further have
\begin{align}
i_1 + \dots + i_s = q, \quad m_1 j_1 + \dots + m_t j_t = p,
\end{align} 
and~$s$ and~$t$ are possibly zero, but not both at the same time. 
Theorem~\ref{TheoremCohomologyOfWn} shows:
\begin{itemize}
\item[i)] If~$s = 0$ or~$i_1 > j_1$, then~$\phi_{j_1 - 1} \phi_{i_1} \dots \phi_{i_s} \Psi^{m_1 - 1}_{j_1} \dots \Psi^{m_t}_{j_t}$ maps to the term \eqref{eq:SpecialTermInSpectralSequence} under the differential~$d_{2(j_1 - 1)}$.
\item[ii)] If~$t = 0$ or~$i_1 < j_1$, then the term \eqref{eq:SpecialTermInSpectralSequence} maps to~$\phi_{i_2} \dots \phi_{i_s} \Psi_{i_1 + 1} \Psi^{m_1}_{j_1} \dots \Psi^{m_t}_{j_t}$ under the differential~$d_{2 i_1}$. 
\end{itemize}
Since necessarily~$i_1$ is odd and~$j_1$ is even, one of i) or ii) must hold.
Let us show that under the assumptions of ii), and if~$p \leq n$ or~$p + q \leq 2n$, then the product~$\Psi_{i_1 + 1} \Psi^{m_1}_{j_1} \dots \Psi^{m_t}_{j_t}$ is nonzero. This is equivalent to showing
\begin{align}
i_1 + 1 + m_1 j_1 + \dots + m_t j_t \leq 2n.
\end{align}
If~$t = 0$, then this is trivial, so assume from here on~$t > 0$ and~$i_1 < j_1$.
If~$p \leq n$, then we know that
\begin{align}
i_1 < j_1 \leq m_1 j_1 + \dots + m_t j_t = p,
\end{align}
thus
\begin{align}
i_1 + 1 + m_1 j_1 + \dots + m_t j_t \leq 2p \leq 2n.
\end{align}
On the other hand, assume~$p + q \leq 2n$.
Note that~$s = 1$ is never the case, since~$i_1$ is always odd and~$q$ is always even. Hence assume~$s > 1$, so that~$i_1 + 1 \leq i_1 + \dots + i_s$. But then
\begin{align}
i_1 + 1 + m_1 j_1 + \dots + m_t j_t \leq
i_1 + \dots + i_s + m_1 j_1 + \dots + m_t j_t = q + p \leq 2n.
\end{align}
With this, we have shown that~$E^{p,q}_\infty$ is zero when~$0 < p + q \leq 2n$, implying that~$H^k(W_n)$ vanishes in degree~$0 < k \leq 2n$. Further, we have shown that~$E^{p,q}_\infty$ is zero when~$p \leq n$, so two cohomology classes of positive degree in~$H^\bullet(W_n)$ correspond to two equivalence classes in some spaces~$E^{p,q}_\infty, E^{p',q'}_\infty$ with~$p,p' > n$. By multiplicativity of the Hochschild-Serre spectral sequence, their product must then correspond to an equivalence class in~$E^{p + p', q + q'}_\infty$, which must be zero because~$p + p' > 2n$. This proves the corollary.
\end{proof}

\section{Gelfand--Fuks cohomology on Euclidean space}
\label{SectionLocalCohomology}
In this section, we calculate the Gelfand--Fuks cohomology~$H^\bullet(\mathfrak{X}(M))$ for~$M = \mathbb{R}^n$. We follow an elaborate outline by Bott, see \cite{bott1973lectures}. This approach will allow us to easily extend our proof to the Gelfand--Fuks cohomology of finite disjoint unions~$\mathbb{R}^n \sqcup \dots \sqcup \mathbb{R}^n$ and also to certain \emph{diagonal} cohomologies thereof, a concept which we introduce in Section~\ref{SectionGelfandFuksOnSmooths}.
\subsection{Definitions and calculation}
The Lie algebra of smooth vector fields~$\mathfrak{X}(\mathbb{R}^n)$ on Euclidean space is a locally convex Lie algebra with respect to the standard Fréchet topology. We are interested in its continuous Chevalley-Eilenberg cohomology with respect to this topology. We can express vector fields in the canonical coordinates
\begin{align}
\mathfrak{X}(\mathbb{R}^n) =
\left \{ \sum_{i=1}^n f_i \partial_i : 
f_i \in C^\infty(\mathbb{R}^n) \right \}
\end{align}
Let us again identify some structures:
\begin{definition}
Let~$t > 0$. We define the family of scaling operators ~$\{T_t\}_{t > 0}$ as
\begin{align}
T_t : \mathbb{R}^n \to \mathbb{R}^n, 
\quad
x \mapsto tx.
\end{align}
\end{definition}
We adopt the notation that if~$\phi : \mathbb{R}^n \to \mathbb{R}^n$ is a local diffeomorphism, then we denote by~$\phi^*$ its pullback on vector fields~$\mathfrak{X}(\mathbb{R}^n)$
\begin{align}
\phi^* (X) := (d \phi)^{-1} (X \circ \phi). 
\end{align}
We overload our notation and also write~$\phi^*$ for the pullback on cochains~$C^k(\mathfrak{X}(\mathbb{R}^n))$ for all~$k \geq 0$:
\begin{gather}
\label{eq:PullbackOfCochainsAlongLocalDiffeos}
(\phi^* c)(X_1,\dots,X_k) := c(\phi^* X_1, \dots, \phi^* X_k), \quad \forall k \geq 0.
\end{gather}
Note that for~$c \in C^0(\mathfrak{X}(\mathbb{R}^n))$ this amounts to~$\phi^*c = c$.
\\
For the scaling operators~$\{T_t\}_{t > 0}$, this translates to
\begin{gather*}
T_t^* X = \frac 1 t (X \circ T_{t}) \quad \forall X \in \mathfrak{X}(\mathbb{R}^n) , \\
(T_t^* c)(X_1,\dots,X_k) := \frac{1}{t^k} c(X_1 \circ T_{t}, \dots, X_k \circ T_{t}) \quad \forall c \in C^k(\mathfrak{X}(\mathbb{R}^n).
\end{gather*}
\begin{definition}
We define the subspace~$\mathfrak{X}_{\pol}(\mathbb{R}^n) \subset \mathfrak{X}(\mathbb{R}^n)$ of \emph{polynomial vector fields}
\begin{align}
\mathfrak{X}_{\pol}(\mathbb{R}^n) := 
\left \{ 
\sum_{i=1}^n f_i  \partial_i : 
f_i \in \mathbb{R}[x_1,\dots,x_n] 
\right \}.
\end{align} 
It admits the structure of a graded Lie algebra
\begin{align}
\mathfrak{X}_{\pol}(\mathbb{R}^n) = \bigoplus_{k \in \mathbb{Z}} P_k , \quad
P_k := \{
X \in  \mathfrak{X}_\mathrm{pol}(\mathbb{R}^n) : T_t^* X = t^k X \quad \forall t > 0 \}.
\end{align}
Elements of~$P_k$ are called \emph{homogeneous} vector fields (of degree~$k$).
\end{definition}
\begin{remark}
Compare this to the grading~$W_n = \bigoplus_{k \in \mathbb{Z}} \frakg_k$ on formal vector fields. The sets~$P_k$ are the images of the natural embeddings~$\mathfrak{g}_k \to \mathfrak{X}(\mathbb{R}^n)$ that we get from considering finite formal vector fields in~$W_n$ as polynomial vector fields on~$\mathbb{R}^n$.
\end{remark}
\begin{definition}
For all~$k \in \mathbb{Z}, q \geq 0$, define
\begin{align}
F^k C^q (\mathfrak{X}(\mathbb{R}^n)) 
:= 
\left\lbrace
c \in C^q(\mathfrak{X}(\mathbb{R}^n)) 
: 
\lim_{t \to 0} t^{-k} T_t^* c(X_1,\dots,X_q) 
\text{ exists } \forall X_i \in \mathfrak{X}(\mathbb{R}^n) 
\right\rbrace. 
\end{align}
\end{definition}
\begin{lemma}
\label{LemmaPropertiesOfRnFiltration}
The spaces~$F^k C^\bullet (\mathfrak{X}(\mathbb{R}^n))$ for~$k \in \mathbb{Z}$ fulfil the following properties:
\begin{itemize}
\item[i)] For every~$k \in \mathbb{Z}$, the space~$F^k C^\bullet (\mathfrak{X}(\mathbb{R}^n))$ is a subcomplex of~$C^\bullet (\mathfrak{X}(\mathbb{R}^n))$.
\item[ii)] We have a descending chain
\begin{align}
\dots 
\subset F^k C^\bullet (\mathfrak{X}(\mathbb{R}^n)) 
\subset F^{k-1} C^\bullet (\mathfrak{X}(\mathbb{R}^n))
\subset F^{k-2} C^\bullet (\mathfrak{X}(\mathbb{R}^n))
\subset
\dots
\end{align}
\item[iii)] For all~$k,l \in \mathbb{Z}$ we have
 \begin{align}
F^k C^\bullet (\mathfrak{X}(\mathbb{R}^n)) \wedge
F^l C^\bullet (\mathfrak{X}(\mathbb{R}^n))
\subset
F^{k+l} C^\bullet (\mathfrak{X}(\mathbb{R}^n)).
 \end{align}
 \item[iv)] For~$k \leq -n$ we have~$F^k C^\bullet (\mathfrak{X}(\mathbb{R}^n)) = C^\bullet (\mathfrak{X}(\mathbb{R}^n))$.
\end{itemize}
Summarizing,~$\{F^k C^\bullet (\mathfrak{X}(\mathbb{R}^n)) \}_{k \in \in \mathbb{Z}}$ constitutes a descending filtration of the Che\-val\-ley\--Ei\-len\-berg complex~$C^\bullet(\mathfrak{X}(\mathbb{R}^n))$ which is bounded from above.
\end{lemma}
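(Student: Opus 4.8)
\textit{Overview.} The four statements separate into the structural claims (i) and (iii), the elementary monotonicity (ii), and the genuine content (iv). Throughout, the decisive computation is that $T_t^*$ acts diagonally on the grading of~$\mathfrak{X}_{\pol}(\mathbb{R}^n)$: for homogeneous~$X_i \in P_{d_i}$ one has $T_t^* X_i = t^{d_i} X_i$, hence $X_i \circ T_t = t^{d_i + 1} X_i$, so that
\[
(T_t^* c)(X_1, \dots, X_q) = t^{-q}\, c(X_1 \circ T_t, \dots, X_q \circ T_t) = t^{\sum_i d_i}\, c(X_1, \dots, X_q).
\]
Thus on homogeneous arguments $t^{-k} T_t^* c$ is the explicit monomial $t^{\sum_i d_i - k}\, c(X_1, \dots, X_q)$, and the existence of the limit as~$t \to 0$ is governed entirely by the sign of~$\sum_i d_i - k$.

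\textit{Parts (i), (ii), (iii).} For (i) and (iii) I would first record the two standard facts that, since~$T_t$ is a diffeomorphism, the induced map~$T_t^*$ on~$\mathfrak{X}(\mathbb{R}^n)$ is a continuous Lie algebra automorphism, and hence the induced~$T_t^*$ on~$C^\bullet(\mathfrak{X}(\mathbb{R}^n))$ is a morphism of differential graded algebras: it commutes with~$d$ and satisfies $T_t^*(c_1 \wedge c_2) = (T_t^* c_1) \wedge (T_t^* c_2)$. For (i) I would write $t^{-k} T_t^*(dc) = d(t^{-k} T_t^* c)$ and evaluate on fixed vector fields; the Chevalley--Eilenberg formula expresses the result as a \emph{finite} sum of terms $t^{-k}(T_t^* c)([X_i, X_j], X_1, \dots)$, each of which converges because~$c \in F^k C^q$, and a finite sum of convergent limits converges. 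For (iii) the same device gives $t^{-(k+l)}T_t^*(c_1 \wedge c_2) = (t^{-k}T_t^* c_1) \wedge (t^{-l} T_t^* c_2)$, and the (finite) wedge-product formula expresses the evaluation as a finite sum of products of one factor converging by~$c_1 \in F^k$ and one by~$c_2 \in F^l$. For (ii) it suffices to note $t^{-(k-1)}T_t^* c = t \cdot (t^{-k} T_t^* c)$, so if the bracket converges, the product converges (to~$0$).

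\textit{Part (iv).} This is where I expect the main difficulty. The plan is to Taylor expand each~$X_i \circ T_t$ at the origin: for a large integer~$D$,
\[
X_i \circ T_t = \sum_{d=-1}^{D} t^{d+1} X_i^{(d)} + t^{D+2} R_i^{(t)},
\]
where~$X_i^{(d)} \in P_d$ is the degree-$d$ homogeneous Taylor part of~$X_i$, and, by Taylor's theorem with integral remainder,~$R_i^{(t)}$ is a vector field depending smoothly on~$(x,t)$ and of uniformly bounded seminorm for~$t \in [0,1]$. Expanding the multilinear~$c$ over these sums, every term containing at least one remainder factor carries a power $t^{\geq D+2}$ against a coefficient that stays bounded as~$t \to 0$ (here one uses continuity of~$c$ to dominate it by a product of seminorms), hence vanishes in the limit once~$D$ exceeds~$q + k$. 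The surviving purely homogeneous terms contribute $t^{\sum_i d_i - k}\, c(X_1^{(d_1)}, \dots, X_q^{(d_q)})$.

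\textit{Conclusion of (iv).} To finish I would invoke skew-symmetry: the degree-$(-1)$ part~$P_{-1} \cong \mathbb{R}^n$ is the~$n$-dimensional space of constant vector fields, so if more than~$n$ of the arguments~$X_i^{(d_i)}$ lie in~$P_{-1}$ they are linearly dependent and $c(X_1^{(d_1)}, \dots, X_q^{(d_q)}) = 0$. Hence any nonvanishing homogeneous term has at most~$n$ indices with~$d_i = -1$ and the rest with~$d_i \geq 0$, forcing~$\sum_i d_i \geq -n$. When~$k \leq -n$ this yields $\sum_i d_i - k \geq 0$ for every surviving term, so each contributes a nonnegative power of~$t$ and the limit exists; thus~$c \in F^k C^q$ for all~$q$, i.e. $F^k C^\bullet(\mathfrak{X}(\mathbb{R}^n)) = C^\bullet(\mathfrak{X}(\mathbb{R}^n))$. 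The technical crux is the uniform boundedness of the remainder contributions, which is exactly where the continuity hypothesis on cochains enters; everything else is bookkeeping in the scaling weight~$\sum_i d_i - k$.
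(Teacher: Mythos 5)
Your proposal is correct, and parts (i)--(iii) coincide with the paper's proof: commutation of~$T_t^*$ with the differential and the wedge product, plus the trivial observation that an existing limit of~$t^{-k}f(t)$ forces~$t^{-(k-1)}f(t) \to 0$. For part (iv) you take a genuinely different decomposition. The paper applies the Hadamard lemma exactly once, writing~$X_k(x) = X_k(0) + \sum_i x_i X_k^{(i)}(x)$, so that~$T_t^* X_k = \tfrac{1}{t}X_k(0) + \sum_i x_i X_k^{(i)}(tx)$; the non-constant summands converge in~$\mathfrak{X}(\mathbb{R}^n)$ (uniformly with all derivatives on compacta) as~$t \to 0$, so only sequential continuity of~$c$ is needed, and the sole source of negative powers of~$t$ is the constant part. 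You instead Taylor-expand each argument to a high order~$D$ with an explicit remainder, which forces you to invoke the quantitative form of continuity (domination of~$|c|$ by a product of seminorms on the Fr\'echet space) to kill the remainder terms. Both arguments then close identically: skew-symmetry of~$c$ together with~$\dim P_{-1} = n$ caps the number of degree~$(-1)$ arguments at~$n$, bounding the pole order by~$t^{-n}$ and giving the claim for~$k \leq -n$. Your heavier expansion is not wasted effort, though --- it is essentially the device the paper deploys later in the proof of Lemma~\ref{LemmaVecFieldsRnFiltrationCharacterization}, where the Hadamard lemma is iterated~$r$ times for exactly the reasons you anticipate; for statement (iv) alone the single first-order splitting is the more economical route.
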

\begin{proof}
\textit{i)}
Pulling back vector fields along local diffeomorphisms is a Lie algebra homomorphism, so
\begin{align}
T_t^* [X,Y] = [T_t^* X, T_t^* Y] \quad \forall X, Y \in \mathfrak{X}(\mathbb{R}^n).
\end{align}
Hence, the pullback on cochains~$T_t^*$ commutes with the Lie algebra differential, so if the appropriate limits exist for a cochain~$c$, they also do for~$dc$. Thus the~$F^k C^\bullet(\mathfrak{X}(\mathbb{R}^n))$ are indeed subcomplexes.
\\
\textit{ii)} 
This follows since if the limit~$\lim_{t \to 0} t^{-k} f(t)$ exists for some function~$t \mapsto f(t)$, so does~$\lim_{t \to 0} t^{-(k-1)}f(t) = 0$.
\\
\textit{iii)}
The compatibility with the wedge product follows from 
\begin{align}
T_t^* (c_1 \wedge c_2) = T_t^* c_1 \wedge T_t^* c_2 \quad \forall c_1, c_2 \in C^\bullet(\mathfrak{X}(\mathbb{R}^n)).
\end{align}
\textit{iv)}
In degree zero the statement is clear, since
\begin{align}
F^k C^0(\mathfrak{X}(\mathbb{R}^n)) = C^0(\mathfrak{X}(\mathbb{R}^n))
\quad
\forall k \leq 0.
\end{align}
Assume now~$q > 0$. Fix~$c \in C^q(\mathfrak{X}(\mathbb{R}^n))$ and~$X_1,\dots, X_q \in \mathfrak{X}(\mathbb{R}^n)$.
Applying the Hadamard lemma to every one of the~$X_i$ shows that there are vector fields~$X_k^{(i)}$ for all~$k = 1,\dots,q$ and~$i = 1,\dots,n$ so that
\begin{align}
X_{k}(x) = X_{k}(0) + \sum_{i=1}^n x_i  X_{k}^{(i)}(x), \quad \forall x \in \mathbb{R}^n.
\end{align}
Then
\begin{align}
T_t^* X_{k}(x) = \frac{1}{t} X_{k}(0) + \sum_{i=1}^n x_i X_{k}^{(i)}(tx) \quad \forall x \in \mathbb{R}^n.
\end{align}
Hence we can rewrite
\begin{align}
T_t^* c (X_1,\dots,X_q) =
c \left( 
\frac{1}{t} X_{1}(0) + \sum_{i=1}^n x_i  X_{1}^{(i)}(tx),
\dots,
\frac{1}{t} X_{q}(0) + \sum_{i=1}^n x_i  X_{q}^{(i)}(tx) \right).
\end{align}
Decomposing this expression using multilinearity of~$c$, we find that all the terms whose order in~$t$ is lower than~$-n$ have to vanish, since, for any collection of indices $\{i_1,\dots,i_{n+1} \} \subset \{1,\dots,q\}$, the set~$\{X_{i_1}(0), \dots, X_{i_{n+1}}(0)\}$is necessarily linearly dependent and~$c$ is skew-symmetric. Note also that on any compact set in~$\mathbb{R}^n$, the vector fields~$x \mapsto x_i X_{k}^{(i)}(tx)$ converge uniformly to the vector field~$x \mapsto x_i X_k^{(i)}(0)$ for~$t \to 0$, and the same holds for their derivatives.
Combining the two previous facts, the continuity of~$c$ lets us conclude that the limit~$\lim_{t \to 0} \tfrac{1}{t^{-n}} T_t^* c (X_1,\dots,X_q)$ exists. This proves the statement.
\end{proof}
The analysis at the end of the previous proof motivates a different characterization of the filtration:
\begin{lemma}
\label{LemmaVecFieldsRnFiltrationCharacterization}
Let~$q  > 0$. A cochain~$c \in C^q(\mathfrak{X}(\mathbb{R}^n))$ lies in~$F^k C^q(\mathfrak{X}(\mathbb{R}^n))$ if and only if for all homogeneous vector fields~$X_1,\dots,X_q \in \mathfrak{X}_{\pol}(\mathbb{R}^n)$ we have
\begin{align}
\sum_{i=1}^q \deg X_i < k \implies c(X_1,\dots,X_k) = 0. 
\end{align}
\end{lemma}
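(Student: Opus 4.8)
The plan is to reduce the whole statement to the single observation that a homogeneous polynomial vector field $X \in P_j$ satisfies $T_t^* X = t^j X$ by definition, so that by multilinearity of $c$ one has, for homogeneous $X_1,\dots,X_q \in \mathfrak{X}_{\pol}(\mathbb{R}^n)$,
\[
(T_t^* c)(X_1,\dots,X_q) = t^{\sum_i \deg X_i}\, c(X_1,\dots,X_q).
\]
The ``only if'' direction is then immediate: if $c \in F^k C^q(\mathfrak{X}(\mathbb{R}^n))$ and the $X_i$ are homogeneous with $\sum_i \deg X_i < k$, then $t^{-k}(T_t^* c)(X_1,\dots,X_q) = t^{\sum_i \deg X_i - k}\,c(X_1,\dots,X_q)$ has strictly negative exponent, and the required limit as $t \to 0$ can exist only if the coefficient $c(X_1,\dots,X_q)$ vanishes.

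For the substantive ``if'' direction I would argue by a Taylor/Hadamard expansion of the arguments, exactly in the spirit of the proof of Lemma~\ref{LemmaPropertiesOfRnFiltration}(iv). Fix $N$ (to be chosen large) and write each $X_i = \sum_{j=-1}^{N-1} P_i^{(j)} + \rho_i$, where $P_i^{(j)} \in P_j$ is the homogeneous vector field assembled from the degree-$(j+1)$ Taylor terms of the coefficient functions of $X_i$, and $\rho_i = \sum_{|\beta| = N+1} x^\beta R_{i,\beta}(x)$ is a Hadamard remainder with smooth vector fields $R_{i,\beta}$. The scaling acts on the remainder by $T_t^* \rho_i = t^{N} \tilde\rho_i(t,\cdot)$, where $\tilde\rho_i(t,x) = \sum_{|\beta|=N+1} x^\beta R_{i,\beta}(tx)$ converges, as $t \to 0$, in the Fréchet topology to the fixed vector field $\sum_{\beta} x^\beta R_{i,\beta}(0)$ (uniform convergence on compacta together with all derivatives), precisely as established in that earlier proof.

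I would then expand $(T_t^* c)(X_1,\dots,X_q) = c(T_t^* X_1,\dots,T_t^* X_q)$ multilinearly. Each summand selects, in each slot $i$, either a homogeneous piece $t^{j_i} P_i^{(j_i)}$ or the remainder $t^{N}\tilde\rho_i(t,\cdot)$. A summand built only from homogeneous pieces equals $t^{\sum_i j_i}\,c(P_1^{(j_1)},\dots,P_q^{(j_q)})$, which by the hypothesis is zero unless $\sum_i j_i \geq k$; in the surviving case $t^{-k}$ times it carries a nonnegative power of $t$ and hence has a limit. A summand using $m \geq 1$ remainder slots carries a power of $t$ at least $m(N+1) - q \geq (N+1) - q$ (the extreme case being one remainder against $q-1$ slots of degree $-1$), and its coefficient converges by continuity of $c$ and the Fréchet convergence of the $\tilde\rho_i(t,\cdot)$. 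Choosing $N \geq k + q - 1$ forces this power to be at least $k$, so $t^{-k}$ times each such summand again converges. Summing the finitely many terms shows $\lim_{t\to 0} t^{-k}(T_t^* c)(X_1,\dots,X_q)$ exists for arbitrary smooth $X_i$, i.e.\ $c \in F^k C^q(\mathfrak{X}(\mathbb{R}^n))$.

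The main obstacle is the bookkeeping in this second direction: one must track the exact power of $t$ attached to every term of the multilinear expansion and verify that the remainder contributions can be pushed to arbitrarily high order in $t$ by enlarging $N$, so that the only terms of genuinely low order are the purely homogeneous ones, which are exactly those annihilated by the hypothesis. This refines the degree-counting already used in Lemma~\ref{LemmaPropertiesOfRnFiltration}(iv), the one delicate point being the justification that the rescaled remainders $\tilde\rho_i(t,\cdot)$ converge in the topology in which $c$ is continuous.
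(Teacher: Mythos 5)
Your proposal is correct and follows essentially the same route as the paper's proof: the forward direction via the scaling identity on homogeneous fields, and the converse via an iterated Hadamard expansion to order roughly $q+k$, separating the purely polynomial terms (killed or rendered harmless by the hypothesis) from the remainder terms (pushed to nonnegative powers of $t$ by the choice of expansion order, with convergence of the rescaled remainders in the Fréchet topology). The only cosmetic difference is that you decompose the polynomial part into homogeneous components from the outset, whereas the paper keeps it as a single block and only decomposes it in the remainder-free case.
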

\begin{proof}
$\Rightarrow:$ Let~$c \in F^k C^q(\mathfrak{X}(\mathbb{R}^n))$, and let~$X_1,\dots,X_q \in \mathfrak{X}_{\pol}(\mathbb{R}^n)$ be any homogeneous polynomial vector fields. Then
\begin{align}
\label{eq:ScalingOnHomogeneousVectorFields}
t^{-k} T_t^* c
(X_1,\dots,X_q) = t^{\left( \sum_{i=1}^q \deg X_i \right) - k} c(X_1,\dots,X_q).
\end{align}
If~$\sum_{i=1}^q \deg X_i  < k$, this can only converge to a finite value in the limit~$t \to 0$ if~$c$ vanishes on $(X_1,\dots,X_q)$. This proves the first implication.
\\
$\Leftarrow:$
Let~$c \in C^q(\mathfrak{X}(\mathbb{R}^n))$ be so that it vanishes on all homogeneous vector fields whose degree adds to a value smaller~$k$.
Set~$r := \max \{q + k, 1\}$. Given any vector fields~$X_1,\dots,X_q \in \mathfrak{X}(\mathbb{R}^n)$, apply the Hadamard lemma to each of them~$r$ times to write, in multiindex notation,
\begin{align}
X_k (x) = Y_k(x) 
+ 
\sum_{ \substack{\vec\alpha \in \mathbb{N}^{n}_0 \\ |\vec{\alpha}| =  r + 1} } 
x_{1}^{\alpha_1} \dots x_{n}^{\alpha_n} Z_k^{\vec \alpha}(x)
=: Y_k(x) + Z_k(x),
\end{align}
where~$Y_k$ is a polynomial vector field whose homogeneous components are of degree~$\leq r - 1$, and~$Z_k^{\vec \alpha} \in \mathfrak{X}(\mathbb{R}^n)$. 
Using multilinearity of~$c$, decompose~$T_t^* c(X_1,\dots,X_q)$ into summands of the form~$\tfrac{1}{t^k} T_t^* c$ with all arguments being some~$Y_k$ or some~$Z_k$ for~$k = 1,\dots,q$.
The limits~$\lim_{t \to 0} t T_t^* Y_k$ and~$\lim_{t \to 0} t^{-r} T_t^* Z_k$ in~$\mathfrak{X}(\mathbb{R}^n)$ exist for all~$k = 1,\dots,q$. Thus any summand in the decomposition of~$\tfrac{1}{t^k}  \left( T_t^* c \right)(X_1,\dots,X_q)$ is of the following form for a certain~$s \geq 0$ and certain~$i_1,\dots,i_s,j_1,\dots,j_{q-s} \in \{1,\dots,q\}$,:
\begin{equation}
\begin{aligned}
\frac{1}{t^k} \left( T_t^* c \right)(&Z_{i_1},\dots,Z_{i_s},Y_{j_1},\dots,Y_{j_{q - s}})
\\
&=
t^{r s - (q - s) - k} 
(T_t^* c)( t^{-r} Z_{i_1},
\dots,
t^{-r}  Z_{i_s}, 
t Y_{j_1},
\dots,
t Y_{j_{q-s}}).
\end{aligned}
\end{equation}
If~$s \geq 1$, then we have due to~$r \geq q + k$,
\begin{align}
r s - (q - s) - k \geq s \geq 1,
\end{align}
and the limit~$t \to 0$ exists.\\
If~$s = 0$, then the summand is of the form~$\tfrac{1}{t^k} T_t^* c (Y_1,\dots,Y_q)$. The~$Y_k$ are polynomial vector fields, so we may use multilinearity to decompose this term so that we get terms of~$\tfrac{1}{t^k} T_t^* c~$ whose arguments are homogeneous polynomial vector fields. In every such summand,~$\tfrac{1}{t^k} T_t^* c$ can be replaced by~$t^{\Sigma-k} c$, where~$\Sigma$ is the sum of the degrees of inserted homogeneous vector fields. By assumption on~$c$, every summand where~$\Sigma < k$ must vanish. 
This implies that as~$t \to 0$, the term~$\tfrac{1}{t^k} T_t^* c (Y_1,\dots,Y_k)$ converges to a finite value. This concludes the proof.
\end{proof}
\subsection{Gelfand--Fuks cohomology of Euclidean space}
Let us now connect the Lie algebra~$\mathfrak{X}(\mathbb{R}^n)$ to the Lie algebra of formal vector fields~$W_n$. 
To this end, fix, for the rest of the section some local frame of vector fields around~$0 \in \mathbb{R}^n$ to induce an isomorphism~$J^\infty_0 \mathfrak{X}(\mathbb{R}^n) \stackrel{\sim}{\to} W_n$, cf. Remark~\ref{RemarkTransformationOfWn}, so that every elements of~$W_n$ can be written as the infinity-jet~$j^\infty_0 X$ at zero of some~$X \in \mathfrak{X}(\mathbb{R}^n)$.
\begin{definition}
\label{def:QuasiIsomorphismFromFiltrationToWn}
If~$X \in \mathfrak{X}(\mathbb{R}^n)$, denote by~$\tilde{X}^{(r)}$ the polynomial vector field corresponding to the~$r$-jet of~$X$ at zero. Define then for all~$k \in \mathbb{Z}$ the maps
\begin{gather}
\begin{gathered}
\gamma_k : 
F^k C^\bullet (\mathfrak{X}(\mathbb{R}^n)) 
\to 
C^\bullet_{(k)}(W_n), 
\\
(\gamma_k c) (j^\infty_0 X_1, \dots, j^\infty_0 X_q) 
:= 
\lim_{r \to \infty}
\lim_{t \to 0}
t^{-k} (T_t^* c')
(\tilde{X}^{(r)}_1 ,\dots, \tilde{X}^{(r)}_q ), 
\end{gathered}
\\
\beta_k : 
C^\bullet_{(k)}(W_n) 
\to F^{k} 
C^\bullet(\mathfrak{X}(\mathbb{R}^n)),
\quad
(\beta_k c')
(Y_1,\dots,Y_k) 
:= 
c' 
\left(j^\infty_0 Y_1,\dots, j^\infty_0 Y_k\right).
\end{gather}
for all~$X_1,\dots,X_q \in W_n,$~$Y_1,\dots,Y_q \in \mathfrak{X}(\mathbb{R}^n)$,~$c \in F^k C^q(\mathfrak{X}(\mathbb{R}^n))$ and~$c' \in C^q_{(k)}(W_n)$.
\end{definition}
\begin{lemma}
\label{lem:GammakAndBetakChainMaps}
The maps~$\beta_k$ and~$\gamma_k$ are well-defined chain maps with~$\gamma_k \circ \beta_k = \id$.
\end{lemma}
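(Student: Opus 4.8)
The plan is to first replace the double-limit definition of $\gamma_k$ by an explicit finite-sum formula, from which well-definedness and all remaining assertions follow cleanly. Fix $c \in F^k C^q(\mathfrak{X}(\mathbb{R}^n))$ and elements $w_i = j^\infty_0 X_i \in W_n$. Decomposing each truncation $\tilde X_i^{(r)}$ into homogeneous polynomial components $(\tilde X_i^{(r)})_{d}$ and using the scaling identity \eqref{eq:ScalingOnHomogeneousVectorFields} together with multilinearity, I would write
\[
t^{-k}(T_t^* c)(\tilde X_1^{(r)}, \dots, \tilde X_q^{(r)}) = \sum_{d_1, \dots, d_q} t^{(\sum_i d_i) - k}\, c\bigl((\tilde X_1^{(r)})_{d_1}, \dots, (\tilde X_q^{(r)})_{d_q}\bigr),
\]
where the index $d$ matches the grading on $\mathfrak{g}_d$, since a coefficient polynomial of degree $d+1$ has $T_t$-weight $d$. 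Lemma~\ref{LemmaVecFieldsRnFiltrationCharacterization} forces every summand with $\sum_i d_i < k$ to vanish, so the inner limit $t \to 0$ exists and retains exactly the terms with $\sum_i d_i = k$. Because each $d_i \ge -1$, these terms involve only components of bounded degree $-1 \le d_i \le k+q-1$, which stabilize once $r \ge k+q$; hence the outer limit exists and
\[
(\gamma_k c)(w_1, \dots, w_q) = \sum_{d_1 + \dots + d_q = k} c\bigl((w_1)_{d_1}, \dots, (w_q)_{d_q}\bigr),
\]
a finite sum depending only on the homogeneous components of the $w_i$, i.e.\ only on the jets. This simultaneously shows the limits exist, that $\gamma_k c$ is independent of the chosen smooth representatives, that it vanishes on homogeneous inputs whose degrees do not sum to $k$, and that it vanishes as soon as one argument has degree exceeding $k+q-1$; the last two facts place $\gamma_k c$ in $C^q_{(k)}(W_n)$, the degree bound supplying the required continuity.

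For $\beta_k$ I would invoke Lemma~\ref{LemmaVecFieldsRnFiltrationCharacterization} directly: on homogeneous polynomial vector fields $Y_i$ of degrees $e_i$, the jets $j^\infty_0 Y_i$ are homogeneous of the same degrees, so $(\beta_k c')(Y_1, \dots, Y_q) = c'(j^\infty_0 Y_1, \dots, j^\infty_0 Y_q)$ vanishes whenever $\sum_i e_i \neq k$, in particular whenever $\sum_i e_i < k$. Thus $\beta_k c' \in F^k C^q(\mathfrak{X}(\mathbb{R}^n))$, and continuity is automatic since $\beta_k c'$ factors through finitely many jet coordinates.

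The chain-map properties come next. For $\beta_k$, the infinity-jet map $j^\infty_0 : \mathfrak{X}(\mathbb{R}^n) \to W_n$ is a continuous Lie algebra homomorphism (Remark~\ref{RemarkTransformationOfWn}), and $\beta_k$ is precomposition with it; pullback of Chevalley--Eilenberg cochains along a Lie homomorphism commutes with the differential, so $\beta_k d = d\beta_k$. For $\gamma_k$ I would work from the finite-sum formula: the Chevalley--Eilenberg differential preserves the $T_t$-weight, since replacing two homogeneous arguments of degrees $d_i, d_j$ by their bracket in $\mathfrak{g}_{d_i + d_j}$ leaves the total degree unchanged. Expanding both $\gamma_k(dc)$ and $d(\gamma_k c)$ and matching the degree-$e$ component $\sum_{a+b=e}[(w_i)_a,(w_j)_b]$ of $[w_i,w_j]$ against the constraint $\sum_l d_l = k$ then identifies the two expressions term by term; all sums are finite by the same degree bounds, so the manipulation is legitimate.

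Finally, $\gamma_k \circ \beta_k = \id$ is immediate from the formula: for $c' \in C^q_{(k)}(W_n)$ the component $(w_i)_{d_i}$ is a polynomial vector field whose infinity-jet is itself, so $(\beta_k c')((w_1)_{d_1}, \dots) = c'((w_1)_{d_1}, \dots)$ and hence $(\gamma_k \beta_k c')(w_1, \dots, w_q) = \sum_{\sum d_i = k} c'((w_1)_{d_1}, \dots, (w_q)_{d_q})$. Since $c' \in C^q_{(k)}(W_n)$ vanishes on homogeneous inputs whose degrees do not sum to $k$, adjoining the (vanishing) terms with $\sum d_i \neq k$ and using multilinearity recovers $c'(w_1, \dots, w_q)$. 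The main obstacle is the first step --- making the double limit rigorous and verifying independence of the representative together with the correct target space --- after which the algebraic identities follow routinely.
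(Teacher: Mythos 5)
Your proposal is correct, and it rests on the same two pillars as the paper's proof: the scaling identity \eqref{eq:ScalingOnHomogeneousVectorFields} on homogeneous polynomial vector fields and the characterization of the filtration in Lemma~\ref{LemmaVecFieldsRnFiltrationCharacterization}. The organizational difference is that you first extract the explicit closed formula $(\gamma_k c)(w_1,\dots,w_q)=\sum_{d_1+\dots+d_q=k} c((w_1)_{d_1},\dots,(w_q)_{d_q})$ and then read off every assertion from it, whereas the paper argues more abstractly: existence of the inner limit comes straight from the definition of $F^k$, eventual constancy in $r$ from the fact that the limit cochain is concentrated in total degree $k$, the chain-map property of $\gamma_k$ from the observation that truncation to a Taylor polynomial and the pullback $T_t^*$ are both Lie algebra morphisms, and $\gamma_k\circ\beta_k=\id$ by evaluating on homogeneous formal vector fields of total degree $k$ (which suffices since both sides lie in $C^q_{(k)}(W_n)$). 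Your formula buys a fully mechanical verification of the composite identity and makes the independence of the representative and the landing in $C^q_{(k)}(W_n)$ completely transparent, at the cost of a more hands-on, degree-bookkeeping verification that $\gamma_k$ commutes with the differential; the paper's Lie-morphism argument for that step is shorter. One small point worth making explicit in your last step: expanding $c'(w_1,\dots,w_q)$ over the full (a priori infinite) homogeneous decomposition of each $w_i$ is legitimate precisely because continuity of $c'\in C^q(W_n)$ forces it to vanish once any argument has sufficiently high degree, so the sum is in fact finite.
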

\begin{proof}
Note first that~$\gamma_k$ is well-defined: By definition of the filtration, for every~$c \in F^k C^\bullet(\mathfrak{X}(\mathbb{R}^n))$ the pointwise limit~$\lim_{t \to 0} t^{-k} T_t^* c$ exists. Further, the sequence 
\begin{align}
\left(
\lim_{t \to 0}
t^{-k} (T_t^* c)
(\tilde{X}^{(r)}_1 ,\dots, \tilde{X}^{(r)}_q )
\right)_{r \in \mathbb{N}}
\end{align}
is eventually constant in~$r$, since the cochain~$\lim_{t \to 0} T_t^* c$ vanishes on homogeneous vector fields whose sum of degrees is larger than~$k$, cf.~\eqref{eq:ScalingOnHomogeneousVectorFields}.
Further, if~$c \in F^k C^q(\mathfrak{X}(\mathbb{R}^n))$, then by Lemma~\ref{LemmaVecFieldsRnFiltrationCharacterization},~$c$ vanishes on polynomial vector fields the sum of whose degrees is smaller than~$k$. Hence~$\gamma_k c \in C^q_{(k)}(W_n)$.\\
Analogously, if~$c \in C^k_{(r)}(W_n)$, then~$\beta_k c$ vanishes on polynomial vector fields whose sum of degrees is smaller than~$k$, hence Lemma~\ref{LemmaVecFieldsRnFiltrationCharacterization} implies~$\beta_k c \in F^k C^q(\mathfrak{X}(\mathbb{R}^n))$.
\\
The identification of a finite formal vector field~$X_n$ with its Taylor polynomial in~$\mathfrak{X}_\mathrm{pol}(\mathbb{R}^n)$ is a Lie algebra morphism, and so is the pullback of a vector field by the diffeomorphism~$T_t$. Hence~$\gamma_k$ is a chain map.
\\
The map~$\beta_k$ is a chain map since taking the infinite jet of a vector field at zero is a Lie algebra morphism~$\mathfrak{X}(\mathbb{R}^n) \to W_n$.
\\
It remains to check the composition of the two maps. Let us prove
\begin{align}
(\gamma_k \beta_k c)(j^\infty_0 X_1,\dots,j^\infty_0 X_q)
=
c(j^\infty_0 X_1,\dots,j^\infty_0 X_q)
\end{align}
for all $c \in C^q_{(k)}(W_n)$ and homogeneous formal vector fields~$j^\infty_0 X_1,\dots,j^\infty_0 X_q \in W_n$ whose degree sums up to~$k$. This suffices, since we have~$c, \gamma_k \beta_k c \in C^q_{(k)}(W_n)$, so both cochains vanish on all other homogeneous vector fields. 
\\
For a homogeneous vector field $X$ and sufficiently large~$r$, we have
\begin{align}
j^\infty_0 \tilde{X}^{(r)}
=
j^\infty_0 X 
\text{ and }
\deg \tilde{X}^{(r)}
=
\deg j^\infty_0 X.
\end{align}
Hence, since if~$j^\infty_0 X_1,\dots,j^\infty_0 X_q \in W_n$ are homogeneous formal vector fields whose degree sums up to~$k$, then
\begin{equation}
\begin{aligned}
(\gamma_k \beta_k c)(j^\infty_0 X_1,\dots,j^\infty_0 X_q)
&=
\lim_{r \to \infty}
c 
(j^\infty_0 \tilde{X}^{(r)}_1,\dots,j^\infty_0 \tilde{X}^{(r)}_q ) 
\\
&= 
c(j^\infty_0 X_1,\dots,j^\infty_0 X_q).
\end{aligned}
\end{equation}
This concludes the proof.
\end{proof}
\begin{corollary}
\label{cor:FiltrationRnExactSequence}
For every~$k \in \mathbb{Z}$,
\begin{align}
0 \to 
F^{k+1} C^\bullet(\mathfrak{X}(\mathbb{R}^n))
\hookrightarrow
F^{k} C^\bullet(\mathfrak{X}(\mathbb{R}^n))
\stackrel{\gamma_k}{\to}
C^\bullet_{(k)}(W_n)
\to 
0
\end{align}
is a split exact sequence of cochain complexes.
\end{corollary}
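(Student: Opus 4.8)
The plan is to deduce the claim almost entirely from the structural facts already established for $\beta_k$ and $\gamma_k$, leaving only the identification of $\ker\gamma_k$ as genuine work. First I would observe that all three maps are chain maps (the inclusion trivially, and $\gamma_k$, $\beta_k$ by Lemma~\ref{lem:GammakAndBetakChainMaps}), and that they all preserve the cohomological degree $q$, so it suffices to establish exactness termwise in each $q$. The inclusion $F^{k+1}C^q \hookrightarrow F^kC^q$ is injective by definition. Since $\gamma_k \circ \beta_k = \id$ by Lemma~\ref{lem:GammakAndBetakChainMaps}, the map $\gamma_k$ is surjective and $\beta_k$ furnishes a section; because $\beta_k$ is itself a chain map, this section splits the sequence, so the only substantive point is exactness in the middle, i.e.\ $\ker\gamma_k = F^{k+1}C^\bullet(\mathfrak{X}(\mathbb{R}^n))$.

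To identify the kernel I would reduce everything to homogeneous polynomial vector fields via Lemma~\ref{LemmaVecFieldsRnFiltrationCharacterization}. The key computational input is the scaling identity \eqref{eq:ScalingOnHomogeneousVectorFields}: for homogeneous $X_1,\dots,X_q$ one has $t^{-k}T_t^*c(X_1,\dots,X_q) = t^{(\sum_i \deg X_i) - k}\,c(X_1,\dots,X_q)$, so that for $c \in F^kC^q$ the value $(\gamma_k c)$ on the corresponding formal vector fields equals $c(X_1,\dots,X_q)$ when $\sum_i \deg X_i = k$ and vanishes when $\sum_i \deg X_i > k$ (the case $\sum_i \deg X_i < k$ already forcing $c$ itself to vanish). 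In other words $\gamma_k$ records exactly the ``total-degree-$k$ part'' of $c$, and since $\gamma_k c \in C^q_{(k)}(W_n)$ it is the zero cochain precisely when $c$ vanishes on all homogeneous tuples of total degree $k$.

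The two inclusions then fall out immediately. For $F^{k+1}C^\bullet \subseteq \ker\gamma_k$, any $c \in F^{k+1}C^q$ vanishes on homogeneous tuples of total degree $< k+1$, hence in particular of total degree exactly $k$, so $\gamma_k c = 0$. Conversely, if $c \in F^kC^q$ lies in $\ker\gamma_k$, the description of $\gamma_k$ above forces $c$ to vanish on homogeneous tuples of total degree exactly $k$; combined with the defining vanishing of $F^k$ on total degrees $< k$, this yields vanishing on all total degrees $\le k$, which is precisely the condition characterizing $c \in F^{k+1}C^q$ in Lemma~\ref{LemmaVecFieldsRnFiltrationCharacterization}.

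The main obstacle, such as it is, is purely bookkeeping: I must make sure that passing between a homogeneous formal vector field $j^\infty_0 X$ and its finite polynomial representative is harmless, i.e.\ that for homogeneous arguments the inner limit $t\to 0$ and the stabilizing limit $r\to\infty$ in the definition of $\gamma_k$ both reproduce $c$ on the polynomial representatives. This is exactly the stabilization already observed in the proof of Lemma~\ref{lem:GammakAndBetakChainMaps}, so no new estimate is required and the argument closes.
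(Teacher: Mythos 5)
Your proposal is correct and follows essentially the same route as the paper's proof: chain maps via Lemma~\ref{lem:GammakAndBetakChainMaps}, surjectivity from the splitting $\gamma_k \circ \beta_k = \id$, and middle exactness from the characterization of the filtration in Lemma~\ref{LemmaVecFieldsRnFiltrationCharacterization}. You merely spell out the kernel identification that the paper leaves implicit, and that computation is accurate.
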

\begin{proof}
Because $\{F^{k} C^\bullet(\mathfrak{X}(\mathbb{R}^n))\}_{k \in \mathbb{Z}}$ constitutes a filtration of~$C^\bullet(\mathfrak{X}(\mathbb{R}^n))$, the inclusion 
$F^{k+1} C^\bullet(\mathfrak{X}(\mathbb{R}^n))
\to
F^{k} C^\bullet(\mathfrak{X}(\mathbb{R}^n))$
is a chain map. The map~$\gamma_k$ is a chain map due to Lemma~\ref{lem:GammakAndBetakChainMaps}. Hence, the sequence is a sequence of chain complexes.
The injectivity of the first map is clear, and the second map is surjective, since it is split by~$\beta_k$. Exactness at the middle term follows from the characterization of the filtration in Lemma~\ref{LemmaVecFieldsRnFiltrationCharacterization}. This concludes the proof.
\end{proof}
For the following lemma, recall the action of a Lie algebra on its Chevalley-Eilenberg cochains and the interior product $\intprod$ defined in Definition~\ref{DefinitionLieAlgebraActionOnCochains}.
\begin{lemma}
\label{lem:DiffeomorphismsInducedHomotopies}
Let $M$ be a smooth manifold, let~$\{\phi_t \}_{t > 0}$ a one-parameter semigroup of diffeomorphisms on~$M$, and~$\{X_t \in \mathfrak{X}(M)\}_{t > 0}$ its time-dependent generator. Then, for all~$t_0,t_1 > 0$ and~$c \in C^\bullet(\mathfrak{X}(M))$ we have
\begin{align}
\phi_{t_1}^* c - \phi_{t_0}^* c
=
K_{t_1,t_0} d c + d K_{t_1,t_0} c,
\end{align}
where~$K_{t_1,t_0} := -\int_{t_0}^{t_1} \phi_t^* (X_t \intprod c) dt$. 
\end{lemma}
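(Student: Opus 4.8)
The plan is to reduce this integral homotopy formula to an infinitesimal (pointwise in $t$) statement and then integrate. Concretely, the heart of the matter is the \emph{Cartan-type identity}
\begin{align}
\frac{d}{dt}\,\phi_t^* c = \phi_t^*(X_t \cdot c),
\end{align}
valid for every $c \in C^\bullet(\mathfrak{X}(M))$, where $X_t \cdot c$ is the Lie algebra action of Definition~\ref{DefinitionLieAlgebraActionOnCochains} (the sign on the right being fixed once and for all by the convention relating the generator $X_t$ to the family $\{\phi_t\}$, so as to match the minus sign in the definition of $K_{t_1,t_0}$). Granting this identity, everything else is formal: the cochain-valued function $t \mapsto \phi_t^* c$ is integrated by the fundamental theorem of calculus—where the integral of a cochain is understood slotwise, $(\int \cdots dt)(Y_1,\dots,Y_q) := \int (\cdots)(Y_1,\dots,Y_q)\,dt$—to give $\phi_{t_1}^* c - \phi_{t_0}^* c = \int_{t_0}^{t_1}\phi_t^*(X_t \cdot c)\,dt$.

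To prove the Cartan identity I would differentiate the defining formula $(\phi_t^* c)(Y_1,\dots,Y_q) = c(\phi_t^* Y_1,\dots,\phi_t^* Y_q)$ under the evaluation, distributing the $t$-derivative across the slots by multilinearity of $c$. The essential input is the transport formula for a fixed field $Y$, namely $\frac{d}{dt}\phi_t^* Y = [X_t,\phi_t^* Y]$, which is the non-autonomous generalization of the Lie derivative and encodes $X_t$ as the time-dependent generator of $\{\phi_t\}$. Substituting this into each slot turns the derivative of the evaluation into $\sum_i c(\phi_t^* Y_1,\dots,[X_t,\phi_t^* Y_i],\dots,\phi_t^* Y_q)$, which by the action formula of Definition~\ref{DefinitionLieAlgebraActionOnCochains} is exactly $\phi_t^*(X_t \cdot c)$ evaluated on $(Y_1,\dots,Y_q)$, up to the overall sign fixed above.

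With the identity in hand, I would feed in the homotopy relation of Lemma~\ref{LemmaInteriorProductHomotopy}, which rewrites the action as $X_t \cdot c = -\,d(X_t \intprod c) - X_t \intprod dc$. The integrand then becomes $-\,d\,\phi_t^*(X_t \intprod c) - \phi_t^*(X_t \intprod dc)$, where I have used that $\phi_t^*$ commutes with the Chevalley--Eilenberg differential—this holds exactly as in the proof of Lemma~\ref{LemmaPropertiesOfRnFiltration}, since $\phi_t^*$ is induced by the Lie-algebra automorphism $\phi_t^*$ of $\mathfrak{X}(M)$. Since $d$ is a fixed linear operator assembled from finite sums of evaluations on brackets, it commutes with the slotwise integral $\int_{t_0}^{t_1}(\cdot)\,dt$; pulling it out and recognizing $K_{t_1,t_0} c = -\int_{t_0}^{t_1}\phi_t^*(X_t \intprod c)\,dt$ together with $K_{t_1,t_0} dc = -\int_{t_0}^{t_1}\phi_t^*(X_t \intprod dc)\,dt$ yields precisely $\phi_{t_1}^* c - \phi_{t_0}^* c = d K_{t_1,t_0} c + K_{t_1,t_0} dc$.

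The main obstacle is analytic rather than algebraic. Because $\mathfrak{X}(M)$ is an infinite-dimensional Fréchet Lie algebra and $c$ is only assumed \emph{continuous} multilinear, one must justify (i) that $t \mapsto \phi_t^* Y$ is differentiable as a curve in $\mathfrak{X}(M)$ with the claimed derivative $[X_t,\phi_t^* Y]$, using smooth dependence of flows on time; (ii) that the continuity of $c$ legitimizes passing the $t$-derivative through $c$ slot by slot; and (iii) that the cochain-valued integral and the operator $d$ may be interchanged. Each of these rests on continuity together with the smoothness of $(t,p)\mapsto\phi_t(p)$, but they are exactly the points requiring genuine care, as is the sign bookkeeping that aligns $X_t \cdot c$ with the minus sign in $K_{t_1,t_0}$. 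Once these are settled, the formal manipulation with $\intprod$, $\cdot$, and $d$ is immediate.
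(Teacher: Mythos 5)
Your proposal follows essentially the same route as the paper: differentiate $\phi_t^*c$ slotwise using the transport formula for the time-dependent generator, rewrite the resulting Lie-algebra action via the homotopy relation of Lemma~\ref{LemmaInteriorProductHomotopy}, and integrate by the fundamental theorem of calculus. Your explicit attention to the analytic points (differentiability of $t \mapsto \phi_t^*Y$ in the Fréchet topology, passing the derivative and the integral through $c$ and $d$) and to the generator sign convention is more careful than the paper's ``standard differentiation rules,'' but the argument is the same.
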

\begin{proof}
By definition of the generator~$\{X_t\}_{t \in \mathbb{R}}$, we have for every~$Y \in \mathfrak{X}(\mathbb{R}^n)$
\begin{align}
\frac{d}{dt} \phi_t^* Y  = \phi_t^* [X_t,Y] \quad \forall t \in \mathbb{R}.
\end{align}
By standard differentiation rules and the homotopy formula from Lemma~\ref{LemmaInteriorProductHomotopy} we have for~$c \in C^q(\mathfrak{X}(M))$ and~$Y_1,\dots,Y_q \in \mathfrak{X}(\mathbb{R}^n)$:
\begin{equation}
\begin{aligned}
\frac{d}{dt} \phi^*_t c (X_1,\dots,X_q)
&=
\sum_{k=1}^q
(\phi_t^* c) (X_1, \dots, [X_t,X_k], \dots,X_q)
\\
&=
- (X_t \cdot \phi_t^* c)(X_1,\dots,X_q)
=
- d (X_t \intprod \phi_t^* c) - X_t \intprod (\phi_t^* dc).
\end{aligned}
\end{equation}
An application of the fundamental theorem of calculus now gives the desired statement.
\end{proof}
\begin{corollary}
\label{CorollaryF1ForFiltrationIsAcyclic}
The complex~$F^1 C^\bullet(\mathfrak{X}(\mathbb{R}^n))$ is acyclic.
\end{corollary}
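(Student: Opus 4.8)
The plan is to run the scaling homotopy supplied by Lemma~\ref{lem:DiffeomorphismsInducedHomotopies}, using the semigroup $\{T_t\}_{t>0}$ and letting the scaling parameter shrink to zero. First I would compute its time-dependent generator: since $\tfrac{d}{dt}T_t(x) = x$, the generator is $X_t = \tfrac1t E$ with $E = \sum_{i=1}^n x_i \partial_i$, which is homogeneous of degree $0$, i.e.\ $T_t^* E = E$. Applying the lemma between $t_0$ and $1$ to a cocycle $c \in F^1 C^q(\mathfrak{X}(\mathbb{R}^n))$ with $q>0$, $dc=0$, and using $T_1 = \id$ and $X_t \intprod c = \tfrac1t\, E \intprod c$, I obtain
\[
c - T_{t_0}^* c = d\, K_{1,t_0} c, \qquad K_{1,t_0} c = -\int_{t_0}^1 \tfrac1t\, T_t^*(E \intprod c)\, dt.
\]

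Next I would send $t_0 \to 0$. Two convergence facts, both coming from $c \in F^1$, drive the argument. Since $\lim_{t\to 0} t^{-1} T_t^* c$ exists, we have $T_{t_0}^* c = t_0 \cdot (t_0^{-1} T_{t_0}^* c) \to 0$ pointwise. Second, because $E$ has degree $0$ one checks that $E \intprod$ commutes with $T_t^*$ (the relevant identity is $\phi^*(Y \intprod c) = ((\phi^{-1})^* Y) \intprod (\phi^* c)$, together with $(T_t^{-1})^* E = E$), so that $E \intprod c \in F^1 C^{q-1}$ and the integrand $t^{-1} T_t^*(E \intprod c)$ has a finite limit as $t \to 0$; it is therefore bounded near $0$ and the improper integral $h := -\int_0^1 \tfrac1t\, T_t^*(E \intprod c)\, dt$ converges. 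Passing to the limit in the displayed identity gives $c = dh$.

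Finally, and this is the step that needs the most care, I would verify that $h$ genuinely lies in $F^1 C^{q-1}$, so that exactness holds inside the subcomplex rather than merely in the ambient complex $C^\bullet(\mathfrak{X}(\mathbb{R}^n))$. Substituting $u = ts$ in $T_s^* h = -\int_0^1 \tfrac1t\, T_{ts}^*(E \intprod c)\, dt$ yields $T_s^* h = -\int_0^s \tfrac1u\, T_u^*(E \intprod c)\, du$, whence $s^{-1} T_s^* h = -\tfrac1s\int_0^s \tfrac1u\, T_u^*(E \intprod c)\, du$ converges as $s \to 0$, by the fundamental theorem of calculus, to $-\lim_{u\to 0} u^{-1} T_u^*(E \intprod c)$, which exists because $E \intprod c \in F^1$. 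Hence $h \in F^1 C^{q-1}$. Combined with the observation that $F^1 C^0 = 0$ (a nonzero constant $0$-cochain is fixed by every $T_t^*$ and so cannot satisfy the defining limit condition of $F^1$), this gives $H^q(F^1 C^\bullet) = 0$ in every degree. The main obstacle is exactly the interchange of the filtration-defining limit $\lim_{t\to 0} t^{-1} T_t^*$ with the integral defining the homotopy; the change of variables above is the clean way to avoid a direct dominated-convergence argument, and it relies essentially on the fact that contracting with the degree-zero field $E$ keeps us at filtration level $1$ instead of dropping it.
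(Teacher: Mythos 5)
Your proof is correct and follows essentially the same route as the paper: apply the homotopy of Lemma~\ref{lem:DiffeomorphismsInducedHomotopies} to the scaling semigroup $\{T_t\}_{t>0}$ (whose generator is $\tfrac1t E$), use $c\in F^1$ to kill the term $T_{t_0}^*c$ in the limit $t_0\to 0$, and conclude that the limiting operator is a contracting homotopy. Your additional verifications --- that $E\intprod$ commutes with $T_t^*$ so the integrand has a finite limit, that the primitive $h$ actually lies in $F^1C^{q-1}$ via the substitution $u=ts$, and that $F^1C^0=0$ --- are details the paper's proof leaves implicit, and they are all correct.
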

\begin{proof}
Consider for every~$t_0,t_1 > 0$ the operator~$K_{t_0,t_1} : C^\bullet(\mathfrak{X}(\mathbb{R}^n)) \to C^\bullet(\mathfrak{X}(\mathbb{R}^n))$ from Lemma~\ref{lem:DiffeomorphismsInducedHomotopies} associated to the one-parameter semigroup~$\{T_t\}_{t > 0}$. By definition, for~$c \in F^1 C^k(\mathfrak{X}(\mathbb{R}^n))$ and all all~$X_1,\dots,X_k \in \mathfrak{X}(\mathbb{R}^n)$, we have 
\begin{align}
\lim_{t \to 0} (T_t^* c)(X_1,\dots,X_k) = 0.
\end{align}
But then
\begin{equation}
\begin{aligned}
c(X_1,\dots,X_k) &=  \lim_{t \to 0} (T_1^* c - T_t^* c)(X_1,\dots, X_k)
\\
&=
\lim_{t \to 0} \left(
K_{1,t} d c + d K_{1,t} c \right) (X_1,\dots,X_k)
=:
(K dc + dKc)(X_1,\dots,X_k).
\end{aligned}
\end{equation}
In the above, we defined  the operator~$K : C^\bullet(\mathfrak{X}(\mathbb{R}^n) \to C^\bullet(\mathfrak{X}(\mathbb{R}^n))$ as the pointwise limit of the operators~$K_{1,t}$:
\begin{align}
Kc = \lim_{t \to 0} K_{1,t}c = \int_0^1 T_t^* (X_t \intprod c) \dif t,
\end{align}
where~$X_t$ is the generator of the semigroup of diffeomorphisms~$\{T_t\}_{t > 0}$. Hence~$K$ is a chain homotopy between the identity map and zero on~$F^1 C^\bullet(\mathfrak{X}(\mathbb{R}^n))$, which proves the statement.
\end{proof}
Finally we can state a variation of Lemma 1 in Section 2.4.B. of \cite{fuks1984cohomology}:
\begin{theorem}
\label{TheoremIsomorphismLocalToInfinitesimal}
The inclusion
\begin{align}
F^{0} C^\bullet(\mathfrak{X}(\mathbb{R}^n)) 
&\hookrightarrow 
C^\bullet(\mathfrak{X}(\mathbb{R}^n))
\end{align}
and the maps
\begin{align}
\gamma_0 : F^0 C^q (\mathfrak{X}(\mathbb{R}^n)) \to C^q_{(0)}(W_n), \quad
\beta_0 : C^q_{(0)}(W_n) \to F^{0} C^q(\mathfrak{X}(\mathbb{R}^n))
\end{align}
from Definition~\ref{def:QuasiIsomorphismFromFiltrationToWn} are quasi-isomorphisms and unital algebra morphisms.
\\
In particular, there is an isomorphism of algebras
\begin{align}
H^\bullet(\mathfrak{X}(\mathbb{R}^n)) 
\cong 
H^\bullet(W_n),
\end{align}
and the wedge product of two elements of positive degree in~$H^\bullet(\mathfrak{X}(\mathbb{R}^n))$ is zero.
\end{theorem}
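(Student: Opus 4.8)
The plan is to assemble the three maps into quasi-isomorphisms by exploiting the filtration $\{F^k C^\bullet(\mathfrak{X}(\mathbb{R}^n))\}_{k \in \mathbb{Z}}$ together with the cohomological information already extracted for its graded pieces. The key structural observation is that the inclusion $F^0 \hookrightarrow C^\bullet(\mathfrak{X}(\mathbb{R}^n))$ factors through a \emph{finite} chain of inclusions
\[
F^0 C^\bullet(\mathfrak{X}(\mathbb{R}^n)) \subset F^{-1} C^\bullet(\mathfrak{X}(\mathbb{R}^n)) \subset \dots \subset F^{-n} C^\bullet(\mathfrak{X}(\mathbb{R}^n)) = C^\bullet(\mathfrak{X}(\mathbb{R}^n)),
\]
the last identification being Lemma~\ref{LemmaPropertiesOfRnFiltration}~iv). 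Thus it suffices to control one inclusion at a time, so that no convergence issues for the full filtration arise.

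For each $k$ I would feed the split short exact sequence of Corollary~\ref{cor:FiltrationRnExactSequence}, namely $0 \to F^{k+1} \to F^k \xrightarrow{\gamma_k} C^\bullet_{(k)}(W_n) \to 0$, into its long exact sequence in cohomology. When $k \neq 0$ the quotient $C^\bullet_{(k)}(W_n)$ is acyclic, via the contracting homotopy $-\tfrac{1}{k}(E \intprod \cdot)$ from Proposition~\ref{PropositionWnCohmologyInZeroethDegree}~v); hence the connecting maps force $H^\bullet(F^{k+1}) \cong H^\bullet(F^k)$. Applying this for $k = -1, \dots, -n$ and composing shows the inclusion $F^0 \hookrightarrow C^\bullet(\mathfrak{X}(\mathbb{R}^n))$ is a quasi-isomorphism. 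For $k = 0$ the quotient is $C^\bullet_{(0)}(W_n)$, but now the \emph{sub}-complex $F^1 C^\bullet(\mathfrak{X}(\mathbb{R}^n))$ is acyclic by Corollary~\ref{CorollaryF1ForFiltrationIsAcyclic}, so the same long exact sequence shows $\gamma_0$ is a quasi-isomorphism. Since $\gamma_0 \circ \beta_0 = \id$ by Lemma~\ref{lem:GammakAndBetakChainMaps}, the map $\beta_0$ induces the inverse isomorphism on cohomology and is therefore also a quasi-isomorphism.

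It remains to verify that all three maps respect the product. The inclusion is that of a subalgebra by Lemma~\ref{LemmaPropertiesOfRnFiltration}~iii), and $\beta_0$ is the pullback of cochains along the Lie-algebra morphism $j^\infty_0 : \mathfrak{X}(\mathbb{R}^n) \to W_n$, hence a unital algebra morphism. For $\gamma_0$ I would use that $T_t^*$ is strictly multiplicative \emph{before} any limit is taken, so that on $F^0$ the leading-order operation $c \mapsto \lim_{t\to 0} T_t^* c$ followed by restriction to jets is multiplicative, invoking Proposition~\ref{PropositionWnCohmologyInZeroethDegree}~iii) to see that only the degree-zero graded pieces contribute to $\gamma_0(c_1 \wedge c_2)$. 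I expect this to be the main obstacle: one must commute the double limit (first $t \to 0$, then $r \to \infty$) defining $\gamma_0$ with the finite symmetrization sum in the wedge product. This is controlled because continuity of cochains makes every evaluation a finite-dimensional computation, on which $T_t^*$ acts by an honest scaling, so the limits are interchangeable with the sum.

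Putting the pieces together yields algebra isomorphisms
\[
H^\bullet(\mathfrak{X}(\mathbb{R}^n)) \cong H^\bullet(F^0 C^\bullet(\mathfrak{X}(\mathbb{R}^n))) \cong H^\bullet(C^\bullet_{(0)}(W_n)) \cong H^\bullet(W_n),
\]
the last one by Proposition~\ref{PropositionWnCohmologyInZeroethDegree}~v). Finally, transporting Corollary~\ref{CorollaryWedgeProductOfFormalsIsZero} across this algebra isomorphism shows that the wedge product of two positive-degree classes in $H^\bullet(\mathfrak{X}(\mathbb{R}^n))$ vanishes.
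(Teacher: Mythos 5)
Your proposal is correct and follows essentially the same route as the paper: the split exact sequences of Corollary~\ref{cor:FiltrationRnExactSequence}, the acyclicity of $F^{1}C^\bullet(\mathfrak{X}(\mathbb{R}^n))$ and of $C^\bullet_{(k)}(W_n)$ for $k\neq 0$, and the boundedness $F^{-n}=C^\bullet$ are exactly the ingredients the paper combines (it uses the direct-sum splitting where you use the long exact sequence, which is equivalent here). Your extra care about the multiplicativity of $\gamma_0$ is a point the paper dismisses as immediate, and your argument for it is sound.
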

\begin{proof}
The compatibility of the maps with the algebra structure is immediate from the multiplicativity of the filtration~$F^k C^q (\mathfrak{X}(\mathbb{R}^n))$ and the formulas for~$\gamma_0, \beta_0$.
By the split exact sequence of Corollary~\ref{cor:FiltrationRnExactSequence}, we have for every~$k \in \mathbb{Z}$ an isomorphism of cochain complexes 
\begin{align}
\label{eq:DirectSumSplittingOfFiltration}
F^k C^\bullet(\mathfrak{X}(\mathbb{R}^n))
\cong
F^{k+1}  C^\bullet(\mathfrak{X}(\mathbb{R}^n))
\oplus
C^\bullet_{(k)}(W_n).
\end{align}
Inserting~$k = 0$ into \eqref{eq:DirectSumSplittingOfFiltration}, the acyclicity of~$F^{1} C^\bullet(\mathfrak{X}(\mathbb{R}^n))$ by Corollary~\ref{CorollaryF1ForFiltrationIsAcyclic} shows that we have algebra isomorphisms
\begin{align}
H^\bullet(F^{0} \tilde{C}^\bullet(\mathfrak{X}(\mathbb{R}^n))) 
\cong 
H^\bullet_{(0)}(W_n) \stackrel{\text{Prop.~\ref{PropositionWnCohmologyInZeroethDegree}}}{\cong} 
H^\bullet(W_n),
\end{align}
hence~$\gamma_0$ and its splitting~$\beta_0$ must be quasi-isomorphisms.
\\
Let further~$k = -1, \dots, -n$ in \eqref{eq:DirectSumSplittingOfFiltration}. By Proposition~\ref{PropositionWnCohmologyInZeroethDegree}, the complexes~$\tilde{C}^\bullet_{(k)}(W_n)$ are acyclic and we have isomorphisms
\begin{equation}
\begin{aligned}
H^\bullet(F^{0} C^\bullet(\mathfrak{X}(\mathbb{R}^n)))
&\cong
H^\bullet(F^{-1} C^\bullet(\mathfrak{X}(\mathbb{R}^n)))
\\
&\cong
\dots
\\
&\cong
H^\bullet(F^{-n} C^\bullet(\mathfrak{X}(\mathbb{R}^n))) 
\\
&
\stackrel{\text{Lem.~\ref{LemmaPropertiesOfRnFiltration}}}{=}
H^\bullet(C^\bullet(\mathfrak{X}(\mathbb{R}^n))) 
=
H^\bullet(\mathfrak{X}(\mathbb{R}^n)).
\end{aligned}
\end{equation}
Hence the inclusion~$F^{0} C^\bullet(\mathfrak{X}(\mathbb{R}^n)) 
\hookrightarrow 
C^\bullet(\mathfrak{X}(\mathbb{R}^n))$ is a quasi-isomorphism.
\\
The product of positive degree elements in~$H^\bullet(\mathfrak{X}(\mathbb{R}^n))$ is zero since this is true for~$H^\bullet(W_n)$ by Corollary~\ref{CorollaryWedgeProductOfFormalsIsZero}.
\end{proof}

We end this section by an extension of our proof of~$H^\bullet(\mathfrak{X}(\mathbb{R}^n)) \cong H^\bullet(W_n)$ to the setting of Gelfand--Fuks cohomology of a disjoint union of finitely many copies of Euclidean space. 
\begin{remark}
 The isomorphism of topological Lie algebras
\begin{align}
\mathfrak{X}(\mathbb{R}^n \sqcup \mathbb{R}^n) \cong \mathfrak{X}(\mathbb{R}^n) \oplus \mathfrak{X}(\mathbb{R}^n),
\end{align}
insinuates that the Gelfand--Fuks cohomology of such a disjoint union may be calculated by the use of a \emph{Künneth formula}: For finite-dimensional Lie algebras~$\frakg, \frakh$ over~$\mathbb{R}$, the Künneth theorem implies
\begin{align}
H^\bullet(\frakg \oplus \frakh) \cong H^\bullet(\frakg) \otimes H^\bullet(\frakh).
\end{align}
And indeed, such Künneth theorems in Lie algebra cohomology are well-known in the purely algebraic setting, but extending them to \emph{continuous} Lie algebra cohomology relies on nontrivial topological assumptions in order to deal with the arising topological tensor products and their completion. For such formulas, the reader may, for example, consult \cite{gourdeau2005kuenneth}. We avoid this approach here.
\end{remark}
\begin{proposition}
\label{PropositionCohomologyOfDisjointUnionsOfRns}
Let~$M := \bigsqcup_{i=1}^r  \mathbb{R}^n$ be a disjoint collection of copies of~$\mathbb{R}^n$. Then every choice of order on the copies of~$\mathbb{R}^n$ induces an algebra isomorphism
\begin{align}
H^\bullet
\left(
\mathfrak{X}\left( M
\right)
\right) 
\cong 
H^\bullet(\mathfrak{X}(\mathbb{R}^n))^{\otimes^r}.
\end{align}
\end{proposition}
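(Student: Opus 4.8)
The plan is to sidestep the topological K\"unneth theorem entirely (for the reasons flagged in the remark preceding the statement) by first passing to formal vector fields, where the continuity hypothesis forces cochains to have finite-dimensional support and the purely algebraic K\"unneth theorem becomes available. Disjointness gives an isomorphism of topological Lie algebras $\mathfrak{X}(M) \cong \bigoplus_{i=1}^r \mathfrak{X}(\mathbb{R}^n)$ with no cross-brackets between the summands; fixing an order on the copies fixes the order of the eventual tensor factors together with the Koszul sign conventions, which is the source of the order-dependence in the claim.

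The first main step is to re-run the machinery of this section verbatim with $\mathbb{R}^n$ replaced by $M$. I would use the \emph{simultaneous} scaling semigroup $\{T_t\}_{t>0}$ that dilates every copy by $t$: this is a one-parameter semigroup of diffeomorphisms of $M$ fixing the $r$ origins, so Lemma~\ref{lem:DiffeomorphismsInducedHomotopies} and the contracting-homotopy argument of Corollary~\ref{CorollaryF1ForFiltrationIsAcyclic} apply without change. The resulting filtration $F^\bullet C^\bullet(\mathfrak{X}(M))$ is bounded (now $F^k = C^\bullet$ for $k \le -rn$, since the degree $-1$ fields span an $rn$-dimensional space), its characterization analogous to Lemma~\ref{LemmaVecFieldsRnFiltrationCharacterization} holds, and the maps $\gamma_k,\beta_k$ identify the graded pieces with the total-Euler-degree-$k$ cochains of the jet algebra $\bigoplus_{i=1}^r J^\infty_{0^{(i)}}\mathfrak{X}(\mathbb{R}^n) \cong W_n^{\oplus r}$, where ``total degree'' refers to the diagonal Euler field $E^{(r)} = (E,\dots,E)$. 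Contracting with $E^{(r)}$ kills the pieces of nonzero total degree exactly as in Proposition~\ref{PropositionWnCohmologyInZeroethDegree}, so the proof of Theorem~\ref{TheoremIsomorphismLocalToInfinitesimal} transports verbatim and yields an algebra isomorphism
\begin{align}
H^\bullet(\mathfrak{X}(M)) \cong H^\bullet(W_n^{\oplus r}).
\end{align}

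The second step is the algebraic K\"unneth isomorphism for the jet algebra. A continuous cochain on $W_n^{\oplus r}$ vanishes once any of its arguments has sufficiently high Euler degree, hence is supported on a finite-dimensional graded subspace of $W_n^{\oplus r}$; writing $C^\bullet(W_n^{\oplus r})$ as the colimit over such truncations of the exterior algebras on finite-dimensional duals, each truncation splits as $\bigotimes_{i=1}^r \Lambda^\bullet(\,\cdot\,)$, and in the colimit one obtains the honest \emph{algebraic} tensor product of differential graded algebras $C^\bullet(W_n^{\oplus r}) \cong \bigotimes_{i=1}^r C^\bullet(W_n)$. Here the differential splits as a graded derivation precisely because the bracket on $W_n^{\oplus r}$ has no cross-terms, and the chosen order dictates the Koszul signs. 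The classical K\"unneth theorem over the field $\mathbb{R}$ then gives $H^\bullet(W_n^{\oplus r}) \cong H^\bullet(W_n)^{\otimes r}$ as algebras; composing with Theorem~\ref{TheoremIsomorphismLocalToInfinitesimal} replaces each factor $H^\bullet(W_n)$ by $H^\bullet(\mathfrak{X}(\mathbb{R}^n))$ and produces the asserted isomorphism.

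The main obstacle is the finite-support identification $C^\bullet(W_n^{\oplus r}) \cong \bigotimes_{i=1}^r C^\bullet(W_n)$: this is exactly the point at which continuity of cochains converts a \emph{completed} tensor product into an \emph{algebraic} one, so the care lies in verifying that the colimit of truncations commutes with the tensor decomposition and that the differential and wedge product respect it with the correct signs. I expect this to be the delicate part, whereas attempting the K\"unneth directly at the level of $C^\bullet(\mathfrak{X}(\mathbb{R}^n))$ would reintroduce the Fr\'echet completed tensor product that the reduction through $W_n$ is specifically designed to avoid.
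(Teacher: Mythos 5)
Your proposal is correct and follows essentially the same route as the paper: the simultaneous scaling semigroup on all $r$ copies, the resulting filtration bounded at $-rn$, reduction to the Euler-degree-zero formal cochains, an algebraic K\"unneth theorem on the formal side (where continuity forces finite-dimensional support, so only algebraic tensor products appear), and multiplicativity via the splitting $c_1\otimes\dots\otimes c_r\mapsto \beta_0^1 c_1\wedge\dots\wedge\beta_0^r c_r$. The only cosmetic difference is that you factor the argument through $H^\bullet(W_n^{\oplus r})$ as an intermediate object and apply K\"unneth once at the end, whereas the paper identifies each graded quotient $F^k/F^{k+1}$ directly with $\bigoplus_{k_1+\dots+k_r=k}C^\bullet_{(k_1)}(W_n)\otimes\dots\otimes C^\bullet_{(k_r)}(W_n)$ and applies K\"unneth levelwise.
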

\begin{proof}
We mimic the proof for the~$r = 1$ situation, but we expand the scaling of~$\mathbb{R}^n$ to the same scaling in every copy of~$\mathbb{R}^n$:
\begin{align}
T_t:
\bigsqcup_{i=1}^r  \mathbb{R}^n 
\to 
\bigsqcup_{i=1}^r  \mathbb{R}^n, 
\quad
x \mapsto tx.
\end{align}
The definition of the corresponding spaces~$F^k C^q(\mathfrak{X}(M))$ is identical to in the~$r = 1$ case, and by the same proofs, they constitute a descending, filtration that is bounded from below with
\begin{align}
F^k C^\bullet(\mathfrak{X}(M)) = C^\bullet(\mathfrak{X}(M)) \quad \forall k \leq - r n.
\end{align}
In analogy to Definition~\ref{def:QuasiIsomorphismFromFiltrationToWn}, we can define a map~$\gamma_k^{(r)}$, which, together with some choice of ordering on the components of~$M$ gives rise to an exact sequence for every~$k \in \mathbb{Z}$:
\begin{equation}
\begin{aligned}
0 \to 
F^{k+1} C^\bullet(\mathfrak{X}(\mathbb{R}^n))
&\to
F^{k} C^\bullet(\mathfrak{X}(\mathbb{R}^n))
\\
&\stackrel{\gamma_k^{(r)}}{\to}
\bigoplus_{k_1 + \dots + k_r = k}
C^\bullet_{(k_1)}(W_n)
\otimes
\dots
\otimes
C^\bullet_{(k_r)}(W_n) 
\to 
0.
\end{aligned}
\end{equation}
For the tensor product complex on the right hand side, we can use the Künneth theorem to calculate its cohomology, and due to acyclicity of~$C^\bullet_{(k)}(W_n)$ for~$k \neq 0$, the only one of the complexes with nontrivial cohomology is the one with the condition~$k_1 + \dots + k_r = 0$. By the same steps as in Corollary~\ref{CorollaryF1ForFiltrationIsAcyclic} and Theorem~\ref{TheoremIsomorphismLocalToInfinitesimal} we arrive at the desired isomorphism of vector spaces.
This isomorphism respects the wedge product, as we see with the arising quasi-isomorphism
\begin{equation}
\beta_0^{(r)} :
C^\bullet_{(0)}(W_n)^{\otimes^r}
\to
C^\bullet(\mathfrak{X}(M)), 
\quad
c_1 \otimes \dots \otimes c_r \mapsto \beta_0^{1} c_1 \wedge \dots \wedge \beta_0^{r} c_r,
\end{equation} 
where~$\beta_0^{k}$ maps formal cochains exactly like the map~$\beta_0$ from the Definition~\ref{def:QuasiIsomorphismFromFiltrationToWn}, but all jets of vector fields are evaluated at the zero in the~$k$-th copy of~$\mathbb{R}^n$. Because the~$\beta_0$ in the~$r = 1$ case respect the wedge product, so does~$\beta_0^{(r)}$.
\end{proof}
The formula for the quasi-isomorphism~$\beta_0^{(r)}$ from the previous proof implies:
\begin{corollary}
Let~$B_1, \dots, B_r \subset \mathbb{R}^n$ be pairwise disjoint sets diffeomorphic to~$\mathbb{R}^n$. Assume their union is contained in another set~$C \subset \mathbb{R}^n$ diffeomorphic to~$\mathbb{R}^n$. The extension map
\begin{align}
[\iota_{B_1 \cup \dots \cup B_r}^C] : 
\bigotimes_{i=1}^r
H^\bullet(\mathfrak{X}(B_i))
\cong
H^\bullet
\left(
\mathfrak{X}\left( 
B_1 \cup \dots \cup B_r
\right)
\right)
\to 
H^\bullet(\mathfrak{X}(C))
\end{align}
is given by
\begin{align}
[c_1] \otimes \dots \otimes [c_r]
\mapsto
[\iota_{B_i}^{C} c_1 \wedge  \dots \wedge \iota_{B_r}^C c_r].
\end{align}
\end{corollary}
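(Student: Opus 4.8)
The plan is to identify the extension maps $\iota_{B}^{C}$ with pullbacks of cochains along restriction homomorphisms of vector fields, and then exploit that such pullbacks are morphisms of differential graded algebras, reducing the claim to a purely local compatibility of the jet-evaluation maps $\beta_0$. Concretely, for an open inclusion $B \subset C$ I would write $\rho_B \colon \mathfrak{X}(C) \to \mathfrak{X}(B)$, $X \mapsto X|_B$, for the continuous restriction homomorphism, so that the extension map is $\iota_B^C = \rho_B^*$, the induced chain map on Chevalley--Eilenberg cochains. Since the $B_i$ are pairwise disjoint, restriction assembles into a single Lie algebra homomorphism $\rho \colon \mathfrak{X}(C) \to \bigoplus_{i} \mathfrak{X}(B_i) = \mathfrak{X}(B_1 \cup \dots \cup B_r)$, whose pullback $\rho^*$ represents $[\iota_{B_1 \cup \dots \cup B_r}^{C}]$.

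First I would recall that, after choosing diffeomorphisms $B_i \cong \mathbb{R}^n$ carrying marked points $p_i \mapsto 0$, the leftmost isomorphism in the statement is exactly the one from Proposition~\ref{PropositionCohomologyOfDisjointUnionsOfRns}, realized on cochains by the quasi-isomorphism $\beta_0^{(r)}$ of that proof. Under the identifications $H^\bullet(\mathfrak{X}(B_i)) \cong H^\bullet_{(0)}(W_n)$ furnished by the $\beta_0^{B_i}$ (Theorem~\ref{TheoremIsomorphismLocalToInfinitesimal}), a class $[c_i]$ is represented by $\beta_0^{B_i} c_i'$ for a cocycle $c_i' \in C^\bullet_{(0)}(W_n)$, and
\[
\beta_0^{(r)}(c_1' \otimes \dots \otimes c_r') = \beta_0^1 c_1' \wedge \dots \wedge \beta_0^r c_r',
\]
where $\beta_0^k$ evaluates infinite jets at the $k$-th marked point $p_k$.

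The heart of the argument is the local compatibility
\[
\rho^* \, \beta_0^k = \iota_{B_k}^{C} \, \beta_0^{B_k}, \qquad k = 1,\dots,r,
\]
which I would verify by unwinding the definitions: both sides send a tuple $(X_1,\dots,X_q)$ of vector fields on $C$ to $c_k'(j^\infty_{p_k} X_1, \dots, j^\infty_{p_k} X_q)$, because restricting a vector field to the open set $B_k$ does not alter its infinite jet at the interior point $p_k$. Granting this identity, multiplicativity of the algebra morphism $\rho^*$ (pullback along a Lie algebra homomorphism respects the wedge product) yields
\[
\rho^*\!\left( \beta_0^1 c_1' \wedge \dots \wedge \beta_0^r c_r' \right)
= \iota_{B_1}^{C} \beta_0^{B_1} c_1' \wedge \dots \wedge \iota_{B_r}^{C} \beta_0^{B_r} c_r',
\]
and passing to cohomology, using that each $\iota_{B_i}^C$ is a chain map and that the wedge product descends to $H^\bullet$, gives precisely $[\iota_{B_1}^{C} c_1 \wedge \dots \wedge \iota_{B_r}^{C} c_r]$, as claimed.

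The main obstacle is less a hard estimate than careful bookkeeping of the three layers of identifications --- the isomorphism of Proposition~\ref{PropositionCohomologyOfDisjointUnionsOfRns}, the quasi-isomorphisms $\beta_0$ to $W_n$-cochains, and the restriction pullbacks --- and in particular checking that the marked-point data used to realize $\beta_0^{(r)}$ is compatible with the extension maps. The compatibility identity $\rho^*\beta_0^k = \iota_{B_k}^C \beta_0^{B_k}$ is where all of this is pinned down, and it is exactly the place where the explicit formula for $\beta_0^{(r)}$ from the previous proof (Definition~\ref{def:QuasiIsomorphismFromFiltrationToWn}) enters; everything else is formal multiplicativity. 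I would also remark that the resulting expression is manifestly independent of the auxiliary marked points, consistent with the intrinsic nature of the extension maps.
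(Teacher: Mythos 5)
Your proposal is correct and follows exactly the route the paper intends: the paper gives no explicit proof, stating only that the corollary is implied by the formula $\beta_0^{(r)}(c_1'\otimes\dots\otimes c_r') = \beta_0^1 c_1' \wedge \dots \wedge \beta_0^r c_r'$ from the proof of Proposition~\ref{PropositionCohomologyOfDisjointUnionsOfRns}, and your unwinding via the compatibility $\rho^*\beta_0^k = \iota_{B_k}^C\beta_0^{B_k}$ (jets at interior points are unaffected by restriction) together with multiplicativity of the pullback is precisely the intended argument.
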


\section{Cosheaf-theoretic aspects of Gel\-fand\--Fuks co\-ho\-mo\-lo\-gy}
\label{sec:CosheafAspectsOfLocalGelfandFuks}
The previous section concludes the analysis of the cohomology of~$\mathfrak{X}(\mathbb{R}^n)$. This constitutes an important building block to understand the Gelfand--Fuks cohomology for arbitrary smooth manifolds~$M$. We will explore some properties related to \emph{extension} of the cochains from a smaller to a larger open set of~$M$. The following presentation of these results, especially the framing in terms of cosheaf-theoretic data, is a novel contribution to the literature, though the results themselves are implicitly used in \cite{gel1969cohomologyI, gel1970cohomologyII, fuks1984cohomology}. We also remark a close similarity of these methods to the standard constructions in the theory of factorization algebras \cite{costello2016factorization}.

Recall that we identify~$J^\infty_0 \mathfrak{X}(\mathbb{R}^n) \cong W_n$ by a choice of local frame of vector fields around 0. The group of local diffeomorphisms~$\phi : \mathbb{R}^n \to \mathbb{R}^n$ that fix zero admits a right action on infinity-jets~$j^\infty_0 X \in J^\infty_0 \mathfrak{X}(\mathbb{R}^n)$ via the pullback of vector fields:
\begin{align}
\phi^* j^\infty_0 X := j^\infty_0( \phi^* X).
\end{align}
This action factors through to an action of the group of infinity-jets of diffeomorphisms that fix zero, denoted~$J^\infty_0 \Diff(\mathbb{R}^n)$.
Hence,~$J^\infty_0 \Diff(\mathbb{R}^n)$ acts on~$W_n$ and by pullback on the complex~$C^\bullet(W_n)$, and we write for all~$\phi \in \Diff(\mathbb{R}^n)$, all~$c \in C^k(W_n)$ and~$X_1,\dots,X_k \in \mathfrak{X}(\mathbb{R}^n)$:
\begin{align}
((j^\infty_0 \phi)^* c)(j^\infty_0 X_1,\dots,j^\infty_0 X_k) :=
c(j^\infty_0 (\phi^* X_1), \dots, j^\infty_0 (\phi^ * X_k)).
\end{align}
%
%
\begin{lemma}
\label{lem:LocalDiffeomorphismsUniqueUpToOrientation}
Let~$\phi: \mathbb{R}^n \to \mathbb{R}^n$ be a local diffeomorphism. 
\begin{itemize}
\item[i)] The induced map~$[\phi^*] : H^\bullet(\mathfrak{X}(\mathbb{R}^n)) \to H^\bullet(\mathfrak{X}(\mathbb{R}^n))$ is a unital algebra isomorphism.
\item[ii)] Assume~$\phi$ is additionally orientation-preserving with respect to some fixed orientation on~$\mathbb{R}^n$. Then~$[\phi^*] = \id$.
\end{itemize}

\end{lemma}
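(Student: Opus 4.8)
The plan is to reduce the computation of $[\phi^*]$ to the action of the linear part $d\phi_0 \in \mathrm{GL}_n(\mathbb{R})$ on the infinitesimal model $H^\bullet(W_n)$, exploiting Theorem~\ref{TheoremIsomorphismLocalToInfinitesimal}. That $\phi^*$ is a chain endomorphism and a unital algebra morphism is immediate: pulling back vector fields along a local diffeomorphism is a homomorphism of Lie algebras, so $\phi^*$ commutes with the Chevalley--Eilenberg differential and with the wedge product, and it fixes $C^0(\mathfrak{X}(\mathbb{R}^n)) = \mathbb{R}$. First I would record that translations act trivially on cohomology: the path $s \mapsto T_{s\tau}$ of global diffeomorphisms joins $\id$ to the translation $T_\tau$ by $\tau := \phi(0)$ and has the constant generator $\tau$, so Lemma~\ref{lem:DiffeomorphismsInducedHomotopies} gives $[T_\tau^*] = \id$. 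Writing $\phi = T_\tau \circ \psi$ with $\psi := T_{-\tau} \circ \phi$ fixing the origin yields $[\phi^*] = [\psi^*]$, so it suffices to treat local diffeomorphisms $\psi$ with $\psi(0) = 0$.

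The heart of the argument is that such a $\psi$ acts on cohomology only through its linear part $A := d\psi_0 = d\phi_0$. I would first check that $\psi^*$ preserves the filtration $F^\bullet C^\bullet(\mathfrak{X}(\mathbb{R}^n))$: since $\psi(0) = 0$ one has the intertwining $\psi \circ T_t = T_t \circ \psi_t$ with $\psi_t(x) := \tfrac1t \psi(tx)$, hence $T_t^* \psi^* = \psi_t^* \, T_t^*$, while $\psi_t \to A$ smoothly as $t \to 0$; letting $t \to 0$ shows $\lim_{t\to 0} t^{-k} T_t^* (\psi^* c) = A^* \lim_{t\to 0} t^{-k} T_t^* c$, so $\psi^*$ maps $F^k$ into $F^k$. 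Feeding the same computation into the leading-order map $\gamma_0$ of Definition~\ref{def:QuasiIsomorphismFromFiltrationToWn} gives the chain-level identity $\gamma_0 \circ \psi^* = A^* \circ \gamma_0$, where now $A^*$ is the standard action of $A \in \mathrm{GL}_n(\mathbb{R})$ on $C^\bullet_{(0)}(W_n)$ (a linear coordinate change preserves polynomial degree, hence the weight grading, so $A^*$ restricts to $C^\bullet_{(0)}(W_n)$). Since $\gamma_0$ and the inclusion $F^0 \hookrightarrow C^\bullet(\mathfrak{X}(\mathbb{R}^n))$ are quasi-isomorphisms by Theorem~\ref{TheoremIsomorphismLocalToInfinitesimal}, this identifies $[\psi^*]$ on $H^\bullet(\mathfrak{X}(\mathbb{R}^n))$ with $[A^*]$ on $H^\bullet_{(0)}(W_n) \cong H^\bullet(W_n)$.

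It then remains to understand the action of the single element $A$ on $H^\bullet(W_n)$. For part (i) this is immediate: $A$ is invertible, so $[A^*]$ has inverse $[(A^{-1})^*]$, whence $[\psi^*]$ and therefore $[\phi^*]$ is an algebra isomorphism. For part (ii), orientation-preservation forces $\det A > 0$, i.e. $A \in \mathrm{GL}_n^+(\mathbb{R})$, which is connected; choosing a smooth path $A_u$ from $\id$ to $A$ in $\mathrm{GL}_n^+(\mathbb{R})$ with generators $Y_u \in \frakg_0 = \gl_n(\mathbb{R}) \subset W_n$, the homotopy formula of Lemma~\ref{LemmaInteriorProductHomotopy} gives $\tfrac{d}{du}(A_u^* c) = d(Y_u \intprod A_u^* c) + Y_u \intprod (A_u^* dc)$ on $C^\bullet_{(0)}(W_n)$, and since $Y_u$ has weight zero the interior product keeps the homotopy inside $C^\bullet_{(0)}(W_n)$. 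Integrating over $u$ exhibits $A^*$ as chain-homotopic to the identity, so $[A^*] = \id$; this is exactly the integrated form of the triviality of the $\frakg$-action from Corollary~\ref{CorollaryLieAlgebraActsTriviallyOnCohomology}. Hence $[\phi^*] = \id$.

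The main obstacle, and the reason for routing through $A$ rather than arguing more directly, is that a non-injective local diffeomorphism cannot in general be joined to $\id$ through local diffeomorphisms admitting a genuine single-valued time-dependent generator, so Lemma~\ref{lem:DiffeomorphismsInducedHomotopies} does not apply to $\psi$ itself. The delicate technical point is therefore the two-step passage from $\psi$ to its linear part: establishing the intertwining $T_t^*\psi^* = \psi_t^* T_t^*$ together with the convergence $\psi_t \to A$ carefully enough that the double limit defining $\gamma_0(\psi^* c)$ factors as $A^*\gamma_0(c)$. Once the action has been pushed down to the linear group, connectivity of $\mathrm{GL}_n^+(\mathbb{R})$ and the already-available infinitesimal triviality finish the proof.
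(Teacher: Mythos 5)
Your argument is correct, but it routes the key step differently from the paper. Both proofs begin identically, killing translations via Lemma~\ref{lem:DiffeomorphismsInducedHomotopies} so as to assume $\phi(0)=0$. From there the paper pushes $\phi$ forward to its \emph{infinite jet}: using $\phi^*(\beta_0 c) = \beta_0((j^\infty_0\phi)^*c)$ it gets (i) from invertibility of $j^\infty_0\phi$, and for (ii) it truncates to the finite-dimensional Lie group $J^N_0\Diff(\mathbb{R}^n)$, writes the $N$-jet of an orientation-preserving $\phi$ as a product of exponentials of jets of vector fields, and invokes the triviality of the $W_n$-action on $H^\bullet(W_n)$ (Corollary~\ref{CorollaryLieAlgebraActsTriviallyOnCohomology}). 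You instead blow down to the \emph{linear part}: the intertwining $T_t^*\psi^* = \psi_t^* T_t^*$ with $\psi_t \to d\psi_0$ gives $\gamma_0\circ\psi^* = (d\psi_0)^*\circ\gamma_0$, reducing everything to the action of $\mathrm{GL}_n(\mathbb{R})$ on $C^\bullet_{(0)}(W_n)$, after which connectedness of $\mathrm{GL}_n^+(\mathbb{R})$ and the homotopy formula of Lemma~\ref{LemmaInteriorProductHomotopy} finish (ii). What the paper's route buys is that it avoids your one genuinely delicate step, the interchange of limits in $\lim_{t\to 0}(T_t^*c)(\psi_t^*Y_1,\dots,\psi_t^*Y_q)$: since the $\psi_t^*Y_i$ are not polynomial, this cannot be reduced to a finite-dimensional weight space and needs equicontinuity of the family $\{T_t^*c\}_{t\in(0,1]}$, which you should justify explicitly (it follows from Banach--Steinhaus on the barrelled space $\mathfrak{X}(\mathbb{R}^n)$, given the pointwise convergence defining $F^0$). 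What your route buys is a more concrete target group --- $\mathrm{GL}_n^+(\mathbb{R})$ rather than the identity component of $J^N_0\Diff(\mathbb{R}^n)$ --- and it makes transparent that the whole action on cohomology factors through $d\phi_0$, a fact the paper never isolates; your part (i) also becomes slightly cleaner since invertibility is checked on a single matrix. Both arguments ultimately rest on the same infinitesimal input, the triviality of the Lie algebra action on cohomology.
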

\begin{proof}
Lemma~\ref{lem:DiffeomorphismsInducedHomotopies} shows that the maps induced on~$C^\bullet(\mathfrak{X}(\mathbb{R}^n))$ by translations
\begin{align}
\tau_a : \mathbb{R}^n \to \mathbb{R}^n, \quad x \mapsto x + a, \quad a \in \mathbb{R}^n
\end{align}
are homotopic to the identity~$\id = \tau_0^*$. Hence on cohomology~$\tau_a$ acts as the identity for all~$a \in \mathbb{R}^n$. Since all~$\tau_a$ are orientation-preserving, we may without loss of generality assume that, in all that follows,~$\phi$ fixes zero.\\
\textit{i)}
Recall the map~$\beta_0 : C^\bullet(W_n) \to C^\bullet(\mathfrak{X}(\mathbb{R}^n))$ from Definition~\ref{def:QuasiIsomorphismFromFiltrationToWn}. By Theorem~\ref{TheoremIsomorphismLocalToInfinitesimal},~$\beta_0$ is a quasi-isomorphism, so every cohomology class in~$H^\bullet(\mathfrak{X}(\mathbb{R}^n))$ has a representative of the form~$\beta_0 c$ for some~$c \in C^\bullet(W_n)$. By definition of~$\beta_0$, it intertwines the action of the group of local diffeomorphisms that fix zero on~$C^\bullet(\mathfrak{X}(\mathbb{R}^n)$ with the action of~$J^\infty_0 \Diff(\mathbb{R}^n)$ on~$C^\bullet(W_n)$, meaning
\begin{align}
\phi^* (\beta_0) c = \beta_0((j^\infty_0 \phi)^* c).
\end{align}
Since~$\phi$ is a local diffeomorphism, it admits a local inverse around 0, so the action of~$j^\infty_0 \phi$ on~$c$ is invertible. Thus~$\phi^* : H^\bullet(\mathfrak{X}(\mathbb{R}^n)) \to H^\bullet(\mathfrak{X}(\mathbb{R}^n))$ is an isomorphism of vector spaces. 
\\
The conditions~$\phi^*(c_1 \wedge c_2) = \phi^* c_1 \wedge \phi^* c_2$ for all~$c_1,c_2 \in C^\bullet(\mathfrak{X}(\mathbb{R}^n))$ and~$\phi^* (1) = 1$ follow directly from the definition of the pullback of local diffeomorphisms. Hence~$\phi$ induces a unital algebra isomorphism.\\
\textit{ii)}
Fix an arbitrary cocycle~$c \in C^\bullet(W_n)$.  
Since~$c$ is a continuous cochain, it is zero on formal vector fields of sufficiently high degree. Hence, for some~$N \in \mathbb{N}$, it factors through to a multilinear map 
\begin{align}
c_N : \left( \frac{W_n}{\bigoplus_{r=N + 1}^\infty \frakg_r} \right)^k \to \mathbb{R}.
\end{align} 
The action of~$J^\infty_0 \Diff(\mathbb{R}^n)$ on~$W_n$ descends to an action of the Lie group~$J^N_0 \Diff(\mathbb{R}^n)$ on the quotient~$\frac{W_n}{\bigoplus_{r=N + 1}^\infty \frakg_r}$, so that so that for all~$X_1,\dots,X_k \in \mathfrak{X}(\mathbb{R}^n)$
\begin{align}
(j^\infty_0 \phi^* c)
(j^\infty_0 X_1,\dots, j^\infty_0 X_k) 
= 
(j^N_0 \phi^* c_N)(j^N_0 X_1,\dots, j^N_0 X_k),
\end{align}
where the action on the right-hand side is defined analogously to the action of the in\-fi\-ni\-ty\--jets. 
The Lie algebra of~$J^N_0 \Diff(\mathbb{R}^n)$ is given by
\begin{align}
J^N_0 \mathfrak{X}(\mathbb{R}^n) 
\cong 
\frac{\bigoplus_{r=0}^\infty \frakg_r}{\bigoplus_{r=N + 1}^\infty \frakg_r}
\subset
\frac{W_n}{\bigoplus_{r=N + 1}^\infty \frakg_r}.
\end{align}
Since~$\phi$ is orientation-preserving, its~$N$-jet at zero lies in the identity component of the Lie group~$J_0^N \Diff(\mathbb{R}^n)$. This component is generated by the image of the exponential map, there exist vector fields $Y_1,\dots,Y_r \in \mathfrak{X}(\mathbb{R}^n)$ so that for all~$X_1,\dots,X_k \in \mathfrak{X}(\mathbb{R}^n)$
\begin{equation}
\begin{aligned}
(j^\infty_0 \phi^* c ) (j^\infty_0 X_1,\dots, j^\infty_0 X_k) 
&= 
(j^N_0 \phi^* c_N ) (j^N_0 X_1,\dots, j^N_0 X_k) 
\\
&= 
(\exp(j^N_0 Y_1)^* \dotsm \exp(j^N_0 Y_r)^* c_N) (j^N_0 X_1,\dots, j^N_0 X_k) 
\\
&=
(\exp(j^\infty_0 Y_1)^* \dotsm \exp(j^\infty_0 Y_r)^* c) (j^\infty_0 X_1,\dots, j^\infty_0 X_k).
\end{aligned}
\end{equation}
The action of~$J^\infty_0 \mathfrak{X}(\mathbb{R}^n) \cong W_n$ on~$H^\bullet(W_n)$ is trivial by Corollary~\ref{CorollaryLieAlgebraActsTriviallyOnCohomology}, and as a consequence 
\begin{align}
[\phi^*]([\beta_0 c]) 
=
[\beta_0( \exp(j^\infty_0 Y_1)^* \dotsm \exp(j^\infty_0 Y_r)^* c)] 
= [\beta_0 c].
\end{align}
Hence~$[\phi^*] = \id$ as a map on cohomology, and the statement is shown.
\end{proof}
We will now make use of the language of \emph{cosheaves} to describe the extension of Gel\-fand\--Fuks cochains between open sets of a smooth manifold. While sheaf theory is well known, the dual concept of cosheaves is less commonly considered. For self\--con\-tained\-ness, we direct the reader to Appendix~\ref{AppendixCosheaves}, or \cite{bredon2012sheaf} for a more detailed study of both sheaf and cosheaf theory.
\begin{definition}
\label{DefinitionPresheafAndPrecosheafStructure}
Let~$M$ be a smooth manifold, and~$U \subset V$ open subsets of~$M$.
\begin{itemize}
\item[i)]
Define the \emph{extension of cochains}
\begin{align}
\iota_U^V : C^\bullet(\mathfrak{X}(U)) &\to C^\bullet(\mathfrak{X}(V))
\end{align}
on cochains of degree~$k > 0$ as
\begin{align}
(\iota_U^V c)(X_1,\dots,X_k) &:= c(X_1 \att_U, \dots, X_k \att_U)
\end{align}
for all~$c \in C^k(\mathfrak{X}(U))$,~$X_1,\dots,X_k \in \mathfrak{X}(V)$.\\ On cochains of degree 0, we set~$\iota_U^V : C^0(\mathfrak{X}(U)) \to C^0(\mathfrak{X}(V))$ to be the identity.
\item[ii)]
The extension of cochains induces an \emph{extension of cohomology classes}
\begin{align}
\iota_U^V : H^\bullet(\mathfrak{X}(U)) \to H^\bullet(\mathfrak{X}(V))
\end{align}
which we will denote with the same symbol~$\iota_U^V$ by an abuse of notation.
\item[iii)]
We define a precosheaf~$\mathcal{H}^\bullet$ of algebras over~$M$, assigning to an open set~$U \subset M$ the algebra~$H^\bullet(\mathfrak{X}(U))$, and to an inclusion of open sets~$U \subset V$ the extension map~$\iota_U^V : H^\bullet(\mathfrak{X}(U)) \to H^\bullet(\mathfrak{X}(V))$.
\end{itemize}
\end{definition}
An alternative perspective on the extension maps~$\iota_U^V$ is that they are the pullback along the inclusion map~$U \to V$, as defined in \eqref{eq:PullbackOfCochainsAlongLocalDiffeos}.
This map is a local diffeomorphism, hence this pullback is well-defined.
\\ 
For the next corollary, recall that a~$\mathrm{C}$-valued precosheaf on~$M$ is simply a covariant functor from the category~$\mathrm{Open}(M)$ of open sets of~$M$ to the category~$\mathrm{C}$. 
\begin{definition}
Let~$A$ be a associative~$\mathbb{R}$- algebra and~$X$ a locally connected topological space. We define the \emph{constant precosheaf of algebras} associated to~$A$ as the assignment
\begin{align}
U \mapsto A^{\otimes \pi_0(U)}.
\end{align}
Here, if~$\pi_0(U) = \infty$, we set~$A^{\otimes \pi_0(U)}$ to be the infinite coproduct of algebras, i.e.
\begin{align}
A^{\otimes \pi_0(U)} := \varinjlim A_1 \otimes \dots \otimes A_n,
\end{align}
and the inclusion maps~$A_1 \otimes \dots \otimes A_n \to A_1 \otimes \dots \otimes A_m$ for~$n \leq m$ of this colimit are given by
\begin{align}
a_1 \otimes \dots \otimes a_n 
\mapsto 
a_1 \otimes \dots \otimes a_n \otimes 1 \otimes \dots \otimes 1.
\end{align}
The extension maps of the precosheaf are given by taking products along connected components.
\end{definition}
\begin{remark}
This is the categorical generalization of a constant cosheaf from the category of vector spaces, replacing the coproduct~$\oplus$ of vector spaces by the coproduct~$\otimes$ of algebras. However, the category of~$\mathbb{R}$-algebras is not even preadditive. Hence, we cannot define exact sequences, and hence cosheaves, in the way we did in Appendix~\ref{AppendixCosheaves}. This is why we refrain from calling this construction a cosheaf.
\end{remark}
\begin{corollary}
\label{cor:CosheafFunctorsEquivalentOnBalls}
Let~$M$ be a smooth, orientable manifold of dimension~$n$. Consider the full subcategory of~$\mathrm{Open}(M)$ defined by finite unions of pairwise disjoint sets diffeomorphic to~$\mathbb{R}^n$:
\begin{align}
\mathcal{B}_M :=
\{U_1 \cup \dots \cup U_k : k \in \mathbb{N}, U_i \cong \mathbb{R}^n, U_i \cap U_j = \emptyset \text{ for } i \neq j\}.
\end{align}
The functor associated to the restriction of the precosheaf~$\mathcal{H}^\bullet$ to this subcategory is naturally isomorphic to the analogous restriction of the functor associated to the constant cosheaf of algebras~$U \mapsto H^\bullet(\mathfrak{X}(\mathbb{R}^n))^{\otimes^{\pi_0(U)}}$.
\end{corollary}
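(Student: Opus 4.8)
The plan is to write down, for every object $U$ of $\mathcal{B}_M$, an explicit algebra isomorphism $\eta_U\colon H^\bullet(\mathfrak{X}(U))\to H^\bullet(\mathfrak{X}(\mathbb{R}^n))^{\otimes\pi_0(U)}$ and then to check that the family $\{\eta_U\}$ is natural for the inclusions in $\mathcal{B}_M$. Write $A:=H^\bullet(\mathfrak{X}(\mathbb{R}^n))$; by Corollary~\ref{CorollaryWedgeProductOfFormalsIsZero} the algebra $A$ is graded-commutative with vanishing products in positive degrees, so the unordered power $A^{\otimes\pi_0(U)}$ is well defined up to canonical isomorphism. First I fix an orientation on $M$, which each component $W\in\pi_0(U)$ inherits (it is open in $M$ and diffeomorphic to $\mathbb{R}^n$). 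For such $W$ I pick an orientation-preserving diffeomorphism $\psi_W\colon W\to\mathbb{R}^n$ and set $\kappa_W:=[\psi_W^*]\colon H^\bullet(\mathfrak{X}(W))\to A$. By Lemma~\ref{lem:LocalDiffeomorphismsUniqueUpToOrientation}(i) this is a unital algebra isomorphism, and by part (ii) it is independent of the choice of $\psi_W$: two orientation-preserving diffeomorphisms differ by an orientation-preserving self-diffeomorphism of $\mathbb{R}^n$, which acts as the identity on $A$. I then let $\eta_U$ be the Künneth isomorphism $H^\bullet(\mathfrak{X}(U))\cong\bigotimes_{W\in\pi_0(U)}H^\bullet(\mathfrak{X}(W))$ of Proposition~\ref{PropositionCohomologyOfDisjointUnionsOfRns} followed by $\bigotimes_W\kappa_W$; it is an algebra isomorphism by construction.

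The conceptual heart is the compatibility of these identifications with extension for a single pair of balls: for connected $U,V\in\mathcal{B}_M$ with $U\subset V$, I claim $\kappa_V\circ\iota_U^V=\kappa_U$. Using the description of $\iota_U^V$ as pullback along the inclusion $\iota\colon U\hookrightarrow V$ (recorded after Definition~\ref{DefinitionPresheafAndPrecosheafStructure}) and the covariance of cochain pullback under composition, one has $\kappa_V\circ\iota_U^V=[(\chi\circ\iota)^*]$ for an orientation-preserving $\chi\colon V\to\mathbb{R}^n$. The map $g:=\chi\circ\iota\colon U\to\mathbb{R}^n$ is an orientation-preserving open embedding; factoring $g=h\circ\psi_U$ with $\psi_U$ an orientation-preserving diffeomorphism, the remaining factor $h:=g\circ\psi_U^{-1}\colon\mathbb{R}^n\to\mathbb{R}^n$ is an orientation-preserving local diffeomorphism. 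Hence $[g^*]=[h^*]\circ[\psi_U^*]$, and Lemma~\ref{lem:LocalDiffeomorphismsUniqueUpToOrientation}(ii) gives $[h^*]=\id$, so $\kappa_V\circ\iota_U^V=[\psi_U^*]=\kappa_U$. This single-component identity is the \emph{only} place where orientability of $M$ is genuinely used: it lets every inclusion and every chart be taken orientation-preserving, so that Lemma~\ref{lem:LocalDiffeomorphismsUniqueUpToOrientation}(ii) applies and the $\kappa_W$ become canonical.

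To pass to naturality in general, given $U\subset V$ in $\mathcal{B}_M$ I decompose $V=\bigsqcup_jV_j$ into components; each component $W$ of $U$ lies in a unique $V_j$ by connectedness, whence $U\cap V_j=\bigsqcup_{W\subset V_j}W$. Since the inclusion $U\hookrightarrow V$ is the disjoint union of the inclusions $U\cap V_j\hookrightarrow V_j$ and $\iota_U^V$ is an algebra map, the explicit form of $\beta_0^{(r)}$ in Proposition~\ref{PropositionCohomologyOfDisjointUnionsOfRns} lets me factor $\iota_U^V$ through Künneth as the tensor product over $j$ of the fiber extensions $\bigotimes_{W\subset V_j}H^\bullet(\mathfrak{X}(W))\to H^\bullet(\mathfrak{X}(V_j))$. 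The corollary immediately following Proposition~\ref{PropositionCohomologyOfDisjointUnionsOfRns} identifies each fiber extension with the iterated wedge of the individually extended classes, $\bigotimes_W[c_W]\mapsto[\,\bigwedge_W\iota_W^{V_j}c_W\,]$. Applying the algebra isomorphism $\kappa_{V_j}$ together with the single-component identity $\kappa_{V_j}\circ\iota_W^{V_j}=\kappa_W$, this wedge turns into the product $\prod_W\kappa_W([c_W])$ in $A$, which is precisely the fiberwise multiplication defining the extension maps of the constant cosheaf. Assembling the fibers shows that the naturality square for $\eta$ commutes, and with each $\eta_U$ already an isomorphism this yields the asserted natural isomorphism.

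I expect the main obstacle to be the bookkeeping in the last step rather than any single deep input: regrouping the $\pi_0(U)$ tensor factors according to their target component and matching the iterated wedge with the fiberwise algebra product quietly relies on graded-commutativity of $A$ and on the vanishing of positive-degree products (Corollary~\ref{CorollaryWedgeProductOfFormalsIsZero}) to kill Koszul signs and to render the unordered tensor powers and the fiber products unambiguous; these are exactly the points where a careless argument would break.
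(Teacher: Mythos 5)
Your proposal is correct and follows essentially the same route as the paper's proof: orientation-compatible identifications of each component with $\mathbb{R}^n$, the tensor decomposition of Proposition~\ref{PropositionCohomologyOfDisjointUnionsOfRns}, Lemma~\ref{lem:LocalDiffeomorphismsUniqueUpToOrientation}(ii) to make single-component inclusions act trivially, and the vanishing of positive-degree products (Corollary~\ref{CorollaryWedgeProductOfFormalsIsZero}) to handle components of $U$ merging inside one component of $V$. The only organizational difference is that you isolate the single-component identity $\kappa_V\circ\iota_U^V=\kappa_U$ first and then assemble, whereas the paper runs the two-case analysis directly; the mathematical content is the same.
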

\begin{proof}
Fix some orientation on~$\mathbb{R}^n$. Using orientability of~$M$, we can construct for every connected~$U \in \mathcal{B}_M$ a diffeomorphism~$\phi_{U} : U \to \mathbb{R}^n$ so that if~$V \in \mathcal{B}_M$ is any other connected set with~$U \cap V \neq \emptyset$, the transition map~$(\phi_{U} \circ \phi_{V}^{-1})|_{\phi_V(V \cap U)}$ is orientation-preserving.
Fix now some set~$U \in \mathcal{B}_M$ with connected components~$U_1,\dots,U_k$. In Proposition~\ref{PropositionCohomologyOfDisjointUnionsOfRns} we defined an isomorphism 
\begin{align}
\label{eq:TensorFactorizeCohomology}
H^\bullet(\mathfrak{X}(U_1)) 
\otimes
\dots 
\otimes
H^\bullet(\mathfrak{X}(U_k))
 \cong 
H^\bullet(\mathfrak{X}(U)).
\end{align}
Thus, under the identification \eqref{eq:TensorFactorizeCohomology} we can express every element in~$H^\bullet(\mathfrak{X}(U))$ as a span of elements of the form
\begin{align}
[\phi^*_{U_1} c_1] 
\otimes \dots \otimes 
[\phi^*_{U_k} c_k], 
\quad 
c_1,\dots,c_k \in C^\bullet(\mathfrak{X}(\mathbb{R}^n)).
\end{align}
Let us show that the collection of the maps 
\begin{align}
\label{eq:NaturalIsomorphism}
[\phi^*_{U_1}]
\otimes 
\dots 
\otimes 
[\phi^*_{U_k}]
: 
H^\bullet(\mathfrak{X}(\mathbb{R}^n))^{\pi_0(U)}
\to
H^\bullet(\mathfrak{X}(U)) 
\end{align}
for all~$U \in \mathcal{B}$ induce the desired natural isomorphism of functors. By Lemma~\ref{lem:LocalDiffeomorphismsUniqueUpToOrientation}, every tensor factor is an isomorphism of vector spaces, and by an explicit calculation with the wedge product, the maps are compatible with the algebra structures. It remains to show that the precosheaf extension maps are respected. 
\\
Let~$V \in \mathcal{B}_M$ contain~$U$. Assume first that two connected components of~$U$, say~$U_1, U_2$ lie in a single connected component of~$V$, say~$V_1$. 
By Corollary~\ref{CorollaryWedgeProductOfFormalsIsZero} the wedge product of~$H^\bullet(W_n)$ is zero in nonzero degree, so the extension map~
\begin{align}
H^{q_1}(\mathfrak{X}(U_1)) \otimes H^{q_2}(\mathfrak{X}(U_2)) \to H^{q_1 + q_2}(\mathfrak{X}(V_1))
\end{align}
is zero if~$q_1$ and~$q_2$ are simultaneously nonzero. 
\\
If one of the~$q_1,q_2$ is equal to zero, say~$q_1 = 0$, then the extension map~$H^{q_1}(\mathfrak{X}(U_1)) \otimes H^{q_2}(\mathfrak{X}(U_2)) \to H^{q_1 + q_2}(\mathfrak{X}(V_1))$ is simply the isomorphism
\begin{align}
\mathbb{R} \otimes H^{q_2}(\mathfrak{X}(U_2)) \xrightarrow{\sim} H^{q_2}(\mathfrak{X}(U_2)),
\quad
1 \otimes [c] \mapsto [c].
\end{align}
For the constant precosheaf of algebras~$U \mapsto H^\bullet( \mathfrak{X}(\mathbb{R}^n) )^{\otimes \pi_0(U)}$ analogous properties hold, and as a consequence the maps \eqref{eq:NaturalIsomorphism} commute with inclusions where two connected components of~$U$ are contained in a single connected component of~$V$.
\\
Hence it remains to consider the case in which every connected component~$V_i$ of~$V$ contains at most one connected component of~$U$, so by reordering we may assume~$U_i \subset V_i$ for all~$i = 1,\dots, k$. It suffices to show that~$(\phi_{V_i}^{-1} )^* \circ \iota_{U_i}^{V_i} \circ \phi_{U_i}^*$ induces the identity map on~$H^\bullet(\mathfrak{X}(\mathbb{R}^n))$ for all~$i = 1,\dots,k$. As maps on cochains~$C^\bullet(\mathfrak{X}(\mathbb{R}^n)$ we have the identity
 \begin{align}
(\phi_{V_i}^{-1})^* 
\circ 
\iota_{U_i}^{V_i}
\circ 
\phi_{U_i}^*
=
\left( \phi_{V_i}^{-1}|_{U_i} \circ \phi_{U_i} \right)^*.
\end{align} 
Now~$\phi_{V_i}^{-1}|_{U_i} \circ \phi_{U_i}$ is an orientation-preserving local diffeomorphism, so by Lemma~\ref{lem:LocalDiffeomorphismsUniqueUpToOrientation} the map 
$(\phi_{V_i}^{-1})^* 
\circ 
\iota_{U_i}^{V_i}
\circ 
\phi_{U_i}^*$ equals the identity on cohomology and~$[\iota_{U_i}^{V_i} \circ \phi_{U_i}] = [\phi_{V_i}]$. Hence, the statement is shown.
\end{proof}
\begin{remark}
In the non-orientable case, the previous proof shows that the restriction of~$\mathcal{H}^\bullet$ is naturally isomorphic to the restriction of a \emph{locally constant cosheaf}~$U \mapsto S(U)$, i.e. every point~$x \in M$ has an open neighborhood~$U_x$ so that the restriction of~$S$ to~$U_x$ is a constant cosheaf.
\end{remark}

\section{Gelfand--Fuks cohomology for smooth manifolds}
\label{SectionGelfandFuksOnSmooths} 
In this section, we construct a spectral sequence due to Gelfand and Fuks that calculates the continuous Lie algebra cohomology for smooth manifolds, following a local-to-global principle using sheaf theoretic ideas.
The spectral sequences were originally constructed in \cite{gel1969cohomologyI}, by an involved global analysis of the cochain spaces~$C^\bullet(\mathfrak{X}(M))$ in terms of explicit distributions.
\\
The proposed local-to-global principle has originally been outlined in \cite{bott1973lectures} and \cite{bott1975gelfand}, and, according to the last reference, was initially suggested by Segal.
However, in these latter two references, there are some unaddressed subtleties: It is (indirectly) claimed that the assignment of open sets~$U$ to~$C^\bullet(\mathfrak{X}(U))$ is a \emph{cosheaf} of graded vector spaces, i.e. its \v{C}ech homology (see Appendix~\ref{AppendixCosheaves}) vanishes with respect to every good cover~$\mathcal{U}$ of~$M$. 
We show in Proposition~\ref{prop:CochainsAreNotCosheaves} that this is false. We present a novel proof that works around this problem by using so-called~$k$-\emph{good} covers, an adaptation to the concept of a good cover originating from \cite{boavida2013manifold}. This was inspired by the recent preprint \cite{hennion2018gelfand}, treating Gelfand--Fuks cohomology in the setting of factorization algebras. 
\\
However, we want to emphasize that this subtlety does not influence the validity of the final results of Bott and Segal. The mistake is not repeated in \cite{bott1977cohomology} and \cite{fuks1984cohomology}, where similar, but more sophisticated \v{C}ech-theoretic methods are used.
\\
Regardless, our proof gives a more elementary way to calculate Gelfand--Fuks spectral sequences for \emph{$k$-diagonal cohomology}, an approximation of Gelfand--Fuks cohomology which we will introduce in the following section. The expression for the spectral sequences have been given in  \cite{fuks1984cohomology} without an explicit proof for~$k \neq 1$; the proof in \cite{bott1973lectures} is only a sketch, with previously mentioned issues, and the proof in \cite{gel1969cohomologyI} uses explicit, distribution-theoretic formulas which may be difficult for non-experts to retrace. 
\\
The construction we propose in the following section is inspired by a preprint by Kapranov and Hennion \cite{hennion2018gelfand}, in which they use the theory of factorization algebras to prove that Gelfand--Fuks cohomology can be identified with singular cohomology of a certain section space. We borrow from this theory a notion of generalized good covers which makes it possible to construct the desired spectral sequences from a local-to-global principle. This strategy is easily generalizable to construct local-to-global spectral sequences in other cohomology theories, as we plan to show in future work  \cite{miaskiwskyi2021loday}.

\subsection{Diagonal Filtration}
Fix a smooth manifold~$M$ of dimension~$n$.
The previously established precosheaf structure (see Definition~\ref{DefinitionPresheafAndPrecosheafStructure}) of the co\-chains~$C^\bullet(\mathfrak{X}(M))$ does \emph{not} extend to a cosheaf structure. 
%
\begin{proposition}
\label{prop:CochainsAreNotCosheaves}
Let~$M$ be a smooth manifold. If~$k > 1$ and~$\dim M > 0$, then the precosheaf~$U \mapsto C^k(\mathfrak{X}(U))$ from Definition~\ref{DefinitionPresheafAndPrecosheafStructure} is not a cosheaf.
\end{proposition}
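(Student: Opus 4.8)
The plan is to refute the cosheaf axiom directly. Recalling (see Appendix~\ref{AppendixCosheaves}) that a cosheaf~$\mathcal{F}$ must in particular satisfy~$\check{H}_0(\mathcal{U};\mathcal{F}) \cong \mathcal{F}(U)$ via the augmentation~$\epsilon = \sum_i \iota_{U_i}^U$ for every good cover~$\mathcal{U} = \{U_i\}$ of every open~$U$, it suffices to exhibit a single good cover for which~$\epsilon$ fails to be surjective. Since the obstruction is local, I would work inside a chart of~$M$ diffeomorphic to~$\mathbb{R}^n$ with~$n = \dim M \geq 1$. There I choose two open balls (open intervals, if~$n = 1$)~$U_1, U_2$ with~$U := U_1 \cup U_2$, whose pairwise intersection~$U_1 \cap U_2$ is a nonempty convex set; then~$U_1, U_2, U_1 \cap U_2$ are all diffeomorphic to~$\mathbb{R}^n$, so~$\{U_1, U_2\}$ is a good cover of~$U$. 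I arrange the balls so that there exist points~$p \in U_1 \setminus \overline{U_2}$ and~$q \in U_2 \setminus \overline{U_1}$; this separates a neighborhood of~$p$ from~$U_2$ and a neighborhood of~$q$ from~$U_1$.

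The counterexample cochain exploits that, for~$k > 1$, a~$k$-cochain can correlate the behavior of vector fields at the two separated points. Concretely I set
\[
c := \ell_p \wedge \lambda_1 \wedge \dots \wedge \lambda_{k-1} \in C^k(\mathfrak{X}(U)),
\]
where~$\ell_p \in C^1(\mathfrak{X}(U))$ is a jet functional localized at~$p$ (e.g.~$\ell_p(X) := X^1(p)$ in the chart coordinates) and~$\lambda_1, \dots, \lambda_{k-1} \in C^1(\mathfrak{X}(U))$ are linearly independent jet functionals localized at~$q$. Such functionals exist because~$k - 1 \geq 1$ and, for~$n \geq 1$, the space~$\mathfrak{X}(U)$ carries continuous jet functionals of arbitrary order at~$q$. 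Using bump functions I then select test fields~$X_1$ supported in~$U_1 \setminus \overline{U_2}$ with~$\ell_p(X_1) = 1$, and~$X_2, \dots, X_k$ supported in~$U_2 \setminus \overline{U_1}$ forming a family dual to~$\lambda_1, \dots, \lambda_{k-1}$. Because every~$p$-localized functional annihilates fields supported near~$q$ and vice versa, only the block-diagonal terms in the antisymmetrization survive, so that~$c(X_1, \dots, X_k) = \pm 1 \neq 0$.

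Finally I would show~$c \notin \operatorname{im}(\epsilon)$. Suppose~$c = \iota_{U_1}^U c_1 + \iota_{U_2}^U c_2$ with~$c_i \in C^k(\mathfrak{X}(U_i))$, and evaluate on~$(X_1, \dots, X_k)$. Since~$X_2, \dots, X_k$ are supported away from~$U_1$, their restrictions~$X_j \att_{U_1}$ vanish, so multilinearity forces~$(\iota_{U_1}^U c_1)(X_1, \dots, X_k) = c_1(X_1 \att_{U_1}, 0, \dots, 0) = 0$; symmetrically~$X_1 \att_{U_2} = 0$ gives~$(\iota_{U_2}^U c_2)(X_1, \dots, X_k) = 0$. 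Thus the right-hand side vanishes while~$c(X_1, \dots, X_k) = \pm 1$, a contradiction, so~$\epsilon$ is not surjective and~$C^k(\mathfrak{X}(-))$ is not a cosheaf. The step I expect to require the most care is matching the precise cosheaf definition of the appendix (so as to confirm that failure of surjectivity of~$\epsilon$ already violates it), together with the verification that the chosen cover is good and that compactly supported test fields with prescribed finite jets exist. Conceptually, the crucial point is that the hypothesis~$k > 1$ is exactly what lets one split the~$k$ arguments into two nonempty clusters concentrated at separated points, which is impossible for~$k = 1$, where~$C^1(\mathfrak{X}(-))$ is in fact cosheaf-like.
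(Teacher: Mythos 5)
Your argument is correct and rests on exactly the same mechanism as the paper's proof: for $k>1$ one can split the $k$ arguments into clusters of vector fields with disjoint supports, so that any putative decomposition $c=\sum_i \iota_{U_i}^U c_i$ evaluates to zero on a tuple where $c$ does not, since each $c_i$ receives at least one identically vanishing restricted argument. The only (harmless) differences are that the paper uses a $k$-element cover adapted to $k$ disjointly supported fields and merely asserts the existence of a separating cochain, whereas you use a two-element cover and exhibit the cochain explicitly as a wedge of point-localized jet functionals; your extra care about the cover being good is unnecessary, since the cosheaf axiom of the appendix is required for every open cover.
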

\begin{proof}
Because~$\dim M > 0$, there are smooth, nonzero~$X_1, \dots, X_k \in \mathfrak{X}(M)$ whose supports are pairwise disjoint, and some~$c \in C^k(\mathfrak{X}(M))$ with~$c(X_1,\dots, X_k) \neq 0$.
Then the sets 
\begin{align}
U_i := M \setminus  \left( \bigcup_{j \neq i} \supp X_j \right), \quad i = 1,\dots, k
\end{align}
define an open cover~$\{U_i\}_{i = 1,\dots,k}$ of~$M$. If the assignment of an open set~$U \subset M$ to co\-chains~$C^k(\mathfrak{X}(U))$ was a cosheaf, then there would exist~$c_i \in C^\bullet(\mathfrak{X}(U_i))$ for~$i = 1, \dots, k$ with
\begin{align}
c = \sum_{i=1}^k \iota_{U_i}^M c_i.
\end{align} 
But then, because~$X_i \att_{U_j} = 0$ for~$i \neq j$, it follows that
\begin{align}
0 \neq c(X_1, \dots, X_k) = \sum_{i=1}^k c_i (X_1|_{U_i}, \dots, X_k|_{U_i}) = 0.
\end{align}
A clear contradiction, hence, the precosheaf~$U \mapsto C^k(\mathfrak{X}(U))$ is not a cosheaf for~$k > 1$.
\end{proof}
Hence, as we increase the number of arguments in our cochains, we may get \emph{locality} or \emph{diagonality problems} as in the above proof. It will be valuable to replace these spaces by certain diagonal replacements:
\begin{definition}
Let~$U$ be an open subset of~$M$.
\begin{itemize}
\item[i)] Define the graded vector space~$B^\bullet(\mathfrak{X}(U)) :=\bigoplus_{q \geq 0} B^q(\mathfrak{X}(U)),$ where
\begin{align}
B^q(\mathfrak{X}(U)) :=
\left \{ 
c : \mathfrak{X}(U)^q 
\to \mathbb{R} 
\, \mid \,
c \text{ multilinear and jointly continuous} \right \}.
\end{align}
\item[ii)] Let~$\{X_1,\dots,X_q \} \subset \mathfrak{X}(U)$ be a finite collection of vector fields and~$k \geq 1$ arbitrary. We say that this collection \emph{has the property~$\Delta_k$} if for every finite set~$\Gamma \subset U$ of~$k$ arbitrary points, there is an~$X_i$ that vanishes in a neighborhood of~$\Gamma$.
\item[iii)]
For~$q, k \geq 1$ integers, we define the \emph{$k$-diagonal distributions} as those~$c \in B^q(\mathfrak{X}(U))$ with
\begin{align}
\{X_1,\dots,X_q\} \text{ has property } \Delta_k \implies c(X_1,\dots,X_q) = 0.
\end{align}
Their collection is denoted~$\Delta_k B^q(\mathfrak{X}(U))$.
\\
If~$q = 0$, set~$\Delta_k B^0(\mathfrak{X}(U)) = B^0(\mathfrak{X}(U))$ for all~$k \geq 1$.
\item[iv)] Define the \emph{$k$-diagonal cochains}~$\Delta_k C^q(\mathfrak{X}(U)) \subset C^q(\mathfrak{X}(M))$ as the skew-symmetric cochains which are contained in~$\Delta_k B^q(\mathfrak{X}(U))$.
\end{itemize}
\end{definition}
\begin{proposition}
For all~$q \geq 0$ and all open~$U \subset M$, we have the ascending chain
\begin{equation}
\begin{aligned}
0 =: \Delta_0 C^q (\mathfrak{X}(U))
&\subset \Delta_1 C^q (\mathfrak{X}(U))
\subset \dots 
\\
&\subset \Delta_{q-1} C^q (\mathfrak{X}(U))
\subset \Delta_q C^q (\mathfrak{X}(U)) = 
C^k (\mathfrak{X}(U)).
\end{aligned}
\end{equation}
Further, the~$\Delta_k C^\bullet(\mathfrak{X}(U))$ constitute a multiplicative filtration of~$C^\bullet(\mathfrak{X}(U))$.
\end{proposition}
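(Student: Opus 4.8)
The plan is to reformulate property $\Delta_k$ in terms of the supports $\supp X_i$, after which all assertions reduce to transparent combinatorial statements. Since a field $X \in \mathfrak{X}(U)$ vanishes on a neighborhood of a finite set $\Gamma \subset U$ exactly when $\Gamma \cap \supp X = \emptyset$, a collection $\{X_1,\dots,X_q\}$ has property $\Delta_k$ if and only if every set of at most $k$ points of $U$ is disjoint from some support $\supp X_i$; equivalently, the supports $\supp X_1,\dots,\supp X_q$ admit no common transversal of at most $k$ points, where a common transversal is a finite set meeting each $\supp X_i$.

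With this reformulation the ascending chain is immediate. A transversal of at most $k$ points is in particular one of at most $k+1$ points, so the nonexistence of a transversal of size $\leq k+1$ forces the nonexistence of one of size $\leq k$; hence every collection with property $\Delta_{k+1}$ has property $\Delta_k$, and a cochain vanishing on all $\Delta_k$-collections vanishes a fortiori on all $\Delta_{k+1}$-collections. This gives $\Delta_k C^q(\mathfrak{X}(U)) \subset \Delta_{k+1} C^q(\mathfrak{X}(U))$. For the top step, if a $q$-collection had property $\Delta_q$ while all $X_i$ were nonzero, then choosing one point $p_i \in \supp X_i$ for each $i$ would exhibit a transversal of at most $q$ points, a contradiction; hence some $X_i$ vanishes and any multilinear $c$ vanishes on the collection, so $C^q(\mathfrak{X}(U)) \subset \Delta_q C^q(\mathfrak{X}(U))$, the reverse inclusion being definitional. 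The leftmost term $\Delta_0 C^q(\mathfrak{X}(U)) = 0$ is a definition, and the degenerate case $q = 0$ is covered by the stated convention.

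Next I would check that each $\Delta_k C^\bullet(\mathfrak{X}(U))$ is a subcomplex, i.e.\ $d\bigl(\Delta_k C^q(\mathfrak{X}(U))\bigr) \subset \Delta_k C^{q+1}(\mathfrak{X}(U))$. The one piece of genuine input is the localization property $\supp[X,Y] \subset \supp X \cap \supp Y$, valid because a field vanishing on an open set has vanishing bracket with any field there. Now fix $c \in \Delta_k C^q(\mathfrak{X}(U))$ and a $\Delta_k$-collection $\{X_1,\dots,X_{q+1}\}$; each summand of $dc(X_1,\dots,X_{q+1})$ evaluates $c$ on the $q$-collection obtained by replacing a pair $X_i,X_j$ with $[X_i,X_j]$. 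Given any set $\Gamma$ of at most $k$ points, property $\Delta_k$ yields an index $m$ with $\Gamma \cap \supp X_m = \emptyset$; if $m \neq i,j$ then $X_m$ still occurs in the new collection, while if $m \in \{i,j\}$ then $\Gamma \cap \supp[X_i,X_j] = \emptyset$ by the support inclusion. Thus the modified collection again has property $\Delta_k$, so $c$ annihilates it and $dc \in \Delta_k C^{q+1}(\mathfrak{X}(U))$.

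Finally, for multiplicativity I would prove $\Delta_k C^{q_1}(\mathfrak{X}(U)) \wedge \Delta_l C^{q_2}(\mathfrak{X}(U)) \subset \Delta_{k+l} C^{q_1+q_2}(\mathfrak{X}(U))$. Let $c_1,c_2$ lie in these spaces and let $\{X_1,\dots,X_{q_1+q_2}\}$ have property $\Delta_{k+l}$. Every summand of $(c_1 \wedge c_2)(X_1,\dots,X_{q_1+q_2})$ partitions the arguments into a group of $q_1$ fields fed to $c_1$ and a group of $q_2$ fields fed to $c_2$. Were the first group to fail property $\Delta_k$ and the second to fail property $\Delta_l$, their supports would carry transversals of at most $k$ and at most $l$ points respectively, whose union is a transversal of at most $k+l$ points for the whole collection, contradicting property $\Delta_{k+l}$. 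Hence each summand contains a vanishing factor, so $c_1 \wedge c_2$ annihilates every $\Delta_{k+l}$-collection, as required. I expect the subcomplex step to be the main obstacle, since it is the only point requiring differential-geometric input through $\supp[X,Y] \subset \supp X \cap \supp Y$; the remaining claims are purely combinatorial once property $\Delta_k$ is expressed via transversals of supports.
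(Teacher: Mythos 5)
Your proof is correct and follows essentially the same route as the paper's: the chain of inclusions from the implication $\Delta_{k+1}\Rightarrow\Delta_k$ on collections, the subcomplex property from the fact that replacing a pair by its bracket preserves property $\Delta_k$ (via $\supp[X,Y]\subset\supp X\cap\supp Y$), and multiplicativity from the observation that a $\Delta_{k+l}$-collection cannot split into two groups failing $\Delta_k$ and $\Delta_l$ respectively. The paper merely asserts these three combinatorial implications, whereas your transversal-of-supports reformulation supplies the (correct) details.
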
 
\begin{proof}
The property~$\Delta_k$ for a set~$\{X_1,\dots,X_q\}$ implies the property~$\Delta_{k-1}$. This proves the ascending chain of inclusions.
\\
Further, a set~$\{X_1,\dots,X_k\}$ of~$k$ vector fields can only have the property~$\Delta_k$ if one of the~$X_i$ is zero everywhere. Hence~$\Delta_k C^k (\mathfrak{X}(M)) = C^k (\mathfrak{X}(M))$.
Further, notice that if~$\{X_1,\dots,X_{q+1}\}$ has the property~$\Delta_k$, so does~$ \{[X_1,X_2],X_3,\dots,X_{q}\}$. From this it follows that
\begin{align}
d(\Delta_k C^q(\mathfrak{X}(M)) 
\subseteq 
\Delta_k C^{q+1}\mathfrak{X}(M).
\end{align}
Lastly, if~$\{X_1,\dots,X_{q + r}\}$ has the property~$\Delta_{k + l}$, then the set~$\{X_1,\dots,X_q\}$ must have the property $\Delta_k$ or the set~$\{X_{q+1}, \dots, X_{q + r}\}$  must have the property~$\Delta_l$. Hence
\begin{align}
\Delta_k C^\bullet(\mathfrak{X}(M)) \wedge
\Delta_l C^\bullet(\mathfrak{X}(M))
\subseteq  \Delta_{k + l} C^\bullet(\mathfrak{X}(M)).
\end{align}
\end{proof}
\begin{example}
A set~$\{X_1,\dots,X_k\} \subset \mathfrak{X}(M)$ has the property~$\Delta_1$ if and only if
\begin{align}
\bigcap_{i=1}^k \supp X_i = \emptyset.
\end{align}
\end{example}
Note that~$\Delta_q C^q(\mathfrak{X}(U)) = C^q(\mathfrak{X}(U))$, 
hence
\begin{align}
\Delta_{k} H^q(\mathfrak{X}(U)) = H^q(\mathfrak{X}(U)) \quad \forall k \geq q + 1.
\end{align}
To put this in terms of more sheaflike data, let us view these cochains through a different lens.
\begin{definition}
Given~$q \geq 1$ and the canonical projections~$\pr_1, \dots, \pr_q : M^q \to M$, consider the vector bundle
\begin{align}
\boxtimes^q TM := \bigotimes^q_{i=1} \pr_i^* TM \to M^q.
\end{align}
Equipping the space of sections~$\mathfrak{X}(M)$ with its standard Fréchet topology, it is a well-known consequence of the Schwartz kernel theorem that there is a natural vector space isomorphism
\begin{align}
\label{eq:SchwartzKernel}
B^q(\mathfrak{X}(M))
\cong
\Gamma( \boxtimes^q TM)^*,
\end{align}
the star denoting the continuous dual with respect to the Fréchet topology (cf. \cite{hormander2003analysis} for the distributional statement). 
This isomorphism is dual to the map
\begin{equation}
\label{eq:SchwartzKernelDualMap}
\begin{gathered}
\mathfrak{X}(M) \otimes \dots \otimes \mathfrak{X}(M) \to \Gamma(\boxtimes^q TM), \quad
(X_1,\dots,X_q) \mapsto X_1 \boxtimes \dots \boxtimes X_q, \\
(X_1 \boxtimes \dots \boxtimes X_q)(x_1,\dots,x_q) := X_1(x_1) \otimes \dots \otimes X_q(x_q) \quad \forall x_1,\dots,x_q \in M.
\end{gathered}
\end{equation}
\end{definition}
\begin{definition}
\label{def:GeneralizedDiagonals}
Let~$X$ be a topological space and~$k,q \in \mathbb{N}$. The \emph{$k$-th diagonal} of~$X$ in~$X^q$ is the subspace
\begin{align}
X^q_k := \{(x_1,\dots,x_q) \in X^q :  |\{x_1,\dots,x_q\}| \leq k\}.
\end{align}
\end{definition}
Special examples are the \emph{thin diagonal}~$M^q_1$ and \emph{fat diagonal}~$M^q_{q-1}$, which take the following form:
\begin{gather*}
M_1^q:= \{(x,\dots,x) \in M^q \}, \\
M_{q-1}^q = \{(x_1,\dots,x_q) \in M^q \mid \exists i,j : i \neq j \text{ and } x_i = x_j \}.
\end{gather*}
Clearly~$M_1^q \subset M_2^q \subset \dots \subset M_q^q = M^q$.
Note that a set of vector fields~$\{X_1,\dots,X_q\}$ has the property~$\Delta_k$ if and only if the support of~$ X_1 \boxtimes \dots \boxtimes X_q$ is contained in~$M^q \setminus M^q_k$. This proves the following:
\begin{lemma}
\label{lem:DiagonalCochainsSchwartzKernel}
An element~$c \in B^q(\mathfrak{X}(M))$ is~$k$-diagonal if and only if the support of its image under the Schwartz kernel map in~$\Hom_{\mathbb{R}}(\Gamma(\boxtimes^q TM), \mathbb{R})$ is contained in~$M^q_k$.
\end{lemma}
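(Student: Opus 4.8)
The plan is to transport the combinatorial condition $\Delta_k$ through the Schwartz kernel isomorphism \eqref{eq:SchwartzKernel} and reinterpret it as a support condition. Write $u \in \Gamma(\boxtimes^q TM)^*$ for the image of $c$, so that by \eqref{eq:SchwartzKernelDualMap} one has $u(X_1 \boxtimes \dots \boxtimes X_q) = c(X_1,\dots,X_q)$; since $u$ lies in the continuous dual of the Fréchet space of all smooth sections, it is a compactly supported distributional section, and $\supp u$ is the smallest closed set off which $u$ annihilates every section supported in the complement. Note that $M^q_k$ is closed, so $\Omega := M^q \setminus M^q_k$ is open. The whole argument rests on the observation recorded immediately before the statement: a collection $\{X_1,\dots,X_q\}$ has property $\Delta_k$ exactly when $\supp(X_1 \boxtimes \dots \boxtimes X_q) = \supp X_1 \times \dots \times \supp X_q$ lies in $\Omega$. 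Granting this, the direction ``$\supp u \subset M^q_k \Rightarrow c$ is $k$-diagonal'' is immediate: if $\{X_i\}$ has $\Delta_k$, then $\supp(X_1 \boxtimes \dots \boxtimes X_q) \subset \Omega$ is disjoint from $\supp u$, so $u$ kills $X_1 \boxtimes \dots \boxtimes X_q$ and $c(X_1,\dots,X_q)=0$.

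For the converse I assume $c$ (hence $u$) is $k$-diagonal and show $\Omega \cap \supp u = \emptyset$. Fix $p=(p_1,\dots,p_q) \in \Omega$; then $p_1,\dots,p_q$ realize at least $k+1$ distinct values $v_1,\dots,v_m$ with $m \geq k+1$. Choosing pairwise disjoint open neighborhoods $V_1 \ni v_1,\dots,V_m \ni v_m$ and setting $W_i := V_j$ whenever $p_i = v_j$ produces an open box $W = W_1 \times \dots \times W_q \ni p$ with $W \subset \Omega$, since any point of $W$ still has its coordinates distributed among $m > k$ disjoint sets and hence takes more than $k$ distinct values. Consequently every pure tensor $X_1 \boxtimes \dots \boxtimes X_q$ with $\supp X_i \subset W_i$ has support inside $W \subset \Omega$, so $\{X_i\}$ has $\Delta_k$ and $u$ annihilates it.

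The remaining and genuinely delicate step is to upgrade this from pure tensors to \emph{arbitrary} sections supported in $W$, and this is the main obstacle: a distributional kernel is in general not detected by pure tensors alone. Here I invoke nuclearity. The isomorphism \eqref{eq:SchwartzKernel} identifies $\Gamma(\boxtimes^q TM)$ with the completed tensor product $\Gamma(TM)^{\hotimes q}$, in which the algebraic span of pure tensors is dense. Given a section $s$ supported in $W$, its support projects to compacta $K_i \Subset W_i$; I approximate $s$ by finite sums of pure tensors and then multiply each tensor factor by a fixed bump function equal to $1$ on $K_i$ and supported in $W_i$. Because the resulting product bump equals $1$ near $\supp s$, these modified approximants still converge to $s$, while each of them is now a sum of pure tensors with factors supported in the $W_i$, hence annihilated by $u$. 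Continuity of $u$ then forces $u(s)=0$. Thus $u$ vanishes on all sections supported in $W$, so $p \notin \supp u$; as $p \in \Omega$ was arbitrary, $\supp u \subset M^q_k$, completing the equivalence.
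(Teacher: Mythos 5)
Your proof is correct and rests on exactly the observation the paper uses to justify this lemma, namely that $\{X_1,\dots,X_q\}$ has property $\Delta_k$ if and only if $\supp X_1\times\dots\times\supp X_q\subset M^q\setminus M^q_k$. You additionally spell out the converse direction (density of pure tensors in $\Gamma(\boxtimes^q TM)$ plus the bump-function localization to keep the factors supported in the $W_i$), which the paper leaves implicit in its one-line "this proves the following"; that step is carried out correctly.
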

With this perspective, we can deduce:
\begin{lemma}
\label{LemmaFineSheafOfDiagonalCochains}
For~$U \subset M^q$, the assignments
\begin{align}
M^q \supset U \mapsto 
\mathcal{B}^q(U) :=&
\Hom_{\mathbb{R}}(\Gamma( \boxtimes^q TM)\att_U, \mathbb{R}), \\
M^q_k \supset U \mapsto
\mathcal{B}^q_k(U) :=&
 \left \{ c \in 
\Hom_{\mathbb{R}}(\Gamma( \boxtimes^q TM), \mathbb{R})
:
\supp c \subset U
\right \},
\end{align}
constitute flabby cosheaves on~$M^q$ and~$M^q_k$, respectively, where the extension maps are induced by the restriction maps of the section spaces.
\end{lemma}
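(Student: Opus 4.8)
The plan is to identify both precosheaves with the canonical example of a flabby cosheaf, namely the compactly supported (distributional) sections of a soft sheaf, and then to read off the cosheaf axiom and flabbiness from partitions of unity.

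First I would unwind the Schwartz-kernel duality locally. For open $U$, the space $\Gamma(\boxtimes^q TM)\att_U$ of smooth sections over $U$ carries the Fréchet topology of uniform convergence of all derivatives on compacta, and its continuous dual is precisely the space of compactly supported distributional sections of the dual bundle whose support is contained in $U$. Under this identification $\mathcal{B}^q(U)$ becomes the space of $c \in \Hom_{\mathbb{R}}(\Gamma(\boxtimes^q TM),\mathbb{R})$ with $\supp c$ compact and contained in $U$, and the extension map $\iota_U^V$ dual to the restriction $\Gamma(\boxtimes^q TM)\att_V \to \Gamma(\boxtimes^q TM)\att_U$ becomes extension by zero. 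Thus $\mathcal{B}^q$ is the precosheaf $U \mapsto \Gamma_c(U,\mathcal{D})$ of compactly supported sections of the fine sheaf $\mathcal{D}$ of distributional sections on $M^q$, and $\mathcal{B}^q_k$ is the analogous precosheaf on the closed subspace $M^q_k$, built from distributions on $M^q$ supported in $M^q_k$.

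Flabbiness then holds because a cosheaf of compactly supported sections of a soft sheaf is flabby (Appendix~\ref{AppendixCosheaves}, cf.\ \cite{bredon2012sheaf}); concretely, each extension-by-zero map $\iota_U^V$ is injective, a compactly supported distribution whose extension vanishes being already zero. For the cosheaf axiom I must show that for every open cover $\{U_i\}$ of an open set $U$ the sequence
\[
\bigoplus_{i,j} \mathcal{B}^q(U_i \cap U_j) \to \bigoplus_i \mathcal{B}^q(U_i) \to \mathcal{B}^q(U) \to 0
\]
is exact. Fixing a partition of unity $\{\rho_i\}$ subordinate to $\{U_i\}$, surjectivity of the last map follows from $c = \sum_i \rho_i c$, since each $\rho_i c$ is supported compactly in $U_i$. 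For exactness in the middle, given a family $(c_i)$ whose extensions sum to $0$ in $\mathcal{B}^q(U)$, the chain $d_{ij} := \rho_j c_i - \rho_i c_j \in \mathcal{B}^q(U_i \cap U_j)$ has \v{C}ech boundary equal to $(c_i)$; this is the standard partition-of-unity computation, using $\rho_k \sum_j c_j = 0$, and it relies only on fineness of $\mathcal{D}$.

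The same computation applies verbatim to $\mathcal{B}^q_k$ once one verifies the relevant softness on the stratified base, which I expect to be the main obstacle: $M^q_k$ is not a manifold, so I would argue that the sheaf of $M^q$-distributions supported on the closed set $M^q_k$ is soft on $M^q_k$, and that a partition of unity on $M^q$ restricts to one adapted to open covers of $M^q_k$. The other point requiring care is confirming that the Fréchet dual in the Schwartz-kernel isomorphism \eqref{eq:SchwartzKernel} genuinely produces compactly supported distributions, so that extension by zero is defined and injective; everything else is the textbook behavior of compactly supported sections of soft sheaves.
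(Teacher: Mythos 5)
Your proposal is correct and follows essentially the same route as the paper: identify $\mathcal{B}^q$ and $\mathcal{B}^q_k$ with the compactly supported sections of the soft sheaves of distributional sections on $M^q$ and on the closed subspace $M^q_k$, and conclude flabbiness from the general fact that compactly supported sections of a soft sheaf form a flabby cosheaf. The paper simply cites that fact (Proposition~\ref{PropositionSoftSheavesGiveFlabbyCosheaves}, i.e.\ Bredon V.1.6) instead of redoing the partition-of-unity computation, and resolves the point you flag as the main obstacle exactly as you anticipate, by noting that the restriction of a soft sheaf to a closed subspace is again soft.
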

\begin{proof}
The sheaves of distributions
\begin{align}
M^q \supset U \mapsto 
\mathcal{D}^q(U) &:= 
\Hom_{\mathbb{R}}
\left(
\Gamma_c(\boxtimes^q TM) \att_U, \mathbb{R}
\right), \\
M^q_k \supset U \mapsto
\mathcal{D}^q_k(U) &:=
 \left \{ c \in 
\Hom_{\mathbb{R}}(\Gamma_c( \boxtimes^q TM), \mathbb{R})
:
\supp c \subset U
\right \}
\end{align}
are shown to be soft using standard partition of unity arguments. Alternatively, this follows since the first sheaf is a module over a soft sheaf of rings, namely the sheaf of smooth functions on~$M^q$, and the second one is a restriction of the first sheaf to a closed subspace, hence soft (see \cite[Chapter II, Thm 9.2, Thm 9.16]{bredon2012sheaf}).
But~$\mathcal{B}^q$ is exactly the precosheaf of compactly supported sections of the sheaf~$\mathcal{D}^q$, and analogously for~$\mathcal{B}^q_k$ and~$\mathcal{D}^q_k$. By Proposition~\ref{PropositionSoftSheavesGiveFlabbyCosheaves}, this implies that these precosheaves are flabby cosheaves.
\end{proof}

\subsection{Generalized good covers}
We have seen that the precosheaf~$U \mapsto C^k(\mathfrak{X}(U))$ for open sets~$U$ of a smooth manifold~$M$ does not define a cosheaf for~$k \geq 2$. However, Lemma~\ref{LemmaFineSheafOfDiagonalCochains} gives some hope that we can meaningfully study them over the Cartesian power~$M^k$. As such, we will need methods to compare different Cartesian powers~$M, M^2, M^3,\dots$ of~$M$. One such tool we can use is the notion of a~$k$-good cover in the sense of \cite[Def 2.9]{boavida2013manifold}:
\begin{definition}
\label{def:K-Good-Covers}
Let~$k \geq 1$. An open cover~$\mathcal{U}$ of~$M$ is \emph{$k$-good} if:
\begin{itemize}
\item[i)] Given~$k$ points~$x_1,\dots,x_k \in M$, there is a~$U \in \mathcal{U}$ with~$x_1,\dots,x_k \in U$.
\item[ii)] Intersections of elements of~$\mathcal{U}$ are diffeomorphic to a disjoint union of at most~$k$ copies of~$\mathbb{R}^n$.
\end{itemize}
\end{definition}
For~$k = 1$ this agrees with the usual notion of a good cover. 
\begin{remark}
The~$k$-good covers are, in a sense, finite approximations to so-called \emph{Weiss covers}, which have property i) of the previous definition with no restriction on the number~$k$, but without any replacement for property ii), so the sets in the cover may, a priori, be homologically wild. Weiss covers are heavily used in the theory of \emph{factorization algebras,} which have strong ties to our setting, see \cite{costello2016factorization,hennion2018gelfand,ginot2015notes}.
\end{remark}
\begin{definition}
\label{def:OpenCoversOfDiagonals}
Let~$X$ be a topological space,~$\mathcal{U}$ an open cover of~$X$, and~$k,q \geq 1$ integers. Define
\begin{gather*}
\mathcal{U}^q := \{V \subset M^q : V \text{ a connected component of } U^q \text{ for some } U \in \mathcal{U} \},\\
\mathcal{U}^q_{k} := \{V \cap M^q_k : V \in \mathcal{U}^q \}.
\end{gather*}
\end{definition}
Property i) of a~$k$-good cover~$\mathcal{U}$ implies that the sets~$\mathcal{U}^1, \mathcal{U}^2,\dots, \mathcal{U}^k$ are open covers of~$M,M^2,\dots,M^k$, making~$k$-good covers useful tools in comparing data between Cartesian powers of~$M$. 
\\
If~$\mathcal{U}$ is a~$k$-good cover, then the equation
\begin{align}
(U \cap V)^q = U^q \cap V^q
\end{align}
shows that all intersections of the open cover~$\mathcal{U}^q$ are diffeomorphic to finite disjoint unions of at most~$k$-copies of~$\mathbb{R}^{qn}$. To show a similar result for the open cover~$\mathcal{U}^q_k$ of~$M^q_k$, let us prepare an auxiliary statement:
\begin{lemma}
\label{lem:DiagonalsOfContractiblesAreContractible}
Let~$X$ be a topological space and~$q,k \geq 1$ integers. 
If~$X$ has finitely many connected components and all of them are contractible, then the same holds for~$X^q_k$.
\end{lemma}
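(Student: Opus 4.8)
The plan is to exhibit $X^q_k$ explicitly as a finite disjoint union of clopen, contractible pieces, each of which is then automatically a connected component. The decomposition will be indexed by the ``pattern'' of which components of $X$ the coordinates land in.

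First I would record the elementary fact that a space with finitely many connected components has each component clopen: a component is always closed, and with only finitely many of them each $C_i = X \setminus \bigcup_{j \neq i} C_j$ is a complement of a finite union of closed sets, hence also open. Writing $X = C_1 \sqcup \dots \sqcup C_m$ with each $C_i$ clopen, connected and contractible, the products $X_P := C_{i_1} \times \dots \times C_{i_q}$, indexed by patterns $P = (i_1,\dots,i_q) \in \{1,\dots,m\}^q$, form a finite clopen partition of $X^q$. Intersecting with the diagonal of Definition~\ref{def:GeneralizedDiagonals} yields $X^q_k = \bigsqcup_P (X^q_k \cap X_P)$, a finite partition into sets that are clopen in $X^q_k$. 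Since a connected clopen subset of any space is exactly the connected component of each of its points, it then suffices to show that every nonempty piece is contractible (hence connected).

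Next I would determine which pieces are nonempty and contract them simultaneously. Because points lying in distinct components of $X$ are distinct, any tuple in $X_P$ realizes at least $|\{i_1,\dots,i_q\}|$ distinct values, with equality attained by choosing a base point $c_{i_j} \in C_{i_j}$ in each slot; hence $X^q_k \cap X_P \neq \emptyset$ if and only if the pattern $P$ uses at most $k$ distinct components, of which there are only finitely many. To contract the nonempty pieces, I would fix contractions $H_i \colon C_i \times [0,1] \to C_i$ of each component to $c_i$ and glue them to a map $H \colon X \times [0,1] \to X$, continuous by the gluing lemma since the $C_i$ are open. The diagonal homotopy $H^q((x_1,\dots,x_q),t) := (H(x_1,t),\dots,H(x_q,t))$ maps each $X_P$ into itself and, at time $1$, collapses $X_P$ to the single point $(c_{i_1},\dots,c_{i_q})$.

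The one point requiring a genuine argument, which I regard as the crux, is that $H^q$ \emph{preserves} the diagonal condition, so that it restricts to a homotopy of $X^q_k \cap X_P$ rather than leaving it. This holds because each $H(\cdot,t)$ is a function $X \to X$, so the image of a tuple taking at most $k$ distinct values again takes at most $k$ distinct values: applying a map can only decrease the number of distinct coordinate values. Granting this, $H^q$ restricts on each nonempty $X^q_k \cap X_P$ to a homotopy from the identity to the constant map at $(c_{i_1},\dots,c_{i_q})$, proving contractibility of that piece. Combined with the clopen partition of the first step, this shows that the finitely many nonempty $X^q_k \cap X_P$ are precisely the connected components of $X^q_k$ and that each is contractible, which completes the proof.
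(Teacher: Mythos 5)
Your proof is correct and follows essentially the same route as the paper's: decompose $X^q$ into products of connected components of $X$, contract each such product coordinatewise using contractions of the factors, and observe that this homotopy preserves the condition of having at most $k$ distinct coordinates, hence restricts to each piece of $X^q_k$. You supply somewhat more detail than the paper (clopenness of the pieces, the gluing argument, and the explicit justification that a coordinatewise-applied map cannot increase the number of distinct values), but the underlying argument is the same.
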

\begin{proof}
Let~$X_1, \dots, X_s$ be the connected components of~$X$. Every connected component~$C$ of~$X^q$ is then of the form
\begin{align}
C = X_{i_1} \times \dots \times X_{i_q}	
\end{align} 
for some~$i_1,\dots,i_q \in \{1,\dots,s\}$, not necessarily all different.
By assumption, the~$X_1,\dots,X_s$ are contractible, hence, for all~$j=1,\dots,s$, there are deformation retracts~$F_j : X_j \times [0,1] \to X_j$ of~$X_j$ onto a point.
Then the map
\begin{align}
F : C \times [0,1] \to C,
\quad
(x_1,\dots,x_q ,t) \mapsto (F_{i_1}(x_1,t),\dots, F_{i_q}(x_q,t))
\end{align}
is a deformation retract of~$C$ onto a point.
\\
If~$C \cap X^q_k$ is nonempty, this map restricts to a deformation retract of~$C \cap X^q_k$ to a point. Hence, if~$C$ is a connected component of~$U^q$, and~$C \cap X^q_k \neq \emptyset$, then~$C \cap X^q_k$ is contractible.
But the connected components of~$X^q_k$ are exactly the nonempty sets~$C \cap X^q_k$ for connected components~$C$ of~$X^q$. Hence all connected components of~$X^q_k$ are contractible. Lastly, since~$X^q$ only has finitely many connected components, so does~$X^q_k$. This finishes the proof.
\end{proof}
\begin{lemma}
\label{LemmaPowersOfKCoverForPowersOfM}
Let~$X$ be a locally connected topological space,~$q,k \geq 1$ integers, and~$\mathcal{U}$ a k-good open cover of~$X$. Then~$\mathcal{U}^q_k$ is an open cover of~$X^q_k$.
Further, all nonempty, finite intersections of elements in~$\mathcal{U}^q_k$ have finitely many connected components, and all these connected components are contractible.
\end{lemma}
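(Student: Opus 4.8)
The plan is to split the proof into the covering statement and the statement about intersections, reducing the latter to Lemma~\ref{lem:DiagonalsOfContractiblesAreContractible}.

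First I would show that $\mathcal{U}^q_k$ covers $X^q_k$. Take a point $(x_1,\dots,x_q) \in X^q_k$; by definition its set of distinct entries $\{x_1,\dots,x_q\}$ has cardinality at most $k$. Listing these distinct entries (repeating one of them to reach exactly $k$ points if fewer occur) and invoking property i) of a $k$-good cover, I obtain a $U \in \mathcal{U}$ containing all entries, so that $(x_1,\dots,x_q) \in U^q$. The point then lies in a connected component $V$ of $U^q$; since $X$ is locally connected so is $X^q$, hence $V$ is open in $X^q$ and $V \cap X^q_k$ is an open neighborhood of the point belonging to $\mathcal{U}^q_k$. This gives the open cover.

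For the intersection statement, a general nonempty finite intersection of elements of $\mathcal{U}^q_k$ has the form $Z \cap X^q_k$, where $Z = V_1 \cap \dots \cap V_p$ and each $V_i$ is a connected component of $U_i^q$ for some $U_i \in \mathcal{U}$. Set $U := U_1 \cap \dots \cap U_p$; by property ii) of a $k$-good cover, $U$ is diffeomorphic to a disjoint union of at most $k$ copies of $\mathbb{R}^n$, so it has finitely many connected components, all contractible. I would then note $Z \subset U^q$ and that each $V_i$, being a connected component of $U_i^q$, is clopen in $U_i^q \supseteq U^q$; hence each $V_i \cap U^q$ is clopen in $U^q$, and so is their intersection $Z$. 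Therefore $Z$ is a union of connected components of $U^q$, and since $U^q$ has finitely many components (each a product of contractibles, hence contractible), so does $Z$.

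Finally I would restrict to the diagonal. Because $U \subset X$, one has $U^q \cap X^q_k = U^q_k$, whence $Z \cap X^q_k = Z \cap U^q_k$. Applying Lemma~\ref{lem:DiagonalsOfContractiblesAreContractible} to the space $U$ shows that $U^q_k$ has finitely many connected components, all contractible; moreover, as established in its proof, these components are exactly the nonempty sets $C \cap U^q_k$ for $C$ a connected component of $U^q$. Since $Z$ is a union of such components $C$, the set $Z \cap U^q_k$ is a disjoint union of finitely many of these contractible pieces, which proves the claim. The only real subtlety I anticipate is the bookkeeping showing that $Z$ is genuinely a union of whole connected components of $U^q$, so that intersecting with the diagonal interacts cleanly with the auxiliary lemma; everything else is a routine application of local connectedness, property ii), and Lemma~\ref{lem:DiagonalsOfContractiblesAreContractible}.
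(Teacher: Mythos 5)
Your proof is correct and follows essentially the same route as the paper's: both parts rest on property i) for the covering claim, and on the combination of property ii), the clopen-ness of connected components in a locally connected space, and Lemma~\ref{lem:DiagonalsOfContractiblesAreContractible} applied to the intersection $U_1 \cap \dots \cap U_p$. The only cosmetic difference is that you treat an arbitrary finite intersection directly by first identifying $Z$ as a union of components of $U^q$ and then cutting with the diagonal, whereas the paper argues the two-set case inside $U^q_k \cap (U')^q_k$ and appeals to induction; the substance is identical.
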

\begin{proof}
If~$(x_1,\dots,x_q) \in X^q_k$, then~$|\{x_1,\dots,x_q\}| \leq k$. But since~$\mathcal{U}$ is~$k$-good, there is some~$U \in \mathcal{U}$ containing all~$x_1,\dots,x_k$. Hence~$x \in U^q \cap X^q_k$. Since~$x$ was arbitrary, this shows that~$\mathcal{U}_k^q$ is an open cover of~$X^q_k$.
\\
Let us now show that if~$V, V' \in \mathcal{U}^q_k$ have nonempty intersection, then~$V \cap V'$ is a finite disjoint union of contractible sets. By definition there exist~$U, U' \in \mathcal{U}$ so that~$V$ and~$V'$ are connected components of~$U^q_k$ and~$U'^q_k$, respectively.
Since~$\mathcal{U}$ is a~$k$-good cover,~$U \cap U'$ has only finitely many connected components, and all these connected components are contractible. Thus by Lemma~\ref{lem:DiagonalsOfContractiblesAreContractible} the same holds for~$(U \cap U')^q_k = U^q_k \cap (U')^q_k$.
\\
Since~$X$ is locally connected, so are~$X^q_k$ and its open subsets~$U^q_k$ and~$(U')^q_k$. In a locally connected space, all connected components are closed and open, hence~$V$ and~$V'$ are closed and open in~$U^q_k$ and~$(U')^q_k$, respectively. Thus~$V \cap V'$ is closed and open in~$U^q_k \cap (U')^q_k$, and hence must be a union of connected components of~$U^q_k \cap (U')^q_k$. But we have seen that there are only finitely many connected components of~$U^q_k \cap (U')^q_k$, and that they are all contractible. This shows that the same holds for~$V \cap V'$.
\\
By induction this extends to arbitrary finite intersections of sets~$V_1,\dots,V_q \in \mathcal{U}^q_k$, and the lemma is shown.
\end{proof}
The first part of the following theorem is Proposition 2.10 in \cite{boavida2013manifold}:
\begin{theorem}
For every smooth manifold~$M$, a~$k$-good open cover exists.
Further, if~$M$ is compact, then~$M$ admits finite~$k$-good open covers.
\end{theorem}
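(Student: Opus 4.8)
The plan is to handle the two assertions separately, citing \cite[Proposition 2.10]{boavida2013manifold} for the bare existence of a $k$-good cover on an arbitrary smooth manifold and supplying an independent compactness argument for the finiteness statement. For the first part there is nothing new to prove: Boavida and Weiss construct such a cover by a procedure that fixes a Riemannian metric and assembles it from suitably fine families of convex sets, and I would simply invoke their result. All of the effort goes into the second part.

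The crucial remark is that property i) of Definition~\ref{def:K-Good-Covers} is nothing but a covering condition on the $k$-fold power $M^k$. A tuple $(x_1,\dots,x_k)$ lies in $U^k := U \times \dots \times U$ exactly when $x_1,\dots,x_k \in U$, so requiring that every $k$-tuple of points of $M$ be contained in a single member of $\mathcal{U}$ is equivalent to
\begin{align}
M^k = \bigcup_{U \in \mathcal{U}} U^k.
\end{align}
Since each $U^k$ is open in $M^k$, property i) says precisely that $\{U^k\}_{U \in \mathcal{U}}$ is an open cover of $M^k$. This reinterpretation is what makes a compactness argument available.

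Now assume $M$ is compact and let $\mathcal{U}$ be a $k$-good cover furnished by the first part. Then $M^k$ is compact, so from the open cover $\{U^k\}_{U \in \mathcal{U}}$ we may extract a finite subcover, yielding finitely many sets $U_1,\dots,U_N \in \mathcal{U}$ with $M^k = \bigcup_{i=1}^N U_i^k$. I claim $\mathcal{U}' := \{U_1,\dots,U_N\}$ is a finite $k$-good cover. Property i) holds by the choice of the $U_i$ together with the reformulation above. Property ii) is automatic, because any intersection of members of $\mathcal{U}'$ is also an intersection of members of $\mathcal{U}$ and hence already diffeomorphic to a disjoint union of at most $k$ copies of $\mathbb{R}^n$. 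Finally, $\mathcal{U}'$ is genuinely a cover of $M$: for any $x \in M$ the diagonal point $(x,\dots,x)$ lies in some $U_i^k$, whence $x \in U_i$.

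The argument has no hard analytic core; the one place to be careful is the reformulation of property i) as a cover of $M^k$, since it is exactly this step that turns the finiteness claim into a routine application of compactness. I would therefore expect the main obstacle, if any, to be purely bookkeeping: checking that the extracted finite subfamily simultaneously covers $M$, retains the $k$-point property, and inherits the contractibility of intersections, all of which follow transparently once the correspondence with open covers of $M^k$ is in place.
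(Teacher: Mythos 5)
Your proposal is correct and follows essentially the same route as the paper: cite Boavida--Weiss for existence, observe that property i) is equivalent to $\{U^k\}_{U\in\mathcal{U}}$ covering $M^k$, and extract a finite subcover using compactness of $M^k$, noting that property ii) is inherited by any subfamily. Your explicit check that the finite subfamily still covers $M$ itself (via the diagonal point $(x,\dots,x)$) is a small detail the paper leaves implicit, but the argument is the same.
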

\begin{proof}
The existence of~$k$-good open covers is shown in \cite{boavida2013manifold}. 
If~$M$ is compact, choose any~$k$-good cover~$\mathcal{U}$, then~$\mathcal{U}^k$ is a cover of~$M^k$, and since~$M^k$ is compact, there is a finite subcover~$\tilde{\mathcal{U}} \subset \mathcal{U}$ so that~$\tilde{\mathcal{U}}^k$ is a cover of~$M^k$. Hence the set~$\tilde{\mathcal{U}}$ fulfils property i) of being a~$k$-good cover, and as a subset of a~$k$-good cover, it also fulfils property ii).
\end{proof}
\subsection{The \v{C}ech-Bott-Segal double complex}
Finally, let us define a double complex which intertwines \v{C}ech complexes with the Che\-val\-ley\--Ei\-len\-berg complex structure. Analyzing this double complex will provide us with spectral sequences that calculate the Gelfand--Fuks cohomology of~$M$. We name this double complex after Bott and Segal, in reference to their spirtually similar local-to-global analysis in \cite{bott1977cohomology}, though we emphasize that our double complex differs from theirs. 
\begin{definition}
Let~$\mathcal{U} := \{U_i\}_{i \in I}$ be an open cover of a smooth manifold~$M$, and~$k \geq 1$. For~$U_{i_1},\dots, U_{i_q} \in \mathcal{U}$, set
\begin{align}
U_{i_1 \dots i_q}
:=
U_{i_1} \cap \dots \cap U_{i_q}.
\end{align}
We define the \emph{$k$-th \v{C}ech-Bott-Segal (CBS) double complex} for the cover~$\mathcal{U}$ as the following double complex:
\begin{center}
\tikzset{every picture/.style={line width=0.75pt}} 
\begin{tikzpicture}[x=0.75pt,y=0.75pt,yscale=-1,xscale=1]
\draw (156,195.97) node    {$\bigoplus _{i} \Delta _{k} C^{1}(\mathfrak{X}( U_{i}))$};
\draw (324.5,195.97) node    {$\bigoplus _{i,j} \Delta _{k} C^{1}( \Gamma (\mathfrak{X}( U_{ij}))$};
\draw (452,195.97) node    {$\ \dotsc \ $};
\draw (452,143.47) node    {$\ \dotsc \ $};
\draw (156,40) node    {$\dotsc $};
\draw (324.5,40) node    {$\dotsc $};
\draw (156,143.47) node    {$\bigoplus _{i} \Delta _{k} C^{2}(\mathfrak{X}( U_{i}))$};
\draw (324.5,143.47) node    {$\bigoplus _{i,j} \Delta _{k} C^{2}( \Gamma (\mathfrak{X}( U_{ij}))$};
\draw (156,91.47) node    {$\bigoplus _{i} \Delta _{k} C^{3}(\mathfrak{X}( U_{i}))$};
\draw (324.5,91.47) node    {$\bigoplus _{i,j} \Delta _{k} C^{3}(\mathfrak{X}( U_{ij}))$};
\draw (452,91.47) node    {$\ \dotsc \ $};
\draw    (249,195.97) -- (221,195.97) ;
\draw [shift={(219,195.97)}, rotate = 360] [color={rgb, 255:red, 0; green, 0; blue, 0 }  ][line width=0.75]    (10.93,-3.29) .. controls (6.95,-1.4) and (3.31,-0.3) .. (0,0) .. controls (3.31,0.3) and (6.95,1.4) .. (10.93,3.29)   ;
\draw    (249,143.47) -- (221,143.47) ;
\draw [shift={(219,143.47)}, rotate = 360] [color={rgb, 255:red, 0; green, 0; blue, 0 }  ][line width=0.75]    (10.93,-3.29) .. controls (6.95,-1.4) and (3.31,-0.3) .. (0,0) .. controls (3.31,0.3) and (6.95,1.4) .. (10.93,3.29)   ;
\draw    (256.5,91.47) -- (221,91.47) ;
\draw [shift={(219,91.47)}, rotate = 360] [color={rgb, 255:red, 0; green, 0; blue, 0 }  ][line width=0.75]    (10.93,-3.29) .. controls (6.95,-1.4) and (3.31,-0.3) .. (0,0) .. controls (3.31,0.3) and (6.95,1.4) .. (10.93,3.29)   ;
\draw    (431.5,91.47) -- (394.5,91.47) ;
\draw [shift={(392.5,91.47)}, rotate = 360] [color={rgb, 255:red, 0; green, 0; blue, 0 }  ][line width=0.75]    (10.93,-3.29) .. controls (6.95,-1.4) and (3.31,-0.3) .. (0,0) .. controls (3.31,0.3) and (6.95,1.4) .. (10.93,3.29)   ;
\draw    (431.5,143.47) -- (402,143.47) ;
\draw [shift={(400,143.47)}, rotate = 360] [color={rgb, 255:red, 0; green, 0; blue, 0 }  ][line width=0.75]    (10.93,-3.29) .. controls (6.95,-1.4) and (3.31,-0.3) .. (0,0) .. controls (3.31,0.3) and (6.95,1.4) .. (10.93,3.29)   ;
\draw    (431.5,195.97) -- (402,195.97) ;
\draw [shift={(400,195.97)}, rotate = 360] [color={rgb, 255:red, 0; green, 0; blue, 0 }  ][line width=0.75]    (10.93,-3.29) .. controls (6.95,-1.4) and (3.31,-0.3) .. (0,0) .. controls (3.31,0.3) and (6.95,1.4) .. (10.93,3.29)   ;
\draw    (156,181.47) -- (156,159.47) ;
\draw [shift={(156,157.47)}, rotate = 90] [color={rgb, 255:red, 0; green, 0; blue, 0 }  ][line width=0.75]    (10.93,-3.29) .. controls (6.95,-1.4) and (3.31,-0.3) .. (0,0) .. controls (3.31,0.3) and (6.95,1.4) .. (10.93,3.29)   ;
\draw    (156,129.47) -- (156,107.97) ;
\draw [shift={(156,105.97)}, rotate = 90] [color={rgb, 255:red, 0; green, 0; blue, 0 }  ][line width=0.75]    (10.93,-3.29) .. controls (6.95,-1.4) and (3.31,-0.3) .. (0,0) .. controls (3.31,0.3) and (6.95,1.4) .. (10.93,3.29)   ;
\draw    (324.5,128.97) -- (324.5,107.97) ;
\draw [shift={(324.5,105.97)}, rotate = 90] [color={rgb, 255:red, 0; green, 0; blue, 0 }  ][line width=0.75]    (10.93,-3.29) .. controls (6.95,-1.4) and (3.31,-0.3) .. (0,0) .. controls (3.31,0.3) and (6.95,1.4) .. (10.93,3.29)   ;
\draw    (324.5,181.47) -- (324.5,159.97) ;
\draw [shift={(324.5,157.97)}, rotate = 90] [color={rgb, 255:red, 0; green, 0; blue, 0 }  ][line width=0.75]    (10.93,-3.29) .. controls (6.95,-1.4) and (3.31,-0.3) .. (0,0) .. controls (3.31,0.3) and (6.95,1.4) .. (10.93,3.29)   ;
\draw    (156,76.97) -- (156,54.5) ;
\draw [shift={(156,52.5)}, rotate = 90] [color={rgb, 255:red, 0; green, 0; blue, 0 }  ][line width=0.75]    (10.93,-3.29) .. controls (6.95,-1.4) and (3.31,-0.3) .. (0,0) .. controls (3.31,0.3) and (6.95,1.4) .. (10.93,3.29)   ;
\draw    (324.5,76.97) -- (324.5,54.5) ;
\draw [shift={(324.5,52.5)}, rotate = 90] [color={rgb, 255:red, 0; green, 0; blue, 0 }  ][line width=0.75]    (10.93,-3.29) .. controls (6.95,-1.4) and (3.31,-0.3) .. (0,0) .. controls (3.31,0.3) and (6.95,1.4) .. (10.93,3.29)   ;

\end{tikzpicture}

\end{center}
The horizontal maps are given by the \v{C}ech differentials associated to the precosheaf structure, the vertical maps by the direct sum of Chevalley-Eilenberg differentials for the complexes $C^\bullet \left( \mathfrak{X} \left( U_{i_1 \dots i_p} \right) \right)$.
The grading is defined so that the term~$\bigoplus_{i_1,\dots,i_p} \Delta_k C^q(\mathfrak{X}(U_{i_1\dots i_p}))$ lies in degree $(p,q)$.
\\
The \emph{$k$-th skew-symmetrized CBS double complex} is the CBS double complex with all horizontal \v{C}ech complexes replaced by their skew-symmetrized versions, see Remark~\ref{rem:AntisymmetrizedCechComplex}.
\end{definition}
\begin{remark}
Many remarks on the form of this double complex are in order:
\begin{itemize}
\item[i)] Note that the zeroeth row of the CBS double complex is zero, and not, as one might expect, the \v{C}ech complex associated to $\Delta_k C^0(\mathfrak{X}(M))$. Since the zeroeth degree is connected to the rest of the complex by a zero differential, we do not lose any information by leaving it out.
\item[ii)] To deduce the ring structure on Gelfand--Fuks cohomology, it would be helpful if we could define a product structure on the CBS double complex in a way which extends the wedge product of cochains. At this point in time, it is unclear to the author how to accomplish this.
\item[iii)] The CBS double complex is not a first-quadrant double complex: It mixes a cohomological and a homological differential.  
A priori, this means there is an ambiguity in defining the associated total complex, given by the choice of taking either direct sums or direct products on the relevant diagonals, since there may now be infinitely many nonzero terms on each such diagonal. The usual convergence theorems for the spectral sequences arising from horizontal and vertical filtration will, in general, not apply. 
\end{itemize}
\end{remark}
Especially part iii) of the previous remark poses a significant problem. We borrow an argument from \cite{bott1977cohomology} to circument this:
\begin{lemma}
\label{lem:NormalizedCechIsBounded}
Let $k \in \mathbb{N}$ and $\mathcal{U}$ a finite open cover of a smooth manifold $M$. The $k$-th skew-symmetrized CBS double complex associated to $\mathcal{U}$ has only finitely many nonzero columns. In particular, it is bounded as a double complex.
\end{lemma}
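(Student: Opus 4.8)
The plan is to exploit the defining feature of the skew-symmetrized \v{C}ech complex (Remark~\ref{rem:AntisymmetrizedCechComplex}): a \v{C}ech chain in horizontal degree $p$ is alternating under permutations of its $p$ indices, and therefore vanishes on any multi-index $(i_1,\dots,i_p)$ in which two indices coincide. First I would set $N := |\mathcal{U}|$, which is finite by hypothesis. Then the $p$-th column of the skew-symmetrized CBS double complex, namely $\bigoplus_{i_1,\dots,i_p} \Delta_k C^\bullet(\mathfrak{X}(U_{i_1\dots i_p}))$, can receive a nonzero contribution only from those summands whose multi-index has pairwise distinct entries, since an alternating chain is determined by, and supported on, the strictly increasing multi-indices.

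A repetition-free (equivalently, strictly increasing) multi-index of length $p$ drawn from the $N$-element index set of $\mathcal{U}$ exists precisely when $p \le N$. Consequently the $p$-th column of the skew-symmetrized CBS double complex is zero whenever $p > N$, so at most $N$ of its columns are nonzero; this is exactly the asserted boundedness.

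For the final clause I would note that finiteness of the number of nonzero columns is precisely what removes the ambiguity in defining the total complex discussed in the preceding remark: since the vertical Chevalley--Eilenberg grading $q$ is bounded below by $0$ while the horizontal grading $p$ now ranges over the finite set $\{1,\dots,N\}$, every anti-diagonal $p + q = \mathrm{const}$ meets only finitely many nonzero terms. Hence the direct sum and the direct product along each anti-diagonal agree, the total complex is unambiguously defined, and the standard convergence theorems for the associated spectral sequences become available.

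The argument is entirely formal; the one point requiring care is to invoke the precise meaning of \emph{skew-symmetrized} so that the vanishing-on-repeated-indices property is genuinely available, rather than merely the existence of alternating representatives. I would also emphasize that the hypothesis that $\mathcal{U}$ be finite is used in an essential way: without it, repetition-free multi-indices of arbitrary length would exist, and infinitely many columns could survive skew-symmetrization, so no such bound would hold.
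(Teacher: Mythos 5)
Your proof is correct and follows essentially the same route as the paper: both arguments rest on the observation that skew-symmetrization kills contributions from multi-indices with repeated entries, so finiteness of $\mathcal{U}$ bounds the nonzero columns (the paper phrases the bound as the largest $n$ admitting a nonempty intersection of $n$ pairwise distinct cover elements, which is at most your $N = |\mathcal{U}|$, but this difference is inessential).
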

\begin{proof}
By finiteness of $\mathcal{U}$, there is a largest $n$ such that there is a nonempty intersection $U_1 \cap \dots \cap U_n$ with $U_i \neq U_j$ for $i \neq j$. Hence all columns in degree $> n$ vanish in the skew-symmetrized double complex.
This concludes the proof.
\end{proof}
We begin with horizontal cohomology. 
\begin{proposition}
\label{PropositionCechIsomorphismSchwartzKernel}
Let $k,q \geq 1$ be integers and $\mathcal{U} = \{U_i\}_{i \in I}$ a $k$-good cover of a smooth manifold $M$. 
The \v{C}ech complex
\begin{align}
\bigoplus_i \Delta_k B^q(\mathfrak{X}(U_i)) \leftarrow \bigoplus_{i,j}  \Delta_k B^q(\mathfrak{X}(U_{ij})) \leftarrow \dots
\end{align}
is isomorphic to the \v{C}ech complex associated to the flabby cosheaf $\mathcal{B}^q_k$ on $M^q_k$ defined in Lemma~\ref{LemmaFineSheafOfDiagonalCochains}) with respect to the open cover $\{ U^q_k \subset M^q_k : U \in \mathcal{U} \}$ of $M^q_k$.
\\
The same statement holds for the skew-symmetrized \v{C}ech complex, and the isomorphism is equivariant under the natural permutation action of the symmetric group~$\Sigma_q$.
\end{proposition}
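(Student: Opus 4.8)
The plan is to assemble the desired isomorphism termwise from the Schwartz kernel identification~\eqref{eq:SchwartzKernel} together with Lemma~\ref{lem:DiagonalCochainsSchwartzKernel}, and then to verify that these termwise maps intertwine the two \v{C}ech differentials. Concretely, for every open $U \subset M$ I would first produce a natural isomorphism
\begin{align}
\Delta_k B^q(\mathfrak{X}(U)) \cong \mathcal{B}^q_k(U^q_k),
\end{align}
where $U^q_k = U^q \cap M^q_k$ is open in $M^q_k$, and then glue these identifications over the common nerve of the cover.

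To obtain the termwise isomorphism, I would apply the Schwartz kernel theorem to the open manifold $U$, giving $B^q(\mathfrak{X}(U)) \cong \Gamma(\boxtimes^q TU)^* = \mathcal{B}^q(U^q)$, the space of compactly supported distributional sections on $U^q$. By Lemma~\ref{lem:DiagonalCochainsSchwartzKernel} such a distribution corresponds to a $k$-diagonal element of $B^q(\mathfrak{X}(U))$ exactly when its support lies in $M^q_k$; since that support is already contained in $U^q$ and $M^q_k$ is closed in $M^q$, this amounts to the support lying in $U^q \cap M^q_k = U^q_k$, which is the defining condition for $\mathcal{B}^q_k(U^q_k)$. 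Using the identity $(U \cap V)^q = U^q \cap V^q$ one checks that the intersection of the cover elements $(U_{i_0})^q_k, \dots, (U_{i_p})^q_k$ equals $(U_{i_0 \dots i_p})^q_k$, so the two \v{C}ech complexes are indexed by the same nerve and their $p$-th terms are identified summand by summand.

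The main step, and the one I expect to be the crux, is compatibility with the differentials. The \v{C}ech differential on the left is the alternating sum of the precosheaf extension maps $\iota$ from Definition~\ref{DefinitionPresheafAndPrecosheafStructure}, which on $B^q$ restrict the vector-field arguments along an inclusion $U' \subset U$. Under the Schwartz kernel isomorphism this map is dual to the restriction of sections $\Gamma(\boxtimes^q TU) \to \Gamma(\boxtimes^q TU')$, hence it is the extension by zero of a compactly supported distribution from $(U')^q$ to $U^q$ --- precisely the cosheaf extension map of $\mathcal{B}^q_k$ furnished by Lemma~\ref{LemmaFineSheafOfDiagonalCochains}. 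The care needed here is exactly the naturality of the Schwartz kernel isomorphism with respect to open restrictions, together with the fact that $M^q_k$ is closed so that supports are preserved under extension by zero. Granting this, both differentials are alternating sums of matching extension maps, and the termwise isomorphisms assemble into an isomorphism of complexes.

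Finally, the skew-symmetrization acts only in the \v{C}ech (cover-index) direction and commutes with all the termwise maps, so the argument descends verbatim to the skew-symmetrized complexes. For $\Sigma_q$-equivariance I would observe that $\sigma \in \Sigma_q$ acts on the left by permuting the $q$ vector-field slots and on the right by permuting the coordinates of $M^q$; the latter preserves $M^q_k$ since $|\{x_1,\dots,x_q\}|$ is permutation-invariant, and the defining map $(X_1,\dots,X_q) \mapsto X_1 \boxtimes \dots \boxtimes X_q$ of~\eqref{eq:SchwartzKernelDualMap} is manifestly equivariant. Hence the Schwartz kernel isomorphism, and with it the isomorphism of \v{C}ech complexes, intertwines the two actions.
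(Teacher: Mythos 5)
Your proposal is correct and follows essentially the same route as the paper: termwise Schwartz kernel isomorphisms $\Delta_k B^q(\mathfrak{X}(U)) \cong \mathcal{B}^q_k(U^q_k)$, naturality with respect to the extension maps (so the square with the precosheaf maps commutes), and equivariance read off from the explicit dual map \eqref{eq:SchwartzKernelDualMap}. The paper's own proof is just a terser version of this; your additional verifications (the support condition via closedness of $M^q_k$, the matching of nerves via $(U\cap V)^q = U^q\cap V^q$, and the identification of the dual of restriction with extension by zero) are exactly the details it leaves implicit.
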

\begin{proof}
Note first that $\mathcal{U}^q_k$ is a cover of $M^q_k$ by Lemma~\ref{LemmaPowersOfKCoverForPowersOfM}.
The (restriction of the) Schwartz kernel maps (\ref{eq:SchwartzKernel}) give us a family of isomorphisms $\{\phi_U : U \subset M \text{ open}\}$ as in Lemma~\ref{lem:DiagonalCochainsSchwartzKernel}, so that for all open $U \subset V$ the following diagram commutes:
\begin{center}
\begin{tikzcd}
\Delta_k B^q(\mathfrak{X}(U)) 
\arrow{r} 
\arrow{d}{\phi_U}
& \Delta_k B^q(\mathfrak{X}(V))  
\arrow{d}{\phi_V}\\
\mathcal{B}^q_k(U^q_k) 
\arrow{r}
& 
\mathcal{B}^q_k(V^q_k)
\end{tikzcd}
\end{center}
Hence, we have isomorphisms on the precosheaf data; this lifts to an isomorphism of the two \v{C}ech complexes.
This argument is independent of the choice of the standard or the skew-symmetrized \v{C}ech complex. Since the sets $U^q_k$ are invariant under the natural $\Sigma_q$-action on $M^q$, both of the terms $\Delta_k B^q(\mathfrak{X}(U)) $ and $\mathcal{B}^q_k(U^q_k)$ admit a $\Sigma_k$-action by permutation of vector fields. The Schwartz kernel map is equivariant with respect to this permutation, as one finds from the explicit formula of its dual map~\ref{eq:SchwartzKernelDualMap}.
\end{proof}
\begin{theorem}
\label{TheoremFirstSpectralSequenceCollapses}
Let $q, k \geq 1$ be integers, and consider the $k$-th (skew-symmetrized) CBS double complex for a $k$-good cover $\mathcal{U}$ of $M$.
The cohomology of the $q$-th row is equal to $\Delta_k C^q (\mathfrak{X}(M))$ in degree zero, and trivial in all other degrees.
\end{theorem}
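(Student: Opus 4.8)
The plan is to reduce the horizontal (\v{C}ech) cohomology of a single row to the acyclicity of a flabby cosheaf on the diagonal $M^q_k$, and then to extract the skew-symmetric part. First I would forget skew-symmetry in the vector-field arguments and replace the row built from $\Delta_k C^q(\mathfrak{X}(U_{i_1\dots i_p}))$ by the analogous row built from the full multilinear spaces $\Delta_k B^q(\mathfrak{X}(U_{i_1\dots i_p}))$. By Proposition~\ref{PropositionCechIsomorphismSchwartzKernel}, the (skew-symmetrized) \v{C}ech complex of this enlarged row is isomorphic, $\Sigma_q$-equivariantly, to the (skew-symmetrized) \v{C}ech complex of the flabby cosheaf $\mathcal{B}^q_k$ on $M^q_k$ with respect to the open cover $\mathcal{U}^q_k$, which is genuinely a cover of $M^q_k$ by Lemma~\ref{LemmaPowersOfKCoverForPowersOfM}.

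The key step is then to invoke the acyclicity of flabby cosheaves for \v{C}ech homology developed in Appendix~\ref{AppendixCosheaves}: since $\mathcal{B}^q_k$ is flabby by Lemma~\ref{LemmaFineSheafOfDiagonalCochains}, its augmented \v{C}ech complex
\[
\dots \to \bigoplus_{i,j} \mathcal{B}^q_k\big((U_{ij})^q_k\big) \to \bigoplus_i \mathcal{B}^q_k\big((U_i)^q_k\big) \to \mathcal{B}^q_k(M^q_k) \to 0
\]
is exact. That is, the \v{C}ech homology is concentrated in degree zero, where it returns the global cosections $\mathcal{B}^q_k(M^q_k)$. By Lemma~\ref{lem:DiagonalCochainsSchwartzKernel}, these global cosections are exactly $\Delta_k B^q(\mathfrak{X}(M))$, so the enlarged $q$-th row, augmented by $\Delta_k B^q(\mathfrak{X}(M))$, is exact.

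Finally I would pass back to genuine cochains. The subspace $\Delta_k C^q \subset \Delta_k B^q$ is precisely the sign-isotypic component for the $\Sigma_q$-action permuting the $q$ vector-field slots; since $\mathbb{R}$ has characteristic zero, the operator $\frac{1}{q!}\sum_{\sigma \in \Sigma_q} \sign(\sigma)\,\sigma$ is an idempotent splitting off this summand, so extracting the skew-symmetric part is an exact functor that commutes with the \v{C}ech differentials. Because the isomorphism of Proposition~\ref{PropositionCechIsomorphismSchwartzKernel} is $\Sigma_q$-equivariant, applying this projector to the exact sequence above shows that the $q$-th row, augmented by $\Delta_k C^q(\mathfrak{X}(M))$, is exact. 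Hence the cohomology of the $q$-th row equals $\Delta_k C^q(\mathfrak{X}(M))$ in degree zero and vanishes in all other degrees, as claimed.

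The main obstacle is the flabby-cosheaf acyclicity itself, i.e. the vanishing of higher \v{C}ech homology of $\mathcal{B}^q_k$ for the cover $\mathcal{U}^q_k$; this is the cosheaf-dual of the classical fact that soft or flabby sheaves are \v{C}ech-acyclic, and I would import it from the cosheaf theory of the appendix rather than reprove it. A secondary bookkeeping point is keeping apart the two group actions at play — the $\Sigma_p$ skew-symmetrization of the \v{C}ech indices already built into the skew-symmetrized CBS complex, versus the $\Sigma_q$ permutation of vector-field arguments used to cut $\Delta_k C^q$ out of $\Delta_k B^q$ — and checking they commute, which is immediate since they act on disjoint pieces of the data.
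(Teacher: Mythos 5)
Your proposal is correct and follows essentially the same route as the paper: pass to the non-skew-symmetric spaces $\Delta_k B^q$, identify the row via Proposition~\ref{PropositionCechIsomorphismSchwartzKernel} with the \v{C}ech complex of the flabby cosheaf $\mathcal{B}^q_k$ on $M^q_k$, invoke Proposition~\ref{prop:FlabbyCosheavesTrivialHomology} for acyclicity, and then use exactness of the $\Sigma_q$-isotypic projection in characteristic zero to recover the statement for $\Delta_k C^q$. Your explicit remark that the $\Sigma_p$-action on \v{C}ech indices and the $\Sigma_q$-action on vector-field slots commute is a point the paper leaves implicit, but otherwise the arguments coincide.
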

\begin{proof}
By Proposition~\ref{PropositionCechIsomorphismSchwartzKernel}, the \v{C}ech complex in this row, associated to the cover~$\mathcal{U}$ and the presheaf $U \mapsto \Delta_k B^q(\mathfrak{X}(U))$ over $M$, has the same homology as the \v{C}ech complex of the flabby cosheaf $U \mapsto \mathcal{B}^q_k(U)$ over $M^q_k$ with respect to the cover $\mathcal{U}^q_k$. Flabby cosheaves have trivial \v{C}ech homology independent of the chosen cover by Proposition~\ref{prop:FlabbyCosheavesTrivialHomology}, hence the homology is equal to $\Delta_k B^q(\mathfrak{X}(M))$ in zeroeth degree and zero in higher degree. 
\\
The isomorphism identifying the two complexes is equivariant with respect to the $\Sigma_q$-action on both spaces. The functor taking the complexes to its $\Sigma_q$-invariants is exact, as it arises from the action of a finite group in characteristic zero. 
Hence, the skew-symmetrized complex also has trivial cohomology in nonzero degree, and in degree zero $\left( \Delta_k B^q(\mathfrak{X}(M)) \right)^{\Sigma_q} = \Delta_k C^q(\mathfrak{X}(M))$. Since the skew-symmetrized complex is exactly the $q$-th row of the skew-symmetrized CBS complex, this concludes the proof.
\end{proof}
\begin{corollary}
\label{cor:ConvergenceOfSpectralSequence}
Let~$k \geq 1$ and assume there exists a finite,~$k$-good cover~$\mathcal{U}$ of~$M$, e.g. when~$M$ is compact. The spectral sequences~$\{E^{p,q}_r, d_r\}$ associated to the skew\--sym\-me\-trized~$k$-th CBS double complex for~$\mathcal{U}$ by filtering horizontally or vertically converges to~$\Delta_k \tilde{H}^\bullet(\mathfrak{X}(M))$. The tilde denotes reduced cohomology, cf. Definition~~\ref{def:ReducedComplex}.
\end{corollary}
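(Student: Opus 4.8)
The plan is to exploit the boundedness established in Lemma~\ref{lem:NormalizedCechIsBounded}: since the finite cover~$\mathcal{U}$ forces the skew-symmetrized CBS double complex to have only finitely many nonzero columns, every total-degree anti-diagonal meets only finitely many nonzero terms. This removes the direct-sum-versus-direct-product ambiguity in forming the total complex and, more importantly, makes both the row and the column filtrations of the total complex \emph{finite} in each total degree. Consequently the two associated spectral sequences both converge, and they converge to the cohomology of one and the same total complex~$H^\bullet(\mathrm{Tot})$. Thus it suffices to compute this total cohomology once, by whichever of the two spectral sequences is most convenient.

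First I would run the spectral sequence obtained by taking \emph{horizontal} (\v{C}ech) cohomology first. By Theorem~\ref{TheoremFirstSpectralSequenceCollapses}, the horizontal cohomology of the~$q$-th row equals~$\Delta_k C^q(\mathfrak{X}(M))$ concentrated in \v{C}ech degree~$0$ and vanishes in all positive \v{C}ech degrees. Hence the first page is supported on the single column~$p = 0$, with~$E_1^{0,q} = \Delta_k C^q(\mathfrak{X}(M))$ for~$q \geq 1$ and~$E_1^{0,0} = 0$, the latter because the zeroeth row of the CBS double complex was defined to be zero. The page-one differential~$d_1$ is induced by the vertical Chevalley-Eilenberg differential, so the column~$(E_1^{0,\bullet}, d_1)$ is precisely the reduced complex~$\Delta_k \tilde{C}^\bullet(\mathfrak{X}(M))$ of Definition~\ref{def:ReducedComplex}. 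Therefore~$E_2^{0,q} = \Delta_k \tilde{H}^q(\mathfrak{X}(M))$ and~$E_2^{p,q} = 0$ for~$p \neq 0$.

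Because the second page is supported on a single column, every higher differential~$d_r$ with~$r \geq 2$ necessarily changes the column index and so has either its source or its target in a vanishing column; hence the spectral sequence degenerates at~$E_2$. Reading off the abutment along the total grading~$n = q - p$ then yields~$H^n(\mathrm{Tot}) \cong E_\infty^{0,n} = \Delta_k \tilde{H}^n(\mathfrak{X}(M))$. Since the other spectral sequence converges to the same~$H^\bullet(\mathrm{Tot})$ by the boundedness argument above, both spectral sequences converge to~$\Delta_k \tilde{H}^\bullet(\mathfrak{X}(M))$, as claimed.

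The step I expect to require the most care is the convergence itself, precisely the subtlety flagged in the remarks following the definition of the CBS double complex: the complex mixes a cohomological Chevalley-Eilenberg direction with a homological \v{C}ech direction and is not first-quadrant, so the usual convergence theorems do not apply off the shelf. The resolution is carried entirely by Lemma~\ref{lem:NormalizedCechIsBounded}: finitely many nonzero columns make both filtrations bounded in each total degree, which is exactly the hypothesis needed for the standard identification of a bounded double complex with the abutment of its two spectral sequences. Once this boundedness is in hand, the identification of the abutment is a formal consequence of Theorem~\ref{TheoremFirstSpectralSequenceCollapses}, with the passage to \emph{reduced} cohomology produced automatically by the vanishing of the zeroeth row.
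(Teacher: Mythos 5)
Your proposal is correct and follows essentially the same route as the paper: collapse the row-filtration spectral sequence at $E_2$ via Theorem~\ref{TheoremFirstSpectralSequenceCollapses} to identify the abutment with $\Delta_k \tilde{H}^\bullet(\mathfrak{X}(M))$, and use Lemma~\ref{lem:NormalizedCechIsBounded} to ensure both filtrations converge to the same total cohomology. Your write-up merely spells out in more detail the single-column degeneration and the role of the vanishing zeroeth row in producing reduced cohomology, which the paper leaves implicit.
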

\begin{proof}
By Theorem~\ref{TheoremFirstSpectralSequenceCollapses}, filtering by rows makes the spectral sequence collapse on the second page, with the indicated limit term~$\Delta_k \tilde{H}^\bullet(\mathfrak{X}(M))$. Due to finiteness of~$\mathcal{U}$ and Lemma~\ref{lem:NormalizedCechIsBounded}, the skew-symmetrized double complex has bounded rows, and for such double complexes both filtrations yield spectral sequences which converge to the same cohomology, see \cite[Chapter XV]{cartan2016homological}.
This shows the statement.
\end{proof}
\subsection{Spectral sequences for diagonal cohomology}
\begin{figure}[t]
\centering
\tikzset{
pattern size/.store in=\mcSize, 
pattern size = 5pt,
pattern thickness/.store in=\mcThickness, 
pattern thickness = 0.3pt,
pattern radius/.store in=\mcRadius, 
pattern radius = 1pt}
\makeatletter
\pgfutil@ifundefined{pgf@pattern@name@_7ax8lpbrf}{
\pgfdeclarepatternformonly[\mcThickness,\mcSize]{_7ax8lpbrf}
{\pgfqpoint{0pt}{0pt}}
{\pgfpoint{\mcSize}{\mcSize}}
{\pgfpoint{\mcSize}{\mcSize}}
{
\pgfsetcolor{\tikz@pattern@color}
\pgfsetlinewidth{\mcThickness}
\pgfpathmoveto{\pgfqpoint{0pt}{\mcSize}}
\pgfpathlineto{\pgfpoint{\mcSize+\mcThickness}{-\mcThickness}}
\pgfpathmoveto{\pgfqpoint{0pt}{0pt}}
\pgfpathlineto{\pgfpoint{\mcSize+\mcThickness}{\mcSize+\mcThickness}}
\pgfusepath{stroke}
}}
\makeatother
 
\tikzset{
pattern size/.store in=\mcSize, 
pattern size = 5pt,
pattern thickness/.store in=\mcThickness, 
pattern thickness = 0.3pt,
pattern radius/.store in=\mcRadius, 
pattern radius = 1pt}
\makeatletter
\pgfutil@ifundefined{pgf@pattern@name@_a1swgzc23}{
\pgfdeclarepatternformonly[\mcThickness,\mcSize]{_a1swgzc23}
{\pgfqpoint{0pt}{0pt}}
{\pgfpoint{\mcSize}{\mcSize}}
{\pgfpoint{\mcSize}{\mcSize}}
{
\pgfsetcolor{\tikz@pattern@color}
\pgfsetlinewidth{\mcThickness}
\pgfpathmoveto{\pgfqpoint{0pt}{\mcSize}}
\pgfpathlineto{\pgfpoint{\mcSize+\mcThickness}{-\mcThickness}}
\pgfpathmoveto{\pgfqpoint{0pt}{0pt}}
\pgfpathlineto{\pgfpoint{\mcSize+\mcThickness}{\mcSize+\mcThickness}}
\pgfusepath{stroke}
}}
\makeatother
 
\tikzset{
pattern size/.store in=\mcSize, 
pattern size = 5pt,
pattern thickness/.store in=\mcThickness, 
pattern thickness = 0.3pt,
pattern radius/.store in=\mcRadius, 
pattern radius = 1pt}
\makeatletter
\pgfutil@ifundefined{pgf@pattern@name@_ah0jjjfdl}{
\pgfdeclarepatternformonly[\mcThickness,\mcSize]{_ah0jjjfdl}
{\pgfqpoint{0pt}{0pt}}
{\pgfpoint{\mcSize}{\mcSize}}
{\pgfpoint{\mcSize}{\mcSize}}
{
\pgfsetcolor{\tikz@pattern@color}
\pgfsetlinewidth{\mcThickness}
\pgfpathmoveto{\pgfqpoint{0pt}{\mcSize}}
\pgfpathlineto{\pgfpoint{\mcSize+\mcThickness}{-\mcThickness}}
\pgfpathmoveto{\pgfqpoint{0pt}{0pt}}
\pgfpathlineto{\pgfpoint{\mcSize+\mcThickness}{\mcSize+\mcThickness}}
\pgfusepath{stroke}
}}
\makeatother
 
\tikzset{
pattern size/.store in=\mcSize, 
pattern size = 5pt,
pattern thickness/.store in=\mcThickness, 
pattern thickness = 0.3pt,
pattern radius/.store in=\mcRadius, 
pattern radius = 1pt}
\makeatletter
\pgfutil@ifundefined{pgf@pattern@name@_f6qfgglj4}{
\pgfdeclarepatternformonly[\mcThickness,\mcSize]{_f6qfgglj4}
{\pgfqpoint{0pt}{0pt}}
{\pgfpoint{\mcSize}{\mcSize}}
{\pgfpoint{\mcSize}{\mcSize}}
{
\pgfsetcolor{\tikz@pattern@color}
\pgfsetlinewidth{\mcThickness}
\pgfpathmoveto{\pgfqpoint{0pt}{\mcSize}}
\pgfpathlineto{\pgfpoint{\mcSize+\mcThickness}{-\mcThickness}}
\pgfpathmoveto{\pgfqpoint{0pt}{0pt}}
\pgfpathlineto{\pgfpoint{\mcSize+\mcThickness}{\mcSize+\mcThickness}}
\pgfusepath{stroke}
}}
\makeatother
 
\tikzset{
pattern size/.store in=\mcSize, 
pattern size = 5pt,
pattern thickness/.store in=\mcThickness, 
pattern thickness = 0.3pt,
pattern radius/.store in=\mcRadius, 
pattern radius = 1pt}
\makeatletter
\pgfutil@ifundefined{pgf@pattern@name@_h3gipah8g}{
\pgfdeclarepatternformonly[\mcThickness,\mcSize]{_h3gipah8g}
{\pgfqpoint{0pt}{0pt}}
{\pgfpoint{\mcSize}{\mcSize}}
{\pgfpoint{\mcSize}{\mcSize}}
{
\pgfsetcolor{\tikz@pattern@color}
\pgfsetlinewidth{\mcThickness}
\pgfpathmoveto{\pgfqpoint{0pt}{\mcSize}}
\pgfpathlineto{\pgfpoint{\mcSize+\mcThickness}{-\mcThickness}}
\pgfpathmoveto{\pgfqpoint{0pt}{0pt}}
\pgfpathlineto{\pgfpoint{\mcSize+\mcThickness}{\mcSize+\mcThickness}}
\pgfusepath{stroke}
}}
\makeatother
 
\tikzset{
pattern size/.store in=\mcSize, 
pattern size = 5pt,
pattern thickness/.store in=\mcThickness, 
pattern thickness = 0.3pt,
pattern radius/.store in=\mcRadius, 
pattern radius = 1pt}
\makeatletter
\pgfutil@ifundefined{pgf@pattern@name@_4ccbxufcd}{
\pgfdeclarepatternformonly[\mcThickness,\mcSize]{_4ccbxufcd}
{\pgfqpoint{0pt}{0pt}}
{\pgfpoint{\mcSize}{\mcSize}}
{\pgfpoint{\mcSize}{\mcSize}}
{
\pgfsetcolor{\tikz@pattern@color}
\pgfsetlinewidth{\mcThickness}
\pgfpathmoveto{\pgfqpoint{0pt}{\mcSize}}
\pgfpathlineto{\pgfpoint{\mcSize+\mcThickness}{-\mcThickness}}
\pgfpathmoveto{\pgfqpoint{0pt}{0pt}}
\pgfpathlineto{\pgfpoint{\mcSize+\mcThickness}{\mcSize+\mcThickness}}
\pgfusepath{stroke}
}}
\makeatother
 
\tikzset{
pattern size/.store in=\mcSize, 
pattern size = 5pt,
pattern thickness/.store in=\mcThickness, 
pattern thickness = 0.3pt,
pattern radius/.store in=\mcRadius, 
pattern radius = 1pt}
\makeatletter
\pgfutil@ifundefined{pgf@pattern@name@_f929sfir4}{
\pgfdeclarepatternformonly[\mcThickness,\mcSize]{_f929sfir4}
{\pgfqpoint{0pt}{0pt}}
{\pgfpoint{\mcSize}{\mcSize}}
{\pgfpoint{\mcSize}{\mcSize}}
{
\pgfsetcolor{\tikz@pattern@color}
\pgfsetlinewidth{\mcThickness}
\pgfpathmoveto{\pgfqpoint{0pt}{\mcSize}}
\pgfpathlineto{\pgfpoint{\mcSize+\mcThickness}{-\mcThickness}}
\pgfpathmoveto{\pgfqpoint{0pt}{0pt}}
\pgfpathlineto{\pgfpoint{\mcSize+\mcThickness}{\mcSize+\mcThickness}}
\pgfusepath{stroke}
}}
\makeatother
 
\tikzset{
pattern size/.store in=\mcSize, 
pattern size = 5pt,
pattern thickness/.store in=\mcThickness, 
pattern thickness = 0.3pt,
pattern radius/.store in=\mcRadius, 
pattern radius = 1pt}
\makeatletter
\pgfutil@ifundefined{pgf@pattern@name@_nwf2vbnco}{
\pgfdeclarepatternformonly[\mcThickness,\mcSize]{_nwf2vbnco}
{\pgfqpoint{0pt}{0pt}}
{\pgfpoint{\mcSize}{\mcSize}}
{\pgfpoint{\mcSize}{\mcSize}}
{
\pgfsetcolor{\tikz@pattern@color}
\pgfsetlinewidth{\mcThickness}
\pgfpathmoveto{\pgfqpoint{0pt}{\mcSize}}
\pgfpathlineto{\pgfpoint{\mcSize+\mcThickness}{-\mcThickness}}
\pgfpathmoveto{\pgfqpoint{0pt}{0pt}}
\pgfpathlineto{\pgfpoint{\mcSize+\mcThickness}{\mcSize+\mcThickness}}
\pgfusepath{stroke}
}}
\makeatother
 
\tikzset{
pattern size/.store in=\mcSize, 
pattern size = 5pt,
pattern thickness/.store in=\mcThickness, 
pattern thickness = 0.3pt,
pattern radius/.store in=\mcRadius, 
pattern radius = 1pt}
\makeatletter
\pgfutil@ifundefined{pgf@pattern@name@_wv3vm78ax}{
\pgfdeclarepatternformonly[\mcThickness,\mcSize]{_wv3vm78ax}
{\pgfqpoint{0pt}{0pt}}
{\pgfpoint{\mcSize}{\mcSize}}
{\pgfpoint{\mcSize}{\mcSize}}
{
\pgfsetcolor{\tikz@pattern@color}
\pgfsetlinewidth{\mcThickness}
\pgfpathmoveto{\pgfqpoint{0pt}{\mcSize}}
\pgfpathlineto{\pgfpoint{\mcSize+\mcThickness}{-\mcThickness}}
\pgfpathmoveto{\pgfqpoint{0pt}{0pt}}
\pgfpathlineto{\pgfpoint{\mcSize+\mcThickness}{\mcSize+\mcThickness}}
\pgfusepath{stroke}
}}
\makeatother
\tikzset{every picture/.style={line width=0.75pt}} 
\begin{tikzpicture}[x=0.75pt,y=0.75pt,yscale=-1,xscale=1]
\draw  (363.11,116.56) -- (544.27,116.56)(450.27,22) -- (450.27,207.95) (537.27,111.56) -- (544.27,116.56) -- (537.27,121.56) (445.27,29) -- (450.27,22) -- (455.27,29)  ;
\draw    (166.25,116.41) -- (347.8,116.41) ;
\draw [shift={(349.8,116.41)}, rotate = 180] [color={rgb, 255:red, 0; green, 0; blue, 0 }  ][line width=0.75]    (10.93,-3.29) .. controls (6.95,-1.4) and (3.31,-0.3) .. (0,0) .. controls (3.31,0.3) and (6.95,1.4) .. (10.93,3.29)   ;
\draw  [fill={rgb, 255:red, 0; green, 0; blue, 0 }  ,fill opacity=1 ] (265.22,114.5) -- (332.31,114.5) -- (332.31,118.33) -- (265.22,118.33) -- cycle ;
\draw  [fill={rgb, 255:red, 0; green, 0; blue, 0 }  ,fill opacity=1 ] (170.08,114.02) -- (199.32,114.02) -- (199.32,118.33) -- (170.08,118.33) -- cycle ;
\draw    (250.6,113.22) -- (250.6,119.93) ;
\draw  [fill={rgb, 255:red, 0; green, 0; blue, 0 }  ,fill opacity=1 ] (208.42,114.02) -- (237.66,114.02) -- (237.66,118.33) -- (208.42,118.33) -- cycle ;
\draw  [fill={rgb, 255:red, 0; green, 0; blue, 0 }  ,fill opacity=1 ] (464.47,114.02) -- (531.57,114.02) -- (531.57,117.85) -- (464.47,117.85) -- cycle ;
\draw  [fill={rgb, 255:red, 0; green, 0; blue, 0 }  ,fill opacity=1 ] (369.34,114.5) -- (398.58,114.5) -- (398.58,118.81) -- (369.34,118.81) -- cycle ;
\draw  [fill={rgb, 255:red, 0; green, 0; blue, 0 }  ,fill opacity=1 ] (406.72,114.5) -- (435.96,114.5) -- (435.96,118.81) -- (406.72,118.81) -- cycle ;
\draw  [fill={rgb, 255:red, 0; green, 0; blue, 0 }  ,fill opacity=1 ] (448.06,101.44) -- (448.06,34.34) -- (451.89,34.34) -- (451.89,101.44) -- cycle ;
\draw  [fill={rgb, 255:red, 0; green, 0; blue, 0 }  ,fill opacity=1 ] (448.54,197.53) -- (448.54,168.29) -- (452.85,168.29) -- (452.85,197.53) -- cycle ;
\draw  [fill={rgb, 255:red, 0; green, 0; blue, 0 }  ,fill opacity=1 ] (448.54,160.14) -- (448.54,130.91) -- (452.85,130.91) -- (452.85,160.14) -- cycle ;
\draw  [pattern=_7ax8lpbrf,pattern size=6pt,pattern thickness=0.75pt,pattern radius=0pt, pattern color={rgb, 255:red, 218; green, 218; blue, 218}] (464.71,101.44) -- (464.71,34.34) -- (531.81,34.34) -- (531.81,101.44) -- cycle ;
\draw  [pattern=_a1swgzc23,pattern size=6pt,pattern thickness=0.75pt,pattern radius=0pt, pattern color={rgb, 255:red, 218; green, 218; blue, 218}] (464.62,197.53) -- (464.62,168.29) -- (532.77,168.29) -- (532.77,197.53) -- cycle ;
\draw  [pattern=_ah0jjjfdl,pattern size=6pt,pattern thickness=0.75pt,pattern radius=0pt, pattern color={rgb, 255:red, 218; green, 218; blue, 218}] (464.62,160.14) -- (464.62,130.91) -- (532.77,130.91) -- (532.77,160.14) -- cycle ;
\draw  [pattern=_f6qfgglj4,pattern size=6pt,pattern thickness=0.75pt,pattern radius=0pt, pattern color={rgb, 255:red, 218; green, 218; blue, 218}] (407.2,101.44) -- (407.2,34.34) -- (435.96,34.34) -- (435.96,101.44) -- cycle ;
\draw  [pattern=_h3gipah8g,pattern size=6pt,pattern thickness=0.75pt,pattern radius=0pt, pattern color={rgb, 255:red, 218; green, 218; blue, 218}] (406.69,197.53) -- (406.69,168.29) -- (436.56,168.29) -- (436.56,197.53) -- cycle ;
\draw  [pattern=_4ccbxufcd,pattern size=6pt,pattern thickness=0.75pt,pattern radius=0pt, pattern color={rgb, 255:red, 218; green, 218; blue, 218}] (406.69,160.14) -- (406.69,130.91) -- (436.56,130.91) -- (436.56,160.14) -- cycle ;
\draw  [pattern=_f929sfir4,pattern size=6pt,pattern thickness=0.75pt,pattern radius=0pt, pattern color={rgb, 255:red, 218; green, 218; blue, 218}] (369.82,101.44) -- (369.82,34.34) -- (398.58,34.34) -- (398.58,101.44) -- cycle ;
\draw  [pattern=_nwf2vbnco,pattern size=6pt,pattern thickness=0.75pt,pattern radius=0pt, pattern color={rgb, 255:red, 218; green, 218; blue, 218}] (369.31,197.53) -- (369.31,168.29) -- (399.18,168.29) -- (399.18,197.53) -- cycle ;
\draw  [pattern=_wv3vm78ax,pattern size=6pt,pattern thickness=0.75pt,pattern radius=0pt, pattern color={rgb, 255:red, 218; green, 218; blue, 218}] (369.31,160.14) -- (369.31,130.91) -- (399.18,130.91) -- (399.18,160.14) -- cycle ;
\draw  [dash pattern={on 0.84pt off 2.51pt}]  (362.33,204.12) -- (536.6,29.84) ;
\draw (246.54,119.89) node [anchor=north west][inner sep=0.75pt]   [align=left] {{\footnotesize 0}};
\draw (282.36,87.73) node [anchor=north west][inner sep=0.75pt]    {$A\subset \mathbb{R}$};
\draw (472.71,8.14) node [anchor=north west][inner sep=0.75pt]    {$A^{2} \subset \mathbb{R}^{2}$};
\end{tikzpicture}
\caption{An illustration of Lemma~\ref{LemmaConnectedComponentIntersectionsWithDiagonals}: the set $A$ has three connected components in $\mathbb{R}$, so its square $A^2 \subset \mathbb{R}^2$ has $9 = 3^2$, all arising by taking products of connected components of $A$. The products of a connected component of $A$ with itself are exactly the connected components of $A^2$ which intersect the diagonal in $\mathbb{R}^2$.}
\label{FigureProductOfComponents}
\end{figure}
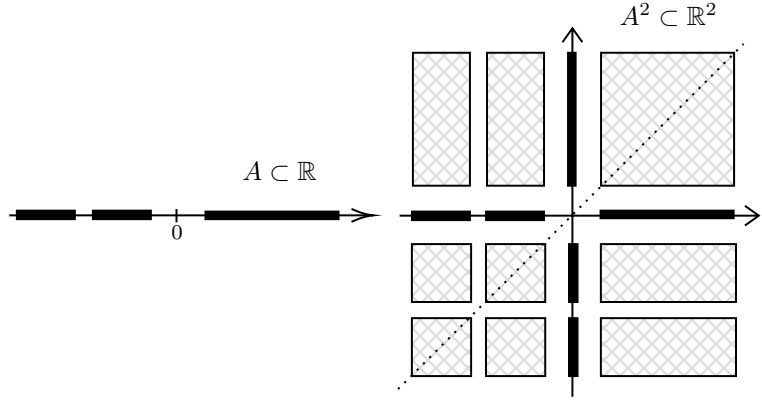
To arrive at Corollary~\ref{cor:ConvergenceOfSpectralSequence} we filtered the CBS double complex by rows, so now, let us study its filtration by rows. The cohomology among the arising vertical complexes amounts to calculating~$k$-diagonal Lie algebra cohomology of~$\mathfrak{X}(U)$, where the sets~$U$ are finite disjoint unions of~$\mathbb{R}^n$. 
We begin by showing that our methods of Section~\ref{SectionLocalCohomology} allow us to calculate~$k$-diagonal cohomology for such~$U$.
\begin{proposition}
\label{PropositionDiagonalLocalCohomologyIsJustLocal}
Let~$1 \leq r \leq k$ and~$U = \bigsqcup_{i=1}^r \mathbb{R}^n$. The inclusion
\begin{align}
\Delta_k C^\bullet(\mathfrak{X}(U))  \subset C^\bullet(\mathfrak{X}(U))
\end{align}
induces an isomorphism
\begin{align}
\Delta_k H^\bullet(\mathfrak{X}(U)) \cong H^\bullet(\mathfrak{X}(U)).
\end{align}
\end{proposition}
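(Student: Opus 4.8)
The plan is to transport the entire scaling-filtration argument of Section~\ref{SectionLocalCohomology} into the $k$-diagonal subcomplex. Write $U = \bigsqcup_{i=1}^r \mathbb{R}^n$ with base points $p_1,\dots,p_r$, and recall from the proof of Proposition~\ref{PropositionCohomologyOfDisjointUnionsOfRns} the scaling filtration $F^j C^\bullet(\mathfrak{X}(U))$, the split short exact sequences
\[
0 \to F^{j+1} C^\bullet(\mathfrak{X}(U)) \to F^j C^\bullet(\mathfrak{X}(U)) \xrightarrow{\gamma_j^{(r)}} G_j \to 0,
\]
where $G_j := \bigoplus_{j_1 + \dots + j_r = j} C^\bullet_{(j_1)}(W_n) \otimes \dots \otimes C^\bullet_{(j_r)}(W_n)$, together with their splittings $\beta_j^{(r)}$. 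First I would intersect this filtration with the diagonal cochains, setting $\Delta_k F^j C^\bullet(\mathfrak{X}(U)) := F^j C^\bullet(\mathfrak{X}(U)) \cap \Delta_k C^\bullet(\mathfrak{X}(U))$. Since both the Chevalley--Eilenberg differential and the scaling pullbacks $T_t^*$ preserve the diagonal condition, this is a filtration of $\Delta_k C^\bullet(\mathfrak{X}(U))$ with $\Delta_k F^j = \Delta_k C^\bullet(\mathfrak{X}(U))$ for $j \leq -rn$.

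The crucial observation is that the splitting $\beta_j^{(r)}$ already has image inside $\Delta_k C^\bullet(\mathfrak{X}(U))$. Indeed, $\beta_j^{(r)}$ sends a tensor $c_1 \otimes \dots \otimes c_r$ to a wedge of cochains each of which evaluates its arguments only through their jets at a single base point $p_i$; as a Schwartz kernel, the resulting cochain is supported on tuples all of whose entries lie in $\{p_1,\dots,p_r\}$, so it is $r$-diagonal, and hence $k$-diagonal precisely because $r \leq k$. This is exactly where the hypothesis $r \leq k$ enters. Consequently the restriction of $\gamma_j^{(r)}$ to $\Delta_k F^j$ is still surjective (it is split by $\beta_j^{(r)}$), while its kernel is $\Delta_k F^j \cap F^{j+1} = \Delta_k F^{j+1}$; this yields split short exact sequences
\[
0 \to \Delta_k F^{j+1} C^\bullet(\mathfrak{X}(U)) \to \Delta_k F^j C^\bullet(\mathfrak{X}(U)) \xrightarrow{\gamma_j^{(r)}} G_j \to 0
\]
with the \emph{same} associated graded $G_j$ as in the non-diagonal case.

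With these in hand the argument of Theorem~\ref{TheoremIsomorphismLocalToInfinitesimal} and Proposition~\ref{PropositionCohomologyOfDisjointUnionsOfRns} runs essentially verbatim. By the Künneth theorem and the acyclicity of $C^\bullet_{(m)}(W_n)$ for $m \neq 0$ (Proposition~\ref{PropositionWnCohmologyInZeroethDegree}), each $G_j$ with $j \neq 0$ is acyclic, so the finite chain of inclusions $\Delta_k F^0 \hookrightarrow \Delta_k F^{-1} \hookrightarrow \dots \hookrightarrow \Delta_k F^{-rn} = \Delta_k C^\bullet(\mathfrak{X}(U))$ consists of quasi-isomorphisms. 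For the last step I need the acyclicity of $\Delta_k F^1 C^\bullet(\mathfrak{X}(U))$: here I would check that the contracting homotopy $K = \int_0^1 T_t^*(X_t \intprod \cdot)\,dt$ from Corollary~\ref{CorollaryF1ForFiltrationIsAcyclic} preserves the diagonal condition. This holds because $T_t^*$ does, and because inserting the extra argument $X_t$ via the interior product can only make property $\Delta_k$ easier to satisfy, so $X_t \intprod \cdot$ maps $k$-diagonal cochains to $k$-diagonal cochains. The short exact sequence at $j = 0$ then gives $H^\bullet(\Delta_k F^0) \cong H^\bullet(G_0) \cong H^\bullet(W_n)^{\otimes r} \cong H^\bullet(\mathfrak{X}(U))$.

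The main obstacle is the verification that $\beta_j^{(r)}$ actually lands in the $k$-diagonal cochains: this is the only place the constraint $r \leq k$ is used, and it is what forces the associated graded of the diagonal filtration to coincide with that of the full one. Everything else is bookkeeping: the acyclicity inputs are inherited from Section~\ref{SectionLocalCohomology}, and only the stability of $K$ under the diagonal condition needs an extra (easy) check. To finish, I would note that the inclusion $\Delta_k C^\bullet(\mathfrak{X}(U)) \hookrightarrow C^\bullet(\mathfrak{X}(U))$ is a morphism of filtered complexes that is the identity on each associated graded piece $G_j$ and commutes with all the maps $\gamma_j^{(r)}$; a diagram chase comparing the diagonal and full filtrations through their common quotients $G_j$ then shows that this inclusion induces precisely the asserted isomorphism $\Delta_k H^\bullet(\mathfrak{X}(U)) \cong H^\bullet(\mathfrak{X}(U))$.
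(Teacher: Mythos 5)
Your proposal is correct and follows essentially the same route as the paper: restrict the scaling filtration and its split exact sequences to the $k$-diagonal subcomplex, observe that the splitting $\beta_0^{(r)}$ lands in $\Delta_k C^\bullet(\mathfrak{X}(U))$ because $r \leq k$, and rerun the acyclicity arguments. You merely make explicit several verifications (the Schwartz-kernel support argument for $\beta_j^{(r)}$ and the compatibility of the homotopy $K$ with the diagonal condition) that the paper dismisses as ``straightforward to check.''
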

\begin{proof}
The construction in the proof of Proposition~\ref{PropositionCohomologyOfDisjointUnionsOfRns} restricts without change to the diagonally filtered complex. Specifically, the filtration~$F^{q} C^\bullet(\mathfrak{X}(\mathbb{R}^n))$ restricts to a filtration~$F^{q} (\Delta_k C^\bullet)(\mathfrak{X}(\mathbb{R}^n))$, it is straightforward to check that the exact sequence
\begin{equation}
 \begin{aligned}
0 \to 
F^{q+1} C^\bullet(\mathfrak{X}(\mathbb{R}^n))
&\to
F^{q} C^\bullet(\mathfrak{X}(\mathbb{R}^n))
\\
&\to
\bigoplus_{k_1 + \dots + k_r = q}
C^\bullet_{(k_1)}(W_n)
\otimes
\dots
\otimes
C^\bullet_{(k_r)}(W_n)
\to 
0
\end{aligned}
\end{equation}
restricts to an exact sequence
\begin{equation}
 \begin{aligned}
0 
\to 
F^{q+1} (\Delta_k C^\bullet)(\mathfrak{X}(\mathbb{R}^n))
&\to
F^{q} (\Delta_k C^\bullet)(\mathfrak{X}(\mathbb{R}^n))
\\
&\to
\bigoplus_{k_1 + \dots + k_r = q}
C^\bullet_{(k_1)}(W_n)
\otimes
\dots
\otimes
C^\bullet_{(k_r)}(W_n)
\to 
0,
 \end{aligned}
\end{equation}
and the image of the splitting~$\beta_0^{(r)} :
C^\bullet_{(0)}(W_n)^{\otimes^r}
\to
C^\bullet(\mathfrak{X}(M))$ from the proof of Proposition~\ref{PropositionCohomologyOfDisjointUnionsOfRns} is contained in 
$\Delta_k C^\bullet(\mathfrak{X}(M))$. Hence, 
\begin{align}
\Delta_k H^\bullet(\mathfrak{X}(U)) 
\cong 
H^\bullet(W_n)^{\otimes^k}
\cong
H^\bullet(\mathfrak{X}(U)),
\end{align}
so that all nontrivial cohomology classes in~$H^\bullet(\mathfrak{X}(U))$ have representatives contained in~$\Delta_k H^\bullet(\mathfrak{X}(U))$. This shows that the inclusion of complexes induces an isomorphism and the proposition is shown.
\end{proof}
Consider now the CBS double complex for some~$k$-good cover~$\mathcal{U}$, and the spectral sequence with respect to the filtration by columns. Every intersection in the cover~$\mathcal{U}$ is diffeomorphic to a disjoint union of at most~$k$ copies of~$\mathbb{R}^n$. Hence, Proposition~\ref{PropositionDiagonalLocalCohomologyIsJustLocal} applies in every column and we can replace diagonal cohomology with standard Lie algebra cohomology. Hence, the first page of the spectral sequence assumes the form in Figure \ref{fig:FirstPageOfCBSDoubleComplex}.
\begin{figure}
\centering
\tikzset{every picture/.style={line width=0.75pt}} 
\begin{tikzpicture}[x=0.75pt,y=0.75pt,yscale=-1,xscale=1]
\draw (126,195.97) node    {$\bigoplus _{i} H^{1}(\mathfrak{X}( U_{i}))$};
\draw (319.5,195.47) node    {$\bigoplus _{i,j} H^{1}( \Gamma (\mathfrak{X}( U_{ij}))$};
\draw (459,195.97) node    {$\ \dotsc \ $};
\draw (459,141.97) node    {$\ \dotsc \ $};
\draw (126,52) node    {$\dotsc $};
\draw (319.5,52) node    {$\dotsc $};
\draw (126,143.47) node    {$\bigoplus _{i} H^{2}(\mathfrak{X}( U_{i}))$};
\draw (319.5,143.47) node    {$\bigoplus _{i,j} H^{2}( \Gamma (\mathfrak{X}( U_{ij}))$};
\draw (126,90.97) node    {$\bigoplus _{i} H^{3}(\mathfrak{X}( U_{i}))$};
\draw (319.5,91.47) node    {$\bigoplus _{i,j} H^{3}(\mathfrak{X}( U_{ij}))$};
\draw (459,91.47) node    {$\ \dotsc \ $};
\draw    (233.5,195.69) -- (182,195.82) ;
\draw [shift={(180,195.83)}, rotate = 359.85] [color={rgb, 255:red, 0; green, 0; blue, 0 }  ][line width=0.75]    (10.93,-3.29) .. controls (6.95,-1.4) and (3.31,-0.3) .. (0,0) .. controls (3.31,0.3) and (6.95,1.4) .. (10.93,3.29)   ;
\draw    (233.5,143.47) -- (182,143.47) ;
\draw [shift={(180,143.47)}, rotate = 360] [color={rgb, 255:red, 0; green, 0; blue, 0 }  ][line width=0.75]    (10.93,-3.29) .. controls (6.95,-1.4) and (3.31,-0.3) .. (0,0) .. controls (3.31,0.3) and (6.95,1.4) .. (10.93,3.29)   ;
\draw    (241.5,91.27) -- (182,91.11) ;
\draw [shift={(180,91.11)}, rotate = 360.15] [color={rgb, 255:red, 0; green, 0; blue, 0 }  ][line width=0.75]    (10.93,-3.29) .. controls (6.95,-1.4) and (3.31,-0.3) .. (0,0) .. controls (3.31,0.3) and (6.95,1.4) .. (10.93,3.29)   ;
\draw    (438.5,91.47) -- (399.5,91.47) ;
\draw [shift={(397.5,91.47)}, rotate = 360] [color={rgb, 255:red, 0; green, 0; blue, 0 }  ][line width=0.75]    (10.93,-3.29) .. controls (6.95,-1.4) and (3.31,-0.3) .. (0,0) .. controls (3.31,0.3) and (6.95,1.4) .. (10.93,3.29)   ;
\draw    (438.5,142.19) -- (407.5,142.52) ;
\draw [shift={(405.5,142.54)}, rotate = 359.38] [color={rgb, 255:red, 0; green, 0; blue, 0 }  ][line width=0.75]    (10.93,-3.29) .. controls (6.95,-1.4) and (3.31,-0.3) .. (0,0) .. controls (3.31,0.3) and (6.95,1.4) .. (10.93,3.29)   ;
\draw    (438.5,195.89) -- (407.5,195.78) ;
\draw [shift={(405.5,195.77)}, rotate = 360.21000000000004] [color={rgb, 255:red, 0; green, 0; blue, 0 }  ][line width=0.75]    (10.93,-3.29) .. controls (6.95,-1.4) and (3.31,-0.3) .. (0,0) .. controls (3.31,0.3) and (6.95,1.4) .. (10.93,3.29)   ;
\end{tikzpicture}
\caption{The first page of the spectral sequence associated to the CBS double complex by beginning with taking the cohomology along the vertical, Chevalley-Eilenberg differential.}
\label{fig:FirstPageOfCBSDoubleComplex}
\end{figure}
We state the following simple lemma without proof (cf. Figure~\ref{FigureProductOfComponents}):
\begin{lemma}
\label{LemmaConnectedComponentIntersectionsWithDiagonals}
Let $X$ be a topological space.
The connected components of $X^q$ that do not intersect $X^q_{q-1}$ are exactly the Cartesian products of $q$ pairwise different connected components of $X$.
\end{lemma}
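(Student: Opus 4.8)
The plan is to reduce the statement to two elementary facts about connected components, neither of which requires local connectedness. First I would establish the standard decomposition of the connected components of a finite product: for any point $(x_1,\dots,x_q) \in X^q$, its connected component is exactly $C_1 \times \dots \times C_q$, where $C_i \subset X$ denotes the connected component of $x_i$ in $X$. The inclusion of $C_1 \times \dots \times C_q$ into the component of $(x_1,\dots,x_q)$ follows because a finite product of connected spaces is connected and $C_1 \times \dots \times C_q$ contains the given point; the reverse inclusion follows by applying the continuous projections $\pr_i : X^q \to X$, whose image of the connected component of $(x_1,\dots,x_q)$ is a connected subset of $X$ containing $x_i$, hence is contained in $C_i$. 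Consequently the connected components of $X^q$ are precisely the products $C_1 \times \dots \times C_q$ with each $C_i$ a connected component of $X$.

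It then remains to decide, among these products, which are disjoint from the fat diagonal $X^q_{q-1} = \{(x_1,\dots,x_q) : |\{x_1,\dots,x_q\}| \leq q-1\}$, i.e.\ the locus where at least two coordinates coincide (by the pigeonhole principle, $q$ coordinates assuming at most $q-1$ distinct values forces an equality). I claim that $C_1 \times \dots \times C_q$ meets $X^q_{q-1}$ if and only if $C_i = C_j$ for some pair $i \neq j$. If two of the factors coincide, say $C_i = C_j$, then choosing any common point $x \in C_i = C_j$ for the $i$-th and $j$-th coordinates and arbitrary points in the remaining factors yields an element of the product with two equal coordinates, hence a point of $X^q_{q-1}$. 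Conversely, if some $(x_1,\dots,x_q)$ in the product satisfies $x_i = x_j$ with $i \neq j$, then $x_i \in C_i$ and $x_j \in C_j$ share a point, and since distinct connected components are disjoint, this forces $C_i = C_j$.

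Combining the two steps, a connected component $C_1 \times \dots \times C_q$ of $X^q$ fails to intersect $X^q_{q-1}$ precisely when the $C_i$ are pairwise distinct, which is exactly the assertion of the lemma. I do not anticipate any serious obstacle: the argument is purely set-theoretic and topological once the product decomposition of components is in hand. The only points requiring mild care are to carry out the first step without inadvertently invoking local connectedness (the claim concerns genuine connected components rather than path components or quasicomponents) and to match the ``pairwise different'' bookkeeping cleanly against the fat-diagonal condition $|\{x_1,\dots,x_q\}| \leq q-1$.
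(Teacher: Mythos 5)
Your proof is correct. The paper states this lemma explicitly without proof (``We state the following simple lemma without proof''), and your argument --- the product decomposition of connected components via the projections $\pr_i$, followed by the observation that $C_1 \times \dots \times C_q$ meets the fat diagonal if and only if two of the factors coincide --- is exactly the standard argument the author is implicitly relying on.
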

The following proposition makes use of \emph{relative} \v{C}ech homology of a pair of topological spaces $(X,A)$ with respect to a cover $\mathcal{U}$ of $X$, cf. \cite[Chapter IX]{eilenberg2015foundations}. In this situation, the space
\begin{align}
\mathcal{U}_A := \{U \cap A : U \in \mathcal{U}\}
\end{align}
is a cover of $A$, and the relative complex is the quotient by the inclusion of \v{C}ech complexes $\check{C}(\mathcal{U}_A) \to \check{C}(\mathcal{U}_X)$.
We denote the relative complex by $\check{C}_\bullet ( \mathcal{U}_X , \mathcal{U}_A )$ and its homology by $\check{H}_\bullet( \mathcal{U}_X, \mathcal{U}_A)$.
If $\mathcal{U}$ and $\mathcal{U}_A$ are good covers of $X$ and $A$, respectively, it is well known that the \v{C}ech homologies are isomorphic to singular homology\footnote{see \cite[Chapter VI.D, Theorem 4]{gunning2009analytic} for a proof in \v{C}ech cohomology. This easily dualizes to our setting.}
\begin{align}
\check{H}^\bullet(\mathcal{U}_X) \cong H^\bullet(X), \quad
\check{H}^\bullet(\mathcal{U}_A) \cong H^\bullet(A).
\end{align} 
Hence, by an argument on long exact sequences in homology, one has an isomorphism to relative singular homology.
\begin{align}
\check{H}_\bullet (\mathcal{U}_M, \mathcal{U}_A) 
\cong
\check{H}_\bullet( M, A)
\end{align}
\begin{proposition}
\label{PropositionSecondSpectralSequenceIsRelativeCechComplexes}
Let $q, k \geq 1$ be integers, $M$ be a smooth, orientable manifold, and $\mathcal{U}$  a $k$-good cover of $M$. Denote by $\{E^{p,q}_r, d_r\}$ the spectral sequence associated to the $k$-th CBS double complex with respect to $\mathcal{U}$, arising from the horizontal filtration by columns.
\\
The $q$-th row of the first page $E_1^{\bullet,\bullet}$ is naturally isomorphic to a direct sum of relative \v{C}ech complexes with respect to covers $(\mathcal{U}^r, \mathcal{U}^r_{r-1}$)  of $(M^r, M^r_{r-1})$ for $r = 1,\dots k$ (cf. Definition~\ref{def:OpenCoversOfDiagonals}):
\begin{equation}
\begin{aligned}
\label{eq:ReorganizedSecondPage}
E^{\bullet, q}_1 
\cong 
\, \, \, \, 
 &\check{C}_\bullet(\mathcal{U}_M) \otimes H^q(W_n) 
\\  
\,
\oplus
\, \, \, \, 
&
\left(
\bigoplus_{\substack{q_1 + q_2 = q \\ q_1,q_2 > 0}}
\check{C}_\bullet
(
\mathcal{U}^2_{M^2}, \mathcal{U}^2_{M^2_{1}}
)
\otimes
H^{q_1}(W_n) \otimes H^{q_2}(W_n) 
\right)^{\Sigma_2}
\\
\oplus \, \, \, \, &
\dots 
\\
\oplus
\, \, \, \, 
&
\left(
\bigoplus_{\substack{q_1 + \dots + q_k = q \\ q_1,\dots,q_k > 0}}
\check{C}_\bullet
(
\mathcal{U}^k_{M^k}, \mathcal{U}^k_{M^k_{k-1}}
)
\otimes
H^{q_1}(W_n) \otimes \dots \otimes H^{q_k}(W_n)
\right)^{\Sigma_k}. 
\end{aligned}
\end{equation}
Here, the symmetric groups $\Sigma_2,\dots,\Sigma_k$ act by simultaneous, skew-symmetric permutation of the Cartesian factors $U_1 \times \dots \times U_k$ of any set in the covers $(\mathcal{U}^k, \mathcal{U}^k_{k-1})$ and the tensor factors of~$H^{q_1}(W_n) \otimes \dots \otimes H^{q_k}(W_n)$.
\\
The same statement holds for the skew-symmetrized CBS complex, when the \v{C}ech complexes in \eqref{eq:ReorganizedSecondPage} are replaced by their skew-symmetrized versions.
\end{proposition}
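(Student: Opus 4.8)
The plan is to obtain the first page by computing the vertical (Chevalley--Eilenberg) cohomology column-by-column and then reorganizing the resulting \v{C}ech complex according to how classes distribute over connected components. Since cohomology commutes with the finite direct sums making up each column, the $(p,q)$-entry of the first page is $\bigoplus_{i_0 < \dots < i_p} \Delta_k H^q(\mathfrak{X}(U_{i_0\dots i_p}))$, with $d_1$ induced by the \v{C}ech differential. By $k$-goodness each intersection $U_{i_0\dots i_p}$ is a disjoint union of at most $k$ copies of $\mathbb{R}^n$, so Proposition~\ref{PropositionDiagonalLocalCohomologyIsJustLocal} lets me drop the diagonal decoration and replace $\Delta_k H^q$ by $H^q(\mathfrak{X}(U_{i_0\dots i_p}))$; Proposition~\ref{PropositionCohomologyOfDisjointUnionsOfRns} then expands the latter as a direct sum, over all assignments of a cohomology degree to each connected component, of tensor products of copies of $H^\bullet(W_n)$.

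Next I would group these summands by the number $r$ of components carrying a class of \emph{positive} degree, writing $q_1 + \dots + q_r = q$ with each $q_j > 0$ (components carrying $H^0(W_n) = \mathbb{R}$ contribute trivially). An ordered choice of $r$ pairwise distinct connected components of $U_{i_0\dots i_p}$ is, by Lemma~\ref{LemmaConnectedComponentIntersectionsWithDiagonals}, the same datum as a connected component of $(U_{i_0\dots i_p})^r = U_{i_0}^r \cap \dots \cap U_{i_p}^r$ lying off the fat diagonal $M^r_{r-1}$. Passing to unordered choices amounts to taking $\Sigma_r$-invariants, where $\Sigma_r$ acts simultaneously by skew permutation of the Cartesian factors of $M^r$ (hence on the cover $\mathcal{U}^r$) and of the tensor factors $H^{q_1}(W_n) \otimes \dots \otimes H^{q_r}(W_n)$. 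Fixing $r$ and the splitting $(q_1,\dots,q_r)$ and letting $p$ vary, the off-diagonal components then assemble into the \v{C}ech chains generating the relative quotient $\check{C}_\bullet(\mathcal{U}^r_{M^r}, \mathcal{U}^r_{M^r_{r-1}})$, the subcomplex supported on $M^r_{r-1}$ being quotiented out precisely because repeated-component tuples carry no support-$r$ class. This yields the summand $\check{C}_\bullet(\mathcal{U}^r_{M^r}, \mathcal{U}^r_{M^r_{r-1}}) \otimes H^{q_1}(W_n) \otimes \dots \otimes H^{q_r}(W_n)$ after taking $\Sigma_r$-invariants, matching \eqref{eq:ReorganizedSecondPage}.

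The crucial compatibility to verify is that the CBS \v{C}ech differential matches the relative \v{C}ech differential under this identification, and this is where the real work lies. A face map forgets one index $i_j$ and extends a class from $U_{i_0\dots i_p}$ to the larger set $U_{i_0\dots\hat{i_j}\dots i_p}$ via $\iota$; by the corollary following Proposition~\ref{PropositionCohomologyOfDisjointUnionsOfRns} this extension is computed by wedging the extended component classes. If the extension merges two distinct components both carrying a positive-degree class, the result is a wedge of two positive-degree classes in $H^\bullet(W_n)$, which vanishes by Corollary~\ref{CorollaryWedgeProductOfFormalsIsZero}; hence the only surviving face contributions keep the $r$ chosen components distinct in the larger set, which is exactly the relative \v{C}ech differential on $M^r$ modulo $M^r_{r-1}$. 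Reconciling the signs, the $\Sigma_r$-equivariance, and this vanishing mechanism---all ultimately governed by the naturality of the identification in Corollary~\ref{cor:CosheafFunctorsEquivalentOnBalls}---is the main obstacle; the passage to the skew-symmetrized CBS complex is then automatic, since everything is natural in the \v{C}ech direction and skew-symmetrization commutes with forming $\Sigma_q$- and $\Sigma_r$-invariants in characteristic zero.
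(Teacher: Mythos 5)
Your proposal is correct and follows essentially the same route as the paper: identify the $q$-th row of $E_1$ with the \v{C}ech complex of the constant precosheaf $U \mapsto H^\bullet(\mathfrak{X}(\mathbb{R}^n))^{\otimes \pi_0(U)}$ via the local computations, decompose by the number $r$ of tensor factors of positive degree (which is preserved by the extension maps precisely because products of positive-degree classes in $H^\bullet(W_n)$ vanish), and use Lemma~\ref{LemmaConnectedComponentIntersectionsWithDiagonals} to match the $r$-component summands with the $\Sigma_r$-invariants of the relative \v{C}ech complexes of $(M^r, M^r_{r-1})$. The compatibility of differentials that you flag as the main remaining obstacle is exactly the point the paper also treats briefly, resting on the same vanishing mechanism you identify.
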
 
\begin{proof}
Set $n := \dim M$. Since $\mathcal{U}$ is $k$-good, all intersections of elements in $\mathcal{U}$ are diffeomorphic to a finite disjoint union of open balls in $\mathbb{R}^n$. Hence we can apply Corollary~\ref{cor:CosheafFunctorsEquivalentOnBalls} to find that the \v{C}ech complex in $q$-th row of $E_1^{\bullet,\bullet}$ is naturally isomorphic to the \v{C}ech complex of the $q$-th degree component of the constant precosheaf of algebras $U \mapsto H^\bullet(\mathfrak{X}(\mathbb{R}^n)) ^{\pi_0(U)}$, i.e.
\begin{align}
\label{eq:ProductConstantCosheaf}
U 
\mapsto 
\bigoplus_{q_1 + \dots + q_{\pi_0(U)} = q} 
H^{q_1}(\mathfrak{X}(\mathbb{R}^n)) 
\otimes \dots \otimes 
H^{q_{\pi_0(U)}}(\mathfrak{X}(\mathbb{R}^n)).
\end{align}
Denote this precosheaf by $H^q_M$. The multiplication of $H^\bullet(\mathfrak{X}(\mathbb{R}^n))$ is trivial on two elements of positive degree. Hence, if $H^q_M(U) \to H^q_M(V)$ is an extension map associated to an inclusion $U \subset V$, then the associated map
\begin{align}
H^{q_1}(\mathfrak{X}(\mathbb{R}^n)) 
\otimes \dots \otimes 
H^{q_{\pi_0(U)}}(\mathfrak{X}(\mathbb{R}^n))
\to
H^{q_1'}(\mathfrak{X}(\mathbb{R}^n)) 
\otimes \dots \otimes 
H^{q_{\pi_0(V)}'}(\mathfrak{X}(\mathbb{R}^n))
\end{align}
is only nonzero if
\begin{align}
| \{r :  q_r > 0, r = 1,\dots, \pi_0(U)\}| = | \{r : q_r' > 0, r = 1,\dots, \pi_0(V)\}|.
\end{align}
Hence the \v{C}ech complex associated to \eqref{eq:ProductConstantCosheaf} decomposes into a direct sum of complexes~$C_1, C_2,\dots$, where $C_r$ is defined as the subcomplex on which the number of tensor factors of nonzero degree in every term equals $r$. Since $\mathcal{U}$ is a $k$-good cover,~$C_r = 0$ if $r > k$, so
\begin{align}
\check{C}(\mathcal{U}_M, H^q_M) =
C_1 \oplus \dots \oplus C_k.
\end{align}
We are done if we can show that we have the following isomorphism of chain complexes:
\begin{align}
\label{eq:IdentificationSubcomplexWithBigCech}
C_r 
\cong
\left(
\bigoplus_{\substack{q_1 + \dots + q_r = q \\ q_1,\dots,q_r > 0}}
\check{C}_\bullet
(
\mathcal{U}^r_{M^r}, \mathcal{U}^r_{M^r_{r-1}}
)
\otimes
H^{q_1}(W_n) \otimes \dots \otimes H^{q_r}(W_n)
\right)^{\Sigma_r}.
\end{align}
Let now $U$ be an intersection of elements of $\mathcal{U}$, and write $U_1, \dots, U_s \subset U$ for the connected components of $U$, respectively.
If $s \geq r$, then there is a nontrivial direct summand in the complex $C_r$ associated to $U$, specifically
\begin{align}
\bigoplus_{
\substack{q_1 + \dots + q_{s} = q 
\\ 
| \{i :  q_i > 0, i = 1,\dots, s\}| = r}
}
&H^{q_1}(\mathfrak{X}(\mathbb{R}^n)) 
\otimes \dots \otimes 
H^{q_{s}}(\mathfrak{X}(\mathbb{R}^n))
\\
&
=
\bigoplus_{
\substack{q_1 + \dots + q_{r} = q 
\\ 
q_1,\dots,q_r > 0}
}
H^{q_1}(\mathfrak{X}(\mathbb{R}^n)) 
\otimes \dots \otimes 
H^{q_{r}}(\mathfrak{X}(\mathbb{R}^n)).
\end{align}
Every term $H^{q_1}(\mathfrak{X}(\mathbb{R}^n)) 
\otimes \dots \otimes 
H^{q_{r}}(\mathfrak{X}(\mathbb{R}^n))$ in the latter direct sum is associated to a a subset of $r$ pairwise different connected components $U_{i_1},\dots,U_{i_r}$ of $U$. 
The product $U_{i_{\sigma(1)}} \times \dots \times U_{i_{\sigma(r)}}$ is a connected component of the set $U^r$, and $U$ is an intersection of elements in $\mathcal{U}$. Hence $U_{i_{\sigma(1)}} \times \dots \times U_{i_{\sigma(r)}}$ is an intersection of elements of the cover $\mathcal{U}^r$, and we can write $c_{i_1 \dots i_r} \in \check{C}_\bullet (\mathcal{U}^r_{M^r})$ for the \v{C}ech simplex associated to it.\\
Now we can identify the term $H^{q_1}(\mathfrak{X}(\mathbb{R}^n)) 
\otimes \dots \otimes 
H^{q_{r}}(\mathfrak{X}(\mathbb{R}^n))$ with a direct summand of the right-hand side of \eqref{eq:IdentificationSubcomplexWithBigCech}: 
\begin{gather*}
H^{q_1}(\mathfrak{X}(\mathbb{R}^n)) 
\otimes \dots \otimes 
H^{q_{r}}(\mathfrak{X}(\mathbb{R}^n)
\\
\cong
\mathbb{R} c_{i_{1} \dots i_{r}}
\otimes
H^{q_1}(\mathfrak{X}(\mathbb{R}^n)) 
\otimes \dots \otimes 
H^{q_{r}}(\mathfrak{X}(\mathbb{R}^n)
\\
\cong
\left(
\bigoplus_{\sigma \in \Sigma_r} 
\mathbb{R} c_{i_{\sigma(1)} \dots i_{\sigma(r)}}
\otimes
H^{q_{\sigma(1)}}(\mathfrak{X}(\mathbb{R}^n)) 
\otimes \dots \otimes 
H^{q_{\sigma(r)}}(\mathfrak{X}(\mathbb{R}^n))
\right)^{\Sigma_r}.
\end{gather*}
Lemma~\ref{LemmaConnectedComponentIntersectionsWithDiagonals} shows that this isomorphism induces the isomorphism \eqref{eq:IdentificationSubcomplexWithBigCech}:\\
Firstly, the lemma implies that the Cartesian product of the pairwise different connected components $U_{i_1}, \dots, U_{i_r}$ does not intersect the diagonal $M^r_{r-1}$. Hence the \v{C}ech simplices $c_{i_1 \dots i_r}$ do not vanish in the relative \v{C}ech complex.\\
Secondly, the lemma implies that every direct summand in the relative \v{C}ech is associated to a product of $r$ pairwise different connected components. Thus, this construction exhausts the right-hand side of \eqref{eq:IdentificationSubcomplexWithBigCech}. \\
Hence the isomorphism \eqref{eq:IdentificationSubcomplexWithBigCech} holds on a level of vector spaces, and it is straightforward to see that this identification respects extension maps. Hence it is even an isomorphism of chain complexes, and the statement is shown.
\end{proof}
By the argument before Proposition~\ref{PropositionSecondSpectralSequenceIsRelativeCechComplexes}, the cohomology of the relative \v{C}ech complexes in the previous proposition is isomorphic to relative singular homology of the associated spaces. Together with Corollary~\ref{cor:ConvergenceOfSpectralSequence} and a degree reflection~$p \mapsto -p$ to bring the spectral sequence into a cohomological form, we end up with the following corollary:
\begin{corollary}
\label{cor:SecondPageOfSpectralSequence}
Let~$M$ be an orientable manifold which admits a finite,~$k$-good open cover (e.g. if~$M$ is compact).
There exists a cohomological spectral sequence~$\{E^{\bullet,\bullet}_r, d_r\}$ which converges to reduced~$k$-diagonal cohomology~$\Delta_k \tilde{H}^\bullet(\mathfrak{X}(M))$, and the entries~$E_2^{p,q}$ of its second page are, for~$q \geq 1$, of the following form:
\begin{equation}
\begin{aligned}
\label{eq:SecondPageOfSpectralSequence}
E^{p,q}_2 &\cong  H_{-p}(M) \otimes H^q(W_n) 
\\
&\oplus \;\, \,
\bigoplus_{\substack{q_1 + q_2 = q \\ q_i > 0}} \;\;
\left(
H_{-p}(M^2, M^2_1) \otimes H^{q_1}(W_n) \otimes H^{q_2}(W_n) 
\right)^{\Sigma_2}   
\\
&\oplus \quad \; \, \,  
\dots\\
&\oplus
\bigoplus_{\substack{q_1 + \dots +  q_k = q \\ q_i > 0}}
\left(
H_{-p}(M^k, M^k_{k-1}) \otimes H^{q_1}(W_n) \otimes \dots \otimes H^{q_k}(W_n) 
\right)^{\Sigma_k} . 
\end{aligned}
\end{equation}
Here, a permutation~$\sigma \in \Sigma_r$ acts by simultaneous permutation of the Cartesian factors of~$M^k$ and the tensor factors~$H^{q}(W_n)$.
\end{corollary}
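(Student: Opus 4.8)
The plan is to build the sequence by passing from the first to the second page of the spectral sequence already produced in Proposition~\ref{PropositionSecondSpectralSequenceIsRelativeCechComplexes}, i.e.\ the one obtained from the $k$-th skew-symmetrized CBS double complex by filtering by columns (taking vertical Chevalley--Eilenberg cohomology first). Convergence requires no new work: since $M$ admits a finite $k$-good cover $\mathcal{U}$, the skew-symmetrized double complex is bounded by Lemma~\ref{lem:NormalizedCechIsBounded}, so Corollary~\ref{cor:ConvergenceOfSpectralSequence} guarantees that the column-filtered spectral sequence converges to the same abutment as the row-filtered one, namely the reduced $k$-diagonal cohomology $\Delta_k \tilde{H}^\bullet(\mathfrak{X}(M))$. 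This fixes the limit term, and the only remaining task is to identify the $E_2$-entries.

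Next I would compute $E_2$ as the cohomology of the first page under its remaining horizontal (Čech) differential. By Proposition~\ref{PropositionSecondSpectralSequenceIsRelativeCechComplexes}, the $q$-th row of $E_1$ is the direct sum, over $r = 1,\dots,k$ and over decompositions $q_1 + \dots + q_r = q$ with all $q_i > 0$, of the $\Sigma_r$-invariants of $\check{C}_\bullet(\mathcal{U}^r_{M^r}, \mathcal{U}^r_{M^r_{r-1}}) \otimes H^{q_1}(W_n) \otimes \dots \otimes H^{q_r}(W_n)$, where I have already invoked Theorem~\ref{TheoremIsomorphismLocalToInfinitesimal} to replace $H^\bullet(\mathfrak{X}(\mathbb{R}^n))$ by $H^\bullet(W_n)$. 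The tensor factors $H^{q_i}(W_n)$ are fixed finite-dimensional $\mathbb{R}$-vector spaces, constant under the Čech differential, so tensoring with them is exact over the field $\mathbb{R}$; taking $\Sigma_r$-invariants is likewise exact in characteristic zero, exactly as in the proof of Theorem~\ref{TheoremFirstSpectralSequenceCollapses}. Both operations therefore commute with passage to homology, and the cohomology of each summand is $\bigl(\check{H}_\bullet(\mathcal{U}^r_{M^r}, \mathcal{U}^r_{M^r_{r-1}}) \otimes H^{q_1}(W_n) \otimes \dots \otimes H^{q_r}(W_n)\bigr)^{\Sigma_r}$.

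To finish I would replace relative Čech homology by relative singular homology and reindex. Lemmas~\ref{LemmaPowersOfKCoverForPowersOfM} and~\ref{lem:DiagonalsOfContractiblesAreContractible} ensure that $\mathcal{U}^r$ and its restriction $\mathcal{U}^r_{M^r_{r-1}}$ have only finitely many, contractible connected components on all finite intersections, so they qualify as good covers of $M^r$ and $M^r_{r-1}$; the comparison recalled just before Proposition~\ref{PropositionSecondSpectralSequenceIsRelativeCechComplexes} then yields $\check{H}_\bullet(\mathcal{U}^r_{M^r}, \mathcal{U}^r_{M^r_{r-1}}) \cong H_\bullet(M^r, M^r_{r-1})$. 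Finally, because the Čech differential of the CBS complex is homological (it lowers the column index), a degree reflection $p \mapsto -p$ brings the sequence into cohomological form and produces the factor $H_{-p}(M^r, M^r_{r-1})$ in the asserted formula, while the restriction to $q \geq 1$ and the appearance of reduced cohomology simply reflect that the zeroth row of the CBS complex was set to zero. I expect the single delicate point to be this Čech-to-singular identification for the diagonal covers $\mathcal{U}^r_{M^r_{r-1}}$: one must apply the good-cover hypothesis at the level of connected components, since the intersections are disjoint unions of contractibles rather than contractible, which is precisely what Lemma~\ref{lem:DiagonalsOfContractiblesAreContractible} is designed to secure.
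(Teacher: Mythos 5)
Your proposal follows essentially the same route as the paper: identify the first page of the column-filtered spectral sequence via Proposition~\ref{PropositionSecondSpectralSequenceIsRelativeCechComplexes}, pass to homology using exactness of tensoring with the fixed $H^{q_i}(W_n)$ and of taking $\Sigma_r$-invariants, convert relative \v{C}ech homology to relative singular homology via the good-cover comparison, invoke Corollary~\ref{cor:ConvergenceOfSpectralSequence} for the abutment, and apply the degree reflection $p \mapsto -p$. The delicate point you flag — that the intersections in $\mathcal{U}^r$ and $\mathcal{U}^r_{M^r_{r-1}}$ are disjoint unions of contractibles rather than contractible, so the \v{C}ech-to-singular comparison must be applied at the level of connected components — is a fair and accurate reading of what Lemma~\ref{LemmaPowersOfKCoverForPowersOfM} is securing.
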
 

\begin{remark}
These spectral sequences differ from the ones stated in \cite{fuks1984cohomology}, but only insofar as \cite{fuks1984cohomology} considers the quotient complexes~$\Delta_k C^\bullet(\mathfrak{X}(\mathbb{R}^n)/\Delta_{k-1} C^\bullet(\mathfrak{X}(\mathbb{R}^n)$ rather than the diagonal complexes themselves. This essentially gives one spectral sequence for every row in (\ref{eq:SecondPageOfSpectralSequence}).
\end{remark} 
 
For~$k \geq q +1$, we have~$\Delta_k H^q(\mathfrak{X}(M)) = H^q(\mathfrak{X}(M))$ so in principle, these spectral sequences can be used to calculate the full Lie algebra cohomology of~$\mathfrak{X}(M)$, degree by degree.
In particular, since we know that the nontrivial cohomology of~$W_n$ is contained within the degrees~$q = 2n+1,\dots, 2n + n^2$ and the relative cohomology of~$(M^k, M^k_{k-1})$ in degrees~$\leq n k$, we have the following:
\begin{corollary}
For all smooth manifolds~$M$ that admits~$k$-good open covers for all~$k \in \mathbb{N}$, and all~$n \geq 0$, the Gelfand--Fuks cohomology~$H^n(\mathfrak{X}(M))$ is finite-dimensional. Further, if~$1 \leq k \leq \dim M$ then~$H^k(\mathfrak{X}(M)) = 0$.
\end{corollary}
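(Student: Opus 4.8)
The plan is to read everything off the spectral sequence of Corollary~\ref{cor:SecondPageOfSpectralSequence}, applied in each fixed cohomological degree with $k$ chosen larger than that degree, so that the $k$-diagonal cohomology to which it converges agrees with the full Gelfand--Fuks cohomology in that degree. Writing $d := \dim M$, the decisive observation is that the second page \eqref{eq:SecondPageOfSpectralSequence} is assembled, for $q \geq 1$, from summands of the shape $H_{-p}(M^r, M^r_{r-1}) \otimes H^{q_1}(W_d) \otimes \dots \otimes H^{q_r}(W_d)$ with $q_1 + \dots + q_r = q$, all $q_i > 0$, and $1 \leq r \leq k$, while the zeroeth Chevalley--Eilenberg row is absent. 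I would first isolate two numerical constraints: by Corollary~\ref{CorollaryWedgeProductOfFormalsIsZero} each nonzero factor $H^{q_i}(W_d)$ with $q_i > 0$ forces $q_i \geq 2d+1$, hence $q = q_1 + \dots + q_r \geq r(2d+1)$; and since $\dim M^r = rd$ and the fat diagonal $M^r_{r-1}$ has dimension at most $(r-1)d$, the long exact sequence of the pair gives $H_{-p}(M^r, M^r_{r-1}) = 0$ whenever $-p > rd$.

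For the vanishing claim I would argue by total degree. Suppose a summand as above contributes to total degree $j = p+q$ with $1 \leq j \leq d$. The two constraints give $-p \leq rd$ and $q \geq r(2d+1)$, so
\[
j = p + q \geq r(2d+1) - rd = r(d+1) \geq d+1,
\]
a contradiction. Hence every entry $E^{p,q}_2$ with $1 \leq p+q \leq d$ vanishes, therefore so does $E^{p,q}_\infty$, and by the convergence established in Corollary~\ref{cor:ConvergenceOfSpectralSequence} we conclude $H^k(\mathfrak{X}(M)) = \tilde{H}^k(\mathfrak{X}(M)) = 0$ for $1 \leq k \leq d$. The same count shows that $d+1$ is the lowest degree in which a nonzero class can appear, which is consistent with the known $S^1$ computation.

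For finite-dimensionality I would fix a degree $n$, dispose of $n=0$ using $H^0(\mathfrak{X}(M)) = \mathbb{R}$, and for $n \geq 1$ work with $\tilde{H}^n = H^n$. Each $E^{p,q}_2$ is finite-dimensional: the factors $H^{q_i}(W_d)$ are finite-dimensional and supported in finitely many degrees (the corollary following Lemma~\ref{lem:InitialShapeOfSecondPage}), while the relative homology groups $H_\bullet(M^r, M^r_{r-1})$ are finite-dimensional because a finite $k$-good cover forces $M$, and hence by the Künneth theorem all $M^r$ and their diagonals, to have finite-dimensional homology. Because $H^\bullet(W_d)$ is supported in finitely many degrees, the two constraints confine $q$ to a finite range and pin $-p = q - n$, so only finitely many bidegrees feed into total degree $n$; thus the associated graded of $H^n(\mathfrak{X}(M))$ is a finite sum of finite-dimensional spaces, and finiteness of the filtration yields the claim. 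The main obstacle, and the only place that needs genuine care, is the uniform treatment of the relative homology factors for all $r$ at once: both the vanishing range $-p > rd$ and the finite-dimensionality must be obtained for the pairs $(M^r, M^r_{r-1})$ through the long exact sequence and the dimension estimate on the fat diagonal, rather than for $M$ alone.
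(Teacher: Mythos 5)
Your argument is correct and is precisely the one the paper intends: the corollary is stated without a formal proof, justified only by the preceding sentence that the nontrivial cohomology of $W_n$ sits in degrees $2n+1,\dots,n^2+2n$ while the relative homology of $(M^r, M^r_{r-1})$ is supported in degrees $\leq nr$, and your two numerical constraints $q \geq r(2\dim M+1)$ and $-p \leq r\dim M$, combined via $p+q \geq r(\dim M+1)$, are exactly the fleshed-out version of that remark. The finite-dimensionality argument likewise matches the intended reading of Corollary~\ref{cor:SecondPageOfSpectralSequence}, so there is nothing to add.
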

\begin{example}
\begin{figure}
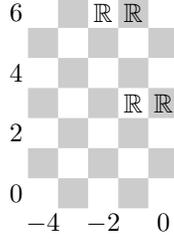

\centering
\begin{sseq}[grid=chess,labelstep=2]{-4...0}{0...6}
\ssmove 0 3
\ssdrop{\mathbb{R}}
\ssmove{-1}{0}
\ssdrop{\mathbb{R}}
\ssmove 0 3
\ssdrop{\mathbb{R}}
\ssmove{-1}{0}
\ssdrop{\mathbb{R}}
\end{sseq}
\caption{The spectral sequence for 2-diagonal Lie algebra cohomology for~$\mathfrak{X}(S^1)$. For the~$k$-diagonal spectral sequences for~$k \geq 2$, this pattern continues into the upper-left direction.}
\end{figure}
We sketch here how one now can calculate the well-known Gelfand--Fuks cohomology of $M = S^1$. By using excision and Poincaré duality\footnote{We thank Moishe Kohan for communicating to us a proof idea for this.}, we can find
\begin{align}
H_r \left( (S^1)^k, (S^1)^k_{k-1} \right) 
=  
\tilde{H}_r \left( (S^1)^k \setminus (S^1)^k_{k-1} \right) 
=
\begin{cases}
\mathbb{R}^{(k-1)!} &\text{ if } r = k, k-1,\\
0 &\text{ else,}
\end{cases}
\end{align}
where the copies of~$\mathbb{R}$ in~$\mathbb{R}^{(k-1)!}$ are enumerated by permutations of the~$(k-1)$-th symmetric group, and the invariant space under the action of the~$k$-th symmetric group~$\Sigma_k$ is one-dimensional.\footnote{Note that in \cite{gel1969cohomologyI} it is incorrectly claimed that the nontrivial degrees of $H_\bullet \left( (S^1)^k, (S^1)^k_{k-1} \right)$ are $k(k-1)/2$-dimensional. Already $(S^1)^3 \setminus (S^1)^3_2$ has only 2, not 3 connected components.}
Using this, we find that in the spectral sequence for~$k$-diagonal cohomology, there is only ever at most a single nontrivial term on every diagonal~$p + q = \text{const}$, and those only exist on the diagonals~$p + q = 0, 2,3, 5,6, 8,9,\dots$. From lacunary arguments, one concludes that all differentials beyond the second page must be trivial. 
\\
By retracing the construction of the spectral sequence, one can verify that the following cocycles~$c_2, c_3 \in C^\bullet(\mathfrak{X}(S^1))$ indeed are representatives of the nontrivial cohomology classes in degree 2 and 3: For~$x_0 \in S^1$ an arbitrary point,~$\partial_\phi$ the standard coordinate vector field on~$S^1$, and~$f,g \in C^\infty(S^1)$, we set
\begin{align}
c_2(f \partial_\phi ,g \partial_\phi) 
= 
\int_{S^1} \left( f(\phi) g'(\phi) - f'(\phi) g(\phi) \right) \dif \phi, \\
c_3(f \partial_\phi ,g \partial_\phi, h \partial_\phi)
= \det 
\begin{pmatrix}
f(x_0) & g(x_0) & h(x_0) \\
f'(x_0) & g'(x_0) & h'(x_0) \\
f''(x_0) & g''(x_0) & h''(x_0) \\
\end{pmatrix}.
\end{align}
We note that higher cohomology classes arise as the wedge products of these diagonal generators, but due to the lack of an obvious product on our spectral sequence, this is difficult to see directly. In any case, as a ring, we have
\begin{align}
H^\bullet(\mathfrak{X}(S^1)) \cong S^\bullet( \mathbb{R} c_2) \otimes \Lambda^\bullet(\mathbb{R} c_3).
\end{align}
\end{example}

\appendix
\section{Cosheaves and \v{C}ech homology}
\label{AppendixCosheaves}
In this appendix, we will recall the definition of (pre-)cosheaves, \v{C}ech homology of precosheaves and simple properties thereof from \cite{bredon2012sheaf}. 
For the remainder of this section, fix a topological space~$X$.
\begin{definition} \cite[Chapter V.1]{bredon2012sheaf}
\begin{itemize}
\item[i)]  A \emph{precosheaf} (of~$\mathbb{R}$-vector spaces)~$\mathcal{P}$ on~$X$ is a covariant functor from the category of open sets of~$X$, morphisms given by inclusions, into the category of~$\mathbb{R}$-vector spaces. Given an inclusion~$U \subset V$ of open sets, we denote the associated mapping~$\mathcal{P}(U) \to \mathcal{P}(V)$ by~$\iota_U^V$, called the \emph{extension map} from~$U$ to~$V$ of the precosheaf~$\mathcal{P}$.
\item[ii)]
A \emph{cosheaf} is a precosheaf~$\mathcal{P}$ with the property that for every open cover~$\mathcal{U} = \{U_i\}_{i \in I}$ of an open set~$U \subset X$, the sequence
\begin{align}
\bigoplus_{i,j} \mathcal{P}(U_i \cap U_j) \to 
\bigoplus_i \mathcal{P}(U_i) \to
\mathcal{P}(U)
\to 0
\end{align}
is exact, where the maps are given by
\begin{align}
(a_{ij})_{i,j} \mapsto 
\left( \sum_j
\iota_{U_i \cap U_j}^{U_i} (a_{ij} - a_{ji})
\right)_i, \quad
(b_i)_i \mapsto \sum_i \iota_{U_i}^U b_i.
\end{align}
\item[iii)] A \emph{morphism of (pre-)cosheaves} is a natural transformation between the functors defining the (pre-)cosheaves.
\end{itemize}
\end{definition}
Unless mentioned otherwise, all our (pre-)cosheaves take values in the category of~$\mathbb{R}$-vector spaces. 
\begin{definition}
Let~$S$ be a precosheaf over a topological space~$X$, and~$\mathcal{U} = \{U_\alpha\}$ an open cover of~$X$. 
Write~$\alpha := (\alpha_1,\dots,\alpha_{p+1})$ for the~$p$-simplex defined by a collection of indices~$\alpha_i$, and write
\begin{align}
U_\alpha := U_{\alpha_1} \cap \dots \cap U_{\alpha_{p+1}}.
\end{align}
Define for a~$p$-simplex~$\alpha$ and a number~$i \in \{1,\dots,p+1\}$ the~$(p-1)$-simplex~$\alpha^{(i)}$ as the simplex which arises from removing the~$i$-th index from~$\alpha$.
For all~$p \geq 0$, we define the space of \emph{\v{C}ech~$p$-chains for~$S$ associated to the cover~$\mathcal{U}$} as
\begin{align}
\check{C}_p(\mathcal{U}_X; S) 
:= 
\bigoplus_{\alpha = (\alpha_1,\dots,\alpha_{p+1})} S(U_\alpha).
\end{align}
We may then express elements~$c \in \check{C}_p(\mathcal{U}_X; S)$ as formal linear combinations
\begin{align}
\label{eq:CechCochainsAsFormalLinearCombinations}
c = 
\sum_{\alpha} c_\alpha, 
\quad 
c_\alpha \in S(U_{\alpha_1} \cap \dots \cap U_{\alpha_{p+1}}),
\end{align}
so that only finitely many~$c_\alpha$ are nonzero.\\
The \emph{\v{C}ech differential}~$\check{\partial} : \check{C}_p(\mathcal{U}_X; S) \to \check{C}_{p-1}(\mathcal{U}_X; S)$ via
\begin{align}
\check{\partial} 
(c_\alpha \alpha) 
:= 
\sum_{i=1}^{p+1} 
(-1)^{i - 1} 
\left( \iota_{U_{\alpha}}^{U_{\alpha^{(i)}}} c_\alpha \right)  \alpha^{(i)}.
\end{align}
The \emph{\v{C}ech homology associated to the cover~$\mathcal{U}$ and the precosheaf~$S$}, which is denoted by~$\check{H}_\bullet(\mathcal{U}_X; S)$, is defined as the homology of the chain complex~$\check{C}_\bullet(\mathcal{U}_X;P) := \bigoplus_{p \geq 0} \check{C}_p(\mathcal{U}_X; S)$.
\\
The \emph{\v{C}ech homology} of~$S$ is defined as
\begin{align}
\check{H}_\bullet(X; S) := \varprojlim \check{H}_\bullet(\mathcal{U}_X; S)
\end{align}
where the inverse limit is taken with respect to refinement of covers.
\end{definition}
\begin{remark}
\label{rem:AntisymmetrizedCechComplex}
The symmetric group~$\Sigma_p$ acts on multiindices~$\alpha$ of length~$p$ by permutation of the entries, and we denote this permutation by~$\sigma \cdot \alpha$.
Recall now the notation from (\ref{eq:CechCochainsAsFormalLinearCombinations}).  If~$c = \sum_\alpha c_\alpha \cdot \alpha$ and~$\alpha = (\alpha_1,\dots,\alpha_p)$ is one of the multiindices,  we call~$c$ \emph{skew-symmetric} if
\begin{align}
c_{\sigma \cdot \alpha} = \sign(\sigma) c_\alpha \quad \forall \alpha.
\end{align}
The \emph{skew-symmetrized} \v{C}ech complex is defined as the subcomplex 
\begin{align}
\check{C}_\bullet^a(\mathcal{U}_X; S) \subset \check{C}_\bullet(\mathcal{U}_X; S)
\end{align}
of skew-symmetric cochains.  
Dualizing the corresponding results for \v{C}ech cohomology of a sheaf \cite[Section 3.8]{godement1958topologie}, one finds that the inclusion~$\check{C}_\bullet^a(\mathcal{U}_X; S) \hookrightarrow \check{C}_\bullet(\mathcal{U}_X; S)
$ is a quasi-isomorphism.
\end{remark}
We want to remark on a special class of cosheaves on which \v{C}ech homology is trivial.
\begin{definition}
A cosheaf is called \emph{flabby} if all its extension maps are injective. 
\end{definition}
\begin{proposition}[\cite{bredon2012sheaf}, Chapter V, Prop. 1.6]
\label{PropositionSoftSheavesGiveFlabbyCosheaves}
Let~$P$ be a soft sheaf over a topological space~$X$. Then compactly supported sections of~$P$ admit the structure of a flabby cosheaf over~$X$, where the extension maps extend sections by zero.
\end{proposition}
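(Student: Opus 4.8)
The plan is to verify separately the two assertions contained in the statement: that extension by zero turns $U \mapsto \Gamma_c(U;P)$ into a well-defined precosheaf with injective extension maps (\emph{flabbiness}), and that this precosheaf satisfies the cosheaf gluing axiom. Throughout I work in Bredon's standing setting, where $X$ is paracompact, locally compact and Hausdorff, so that compact subsets are closed, admit relatively compact neighbourhoods, and partitions of unity are available. For an inclusion of open sets $U \subset V$ and a section $s \in \Gamma_c(U;P)$ with $K := \supp s$, the sets $U$ and $V \setminus K$ form an open cover of $V$ on whose overlap $U \setminus K$ the section $s$ and the zero section agree; gluing them yields a unique $\iota_U^V s \in \Gamma(V;P)$ with $\supp(\iota_U^V s) = K$, hence $\iota_U^V s \in \Gamma_c(V;P)$. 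This assignment is manifestly $\mathbb{R}$-linear and functorial in $U \subset V$, and it is injective because $s = (\iota_U^V s)\att_U$; flabbiness follows.

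For the cosheaf axiom I must show, for every open cover $\mathcal{U} = \{U_i\}$ of an open set $U \subset X$, that
\begin{align}
\bigoplus_{i,j} \Gamma_c(U_i \cap U_j; P) \to \bigoplus_i \Gamma_c(U_i; P) \to \Gamma_c(U; P) \to 0
\end{align}
is exact. Surjectivity at the right is the crucial point. Given $s \in \Gamma_c(U;P)$ with compact support $K$, finitely many members $U_{i_1}, \dots, U_{i_m}$ already cover $K$, and by the shrinking lemma I may write $K = K_1 \cup \dots \cup K_m$ with each $K_j \subset U_{i_j}$ compact. It then suffices to decompose $s = \sum_{j} \iota_{U_{i_j}}^U s_j$ with $s_j \in \Gamma_c(U_{i_j};P)$, which is exactly the statement that a soft sheaf admits \emph{partitions of sections} subordinate to an open cover. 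By induction on $m$ this reduces to the case of two opens $U_1, U_2$ covering $K$: writing $L_1 := K \setminus U_2 \subset U_1$ and $L_2 := K \setminus U_1 \subset U_2$, these are disjoint compact sets, so one defines a section over the closed set $L_1 \cup L_2$ equal to $s$ near $L_1$ and to $0$ near $L_2$, extends it to a global section using softness, and thereby produces a compactly supported $s_1$ with $\supp s_1 \subset U_1$ and $\supp(s - s_1) \subset U_2$; here softness is used precisely to glue $s$ against $0$ across the two disjoint closed sets.

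The main obstacle is exactly this splitting step: for a general soft sheaf $P$ there is no multiplication by bump functions, so the decomposition cannot be obtained by cutting $s$ off, and must instead be produced from the defining extension property of softness, with local compactness invoked to keep the resulting supports compact. Once surjectivity is in hand, exactness in the middle follows by the same mechanism: a family $(b_i)$ with $\sum_i \iota_{U_i}^U b_i = 0$ has all $b_i$ compactly supported and only finitely many nonzero, and applying the partition-of-sections argument to the overlaps $U_i \cap U_j$ exhibits $(b_i)$ as the image of a suitable $(a_{ij})$ (the analogous vanishing in all degrees being Proposition~\ref{prop:FlabbyCosheavesTrivialHomology}). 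Assembling the two assertions shows that $U \mapsto \Gamma_c(U;P)$ is a flabby cosheaf, as claimed.
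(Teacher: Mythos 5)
The paper offers no proof of this proposition at all --- it is quoted verbatim from Bredon --- so your argument can only be judged on its own terms. The first half (extension by zero is well defined, functorial, and injective, hence flabbiness) is correct, and the reduction of surjectivity to a two-set splitting by covering $K$ with finitely many $U_{i_j}$ and inducting is the right architecture. The genuine gap is in the two-set splitting step itself. You define a section over the closed set $L_1 \cup L_2$, with $L_1 = K \setminus U_2$ and $L_2 = K \setminus U_1$, equal to $s$ near $L_1$ and to $0$ near $L_2$, and extend it by softness to a global section $s_1$. But softness gives no control whatsoever on the extension away from $L_1 \cup L_2$: the resulting $s_1$ need not vanish outside $U_1$, need not have compact support, and $s - s_1$ need not be supported in $U_2$. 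Concretely, for the soft sheaf of continuous functions on $\mathbb{R}$ with $U_1 = (-2,1)$, $U_2 = (-1,2)$ and $s$ a bump supported in $[-3/2,3/2]$, a global continuous function agreeing with $s$ near $[-3/2,-1]$ and vanishing near $[1,3/2]$ may perfectly well equal $1$ outside $[-3,3]$; none of the claimed support properties follow.

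The standard repair is to invoke softness on the \emph{complements} rather than on the small compact pieces: after shrinking to relatively compact open sets $V_i$ with $\overline{V_i} \subset U_i$ and $K \subset V_1 \cup V_2$, prescribe a section on the closed set $(X \setminus V_1) \cup (X \setminus V_2)$ by taking $0$ on $X \setminus V_1$ and $s$ on $X \setminus V_2$; this is consistent on the overlap $X \setminus (V_1 \cup V_2)$ precisely because $\supp s \subset V_1 \cup V_2$, and a soft extension $s_1$ then automatically vanishes off $V_1$, so $\supp s_1 \subset \overline{V_1} \subset U_1$ is compact and $\supp(s - s_1) \subset \overline{V_2} \subset U_2$. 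With that step corrected, your outline (including the middle-exactness, which you only gesture at and which requires the same device applied to the overlaps) goes through; as written, the crucial step does not deliver its conclusion.
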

\begin{proposition}[\cite{bredon2012sheaf}, Chapter VI, Cor. 4.5]
\label{prop:FlabbyCosheavesTrivialHomology}
Let~$S$ be a flabby cosheaf over~$X$ and~$\mathcal{U}$ an open cover of~$X$. Then
\begin{align}
\check{H}_p(\mathcal{U}_X; S) = 
\begin{cases}
S(X) & \text{ if } p = 0, \\
0 & \text{ else.}
\end{cases}
\end{align}
\end{proposition}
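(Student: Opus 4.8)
The plan is to prove this as the precise dual of the classical fact that flasque sheaves have vanishing higher \v{C}ech cohomology with respect to an arbitrary cover, by dualizing Godement's discontinuous-sections argument. First I would rephrase the claim as exactness of the augmented \v{C}ech chain complex $\cdots \to \check C_1(\mathcal U_X; S) \to \check C_0(\mathcal U_X; S) \xrightarrow{\epsilon} S(X) \to 0$, where $\epsilon((b_i)_i) = \sum_i \iota_{U_i}^X b_i$. Surjectivity of $\epsilon$ and exactness at $\check C_0$ are, up to sign, exactly the defining cosheaf sequence of $S$, hence already available; so the entire content is acyclicity in positive degrees, and flabbiness must be the decisive input. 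For the two-set cover this is already transparent: $\partial\colon S(U_1\cap U_2)\to S(U_1)\oplus S(U_2)$ sends $c$ to $(\iota c,-\iota c)$, which is injective precisely because the extension maps are, so that $\check H_1=0$.

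The heart of the argument is a computable class of acyclic flabby cosheaves. For a family of vector spaces $\{G_x\}_{x\in X}$, consider $\mathcal G(U):=\bigoplus_{x\in U}G_x$ with extension maps given by inclusion of summands; these are injective, so $\mathcal G$ is flabby, and one checks the cosheaf axiom summand-by-summand. For such $\mathcal G$ the \v{C}ech complex factors as $\check C_\bullet(\mathcal U_X;\mathcal G)\cong\bigoplus_{x\in X}\bigl(G_x\otimes C^{\mathrm{simp}}_\bullet(\Delta_x)\bigr)$, where $\Delta_x$ is the full simplex on the index set $\{i:x\in U_i\}$. The cone operator based at a fixed vertex of $\Delta_x$ is an explicit contracting homotopy, valid even for infinite simplices, so $\check H_p(\mathcal U_X;\mathcal G)=0$ for $p>0$ and equals $\mathcal G(X)$ for $p=0$, for every cover. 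This is the dual of the skyscraper computation underlying Godement's resolution.

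Next I would embed an arbitrary flabby cosheaf $S$ into such a $\mathcal G^0$ by the cosheaf-theoretic analogue of the sheaf of discontinuous sections, arrange that the quotient is again flabby, and iterate to obtain a flabby resolution $0\to S\to\mathcal G^0\to\mathcal G^1\to\cdots$ that is exact on sections over every open set (this is where flabbiness enters, dually to flasque resolutions being exact on sections). Assembling the double complex $K_{p,q}:=\check C_p(\mathcal U_X;\mathcal G^q)$, the resolution-direction homology collapses (by exactness on each intersection $U_\alpha$) to $\check C_\bullet(\mathcal U_X;S)$ concentrated in $q=0$, while the \v{C}ech-direction homology collapses (by the previous paragraph) to the single column $\mathcal G^\bullet(X)$, whose homology is $S(X)$ in degree $0$ and zero otherwise. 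Comparing the two outputs yields the proposition. The skew-symmetrized version follows identically, using the quasi-isomorphism of Remark~\ref{rem:AntisymmetrizedCechComplex}.

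The main obstacle is twofold. The embedding $S\hookrightarrow\mathcal G^0$ and the flabbiness of its quotient, automatic on the sheaf side, require the genuine homological machinery of cosheaves developed in \cite{bredon2012sheaf}; and the double-complex comparison meets exactly the convergence subtlety flagged for the CBS complex in item~(iii) of the remark following its definition: when the nerve of $\mathcal U$ is infinite-dimensional the total-degree diagonals are infinite, so the direct-sum and direct-product totalizations differ and the usual spectral sequence convergence theorems do not apply directly. The clean way around this is to avoid spectral sequences and instead splice the explicit cone homotopies of the $\mathcal G^q$ through the resolution into a single global contracting homotopy on $\check C_\bullet(\mathcal U_X;S)$ in positive degrees; for finite covers, boundedness makes the comparison immediate. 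Since all of this is standard, the exposition may simply cite \cite[Chapter VI]{bredon2012sheaf}, but the route above is the self-contained one.
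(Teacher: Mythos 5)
The paper offers no proof of this proposition; it is quoted directly from Bredon (Chapter VI, Cor.\ 4.5). So your sketch is being measured against that source rather than against an argument in the text. Your reduction to exactness of the augmented complex, the observation that the augmentation and exactness at $\check C_0$ are precisely the cosheaf axiom, and the computation for skyscraper-sum cosheaves $\mathcal G(U)=\bigoplus_{x\in U}G_x$ (decomposition over $x$ into homogeneous chain complexes of full simplices, contracted by a cone operator) are all correct.

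The genuine gap is the resolution step, and it is not the routine dualization you present it as: it is exactly the point where sheaf/cosheaf duality breaks down. On the sheaf side, $F\hookrightarrow\prod_x F_x$ works because germs detect sections and because the Godement sheaf is a \emph{product}. Dually you would need either a surjection $\bigoplus_{x\in U}S_x\to S(U)$ from the costalks $S_x=\varprojlim_{V\ni x}S(V)$, or a monomorphism of precosheaves $S\to\mathcal G^0$ with $\mathcal G^0$ of your skyscraper-sum type. Both fail for the flabby cosheaves this paper actually feeds into the proposition, namely $U\mapsto\Gamma_c(\mathcal F)(U)$ for a soft sheaf $\mathcal F$ (Proposition~\ref{PropositionSoftSheavesGiveFlabbyCosheaves}, Lemma~\ref{LemmaFineSheafOfDiagonalCochains}): the costalks of compactly supported sections vanish (a family of sections compactly supported in every neighborhood of $x$ is supported in $\{x\}$, hence zero), so the first map cannot be surjective; and a natural transformation $S\to\mathcal G^0$ forces each component $S(U)\to G_x$ to kill $\im(S(V)\to S(U))$ for every open $V\not\ni x$ and forces each element of $S(U)$ to have nonzero component at only finitely many $x$, which a compactly supported function violates. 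Replacing $\bigoplus$ by $\prod$ repairs the map but destroys the cosheaf axiom for covers admitting no finite subcover, since a product section cannot then be written as a finite sum of sections extended by zero from single cover elements. Hence the existence of a resolution that is exact on every open set and termwise \v{C}ech-acyclic --- the entire engine of your double-complex comparison --- is unestablished, and the convergence issue you flag is secondary to this. If you want a self-contained argument, a more promising route is to build a contracting homotopy on $\check C_{p}(\mathcal U_X;S)$ for $p>0$ directly, using a chosen linear splitting of the cosheaf surjection $\bigoplus_i S(U_i\cap U_\alpha)\to S(U_\alpha)$ together with injectivity of the extension maps to control the error terms; otherwise, keep the citation.
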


\section{The Hochschild-Serre Spectral Sequence for locally convex Lie algebras}
\label{AppendixHochschildSerre}
In the finite-dimensional setting, the Hochschild-Serre spectral sequence is standard and a proof is laid out in \cite[Chapter 1.5.1]{fuks1984cohomology} and \cite{hochschild1953cohomology}. For general locally convex Lie algebras and continuous cohomology, one generally needs a number of topological assumptions. For example, restriction maps of continuous cochains like~$C^q(\mathfrak{g}) \to C^r(\mathfrak{h},\Lambda^{q-r}(\mathfrak{g}/\mathfrak{h})^*)$ are not necessarily surjective if the subspace~$\frakh$ is not complemented. We formulate some assumptions which suffice for the setting in this paper:
\begin{theorem}
Let~$\mathfrak{g}$ be a complete, barreled, locally convex, nuclear Lie algebra whose strong dual space~$\frakg^*$ is complete,~$\mathfrak{h} \subset \mathfrak{g}$ a finite-dimensional Lie subalgebra, and~$A$ a complete, locally convex space on which~$\mathfrak{g}$ acts continuously. 
There is a cohomological spectral sequence~$\{E^{p,q}_r , d_r\}$ converging to continuous cohomology~$H^\bullet(\mathfrak{g})$ with
\begin{align}
E^{p,q}_1 = H^q \left(\mathfrak{h}, C^p\left( \mathfrak{g}/\mathfrak{h}, A \right) \right),
\end{align}
where~$C^p( X, Y)$ denotes skew-symmetric, jointly continuous, multilinear maps 
\begin{align}
\underbrace{X \times \dots \times X}_{p \text{ times }} \to Y,
\end{align}
and cohomology is taken with respect to continuous cochains.
This spectral sequence is contravariantly functorial, in the sense that a diagram of continuous Lie algebra morphisms
\begin{center}
\begin{tikzcd}
\mathfrak{h}
\arrow[hook]{r}
\arrow{d}
& 
\mathfrak{g} 
\arrow{d}\\
\tilde{\mathfrak{h}}
\arrow[hook]{r}
& 
\tilde{\mathfrak{g}}
\end{tikzcd}
\end{center}
induces linear maps 
\begin{align}
E_r^{p,q}(\tilde{\mathfrak{g}},\tilde{\mathfrak{h}}) 
\to 
E_r^{p,q}(\mathfrak{g},\mathfrak{h})
\end{align} 
compatible with the differentials for all~$p,q,r \geq 0$.
\end{theorem}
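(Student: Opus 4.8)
The plan is to obtain the spectral sequence from a decreasing filtration of the continuous Chevalley--Eilenberg complex $C^\bullet(\mathfrak{g};A)$, exactly as in the classical Hochschild--Serre construction, and to verify that the finite-dimensionality of $\mathfrak{h}$ together with the nuclearity hypotheses let every step of that construction survive the passage to continuous cochains. The first step is the topological splitting. Since $\mathfrak{h}$ is finite-dimensional and $\mathfrak{g}$ is Hausdorff locally convex, $\mathfrak{h}$ is topologically complemented; I fix a continuous projection $\pi:\mathfrak{g}\to\mathfrak{h}$ and set $\mathfrak{m}:=\ker\pi$, so that $\mathfrak{g}\cong\mathfrak{h}\oplus\mathfrak{m}$ as topological vector spaces and $\mathfrak{m}\cong\mathfrak{g}/\mathfrak{h}$. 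This complementedness is precisely the property flagged in the appendix's introduction as failing in general: it is what makes the restriction maps of continuous cochains surjective. Because $\Lambda^q\mathfrak{h}$ is finite-dimensional, $C^q(\mathfrak{h},A)=\Lambda^q\mathfrak{h}^*\otimes A$ carries no completion issues and remains complete locally convex. Combining the algebraic decomposition $\Lambda^n\mathfrak{g}=\bigoplus_{p+q=n}\Lambda^p\mathfrak{m}\otimes\Lambda^q\mathfrak{h}$ with nuclearity, so that spaces of jointly continuous alternating maps split along the complemented summand, yields a natural isomorphism
\[
C^n(\mathfrak{g};A)\;\cong\;\bigoplus_{p+q=n} C^p\bigl(\mathfrak{g}/\mathfrak{h},\,C^q(\mathfrak{h},A)\bigr).
\]

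Next I set up the filtration and compute the low pages. Define $F^p C^n(\mathfrak{g};A)$ to be the continuous cochains that vanish whenever at least $n-p+1$ of their arguments lie in $\mathfrak{h}$. This is a decreasing filtration which is finite in each total degree, $0=F^{n+1}\subseteq\cdots\subseteq F^0=C^n$, so convergence of the associated spectral sequence is automatic. That the differential $d$ preserves each $F^p$ follows because $\mathfrak{h}$ is a subalgebra, so forming a bracket never lowers the count of $\mathfrak{h}$-arguments below the filtration threshold. Under the splitting above, the associated graded piece $\mathrm{gr}^p C^{p+q}$ is identified with $C^p(\mathfrak{g}/\mathfrak{h},C^q(\mathfrak{h},A))$, and inspection of the Chevalley--Eilenberg formula shows that the induced differential $d_0$ is, up to sign, the $\mathfrak{h}$-cochain differential with coefficients in the $\mathfrak{h}$-module $C^p(\mathfrak{g}/\mathfrak{h},A)$. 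Hence $E_1^{p,q}=H^q(\mathfrak{h},C^p(\mathfrak{g}/\mathfrak{h},A))$, and the filtration being bounded in each degree, the spectral sequence converges to the continuous cohomology of $\mathfrak{g}$ with coefficients in $A$.

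Functoriality is then formal. A morphism of the indicated diagram is a continuous Lie algebra map $\phi:\mathfrak{g}\to\tilde{\mathfrak{g}}$ with $\phi(\mathfrak{h})\subseteq\tilde{\mathfrak{h}}$; it induces a pullback $\phi^*$ on continuous cochains, and since $\phi$ carries $\mathfrak{h}$-arguments to $\tilde{\mathfrak{h}}$-arguments, $\phi^*$ respects the filtrations. Thus $\phi^*$ is a morphism of filtered complexes and produces compatible linear maps $E_r^{p,q}(\tilde{\mathfrak{g}},\tilde{\mathfrak{h}})\to E_r^{p,q}(\mathfrak{g},\mathfrak{h})$ on every page, commuting with all $d_r$.

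I expect the genuine work to lie not in the combinatorics, which are identical to the finite-dimensional case, but in the functional-analytic identification of the graded pieces: one must verify that a jointly continuous alternating cochain on $\mathfrak{g}^n$ decomposes into jointly continuous pieces of the prescribed bidegree and that each summand is individually a continuous cochain of the claimed type. This is exactly where completeness, barreledness, nuclearity, and completeness of the strong dual $\mathfrak{g}^*$ enter, through the standard theory of kernels and completed tensor products of nuclear spaces. The finite-dimensionality of $\mathfrak{h}$ is what reduces the difficulty to controlling continuity only in the $\mathfrak{g}/\mathfrak{h}$-slots, and this is what makes the stated hypotheses sufficient to carry the classical argument through verbatim.
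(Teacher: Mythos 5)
Your proposal is correct and follows essentially the same route as the paper: the standard Hochschild--Serre filtration by the number of $\mathfrak{h}$-arguments, the topological splitting $\mathfrak{g}\cong\mathfrak{h}\oplus\mathfrak{g}/\mathfrak{h}$ from finite-dimensionality of $\mathfrak{h}$ to identify the graded quotients, and formal functoriality of filtered complexes. The one step you leave as ``standard kernel theory'' --- the currying isomorphism $L(\hat{\Lambda}^q\mathfrak{h}\hotimes\hat{\Lambda}^p(\mathfrak{g}/\mathfrak{h}),A)\cong C^q(\mathfrak{h},L(\hat{\Lambda}^p(\mathfrak{g}/\mathfrak{h}),A))$ --- is exactly where the paper invokes the completeness/barreledness/nuclearity hypotheses via \cite[Proposition 50.5]{treves1967topological}.
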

\begin{proof}
We define on the continuous cochains~$C^\bullet(\mathfrak{g})$ the filtration
\begin{align}
F^p C^{p+q}(\mathfrak{g} ; A) :=
\{c \in C^{p+q}(\mathfrak{g}, A) : c(X_1,\dots,X_{p+q}) = 0 \text{ when } X_1,\dots,X_{q+1} \in \mathfrak{h} \}.
\end{align}
This is an ascending filtration with
\begin{align}
C^r(\mathfrak{g},A) =
F^0 C^r(\mathfrak{g},A)
\supset
\dots
\supset 
F^r C^r(\mathfrak{g},A)
\supset 
F^{r+1} C^r(\mathfrak{g},A) = 0,
\end{align}
and
\begin{align}
d F^p C^{p + q}(\mathfrak{g} ; A) \subset F^p C^{p + q + 1}(\mathfrak{g} ; A).
\end{align}
Denote by~$\hat{\Lambda}^q$ the functor assigning to a locally convex vector space~$X$ the closure of the skew-symmetric tensors in its iterated projective tensor product~$X^{\hotimes^q}$, see for example \cite[Chapter III.7, IV.9]{schaefer1971locally}. 
We have a well-defined map
\begin{equation}
\begin{gathered}
F^p C^{p + q}(\mathfrak{g},A) 
\to  
L(
 \hat{\Lambda}^q\mathfrak{h} 
 \hotimes
 \hat{\Lambda}^p \mathfrak{g}/\mathfrak{h} , 
 A 
), 
\quad 
c \mapsto \tilde{c}, 
\\
\tilde{c}(
 (h_1 \wedge \dots \wedge h_q) 
 \otimes 
 [g_1] \wedge \dots \wedge [g_p])
 )
 :=
 c(h_1,\dots,h_q,g_1,\dots,g_p).
\end{gathered}
\end{equation}
This map is independent of the choices of representatives~$g_i$ by definition of the filtration and it is surjective because finite-dimensional subspaces are always complemented, so~$\mathfrak{g} \cong \mathfrak{h} \oplus \mathfrak{g}/\mathfrak{h}$ as a direct sum of locally convex vector spaces. The kernel of this map equals~$F^{p+1}C^{p+q}(\mathfrak{g},A)$. The image of this map is also indeed contained in the continuous linear maps by continuity of elements in the domain.
Since~$\mathfrak{h}$ is finite-dimensional, we trivially have
\begin{align}
(\mathfrak{h} \hotimes \mathfrak{g}/\mathfrak{h})^*
\cong 
\mathfrak{h}^*  \hotimes (\mathfrak{g}/\mathfrak{h})^*.
\end{align}
By the assumptions on~$\frakg$ and~$A$, we may apply \cite[Proposition 50.5]{treves1967topological} twice to find
\begin{align}
L(
 \hat{\Lambda}^q
 \mathfrak{h} 
  \hotimes
 \hat{\Lambda}^p 
 \mathfrak{g}/\mathfrak{h}, 
 A 
)
\cong
L(
 \hat{\Lambda}^q
 \mathfrak{h}, 
 L(
  \hat{\Lambda}^p 
  \mathfrak{g}/\mathfrak{h}, A
 )
)
\cong
C^q(\mathfrak{h}, 
L(
  \hat{\Lambda}^p \mathfrak{g}/\mathfrak{h}, A
 )
).
\end{align}
Hence we get an isomorphism of vector spaces
\begin{align}
F^p C^{p + q}(\mathfrak{g},A)
/
F^{p+1} C^{p + q}(\mathfrak{g},A) 
\cong 
C^q
\left(
\mathfrak{h},
L(\hat{\Lambda}^p \mathfrak{g}/\mathfrak{h}, 
A )
\right)
.
\end{align}
The differential of~$C^\bullet(\mathfrak{g},A)$ descends to the differential of this complex like in the purely algebraic case, so the spectral sequence associated to this filtration indeed has first page:
\begin{align}
E^{p,q}_1 = H^q \left(\mathfrak{h},L(\hat{\Lambda}^p \mathfrak{g}/\mathfrak{h}, A) \right).
\end{align}
The functoriality with respect to Lie algebra pairs~$(\mathfrak{g},\mathfrak{h})$ is analogous to the purely algebraic setting.
\end{proof}
\begin{remark}
This spectral sequence in the algebraic setting is generally also phrased with information about the second page if~$\mathfrak{h}$ is an ideal. Adapting this to the continuous setting would require stronger assumptions, since this in particular requires commuting the projective tensor product with the cohomology.
\end{remark}

\bibliographystyle{unsrt}
\bibliography{lit}
\end{document}